\documentclass[11pt]{amsart}
\allowdisplaybreaks
\usepackage[top=1.1in,bottom=1.1in,left=1.2in,right=1.2in]{geometry}
\usepackage{amsmath, amscd, amssymb}
\usepackage[frame,cmtip,arrow,matrix,line,graph,curve]{xy}
\usepackage{graphpap, color}
\usepackage[mathscr]{eucal}
\usepackage{cancel}
\usepackage{verbatim}
\usepackage{adjustbox}
\usepackage{tikz}
\usepackage{float}
\usepackage{booktabs}
\usepackage{multirow}
\usepackage{tabularx}
\usepackage{tikz-cd}
\usepackage{bbm}
\usepackage{stmaryrd}

\usepackage{hyperref}

\numberwithin{equation}{section}

\newtheorem{theorem}{Theorem}[section]
\newtheorem{corollary}[theorem]{Corollary}
\newtheorem{proposition}[theorem]{Proposition}
\newtheorem{lemma}[theorem]{Lemma}

\theoremstyle{definition}
\newtheorem{definition}[theorem]{Definition}

\newtheorem{remark}[theorem]{Remark}

\newcommand{\Z}{\mathbb{Z}}
\newcommand{\Q}{\mathbb{Q}}

\newcommand{\C}{\mathbb{C}}

\newcommand{\A }{\mathbb{A}}
\newcommand{\PP}{\mathbb{P}}

\def\EE{\mathbb{E}}

\def\QQ{\mathbb{Q}}

\def\ZZ{\mathbb{Z}}

\def\sE{\mathscr{E}}

\def\sK{\mathscr{K}}
\def\sP{\mathscr{P}}
\def\sQ{\mathscr{Q}}

\def\sT{\mathscr{T}}

\newcommand{\cal}{\mathcal}

\def\cB{{\cal B}}
\def\cC{{\cal C}}
\def\cD{{\cal D}}
\def\cE{{\cal E}}
\def\cF{{\cal F}}

\def\cL{{\cal L}}
\def\cM{{\cal M}}
\def\cN{{\cal N}}
\def\cO{{\cal O}}
\def\cP{{\cal P}}

\def\cS{{\cal S}}

\def\cZ{{\cal Z}}

\def\fg{\mathfrak g}

\def\ft{\mathfrak t}

\def\tS{\widetilde{S}}

\def\tY{{\widetilde{Y}}}

\def\hbar{\overline{h}}

\def\GL{\mathrm{GL} }
\def\SL{\mathrm{SL} }
\def\PGL{\mathrm{PGL} }

\DeclareMathOperator{\Aut}{Aut}

\DeclareMathOperator{\id}{id}

\DeclareMathOperator{\Stab}{Stab}
\DeclareMathOperator{\codim}{codim}

\DeclareMathOperator{\Pic}{Pic}

\DeclareMathOperator{\Hom}{Hom}

\DeclareMathOperator{\Sym}{Sym}

\def\diag{\mathrm{diag} }

\def\Bl{\mathrm{Bl}}
\def\Fl{\mathrm{Fl}}

\newcommand{\hooklongrightarrow}{\lhook\joinrel\longrightarrow}

\def\and{\quad{\rm and}\quad}
\def\lra{\longrightarrow }

\def\beq{\begin{equation}}
\def\eeq{\end{equation}}
\def\ben{\begin{enumerate}}
\def\een{\end{enumerate}}

\def\res{\mathrm{res}}

\def\and{\quad\text{and}\quad}

\def\e{\epsilon}
\def\w{\omega}

\def\pt{\mathrm{pt}}

\def\Mps{\overline{M}^{\mathrm{ps}}}

\def\PH{P^{\mathrm{H}}}
\def\PK{P^{\mathrm{K}}}

\def\PCY{P^{\mathrm{CY}}}
\def\PGIT{P^{\mathrm{GIT}}}

\def\cPH{\cP^{\mathrm{H}}}
\def\cPK{\cP^{\mathrm{K}}}
\def\cPKs{\cP^{\mathrm{K},s}}
\def\cPCY{\cP^{\mathrm{CY}}}
\def\cPGIT{\cP^{\mathrm{GIT}}}
\def\cPGITs{\cP^{\mathrm{GIT},s}}

\def\cM{\mathcal{M}}
\def\cMps{\overline{\cM}^{\mathrm{ps}}}

\def\cDK{\cD^{\mathrm{K}}}

\def\cEK{\cE^{\mathrm{K}}}

\def\IH{I^{\mathrm{H}}}

\def\IK{I^{\mathrm{K}}}

\def\cPhat{\widehat{\cP}}
\def\Phat{\widehat{P}}
\def\cPhatCY{\widehat{\cP}^{\mathrm{CY}}}
\def\cDhat{\widehat{\cD}}

\def\cEhat{\widehat{\cE}}
\def\cEhatCY{\widehat{\cE}^{\mathrm{CY}}}
\def\cFhat{\widehat{\cF}}
\def\cFhatCY{\widehat{\cF}^{\mathrm{CY}}}
\def\Ehat{\widehat{E}}

\def\Uhat{\widehat{U}}
\def\ehat{\hat{e}}
\def\fhat{\hat{f}}
\def\ghat{\hat{g}}
\def\ihat{\hat{i}}
\def\jhat{\hat{j}}
\def\Ihat{\hat{I}}

\def\shat{\hat{s}}
\def\cShat{\widehat{\cS}}
\def\cChat{\widehat{\cC}}
\def\cShatH{\cShat^{\,\rmH}}
\def\cChatH{\cChat^{\,\rmH}}
\def\cShatK{\cShat^{\,\rmK}}
\def\cChatK{\cChat^{\,\rmK}}
\def\cShatCY{\cShat^{\mathrm{CY}}}
\def\cChatCY{\cChat^{\mathrm{CY}}}
\def\cZhat{\widehat{\cZ}}

\def\Shat{\widehat S}
\def\Chat{\widehat C}
\def\cLhat{\widehat \cL}
\def\sThat{\widehat \sT}

\def\DC{\mathrm{DC}}
\def\HT{\mathrm{HT}}

\def\pTac{\mathrm{tac}}

\def\Gm{\mathbb{G}_m}

\def\rmH{\mathrm{H}}
\def\rmK{\mathrm{K}}
\def\flex{{\mathrm{flex}}}
\def\tr{{\mathrm{tr}}}

\def\chartheta{{\theta}}
\def\charthetabar{\bar\chartheta}

\newcommand{\pHT}{p_{\HT}}
\newcommand{\pDC}{p_{\DC}}

\newcommand{\pr}{\mathrm{pr}}

\title[Chow and cohomology of moduli stacks of plane quartics]{Chow and cohomology rings of moduli stacks of plane quartics}
\date{May 17, 2026}

\author{Kenneth Ascher} 
\address{Department of Mathematics, University of California Irvine,
Irvine, CA 92697,  USA}
\email{kascher@uci.edu}

\author{Donggun Lee}
\address{June E Huh Center for Mathematical Challenges, Korea Institute for Advanced Study, 85 Hoegiro, Dongdaemun-gu, Seoul 02455, Republic of Korea}
\email{dglee@kias.re.kr}

\begin{document}

\begin{abstract}
This paper studies the Chow and cohomology rings of the Hacking moduli stack $\cP^{\mathrm{H}}$ of plane quartics. We construct a smooth proper Deligne--Mumford stack resolving the Calabi--Yau wall crossing between the KSBA and K-moduli compactifications for plane quartics via stack-theoretic weighted blowups. Its coarse moduli space is, up to normalization, the fiber product of the natural diagram relating the KSBA, K-moduli, and boundary polarized Calabi--Yau compactifications. From this, we compute the Poincar\'e polynomial of $\cP^{\mathrm{H}}$, show that the cycle class map is an isomorphism with rational coefficients, and determine generators and relations for its Chow ring in terms of tautological classes. Analogous results are established for the GIT and K-moduli stacks.
\end{abstract}

\maketitle

\section{Introduction}

Compact moduli spaces of stable log pairs, introduced by Koll\'ar--Shepherd-Barron and Alexeev (KSBA), provide higher-dimensional analogues of the Deligne--Mumford compactification of the moduli space of stable curves.
While the general theory has undergone major developments in recent years (see, for instance, \cite{kollar-modulibook}), explicit examples with a detailed understanding of their topology and intersection theory are extremely rare.

In \cite{Hac-thesis,Hac}, Hacking constructed a KSBA compactification $\cPH_d$ of the moduli stack of smooth plane curves of degree $d$, parametrizing $\QQ$-Gorenstein slc degenerations of pairs $(\PP^2, (\frac{3}{d}+\epsilon)C_d)$ 
for $\epsilon>0$ sufficiently small. 
The moduli stacks $\cPH_d$ are among the earliest and most prominent examples of KSBA compactifications.

More recently, K-stability has led to the construction of compact moduli spaces of log Fano pairs; see, for instance, \cite{xu-survey, xu-book}. 
These K-moduli spaces often provide rich birational models with modular interpretations.

In the case of plane curves,
\cite{ADL} constructed K-moduli stacks $\cPK_d(c)$, parametrizing K-polystable degenerations of $(\PP^2, cC_d)$ for $0< c<\frac{3}{d}$, and studied their wall-crossing behavior as $c$ varies. 
The stack $\cPK_d(c)$ is closely related to the GIT quotient stack $\cPGIT$ of plane curves of degree $d$.
For sufficiently small $c$, it is canonically isomorphic to $\cPGIT$, and the first wall-crossing is given by a weighted blowup of $\cPGIT$. 
Subsequent wall-crossings are generally more complicated,
but tend to simplify the geometry of the moduli space as $c$ decreases. 
This is reflected in \cite[Question~9.3 and Theorem~9.5]{ADL}, where it is asked whether the Picard rank of $\cPK_d(c)$ decreases as $c$ decreases, and verified when $3\mid d$ or $d<13$.

This yields the wall-crossing diagram
\beq\label{intro:K-wc}\begin{tikzcd}
	\cPH_d \arrow[r,dashed]&\cPK_d\arrow[r,dashed]&~\cdots~\arrow[r,dashed]&\cPK_d(\epsilon)\cong \cPGIT
\end{tikzcd}\eeq
where $\cPK_d:=\cPK_d(\frac{3}{d}-\epsilon)$ denotes the K-moduli stack corresponding to the maximal value of $c$.
The same diagram exists for the coarse moduli space $\PH_d$ and the good moduli spaces $\PK_d(c)$. 

Moreover, \cite{bpCY} constructed a compactification $\cPCY_d$ parametrizing boundary polarized Calabi--Yau degenerations of $(\PP^2,\tfrac{3}{d}C_d)$. 
It contains both $\cPH_d$ and $\cPK_d$ as open substacks, and admits a projective moduli space $\PCY_d$, which serves as a base for the wall crossing
\beq\label{intro:flip}
\begin{tikzcd}
	\cPH_d \arrow[rr,dashed] \arrow[rd] && \cPK_d \arrow[ld] \\
	& \PCY_d
\end{tikzcd}
\eeq
between KSBA stability and K-stability \cite[Theorem~1.3]{bpCY}.
Here the stack $\cPH_d$ is Deligne--Mumford, whereas $\cPK_d$ is not.
The diagrams \eqref{intro:K-wc} and \eqref{intro:flip} relate $\cPH_d$ to $\cPGIT$, whose topology and intersection theory are often more amenable to explicit computation.

\smallskip

We now restrict to the case $d=4$ and omit $d$ from the notation. 
We work over $\C$, and all cohomology and Chow groups are taken with $\Q$-coefficients throughout.

In this case, the K-moduli wall-crossing consists of a single step, and in particular, $\cPK\to \cPGIT$ is the Kirwan-type stack-theoretic weighted blowup.
From the viewpoint of the Hassett--Keel program for genus three curves, \eqref{intro:K-wc} and \eqref{intro:flip} fit into the following diagram
\[\begin{tikzcd}
\overline{M}_3 \arrow[r] 
  & \PH \arrow[rd, "\varphi"'] \arrow[rr, dashed] 
    && \PK \arrow[ld, "\psi"] \arrow[rd] \\
  && \PCY 
    && \PGIT,
\end{tikzcd}\]
where $\overline{M}_3$ denotes the moduli space of stable curves of genus three, which is also isomorphic to the KSBA compactification of pairs with coefficient one constructed by Hassett \cite{Has99}, and $\PGIT$ denotes the GIT moduli space. The flip corresponds to the first flip in the log minimal model program for $\overline{M}_g$, specialized to the case $g=3$ \cite[p.~4]{HL}; see also \cite{HH13}. 

From the perspective of birational geometry, this flip is well understood.
The morphisms to $\PCY$ contract two natural loci to a single point: 
\begin{itemize}
	\item $\varphi$ contracts the closed codimension-two boundary stratum $\cZ_2\subset \cPH$, parametrizing unions of two genus one curves meeting at two nodes;
	\item $\psi$ contracts the strictly polystable locus $\cZ_\pTac\subset \cPK$, parametrizing tacnodal curves admitting $\Gm$-stabilizers.
\end{itemize}
However, for the purposes of topology and intersection theory,
it is not sufficient to understand the birational map $\cPH \dashrightarrow \cPK$ merely as a flip.

Our first main result 
constructs a smooth proper Deligne--Mumford stack resolving this Calabi--Yau wall crossing
via stack-theoretic weighted blowups \cite{abramovich-temkin-wlodarczyk,QuekRydh} centered at $\cZ_2$ and $\cZ_\pTac$.
\begin{theorem}\label{thm:mainthmintro}
There exists a smooth proper Deligne--Mumford stack $\cPhat$ 
\[\begin{tikzcd}
	& \cPhat \ar[ld,"\Phi"'] \ar[rd,"\Psi"] \\
\cPH && \cPK
\end{tikzcd}\]
resolving the Calabi--Yau wall crossing over $\PCY$,
such that
\begin{enumerate}
	\item $\Phi$ is a stack-theoretic weighted blowup along $\cZ_2$; 
	\item $\Psi$ is a Kirwan-type stack-theoretic weighted blowup along $\cZ_\pTac$; 
	\item the coarse moduli space $\Phat$ of $\cPhat$ is the normalization of the fiber product $\PH \times_{\PCY} \PK$.
\end{enumerate}
\end{theorem}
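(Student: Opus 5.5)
The plan is to build $\cPhat$ from the K-moduli side and then identify it with a weighted blowup of $\cPH$. First I will do the Kirwan-type blowup on $\cPK$. The locus $\cZ_\pTac$ is the closed substack of polystable tacnodal curves with $\Gm$-stabilizers. Concretely, near a point of $\cZ_\pTac$ the stack is a quotient $[V/\Gm]$ (possibly with a finite extension), where $V$ is the deformation space of the pair. I decompose $V=V_+\oplus V_0\oplus V_-$ by $\Gm$-weights. The weighted blowup of $[V/\Gm]$ at the fixed locus, using the absolute values of these weights, is then a $\Gm$-quotient whose stable locus is $[(V_+\setminus 0)\times(V_-\setminus0)\times V_0/\Gm]$ after saturation. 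Taking the Kirwan (saturated) blowup and deleting the unstable strict transforms of $V_\pm\times V_0$ gives a Deligne--Mumford stack $\cPhat$ with a proper map $\Psi$. This is a stack-theoretic weighted blowup in the sense of \cite{QuekRydh}, which settles (2). Smoothness of $\cPhat$ follows because $\cPK$ is smooth, being a quotient of a smooth stack.

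Second, I identify $\cPhat$ with a weighted blowup of $\cPH$ along $\cZ_2$, which is (1). Over $\cZ_\pTac$ the exceptional divisor of $\Psi$ is a weighted projective bundle $\cP(V_+)\times\cP(V_-)$ modulo the residual group. I will interpret its points as the pairs obtained by stable reduction from tacnodal curves. On the Hacking side, a point of $\cZ_2$ is a union of two genus-one components meeting in two nodes. Its normal space $N_{\cZ_2}$ is the $\QQ$-Gorenstein smoothing directions of the two singular points of the surface, which are $\frac14(1,1)$-type Wahl/T-singularities appearing on the degenerate $\PP^2$. These directions carry natural weights from the index-one covers. I match $\cP(N_{\cZ_2})$ with $\cP(V_-)$ and the fibers of $\cZ_2\to\pt$ with $\cP(V_+)$, or the reverse. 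I then construct a morphism $\cPhat\to\cPH$ through the universal family. Over the complement of the exceptional divisor it is $\cPK\setminus\cZ_\pTac\cong\cPH\setminus\cZ_2$, since both sides parametrize the same smooth and mildly singular pairs. On the exceptional divisor I define it by a family over the weighted blowup that is KSBA-stable after a weighted base change; its existence is guaranteed by properness of $\cPH$ together with separatedness. Checking that this map is the weighted blowup of $\cZ_2$ with the correct weights amounts to a universal property: the pullback of $\cZ_2$ must be a Cartier divisor with matching weight filtration. I expect this to be the main obstacle. It requires explicit local coordinates on both deformation spaces, namely the versal deformation of the tacnode with its $\Gm$-weights and the $\QQ$-Gorenstein deformations of the degenerate surface, together with an explicit comparison of the two.

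Finally, for (3) I take the coarse spaces. Since $\Phi$ and $\Psi$ are compatible with the maps to $\PCY$, both composites agree off the contracted loci, so they agree everywhere by separatedness. This gives a morphism $\Phat\to\PH\times_{\PCY}\PK$. The map is birational and finite: both exceptional loci map to the single point of $\PCY$, and the fibers of $\Phat$ over that point are the products $\cP(V_+)\times\cP(V_-)$ up to finite groups, which surject finitely onto (the fiber of $\PH$)$\times$(the fiber of $\PK$). The map is proper, and $\Phat$ is normal, being the coarse space of a smooth DM stack. By Zariski's main theorem, $\Phat$ is therefore the normalization of the main component of the fiber product.
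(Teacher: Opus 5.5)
Your overall architecture matches the paper's: a Kirwan-type weighted blowup of $\cPK$ along $\cZ_\pTac$, a family of Hacking-stable pairs over $\cPhat$ inducing $\Phi$, the universal property of the blowup, and Zariski's Main Theorem on coarse spaces. However, part (1), which is the heart of the theorem, is not carried out, and the geometry you propose for it is wrong.

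\emph{Your matching fails on dimensions.} You match $\cP(N_{\cZ_2})$ with $\cP(V_-)$ and $\cZ_2$ with $\cP(V_+)$. But $\cZ_2\cong[(\cP(2,3,4)\times\cP(2,3,4))/\mu_2]\times B\Gamma$ has dimension $4$ and codimension $2$ in $\cPH$, so $\PP(N_{\cZ_2/\cPH})$ has $\PP^1$-fibers, while $\cP(V_\pm)\cong\cP(2,3,4)$ are surfaces. The correct correspondence exchanges centers and fibers:
\begin{itemize}
\item $\cP(V_+)\times\cP(V_-)$ modulo the swap \emph{is} $\cZ_2$, up to gerbes. The two triples of coordinates are the coefficients $(a_2,a_3,a_4)$ and $(b_2,b_3,b_4)$ of the two elliptic curves $z^2=x^4+a_2x^2y^2+a_3xy^3+a_4y^4$ on the two copies of $\PP(1,1,2)$.
\item The $\PP^1$-fibers of $\PP(N_{\cZ_2/\cPH})$ are the coarse space of the base $[\PP^1/\Gamma]$ of $\cZ_\pTac$, i.e.\ your $V_0$-direction.
\end{itemize}
Likewise, $\Phi$ carries no weights from index-one covers of $\frac14(1,1)$ points. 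That singularity lives on $\PP(1,1,4)$, i.e.\ over $\cZ_1$; the surfaces over $\cZ_2$ are the non-normal $\PP(1,1,2)\cup_\ell\PP(1,1,2)$. In fact $\Phi$ is the ordinary blowup of $\cZ_2$ followed by a square root along the exceptional divisor, reflecting $\Phi^{-1}(\cZ_2)=2\cEhat$.

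The same error breaks your step (3). The fiber of $\Phat$ over the boundary point of $\PCY$ is the coarse space $Z_2\times\PP^1$ of $\cEhat$, of dimension $5$. It is not $\cP(V_+)\times\cP(V_-)$, which has dimension $4$ and so could not surject onto $Z_2\times\PP^1$. Getting this product structure globally needs more than the local model $[V/\Gm]$. The paper proves the global splitting $N_{\cZ_\pTac/\cPK}\cong N_2\oplus N_3\oplus N_4$, with $N_j$ descended from $\cO(j\mu)\oplus\cO(j\nu)$ (Proposition~\ref{prop:normalbundle.decomp}). This is what makes the $\cP(2,3,4)\times\cP(2,3,4)$-fibration over $[\PP^1/\Gamma]$ trivial.

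\emph{More fundamentally, you do not construct $\Phi$.} Properness and separatedness of $\cPH$ extend $\cPhat\setminus\cEhat\to\cPH$ across the generic points of the divisor $\cEhat$, possibly after a root stack. They do not extend it across codimension-two loci of $\cPhat$, where indeterminacy can survive (think of $\PP^2\dashrightarrow\PP^1$). You would have to show that the stable limit is independent of the arc through each point of $\cEhat$ and varies algebraically, and an explicit family is what supplies this. The missing idea is a concrete modification of the pulled-back universal family $(\cShatK,\cChatK)$:
\begin{itemize}
\item Blow up the total space along the tacnode locus $\cZhat$ with weights $(1,1,2)$, the weight-two direction being transverse to the common tangent of the branches.
\item Each fiber over $\cEhat$ then becomes $\Shat_1\cup_{\ell_1}\Shat_0\cup_{\ell_2}\Shat_2$, with $\Shat_i\cong\PP(1,1,2)$ carrying elliptic curves in normal form whose coefficients are the coordinates on $\cP(V_\pm)$.
\item Contract $\Shat_0$ onto $\ell$ by the pencil of proper transforms of the two branch curves. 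This is globalized by $\cLhat=\cL|_{\cShat}\otimes\cO_{\cShat}(-2\cFhat)$, using vanishing of higher cohomology, base-point-freeness, and cohomology and base change.
\end{itemize}
Only then does one compute $\Phi^{-1}(\cZ_2)=2\cEhat$ on one-parameter slices. Next one passes to the relative coarse space, which kills the $\mu_2$-gerbe so that $\cPhat\to\cPhat^\dagger$ is a square-root stack. Finally one applies the universal property of the blowup and Zariski's Main Theorem.

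Two smaller inaccuracies. First, $\Psi$ is not proper. Second, $\cPhat\setminus\cEhat$ is $\cPKs$, not $\cPK\setminus\cZ_\pTac$. The latter still contains the non-polystable tacnodal quartics and is not isomorphic to $\cPH\setminus\cZ_2$. So properness and the Deligne--Mumford property of $\cPhat$ must come from Kirwan's construction (projective coarse space, positive-dimensional stabilizers only along $\cZ_\pTac$), not from $\Psi$.
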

This resolution interpolates between the KSBA and K-moduli compactifications at the level of both stacks and families.
We construct a family of pairs over $\cPhat$ simultaneously realizing the degenerations on the KSBA and K-moduli sides.
More precisely, over the exceptional divisor, the fibers are chains of three components whose outer components recover the pairs parameterized by $\cZ_2\subset \cPH$, while the middle component is given by minimal dlt modifications of the K-polystable pairs parameterized by $\cZ_\pTac\subset \cPK$.

Moreover, $\Phat$ is naturally determined by the boundary polarized Calabi--Yau compactification $\cPCY$: it arises as a weighted analogue of the canonical reduction of stabilizers in the sense of \cite{ER}; see Corollary~\ref{cor:bpCY} and Remark~\ref{rem:bpCY}.

The construction of the resolution proceeds by first constructing $\cPhat$ as a Kirwan-type stack-theoretic weighted blowup of $\cPK$ along $\cZ_\pTac$, using the decomposition of the normal bundle $N_{\cZ_\pTac/\cPK}$ according to the weights of the $\Gm$-stabilizer action (Section~\ref{s:normal}).
We then modify the pulled-back universal family to construct $\Phi$, and use the universal property of the blowup to identify $\Phi$ with the stack-theoretic weighted blowup along $\cZ_2$ (Section~\ref{s:resolution}).

Finally, the morphism $\Phat\to \PGIT$ gives a weighted Kirwan partial desingularization \cite{Kir85}.

\smallskip

This resolution is also closely related to Hacking's stratification
\[\cPH = \cZ_0 \sqcup \cZ_1 \sqcup \cZ_2,\]
where the open stratum $\cZ_0$ parametrizes quartic curves on $\PP^2$, the codimension-one stratum $\cZ_1$ parametrizes curves on $\PP(1,1,4)$, and the codimension-two stratum $\cZ_2$ parametrizes curves on the reducible surface $\PP(1,1,2)\cup \PP(1,1,2)$.
In particular, the morphism $\Phi:\cPhat\to \cPH$ is the stack-theoretic weighted blowup along the deepest stratum $\cZ_2$.

Moreover, the boundary divisor 
\[\partial\cPH=\overline\cZ_1=\cZ_1\sqcup \cZ_2\] 
is the image of the exceptional divisor of $\cPK\to \cPGIT$ under the flip. 
By Theorem~\ref{thm:mainthmintro}, its coarse moduli space is a weighted Kirwan partial desingularization of the GIT moduli space of unordered eight points on $\PP^1$; see Remark~\ref{rem:Z1.Kirwan} for details.

\medskip

Our second main result determines the Betti numbers of $\cPH$ and establishes that the cycle class map is an isomorphism. 
Using Kirwan's method \cite{Kir84,Kir85,KL1,KL2} (see Section~\ref{s:Betti}), together with the above resolution of the flip, we prove the following.

\begin{theorem}[Theorem~\ref{thm:Betti.PH}]\label{thm:Poincare.intro}
The Poincar\'e polynomial of $\cPH$ is
\beq \label{eq:Poincare.intro} 1+2t^2+4t^4+5t^6+4t^8+2t^{10}+t^{12}.\eeq
Moreover, the cycle class map $A^*(\cPH) \to H^*(\cPH)$ is an isomorphism.
\end{theorem}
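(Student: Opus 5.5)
The plan is to compute the Poincaré polynomial of $\cPH$ by passing through the resolution $\cPhat$ of Theorem~\ref{thm:mainthmintro}, and to get the cycle class statement from the same decomposition. We start from the GIT stack $\cPGIT=[(\PP^{14})^{ss}/\SL_3]$. Its equivariant Poincaré series is computed by Kirwan's method: the HKKN stratification of $\PP^{14}=\PP(\Sym^4\C^3)$ is equivariantly perfect, so
\[
P_t^{\SL_3}(X^{ss}) = P_t(\PP^{14})P_t(B\SL_3)-\sum_{\beta\neq 0} t^{2d_\beta}P_t^{\mathrm{Stab}_\beta}(Z_\beta^{ss}),
\]
where $d_\beta$ is the codimension of the stratum $S_\beta$. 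These unstable strata are explicit for quartics. Next we pass to $\cPK$, which is the Kirwan-type weighted blowup of $\cPGIT$ along the strictly polystable locus. For this step we use Kirwan's blowup formula in the weighted form of \cite{KL1,KL2}. It adds the contribution of the projectivized normal bundle to the polystable locus and subtracts the unstable strata that appear in the blowup. This is the known computation for $\cPK$, and it should also show that $\cPK$ has only $\Gm$- and finite stabilizers, away from $\cZ_\pTac$ where $\Gm$ persists.

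Then we pass to $\cPhat$ by the weighted blowup $\Psi$ along $\cZ_\pTac$. Here we use the weight decomposition $N_{\cZ_\pTac/\cPK}=N^+\oplus N^-$ from Section~\ref{s:normal}. The new exceptional divisor is a weighted projective bundle over $\cZ_\pTac$, taken modulo $\Gm$ on its stable part. We write
\[
P_t(\cPhat)=P_t(\cPK)+\bigl(\text{exceptional correction}\bigr),
\]
with the correction equal to $P_t(\PP(N^+)\times_{\cZ_\pTac}\PP(N^-)\text{-type quotient})$ minus the unstable/$\Gm$-fixed contribution $P_t^{\Gm}(\cZ_\pTac)$, following Kirwan's formula. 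Finally, since $\Phi$ is a weighted blowup of the smooth DM stack $\cPH$ along the smooth substack $\cZ_2$ of codimension $2$, the blowup formula gives
\[
P_t(\cPhat)=P_t(\cPH)+(t^2)\,P_t(\cZ_2).
\]
This holds because the exceptional divisor is a weighted $\PP^1$-bundle and rational coefficients kill stabilizers. We therefore obtain $P_t(\cPH)=P_t(\cPhat)-t^2P_t(\cZ_2)$. The stratum $\cZ_2$ parametrizes pairs of genus one curves on $\PP(1,1,2)\cup\PP(1,1,2)$, and its coarse space should be a rational surface such as $\overline{M}_{1,1}\times\overline{M}_{1,1}$ mod symmetry, or its Hacking analogue. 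We compute $P_t(\cZ_2)$ directly, expecting $1+2t^2+t^4$ or $1+t^2+t^4$, and check that the result matches \eqref{eq:Poincare.intro}. As sanity checks, the result should be palindromic of degree $12$, since $\dim\cPH=6$ and $\cPH$ is smooth and proper.

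For the cycle class map, we show that the same decompositions hold for Chow groups. The formula $A^*(\cPGIT)$ comes from equivariant Chow of $\PP^{14}$ minus strata, and equivariant Chow of $\PP^{14}$ is isomorphic to cohomology. Each stratum and center is built from projective spaces, Grassmannian-type quotients and $B\Gm$, $B\SL_2$, for which $A^*\to H^*$ is an isomorphism. The localization exact sequences in Chow and the Gysin sequences in cohomology are compatible. Perfection of the stratification in cohomology, combined with the surjectivity of the Chow pushforwards, gives by a five-lemma/dimension count that $A^*\to H^*$ is an isomorphism on $\cPGIT$, then on $\cPK$ and on $\cPhat$ via the blowup formulas for Chow groups. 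It finally descends to $\cPH$, because $A^*(\cPH)$ and $H^*(\cPH)$ are direct summands of those of $\cPhat$ (via $\Phi^*$ and $\Phi_*$), compatibly with cycle maps.

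The main obstacle is the middle step, the contribution of the $\Gm$-quotient exceptional locus of $\Psi$. It requires knowing $\cZ_\pTac$, the weights of $N^\pm$, and the stability on the weighted projectivization precisely enough to evaluate Kirwan's correction terms. We would first try to identify $\cZ_\pTac$ with a quotient of a configuration of points on $\PP^1$ (pairs of conics tangent at two points), where everything is explicit.
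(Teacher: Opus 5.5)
Your route is the paper's. You use Kirwan's equivariantly perfect stratification for $\cPGIT$, then Kirwan's blowup formula for $\cPK$ and for $\cPhat'\to\cPK$ followed by removal of the unstable strata, and finally $P_t(\cPhat)=P_t(\cPH)+t^2P_t(\cZ_2)$.

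Two of your variations are fine:
\begin{itemize}
\item Writing the middle step as $P_t(\cPhat)=P_t(\cPK)-P_t(\cZ_\pTac)+P_t(\cEhat)$ is equivalent to the paper's increment $\frac{t^2-t^{12}}{1-t^2}P_t(\cZ_\pTac)-\frac{(1+t^2)(t^6+t^8+t^{10})}{1-t^2}$. The reason is that the unstable strata of $\cPhat'$ (codimensions $3,4,5$) meet $\cEhat'$ in the unstable strata of $\cEhat'$, with the same codimensions and the same cohomology. With $P_t(\cZ_\pTac)=1/(1-t^2)$ and $P_t(\cEhat)=(1+t^2)(1+t^2+2t^4+t^6+t^8)$ from Proposition~\ref{prop:cEhat}, it gives $1+3t^2+5t^4+7t^6+5t^8+3t^{10}+t^{12}$. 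The paper's form needs only the fiberwise $R$-weights $\pm2,\pm3,\pm4$, so the global structure you flag as the main obstacle is not really required.
\item Your retract argument for the cycle map on $\cPH$ is valid. It uses $\Phi_*\Phi^*=\mathrm{id}$ in Chow and in cohomology, compatibly with the cycle map. It also avoids the paper's use of the cycle map on the center $\cZ_2$.
\end{itemize}

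The step that fails as you describe it is the last one: $\cZ_2$ is not a surface. It has codimension $2$ in the $6$-dimensional $\cPH$, so it is $4$-dimensional. On each component $\PP(1,1,2)$ you have a genus one curve together with the two points where it meets the double line $\ell$, i.e.\ a $2$-pointed genus one curve. Proposition~\ref{prop:Z2} gives $\cZ_2\cong[(\cP(2,3,4)\times\cP(2,3,4))/\mu_2]\times B\Gamma$. Its coarse space is $(\PP(2,3,4)\times\PP(2,3,4))/\mu_2$, not a quotient of $\overline M_{1,1}\times\overline M_{1,1}$. Taking $\mu_2$-invariants of $H^*(\cP(2,3,4))^{\otimes 2}$ gives $P_t(\cZ_2)=1+t^2+2t^4+t^6+t^8$, and only this produces \eqref{eq:Poincare.intro}. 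Your guesses give non-palindromic results:
\begin{itemize}
\item $1+2t^2+t^4$ gives $1+2t^2+3t^4+6t^6+5t^8+3t^{10}+t^{12}$;
\item $1+t^2+t^4$ gives $1+2t^2+4t^4+6t^6+5t^8+3t^{10}+t^{12}$.
\end{itemize}

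Two smaller corrections:
\begin{itemize}
\item $\cPK\to\cPGIT$ is the weight-two blowup at the double-conic point $\pDC\cong B\PGL_2$, the locus of maximal-dimensional stabilizer. It is not the blowup along the whole strictly polystable locus. The oxes and cateyes survive as $\cZ_\pTac$, which is what $\Psi$ blows up. The $\cPK$ step is $\frac{t^2-t^{18}}{(1-t^2)(1-t^4)}$ minus the four $B\Gm$-strata from $\PGL_2$ acting on $\PP H^0(\cO_{\PP^1}(8))$.
\item In the GIT step of the cycle-class argument, strata such as $[Y_4^{ss}/\GL_2]$ are not themselves built from projective spaces. You need the induction on dimension in Theorem~\ref{thm:cl.projsp}.
\end{itemize}
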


The proof proceeds by first establishing the corresponding results for the GIT quotient stack $\cPGIT$ using Kirwan's equivariantly perfect stratification \cite{Kir84} (Theorem~\ref{thm:poincare.git}). 

For the cycle class map, we prove a more general result asserting that the equivariant cycle class map for projective space is an isomorphism (Theorem~\ref{thm:cl.projsp}). As an immediate consequence, the cycle class map for the GIT moduli stack of hypersurfaces in $\PP^n$ is an isomorphism and its cohomology vanishes in odd degrees
(Corollary~\ref{cor:cl.hypersurf}).
 
We then deduce the corresponding results for $\cPK$ and $\cPhat$, and finally for $\cPH$, by applying the results of \cite{Kir84,Kir85} together with the explicit structure of the weighted blowups relating these stacks.
We also compute the intersection Betti numbers of the good moduli spaces $\PGIT$ and $\PK$ using \cite{kirwan-ic}; see Theorem~\ref{thm:IP.GIT.K}.

\medskip

Our third main result gives an explicit presentation of the Chow ring of $\cPH$ in terms of tautological classes. 
First, we recall the two natural Hodge-theoretic bundles on $\cPH$.

The first is the Hodge line bundle associated to the family of Calabi--Yau pairs $(\cS,\frac34\cC)$, where $(\cS,\cC)\to \cPH$ is the universal family. The line bundle
\[\Lambda:=\cO_{\cS}(4K_{\cS/\cPH}+3\cC)\]
is fiberwise trivial and hence descends to a line bundle on $\cPH$, again denoted by $\Lambda$. 
This line bundle is defined more generally on $\cPCY$ (see \cite[Section~4.2]{bpCY}), and descends to an ample $\Q$-line bundle on $\PCY$ \cite[Theorem~14.14]{bpCY}; see also \cite[Theorem~9.17]{ADL}.

The second is the Hodge bundle
\[\EE^\rmH:=\pi_{\cC*}(\omega_{\cC/\cPH})\]
associated to the universal family of curves $\pi_{\cC}:\cC\to \cPH$.
This is a rank $3$ vector bundle with determinant 
$\det \EE^\rmH=\Lambda(-\partial\cPH)$; see Corollary~\ref{cor:Hodge}. 

We introduce the following tautological classes:
\[\lambda:=c_1(\Lambda), \quad \delta=[\partial\cPH], \and
c_2^\rmH:=c_2\left((\EE^\rmH)^*\otimes(\det \EE^\rmH)^{1/3}\right).\]
We obtain the following presentation of the Chow ring of $\cPH$.
\begin{theorem}
\label{thm:Chowring.intro}
The Chow ring of $\cPH$ admits the explicit presentation
\[A^*(\cPH)\cong\Q[\lambda,\delta,c_2^\mathrm H]/(r_3,r_4,r_4',r_4'',r_5),\]
with homogeneous relations of degrees $3$, $4$, $4$, $4$, and $5$ (see Theorem~\ref{thm:Chowring.Hacking}).
\end{theorem}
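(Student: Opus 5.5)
The plan is to compute $A^*(\cPH)$ by passing through the resolution $\cPhat$ of Theorem~\ref{thm:mainthmintro}, starting from the GIT stack, where everything is explicit. First I will present $A^*(\cPGIT)=A^*_{\GL_3}(\PP^{14}\setminus\text{unstable})$. Theorem~\ref{thm:cl.projsp} and Kirwan's stratification show that $A^*_{\GL_3}(\PP^{14})=\Q[c_1,c_2,c_3,H]/(\prod(H+\text{weights}))$ surjects onto it. The kernel is generated by the classes supported on the unstable strata, pushed forward from $A^*_{\GL_3}$ of each stratum. Since the Poincar\'e polynomial is already known (Theorem~\ref{thm:poincare.git}), I only need enough explicit pushforward relations to cut the ring down to the correct dimension in each degree. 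After normalizing by the $\det$ twist (the relevant group is effectively $\SL_3$, up to $\mu_4$), the generators become the hyperplane/Hodge class together with $c_2,c_3$ of the standard representation.

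Second, I will pass to $\cPK$, which is a Kirwan-type weighted blowup of $\cPGIT$ along the locus of the double conic (the $\SL_2$-stabilizer point). For this I use the formula for Chow rings of weighted blowups: $A^*(\widetilde X)$ is generated by $A^*(X)$ and the exceptional class $E$. The relations are $E\cdot\ker(A^*X\to A^*Z)$, together with a weighted top Chern class relation $\prod_i(-E\cdot w_i+\text{Chern roots of }N_i)$ coming from the weight decomposition of the normal bundle (Section~\ref{s:normal}). In the Kirwan case one must first remove the unstable part of the blowup, which adds relations pushed forward from the exceptional divisor. I repeat the same procedure for $\Psi:\cPhat\to\cPK$ along $\cZ_\pTac$, using the $\Gm$-weight splitting of $N_{\cZ_\pTac/\cPK}$ and an explicit description of $A^*(\cZ_\pTac)$, which is a quotient of a small-dimensional space by $\Gm\rtimes\Z/2$.

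Third, I will descend to $\cPH$ along $\Phi$. Because $\Phi$ is a weighted blowup along $\cZ_2$, the map $\Phi^*$ is injective and
\[A^*(\cPhat)=\Phi^*A^*(\cPH)\oplus\bigoplus_{k\ge1}E^k\cdot A^*(\cZ_2),\]
with $k$ bounded by the codimension. Hence $A^*(\cPH)$ is the subring of $A^*(\cPhat)$ complementary to the $E$-part. To identify it, I express $\lambda$, $\delta$ and $c_2^{\rmH}$ via their pullbacks. $\Lambda$ is pulled back from $\PCY$, so $\Phi^*\lambda=\Psi^*\lambda$. The class $\delta$ pulls back to the strict transform of the GIT exceptional divisor plus a multiple of $E$. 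For $c_2^\rmH$ I use $\det\EE^\rmH=\Lambda(-\partial\cPH)$ (Corollary~\ref{cor:Hodge}) and compare $\EE^\rmH$ with the pulled-back Hodge bundle from the K-side family; these differ by an elementary modification along $E$. I then check that $\lambda,\delta,c_2^\rmH$ generate, by comparing dimensions with the Poincar\'e polynomial $1+2t^2+4t^4+5t^6+4t^8+2t^{10}+t^{12}$ of Theorem~\ref{thm:Poincare.intro}. The dimensions $1,2,4$ in degrees $0$--$2$ are consistent with the generators $\lambda,\delta$ in degree one and $c_2^\rmH$ in degree two.

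Finally, I find the relations. The polynomial ring in generators of degrees $1,1,2$ has dimension $6$ in degree $3$, while the target has dimension $5$, so there is exactly one relation $r_3$. In degree $4$ the ring modulo $r_3$ has dimension $7$ against a target of $4$, giving three new relations $r_4,r_4',r_4''$. I will check that one further relation $r_5$, together with these, forces the correct dimensions up to degree $6$, that degree $7$ vanishes, and that the pairing is perfect (Poincar\'e duality on the smooth proper DM stack). The relations themselves can be obtained by pushing the GIT relations through the blowup formulas, or more cleanly via Poincar\'e duality: the relations are exactly the kernel of the pairing with top-degree intersection numbers of $\lambda,\delta,c_2^\rmH$. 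Those numbers I will compute on $\cPhat$ by localization to the exceptional divisors. The main obstacle is computing $c_2^\rmH$ on $\cPhat$, namely the precise elementary modification of the Hodge bundle along the exceptional divisor and its restriction to $\cZ_2$. This is where I expect the computation to be most delicate.
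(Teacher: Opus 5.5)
Your pipeline matches the paper's: $\cPGIT\to\cPK\to\cPhat\to\cPH$ via weighted blowup formulas, the comparison of $\Phi^*\EE^\rmH$ with $\Psi^*\EE^\rmK$ along $\cEhat$, and the splitting $A^*(\cPhat)=\Phi^*A^*(\cPH)\oplus(\ehat\text{-part})$. However, Step~2 looks for the new relations in the wrong place. In a Kirwan-type blowup the unstable locus is \emph{not} contained in the exceptional divisor. It is the proper transform of the locus of points whose orbit closures meet the center (\cite[Remark~7.17]{Kir85}; Lemmas~\ref{lem:Uus} and~\ref{lem:Uns}):
\begin{itemize}
\item for $U^{1/2}\to X^{ss}$, the proper transform of the codimension-$4$ locus of quartics whose orbit closure contains a double conic;
\item for $\Uhat\to U^{1/2,ss}$, the proper transform of the codimension-$3$ strictly semistable locus.
\end{itemize}
Its intersection with the exceptional divisor has codimension one higher, so classes pushed forward from there start one degree too late. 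They miss the degree-$4$ relation $r^{\sQ}_{00}$ for $\cPK$ (one has $\dim A^4_G(U^{1/2})=5$ but $\dim A^4(\cPK)=4$). They also miss the degree-$3$ relation $r^{\sThat}_{00}$ for $\cPhat$ (one has $\dim A^3(\cPhat')=8$ but $\dim A^3(\cPhat)=7$). That relation is exactly what eliminates $c_3$ in Theorem~\ref{thm:Chow.cPhat}, and hence what expresses $c_3^\rmH$ through $\lambda,\delta,c_2^\rmH$ (Proposition~\ref{prop:c2Hc3H}). Without it, no three-generator presentation is possible.

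What is missing is a resolution of these proper transforms and a way to compute their classes. The paper uses incidence loci $\widetilde\sQ,\sThat\subset\Fl\times(-)$ that record a tacnode and its tangent line, together with the proper-transform formula of Theorem~\ref{thm:blowup.formula2}. For $\sThat$ it also needs the weight splitting of Proposition~\ref{prop:normalbundle.decomp}, the resulting excess factor, and the class of the diagonal of $\Fl$ (Lemma~\ref{lem:diagonal}). Separately, the constant term of your blowup relation must be exactly $f^*[Z]$, not an arbitrary lift of $c_{\mathrm{top}}(N)$; for $\cPK$ this means you must compute $[Z_\DC]$.

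There are two smaller points in Step~3.

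First, comparing dimensions proves that $\lambda,\delta,c_2^\rmH$ generate only if you compute the Hilbert function of a specific subalgebra of the explicit ring $A^*(\cPhat)$. That subalgebra is generated by $\Phi^*\lambda$, $\Phi^*\delta=-e-2\ehat$ and $\Phi^*c_2^\rmH=c_2-\tfrac{1}{24}(\lambda\delta+2\delta^2)$, and its Hilbert function must match $\dim A^k(\cPH)$. Your degree-$3$ and degree-$4$ relation counts presuppose generation. Done that way, it is a valid computational substitute for the paper's structural argument: generation of $A^*(\cPGITs)$ by $\lambda,c_2^\rmH,c_3^\rmH$, localization along $\overline\cZ_1$, and surjectivity of $A^*(\cPH)\to A^*(\overline\cZ_1)$ deduced from Lemma~\ref{lem:rest.ZpTac}.

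Second, the Poincar\'e-duality route is not a shortcut. There is no torus action on $\cPhat$ to localize with, and $\cPGIT$ and $\cPK$ are not proper Deligne--Mumford stacks. So in your framework the degree functional on $A^6(\cPhat)$ is available only through the presentation of $A^*(\cPhat)$. Given that presentation, it is simpler to follow the paper: rewrite each generator of $\Ihat$ as $\alpha+\beta\ehat$ using \eqref{eq:quadrel.ehat} and apply $\Phi_*$, with $\Phi_*\ehat=0$ and $\Phi_*\ehat^2=-\tfrac{1}{8}(\lambda\delta+2\delta^2)$. Then $\IH$ is generated by the $\alpha$'s and the $(\lambda\delta+2\delta^2)\beta$'s.
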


\begin{remark}
The relation $r_3=\lambda^2\delta+2\lambda\delta^2$ is equivalent to $\lambda\cdot[\cZ_2]=0$.
\end{remark}

The Chow rings of $\cPGIT$, $\cPK$, and $\cPhat$ are computed in Sections~\ref{s:chow.pgit}, \ref{s:chow.pk}, and \ref{s:chow.phat}, respectively; see Theorem~\ref{thm:Chowring.GIT}, Theorems~\ref{thm:Chowring.PK} and \ref{thm:ChowK.lambda}, and Theorem~\ref{thm:Chow.cPhat}.
Our approach is based on Kirwan's method together with the blowup formulas of Arena--Obinna \cite{ArenaObinna, Arena}. 

Our approach largely follows that of Canning--Oprea--Pandharipande \cite{COP} on the Chow ring of the moduli space of quasi-polarized K3 surfaces of degree two, viewed as an open subvariety of $\PK_6$.
Indeed, for $\cPGIT$ and $\cPK$, we compute the relations arising from excising unstable strata via localization sequences, use incidence varieties to resolve the strata, and perform explicit pushforward computations.

The computation for $\cPhat$ fits into the same framework, but is more subtle and relies in part on the study of the normal bundle $N_{\cZ_\pTac/\cPK}$ in Section~\ref{s:normal}.

Finally, the Chow ring of $\cPH$ is obtained from those of $\cPK$ and $\cPhat$.
The generators are obtained via the flip to $\cPK$, while the relations are obtained by pushing forward relations from $\cPhat$.
To relate the tautological classes on $\cPH$ and $\cPK$, we compare the Hodge bundles $\EE^\rmH$ and $\EE^\rmK$ via the resolution $\cPhat$.
In particular, we obtain a short exact sequence
\[0\lra \Psi^*\EE^\rmK(-\cEhat)\lra \Phi^*\EE^\rmH\lra \cO_{\cEhat}\lra 0,\]
where $\cEhat\subset \cPhat$ denotes the exceptional divisor; see Theorem~\ref{thm:Hodge}.

\smallskip

The proofs of the two assertions in Theorem~\ref{thm:Poincare.intro} and of Theorem~\ref{thm:Chowring.intro} are independent.
Moreover, once Theorem~\ref{thm:Chowring.intro} is established, either assertion in Theorem~\ref{thm:Poincare.intro} implies the other.

\smallskip

We expect $\cPhat$ to admit a modular interpretation.
More precisely, in the case of plane quartics, we conjecture that $\Phat$ is isomorphic to the toroidal compactification of the moduli space of polarized K3 surfaces that arise as cyclic degree $4$ covers of $\PP^2$ branched along smooth quartic curves, as constructed and studied in \cite{AEH24,ADH}.

\smallskip

We also expect our construction to extend beyond plane quartics.
Namely, for general $d$, we expect a natural proper Deligne--Mumford stack resolving the Calabi--Yau wall crossing between $\cPH_d$ and $\cPK_d$, whose coarse moduli space is given by the normalization of the main component of $\PH_d\times_{\PCY_d}\PK_d$.
More generally, we expect an analogous statement in the setting of pairs $(S,C)$ with $S$ a del Pezzo surface and $C\sim_{\QQ}-rK_S$.
For such pairs, a boundary polarized Calabi--Yau compactification has been constructed in \cite{blum-liu}. 
These questions will be pursued in future work.

\subsection*{Outline.}

In Sections~\ref{s:Hacking} and \ref{s:K}, we study some of the boundary strata of $\cPH$ and $\cPK$. 
Section~\ref{s:Hacking} gives an explicit description of the codimension-two closed stratum $\cZ_2\subset \cPH$, which serves as the center of $\Phi$ (Proposition~\ref{prop:Z2}), 
while Section~\ref{s:K} identifies the strictly polystable locus $\cZ_\pTac\subset \cPK$, the center of the weighted blowup $\Psi$ (Proposition~\ref{prop:gerbe}).

In Sections~\ref{s:normal} and \ref{s:resolution}, we construct the stack $\cPhat$ resolving the Calabi--Yau wall crossing between $\cPH$ and $\cPK$. 
Section~\ref{s:normal} studies the decomposition of the normal bundle $N_{\cZ_\pTac/\cPK}$. 
Section~\ref{s:resolution} constructs the stack $\cPhat$ and the morphism $\Phi$ via a modification of the universal family, and proves that $\Phi$ is the stack-theoretic weighted blowup along $\cZ_2$ (Theorem~\ref{thm:roof.diagram}).

In Section~\ref{s:Hodge}, we compare the Hodge bundles on $\cPH$, $\cPK$, and $\cPGIT$. 

In Section~\ref{s:Betti}, we compute the Poincar\'e series of $\cPGIT$, $\cPK$, $\cPhat$, and $\cPH$, and prove that their cycle class maps are isomorphisms. 
Using Kirwan's equivariantly perfect stratification, we first compute the Poincar\'e series of $\cPGIT$ (Theorem~\ref{thm:poincare.git}), and obtain those of the other stacks via the blowup formula and Kirwan's theory of partial desingularization (Theorems~\ref{thm:PK.poincare} and~\ref{thm:Betti.PH}). 
For the cycle class map, we establish a general result for projective space (Theorem~\ref{thm:cl.projsp}), apply it to $\cPGIT$ (Corollary~\ref{cor:cl.hypersurf}), and extend this to the remaining stacks. 
We also record the intersection Betti numbers of the good moduli spaces $\PGIT$ and $\PK$ (Theorem~\ref{thm:IP.GIT.K}).

Sections~\ref{s:chow.pgit}--\ref{s:chow.ph} compute the Chow rings. 
In particular, Sections~\ref{s:chow.pgit}--\ref{s:chow.phat} treat $\cPGIT$, $\cPK$, and $\cPhat$ via localization sequences, blowup formulas, and explicit resolutions of unstable loci (Theorems~\ref{thm:Chowring.GIT}, \ref{thm:Chowring.PK}, \ref{thm:ChowK.lambda}, and~\ref{thm:Chow.cPhat}). 
We also compute the Chow rings of the stable loci $\cPGITs\subset \cPGIT$ and $\cPKs\subset \cPK$ (Theorem~\ref{thm:Chowring.stable}).
The Chow ring of $\cPH$ is obtained in Section~\ref{s:chow.ph} via pushforward from $\cPhat$ and expressed in terms of tautological classes (Theorem~\ref{thm:Chowring.Hacking}).

\subsection*{Acknowledgments}
We thank Dan Abramovich, Dori Bejleri, Harold Blum, Kristin DeVleming, and Yongnam Lee for helpful discussions.
We thank Woonam Lim for asking about the intersection Betti numbers of the moduli spaces, which led us to include Theorem~\ref{thm:IP.GIT.K}.

KA was partially supported by the National Science Foundation Grant DMS-2302550 and a UCI Chancellor's Fellowship.
DL was partially supported by the National Research Foundation of Korea(NRF) grant funded by the Korea government(MSIT) (2021R1A2C1093787), the Institute for Basic Science (IBS-R032-D1), and the KIAS Individual Grant (HP109201).

\section{Boundary of $\cPH$}\label{s:Hacking}
The moduli stack $\cPH$ parametrizes $\QQ$-Gorenstein slc degenerations of pairs $\left(\PP^2, (\frac{3}{4}+\epsilon)C_4\right)$. 
We describe the codimension-two closed stratum $\cZ_2\subset \cPH$, consisting of pairs $(S,C)$ with
\[
S=\PP(1,1,2)\cup_\ell \PP(1,1,2),\]
where the two components of $S$ are glued along a line $\ell$ through the cone point. 
This stratum is precisely the center of the stack-theoretic weighted blowup
$\cPhat \to \cPH$ in Theorem~\ref{thm:mainthmintro}(1).

The stack $\cZ_2$ is described as a quotient of a product of weighted projective stacks by a finite group; see Proposition~\ref{prop:Z2}.

\subsection{Stratification of $\cPH$}

We recall Hacking's stratification of $\cPH$ 
\[\cPH=\cZ_0\sqcup \cZ_1\sqcup \cZ_2,\]
where the index records the codimension of the stratum.
More explicitly,
\begin{itemize}
	\item $\cZ_0$ parametrizes pairs $(S,C)$ with $S=\PP^2$,
	\item $\cZ_1$ parametrizes pairs $(S,C)$ with $S=\PP(1,1,4)$, and
	\item $\cZ_2$ parametrizes pairs $(S,C)$ with $S = \PP(1,1,2) \cup_\ell \PP(1,1,2)$ as above.
\end{itemize}

If $\left[(S,C)\right] \in \cZ_2$, then $C=C_1\cup C_2$, where $C_i\subset \PP(1,1,2)$ are degree $4$ curves,
and they meet along  $\ell$ in two points away from the cone point.

We now make this description explicit by studying a single component.

\subsection{Normal form}
Let $x,y,z$ be homogeneous coordinates on $\PP(1,1,2)$. 

\begin{lemma}\label{lem:normal.form.Z2}
Let $C\subset \PP(1,1,2)$ be a degree $4$ curve, and let $\ell$ be a line. 
Suppose that the pair 
$\left(\PP(1,1,2),(\tfrac{3}{4}+\epsilon)C+\ell\right)$
is slc for sufficiently small $\epsilon>0$. 
Then, up to an automorphism of $\PP(1,1,2)$, we may write
\beq\label{eq:normal.form.Z2}
C:~z^2=x^4+a_2x^2y^2+a_3xy^3+a_4y^4,\quad \ell:~y=0,
\eeq
for $(a_2,a_3,a_4)\neq (0,0,0)$.
The subgroup of $\Aut \PP(1,1,2)$ preserving \eqref{eq:normal.form.Z2} is
\[\Gm\times \Gamma, \quad \text{with }~ \Gamma=\mu_2,\]
where $t\in \Gm$ acts by $[x:y:z]\mapsto[x:ty:z]$, and $\Gamma$ acts by $z\mapsto -z$.
\end{lemma}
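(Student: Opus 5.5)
We work directly with the equation of $C$ and use automorphisms of $\PP(1,1,2)$, which act by $x\mapsto \alpha x+\beta y$, $y\mapsto \gamma x+\delta y$, $z\mapsto \lambda z+q(x,y)$ with $q$ a binary quadric. A line through the cone point is the zero locus of a linear form in $x,y$, so after a $\GL_2$-change in $(x,y)$ we may assume $\ell=\{y=0\}$. A degree $4$ curve has equation $c z^2+z\,g_2(x,y)+f_4(x,y)=0$.

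\textbf{Step 1: $c\neq 0$.} If $c=0$, then $C$ passes through the cone point $[0:0:1]$. Since the coefficient of $C$ is close to $3/4$ and $\ell$ has coefficient $1$, we check that the pair is not slc near the cone point. Concretely, the cone point is the $\frac12(1,1)$ singularity, and $\ell$ together with $C$ through it already makes the log discrepancy of the exceptional curve of the minimal resolution negative. Indeed, the discrepancy of the $(-2)$-curve is $-\frac12\operatorname{mult}(\ell)-(\frac34+\epsilon)\cdot(\text{its order along }C)$ in suitable units, and this falls below $-1$. Hence $c\neq0$, and after scaling, completing the square via $z\mapsto z-\tfrac12 g_2$, we reach $z^2=f_4(x,y)$.

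\textbf{Step 2: normalize $f_4$ using the stabilizer of $\ell$.} The transformations preserving $y=0$ are $x\mapsto \alpha x+\beta y$, $y\mapsto\delta y$. The pair $(C,\ell)$ being slc with $\ell$ of coefficient $1$ forces $C$ to meet $\ell$ transversally in two distinct points. Otherwise the restriction (adjunction) $(\ell,(\frac34+\epsilon)C|_\ell)$ would fail to be lc, since a multiplicity-$2$ point gives coefficient $>1$. Restricting to $y=0$ gives $z^2=f_4(x,0)$, whose two solutions $z=\pm\sqrt{f_4(1,0)}$ are distinct iff $f_4(1,0)\neq 0$. So the $x^4$ coefficient is nonzero, and we scale it to $1$. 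Using the translation $x\mapsto x+\beta y$, we kill the $x^3y$ coefficient (Tschirnhaus), which yields \eqref{eq:normal.form.Z2}.

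\textbf{Step 3: $(a_2,a_3,a_4)\neq 0$.} If all vanish, then $C:z^2=x^4$ is reducible, namely $z=\pm x^2$, and the two components are tangent at $[0:1:0]$ to high order. This is a tacnode-type or worse singularity, which is not lc with coefficient $\frac34+\epsilon>\frac12$ in the relevant sense. For example, $z^2=x^4$ has lct $3/4$, so coefficient $\frac34+\epsilon$ fails. This lct computation is the main point to verify carefully, along with Step 1.

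\textbf{Step 4: stabilizer.} Finally, we compute the subgroup preserving the normal form. Preserving $y=0$ and the form $z^2=x^4+\dots$ forces $\beta=0$ (to keep the $x^3y$ coefficient zero), and $q=0$ (no $z\cdot g_2$ term). Together with scalings, this gives $x\mapsto\alpha x$, $y\mapsto\delta y$, $z\mapsto\lambda z$ with $\lambda^2=\alpha^4$. Modulo the weighted $\Gm$ acting by $(s,s,s^2)$, we normalize $\alpha=1$, leaving $\lambda=\pm1$ and $\delta=t$ arbitrary. Here $t$ acts on $(a_2,a_3,a_4)$ with weights $(2,3,4)$, so the subgroup preserving the shape of \eqref{eq:normal.form.Z2} is $\Gm\times\mu_2$ as stated, with $\Gamma$ given by $z\mapsto -z$. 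The main obstacle is the singularity analysis in Steps 1 and 3. There I would use explicit weighted blowups, with weights $(1,2)$ in $(x,z)$ at the tacnode, to compute log discrepancies.
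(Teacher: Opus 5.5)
Your proposal is correct and follows essentially the same route as the paper. Both arguments normalize $\ell$ to $y=0$, use the slc condition at the cone point to get a nonzero $z^2$-coefficient, complete the square, use slc along $\ell$ to get $f_4(x,0)\neq 0$, kill the $x^3y$-term by $x\mapsto x+\beta y$, and read off the stabilizer $\Gm\times\mu_2$. Your Steps 1--3 also supply singularity checks that the paper leaves implicit: the coefficient $\tfrac12+(\tfrac34+\epsilon)>1$ of the exceptional curve over the $\tfrac12(1,1)$ point, the adjunction argument on $\ell$, and $\mathrm{lct}(z^2-x^4)=\tfrac34$, which rules out $(a_2,a_3,a_4)=(0,0,0)$.
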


\begin{proof}
Write
\[C:f_0(x,y)z^2+f_2(x,y)z+f_4(x,y)=0, \quad \ell:f_1(x,y)=0.\]
By the slc assumption, $C$ does not pass through the cone point $[0:0:1]$, hence we have $f_0\neq 0$, and may assume $f_0=1$.
Using automorphisms of $\PP(1,1,2)$, we may eliminate the linear term in $z$ and send $\ell$ to $y=0$, so that
\[C: z^2=f_4(x,y), \quad \ell:y=0.\]
The slc assumption implies that $f_4(x,0)\neq 0$.

The subgroup of $\Aut\PP(1,1,2)$ preserving this form consists of transformations of the form
\[(x,y,z)\mapsto (x+ay,by,cz),\]
from which we can eliminate the $x^3y$-term in $f_4$ and obtain \eqref{eq:normal.form.Z2}.
The description of the stabilizer follows from the requirement that the equation and the line $y=0$ are preserved, which forces $a=0$ and $c^2=1$.
\end{proof}

\subsection{Explicit description of $\cZ_2$}
For $w_0,\dots,w_m\in \Z_{>0}$, we denote by
\[\cP(w_0,\dots,w_m):=[(\A^{m+1}\setminus\{0\})/\Gm]\]
the weighted projective stack defined by the $\Gm$-action with weights $w_i$.
Its coarse moduli space is the weighted projective space $\PP(w_0,\dots,w_m)$.

By Lemma~\ref{lem:normal.form.Z2}, curves $C$ in the normal form \eqref{eq:normal.form.Z2} are determined by the coefficients $(a_2,a_3,a_4)\in \A^3\setminus\{0\}$ modulo the $\Gm$-action, and are thus parametrized by $\cP(2,3,4)$.
The involution $\Gamma=\mu_2$ appearing in Lemma~\ref{lem:normal.form.Z2} acts on $C$ by the elliptic involution.

We now construct a morphism
\beq\label{eq:maptoZ2}
[(\cP(2,3,4)\times \cP(2,3,4))/\mu_2]\times B\Gamma \longrightarrow \cZ_2,
\eeq
where $\mu_2$ interchanges the two factors.
A point of $\cP(2,3,4)\times \cP(2,3,4)$ determines two pairs $(\PP(1,1,2),C_1,\ell)$ and $(\PP(1,1,2),C_2,\ell)$ in normal form.
Gluing the two copies of $\PP(1,1,2)$ along $\ell$, we obtain a pair $(S,C)\in\cZ_2$, where the two curves meet $\ell$ at the points $[1:0:\pm 1]$.

Over $\cP(2,3,4)\times\cP(2,3,4)$, we have families parametrizing
\[(\PP(1,1,2), C_1, \ell) \and (\PP(1,1,2), C_2, \ell),\]
where the divisor $\ell=\{y=0\}$ is fixed pointwise by the $\Gm$-action.
These families are canonically identified along $\ell$, and hence glue to a family of pairs
\[(\PP(1,1,2)\cup_\ell \PP(1,1,2),\, C_1 \cup C_2)\]
over $\cP(2,3,4)\times \cP(2,3,4)$, inducing a morphism $\cP(2,3,4)\times\cP(2,3,4)\to \cZ_2$. 
This morphism is invariant under the $\mu_2$-action exchanging the two factors and under the $\Gamma$-action acting on each component by the involution $z\mapsto -z$, and hence defines the desired morphism.

\begin{proposition}\label{prop:Z2}
The morphism \eqref{eq:maptoZ2} is an isomorphism. 
In particular, if $Z_2$ denotes the coarse moduli space of $\cZ_2$, then
\[Z_2\cong (\PP(2,3,4)\times \PP(2,3,4))/\mu_2.\]
\end{proposition}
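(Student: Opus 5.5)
To prove that \eqref{eq:maptoZ2} is an isomorphism, I would show that it is representable, bijective on geometric points, induces isomorphisms on automorphism groups, and is étale. Both sides are then smooth separated Deligne--Mumford stacks, and a representable étale morphism that is bijective on points and on stabilizers is an isomorphism.

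\textbf{Points.} Let $(S,C)\in\cZ_2$, with $S=\PP(1,1,2)\cup_\ell\PP(1,1,2)$ and $C=C_1\cup C_2$. Since $K_S+(\tfrac34+\epsilon)C$ is slc, adjunction along the double locus shows that each component pair $(\PP(1,1,2),(\tfrac34+\epsilon)C_i+\ell)$ is slc. Lemma~\ref{lem:normal.form.Z2} then puts each $C_i$ in normal form, with $C_i\cap\ell=\{[1:0:\pm1]\}$.

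The gluing of the two components along $\ell$ need not be the identity in these coordinates. It is an isomorphism of $\ell$ matching the two-point sets $C_1\cap\ell$ and $C_2\cap\ell$ and fixing the cone point. The automorphisms of $\ell$ that fix the cone point and preserve $\{[1:0:\pm1]\}$ are exactly those induced by $z\mapsto\pm z$, i.e.\ by $\Gamma$. Applying $\Gamma$ to one component, I can therefore assume the gluing is the standard one, which gives surjectivity.

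For injectivity, any isomorphism of such pairs either preserves or swaps the two components; this accounts for the $\mu_2$ factor. On each component it restricts to an automorphism preserving the normal form, which by Lemma~\ref{lem:normal.form.Z2} lies in $\Gm\times\Gamma$. The $\Gm$-part rescales $(a_2,a_3,a_4)$ with weights $(2,3,4)$ and acts trivially on $\ell$, so it can be chosen independently on each side.

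\textbf{Automorphism groups.} The same analysis computes $\Aut(S,C)$. An automorphism is a pair of automorphisms $(g_1,g_2)$ of the two components, possibly composed with the swap, agreeing on $\ell$. Each $g_i\in\Gm\times\mu_2$, the $\Gm$-parts act trivially on $\ell$, and the $\mu_2$-parts act on $\ell$ by $z\mapsto\pm z$, so agreement on $\ell$ forces the two $\mu_2$-parts to coincide. Hence the gluing condition cuts $\mu_2\times\mu_2$ down to a single diagonal $\Gamma$. This gives
\[\Aut(S,C)\cong\big(\Gm^2\rtimes\mu_2\big)_{(a,a')}\times\Gamma,\]
which is exactly the stabilizer on the source, once one checks that the diagonal $\Gamma$ acts trivially on the parameters $(a,a')$. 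In particular the map is representable and bijective on stabilizers.

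\textbf{Étaleness — the main obstacle.} A bijection on points is not enough: I must show that the normal form persists in families. The plan is to take a family of $\QQ$-Gorenstein pairs in $\cZ_2$ over a base $T$ and trivialize it étale-locally as a gluing of two $\PP(1,1,2)$-bundles along $\ell$. Here I would use that $\cZ_2$ is the locally closed stratum where the surface type is constant and that $\PP(1,1,2)$ is rigid. Then I run the proof of Lemma~\ref{lem:normal.form.Z2} with coefficients in $\cO_T$: complete the square in $z$, and kill the $x^3y$ coefficient by the translation $x\mapsto x+ay$. Both steps are solvable uniquely over any base because $2$ and $4$ are invertible.

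An alternative route is infinitesimal. Since the map is a bijection on points between stacks of the same dimension, it suffices to compare tangent spaces: both have dimension $4$ ($=2\cdot 2$) and there are no obstructions. This requires identifying the deformations of the glued pair that preserve the double locus with the pairs of component deformations; this is the step I expect to need the most care. Finally, passing to coarse spaces, $B\Gamma$ disappears and $\cP(2,3,4)$ becomes $\PP(2,3,4)$, which gives $Z_2\cong(\PP(2,3,4)\times\PP(2,3,4))/\mu_2$.
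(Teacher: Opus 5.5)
Your analysis of points and automorphism groups matches the paper's first step. Lemma~\ref{lem:normal.form.Z2} shows that \eqref{eq:maptoZ2} is bijective on closed points and stabilizer-preserving. Your handling of the gluing map on $\ell$ (only $z\mapsto\pm z$ fixes the cone point and preserves the two-point set) and of the diagonal $\Gamma$ is correct.

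\textbf{The gap is in the third step.} Bijectivity on points and on stabilizers says nothing about the scheme structure or the singularities of the target: the normalization of a cuspidal curve passes both tests and is not an isomorphism. So everything rests on étaleness, and you do not prove it.
\begin{itemize}
\item In the infinitesimal route, the claim ``there are no obstructions'' is precisely the nontrivial geometric input, since it amounts to $\cZ_2$ being smooth of dimension $4$. You also leave the injectivity of the differential open.
\item In the family route, rigidity of $\PP(1,1,2)$ does not give étale-local triviality of families in $\cZ_2$ over non-reduced bases. The glued surface $S$ has nontrivial $\QQ$-Gorenstein deformations, to $\PP(1,1,4)$ and to $\PP^2$. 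Knowing that the stratum $\cZ_2$, with the structure it has as a substack of $\cPH$, is exactly the locally trivial locus is a statement about the local structure of $\cPH$ along $\cZ_2$.
\item Your sentence ``both sides are then smooth'' likewise assumes the smoothness of the target without justification.
\end{itemize}

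\textbf{How the paper supplies the missing input.} It cites Hacking: $\cZ_2$ is smooth by \cite[Theorems~3.12 and~8.2]{Hac}. Once you have that, étaleness is unnecessary.
\begin{itemize}
\item The source is proper and smooth, and the morphism is representable and quasi-finite, hence finite.
\item Being bijective on points with isomorphic stabilizers (in characteristic zero), it is birational, so it is the normalization of $\cZ_2$.
\item Since $\cZ_2$ is normal, Zariski's Main Theorem makes it an isomorphism.
\end{itemize}
So either import Hacking's smoothness result and conclude this way, or carry out the $\QQ$-Gorenstein deformation computation for $(S,C)$ showing that $\cZ_2$ is smooth of dimension $4$.
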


\begin{proof}
By Lemma~\ref{lem:normal.form.Z2},  \eqref{eq:maptoZ2} is stabilizer-preserving and bijective on closed points, hence it is the normalization.
Since $\cZ_2$ is smooth by \cite[Theorems~3.12 and~8.2]{Hac}, it follows that the morphism is an isomorphism.
\end{proof}

\begin{remark}
Let $\Mps_{1,2}$ denote the moduli space of pseudostable curves of genus one with two marked points \cite{schubert-pseudostable}.
In terms of pseudostable curves, Proposition~\ref{prop:Z2} together with the identification $\PH\cong \Mps_3$ (\cite[Proposition~20]{HL}) shows that the clutching morphism
$(\Mps_{1,2}\times \Mps_{1,2})/\mu_2 \to \Mps_3$
induces an isomorphism of coarse spaces
$Z_2 \cong (\Mps_{1,2}\times \Mps_{1,2})/\mu_2$.

Moreover, since $\cMps_{1,2}\cong \cP(2,3,4)$ (see \cite{Inc}), Proposition~\ref{prop:Z2} can be viewed as a stack-theoretic refinement of this description.
%
\end{remark}

\section{Boundary of $\cPK$}\label{s:K}

The K-moduli stack $\cPK$ parametrizes $\QQ$-Gorenstein K-semistable degenerations of pairs $\left(\PP^2, (\frac{3}{4}-\epsilon)C_4\right)$.
In this section, we first recall the description of $\cPK$ as a stack-theoretic weighted blowup of $\cPGIT$, following \cite{ADL}, and describe the geometry of its exceptional divisor $\cEK$. 
We then study the strictly polystable locus $\cZ_\pTac\subset \cPK$, showing that it is a gerbe over a stacky curve $[\PP^1/\Gamma]$ and, moreover, that it intersects $\cEK$ transversely.

\subsection{The morphism $\cPK \to \cPGIT$ and its exceptional divisor}
We recall the description of $\cPK$ and the morphism to the GIT moduli stack $\cPGIT$, following \cite[Sections~5 and~6.1]{ADL}. 
In the case of plane quartics, this is the unique K-moduli wall crossing, and it coincides with the first step of a weighted Kirwan partial desingularization \cite{Kir85} for $\cPGIT$. 

Let $G=\SL_3$ and $\bar G=\PGL_3$, and set
\[X=\lvert\cO_{\PP^2}(4)\rvert=\PP^{14}.\]
Let $X^{ss}\subset X$ be the GIT semistable locus with respect to the unique $G$-linearization on $\cO_X(1)$.
Then the GIT moduli stack of plane quartics is
\[\cPGIT:=[X^{ss}/\bar G],\]
and the quotient stack $[X^{ss}/G]$ is a $\mu_3$-gerbe over $\cPGIT$. 

Let $Z_\DC\subset X^{ss}$ denote the locus of double conics. Then 
\[Z_\DC \cong \bar G/\PGL_2 \and \pDC:=[Z_\DC/\bar G]\cong B\PGL_2,\]
and in particular $Z_\DC$ and $\pDC$ are smooth.

The following result identifies $\cPGIT$ with a K-moduli stack. 

\begin{theorem}[{\cite[Theorem~1.3]{ADL}}]\label{thm:Kwallcrossing}
Let $0<c<\frac{3}{8}$. Then $\cPGIT$ is canonically isomorphic to the K-moduli stack of log Fano pairs $(\PP^2,cC)$ with plane quartic curves $C$. Moreover, there is a wall-crossing morphism
\[\cPK \lra \cPGIT.\]
It is the Kirwan-type stack-theoretic weighted blowup at $\pDC$ with weight two. 
\end{theorem}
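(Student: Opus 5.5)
The statement is \cite[Theorem~1.3]{ADL}. The plan is to recall the argument of \cite{ADL} specialized to $d=4$ rather than reprove it from scratch. It has two parts.

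\textbf{First part: small $c$.} For $0<c<\frac38$ we identify the K-moduli stack with $\cPGIT$. The first step is the Paul--Tian type comparison: for $(\PP^2,cC)$ with $c$ small, K-semistability of the pair is equivalent to GIT semistability of $C$. One checks this on test configurations induced by one-parameter subgroups of $\SL_3$, where the Futaki invariant is a positive multiple of the Hilbert--Mumford weight.

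The second step shows that no degeneration of the surface occurs. Here the key tool is the normalized volume (local-to-global) inequality, $\widehat{\mathrm{vol}}(x,X,cC)\cdot\frac{(n+1)^n}{n^n}\ge (-K_X-cC)^2$, applied to a K-semistable degeneration $X$ of $\PP^2$. For $c<\frac38$ this forces every singularity of $X$ to have local volume exceeding $\frac{9}{4}(1-\frac{4c}{3})^2\cdot$(const). This rules out all Manetti surfaces $\PP(a^2,b^2,c^2)$ other than $\PP^2$, since the first candidate is $\PP(1,1,4)$, which needs $c\ge\frac38$. Hence $X\cong\PP^2$, and the K-moduli stack is $[X^{ss}/\bar G]$.

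\textbf{Second part: the wall.} At $c=\frac38$ there is a single wall, located by finding the $c$ at which a $\Gm$-equivariant degeneration to $\PP(1,1,4)$ acquires vanishing Futaki invariant. The destabilized locus on the GIT side is exactly $\pDC$: the double conic $2Q$ degenerates, with $Q$ as the conic, to the pair $(\PP(1,1,4), \{z^2=f_8(x,y)\})$. The local VGIT structure near the polystable point with $\Gm$-stabilizer then shows the following. Locally at $\pDC$, the K-moduli stack for $c$ slightly above $\frac38$ is obtained from the Luna slice $N$ (the normal space to $Z_{\DC}$) by removing the negative weight part. The $\Gm$ acting on $N$ has weight $-2$, up to sign conventions, on the normal directions, so the wall-crossing $\cPK\to\cPGIT$ is the Kirwan-type stack-theoretic weighted blowup along $\pDC$ with weight two.

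Concretely, one computes $N\cong H^0(\cO_{\PP^1}(8))$ with $\mathrm{PGL}_2$ acting, the infinitesimal $\Gm\subset\mathrm{Aut}\,\PP(1,1,4)$ scaling it with weight $2$. One then applies the wall-crossing theorem of \cite{ADL} (the local structure of K-moduli as a $\Theta$-stratification / VGIT), which identifies the new stack with $[(\mathrm{Bl}^{w}_{\pDC})]$ in the sense of \cite{QuekRydh}.

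The main obstacle is this last identification: checking that the local VGIT flip at the wall is globally the weighted blowup with weight exactly two. This requires matching the $\Gm$-weights on the slice to the $\PP(1,1,4)$ degeneration and verifying that the exceptional divisor $[\PP^8\text{-stuff}/\PGL_2]$ of eight points on $\PP^1$ glues correctly. I would try first to read off the weights from the explicit one-parameter subgroup $\mathrm{diag}(t,t,t^{-2})$ acting on the $z^2$-coordinate.
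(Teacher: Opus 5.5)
The paper gives no proof of this statement. It quotes \cite{ADL}, and what it actually uses is the explicit form $\cPK\cong[U^{1/2,ss}/\bar G]$ from \cite[Theorems~5.14--5.15]{ADL}. Your first part is a reasonable sketch of the small-$c$ argument, though matching Futaki invariants of product test configurations with Hilbert--Mumford weights only gives K-semistable $\Rightarrow$ GIT-semistable; the converse needs a separate properness/separatedness argument.

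The genuine gap is the range of $c$. In the statement, $\cPK$ means $\cPK(\frac34-\epsilon)$, but you only analyze the crossing at $c=\frac38$ and simply assert that it is the only wall. Your volume argument does not cover $(\frac38,\frac34)$. For $c\ge\frac35$, the inequality $(3-4c)^2\le\frac94\,\widehat{\mathrm{vol}}$ applied to singularity types alone no longer excludes $\PP(1,4,25)$ or its partial smoothings, because the $\frac1{25}(1,4)$ point has local volume $\frac4{25}$. Extra input is needed, e.g.\ that $C\in|\cO(40)|$ is forced through that point and the pair is then not klt. You must also show that no pair on $\PP^2$ or $\PP(1,1,4)$ changes its K-stability status on $(\frac38,\frac34)$. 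One direction follows by interpolation: every pair in the $c=\frac38+\epsilon$ stack has at worst $A_3$ singularities, so $(X,\frac34C)$ is lc and K-semistability persists up to $\frac34$. That nothing new becomes K-semistable is part of the quartic-specific analysis in \cite{ADL}. The paper relies on it when it says the wall crossing consists of a single step.

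Second, the global identification with the weight-two Kirwan blowup, which you flag as the main obstacle, is left undone, and your proposed check would not work. The subgroup $\diag(t,t,t^{-2})\subset\SL_3$ only produces product test configurations on $\PP^2$ and never sees $\PP(1,1,4)$. Likewise, a $\Gm$-weight on $N=H^0(\cO_{\PP^1}(8))$ alone cannot produce a blowup, since $[(N\setminus 0)/\Gm]$ is only the exceptional divisor.

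The relevant $\Gm$ is the one in $\Aut(\PP(1,1,4),Q_\infty)$ scaling $w$, where $Q_\infty=\{w=0\}$; it is not in $\SL_3$. Let $q$ be the equation of $Q$ and let $w$ have weight $2$. The Luna slice at the wall pair $(\PP(1,1,4),\frac34 Q_\infty)$ is $\A^1_t\times N$, where $t$ is the smoothing parameter of $\{q=tw\}\subset\PP(1,1,1,2)$. This $\Gm$ acts with weight $-1$ on $t$ and weight $2$ on $f_8$. The chamber $\{t\neq 0\}$ recovers $[N/\PGL_2]$, and the chamber $\{f_8\neq 0\}$, after imposing $\PGL_2$-semistability, gives the Kirwan-type blowup. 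It is this ratio of weights that produces ``weight two''.

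Globally, \cite{ADL} make this explicit. For a normal direction $q^2+tf$, take the base change $t=s^2$; this is exactly the square-root stack $U^{1/2}\to U$ along $E_{\DC}$. Setting $w=q/s$ turns the family into $\{q=sw,\ w^2+f=0\}$, with central fiber $(\PP(1,1,4),\{w^2=f|_Q\})$, and $s\mapsto -s$ is the involution $\Gamma$ of Theorem~\ref{thm:cEK}. This gives a family of pairs over $U^{1/2,ss}$. One then still has to prove that the induced morphism $[U^{1/2,ss}/\bar G]\to\cPK$ is an isomorphism, which is what \cite[Theorems~5.14--5.15]{ADL} do.
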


Let us describe this theorem in terms of quotients by $\bar G$.
Let $U\to X^{ss}$ be the blowup along $Z_\DC$ with exceptional divisor $E_\DC$, and let $U^{1/2}\to U$ be the square root stack along $E_\DC$. 
The $\bar G$-action on $X^{ss}$ lifts to $U$ and $U^{1/2}$.

Let $U^{ss}\subset U$ be the GIT semistable locus with respect to 
\[\cO(a)|_U\otimes \cO(-E_\DC)\] 
for sufficiently large $a> 0$, and let $U^{1/2,ss}$ be its inverse image in $U^{1/2}$. 
We thus obtain $\bar G$-equivariant morphisms $U^{1/2,ss}\to U^{ss}\to X^{ss}$.

By \cite[Theorems~5.14--5.15]{ADL}, we have an isomorphism
\[\cPK \cong [U^{1/2,ss}/\bar G].\]
Under this, the morphism $\cPK \to \cPGIT$ is induced by $U^{1/2,ss}\to X^{ss}$.

The exceptional divisor $\cEK\subset \cPK$ is the descent of the inverse image of $E_\DC$ in $U^{1/2,ss}$.
The divisor $\cEK$ parametrizes pairs $(S,C)$ with $S=\PP(1,1,4)$.

To describe $\cEK$, we need to compute the normal space of $Z_\DC \subset X^{ss}$ and apply Luna's \'etale slice theorem \cite{luna-slice}.
The description of the normal space follows from \cite[(5.5)]{ADL}.
\begin{lemma}\label{lem:etalenbd.DC}
	Let $Q$ be a smooth conic with $\cO_Q(1)=\cO_{\PP^1}(1)$.
	Let $Q_2$ be the double conic supported on $Q$. The normal space of $Z_\DC$ in $X^{ss}$ at $[Q_2]$ is
	\beq\label{eq:normalspace.DC}\begin{split}
		N_{Z_\DC/X^{ss}}|_{[Q_2]}
		&\cong H^0(Q,\cO_Q(8)).
	\end{split}
	\eeq
	In particular, $\cPGIT$ is \'etale locally isomorphic at $[Q_2]$ to the quotient stack
	\[\big[H^0(Q,\cO_Q(8))/\PGL_2\big].\] 
\end{lemma}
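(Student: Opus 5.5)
The plan is to compute the tangent space $T_{[Q_2]}X=H^0(\PP^2,\cO(4))/\C\cdot q^2$ as a $\PGL_2=\Stab(Q)$-representation, where $q$ is the quadratic form defining $Q$. Inside it we identify the tangent space to the orbit $Z_\DC=\bar G\cdot[Q_2]$, and then read off the normal space as the quotient. The \'etale local description will then follow from Luna's \'etale slice theorem.

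\textbf{Step 1: restriction to $Q$.} Consider the exact sequence
\[0\lra H^0(\PP^2,\cO(2))\xrightarrow{\cdot q} H^0(\PP^2,\cO(4))\lra H^0(Q,\cO_{\PP^2}(4)|_Q)\lra 0 .\]
It is exact on the right because $H^1(\PP^2,\cO(2))=0$. Since $\cO_{\PP^2}(1)|_Q\cong\cO_{\PP^1}(2)$, the last term is $H^0(Q,\cO_Q(8))$ in the normalization $\cO_Q(1)=\cO_{\PP^1}(1)$, which has dimension $9$. The dimensions check: $15=6+9$. The sequence is $\SL_2$-equivariant once we identify $\Stab(Q)\subset \SL_3$ with the image of $\SL_2$ acting via $\Sym^2$.

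\textbf{Step 2: the orbit tangent space.} The tangent space to $Z_\DC$ at $[Q_2]$ is the image of $\mathfrak{sl}_3$ acting on $q^2$. For $\xi\in\mathfrak{sl}_3$ we have $\xi\cdot q^2=2q\,(\xi\cdot q)$. Since $\mathfrak{sl}_3\cdot q$ together with $q$ spans $H^0(\cO(2))$ (the orbit of a smooth conic is open in $\PP^5$), the orbit tangent space modulo $\C q^2$ is exactly $q\cdot H^0(\cO(2))/\C q^2$, which has dimension $5=\dim \PGL_3/\PGL_2$. Hence
\[N_{Z_\DC/X^{ss}}|_{[Q_2]}\cong H^0(\PP^2,\cO(4))/q\cdot H^0(\PP^2,\cO(2))\cong H^0(Q,\cO_Q(8)),\]
and this isomorphism is $\PGL_2$-equivariant. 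Here we should check that the $\mu_2$ ambiguity between $\SL_2$ and $\PGL_2$ acts trivially on even-degree sections, so the action genuinely descends to $\PGL_2$. This is consistent with \cite[(5.5)]{ADL}.

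\textbf{Step 3: Luna slice.} The orbit of $[Q_2]$ is closed in $X^{ss}$ with reductive stabilizer $\PGL_2$ (as used in Theorem~\ref{thm:Kwallcrossing}). Luna's \'etale slice theorem \cite{luna-slice} therefore gives a $\PGL_2$-stable smooth slice $V\ni[Q_2]$, \'etale over a neighborhood in $X^{ss}/\!\!/\bar G$, with $[V/\PGL_2]\to[X^{ss}/\bar G]$ \'etale. Because $V$ is smooth at a point fixed by the reductive group $\PGL_2$, we can moreover choose an equivariant \'etale map $V\to T_{[Q_2]}V$ to the normal space. This yields the \'etale local isomorphism with $[H^0(Q,\cO_Q(8))/\PGL_2]$.

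The main obstacle is the equivariance bookkeeping in Steps 1 and 2. One has to get the stabilizer right and the twist $\cO(4)|_Q=\cO_Q(8)$ right, and confirm that no character twist appears. The twist question arises because $q$ is only semi-invariant under $\GL_3$, although it is invariant under $\SL_2$. Everything else is a dimension count.
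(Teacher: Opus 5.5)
Your proposal is correct and follows the paper's route. You identify the normal space with the cokernel of multiplication by $q$, $H^0(\PP^2,\cO(2))\to H^0(\PP^2,\cO(4))$, restrict to $Q$ to obtain $H^0(Q,\cO_Q(8))$, and conclude by Luna's \'etale slice theorem using the $\PGL_2$ stabilizer. The only difference is that you derive the cokernel description directly, via the orbit tangent space $q\cdot H^0(\PP^2,\cO(2))$ modulo $\C q^2$ together with the character check, whereas the paper cites \cite[(5.5)]{ADL} for it.
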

\begin{proof}
	By \cite[(5.5)]{ADL}, the normal space is isomorphic to the cokernel of
	\beq\label{eq:coker.DC}H^0(\PP^2,\cO(4)\otimes \cO(-Q))\lra H^0(\PP^2,\cO(4)),\eeq
	which is $H^0(Q,\cO_Q(8))$ since $\cO_{\PP^2}(4)|_Q=\cO_Q(8)$. 
	The second assertion follows from Luna's \'etale slice theorem, since the stabilizer of a double conic in $\bar G$ is isomorphic to $\PGL_2$.
\end{proof}

We identify a given smooth conic $Q$ with $\PP^1$, and let 
\[Y_8=\lvert \cO_{\PP^1}(8)\rvert=\PP^8,\]
which carries a natural $\SL_2$-action.
Let $Y^{ss}_8$ be the GIT semistable locus with respect to the unique $\SL_2$-linearization on $\cO_Y(1)$.
From the above results and \cite[Theorem~5.9]{ADL}, we obtain the following.

\begin{theorem}\label{thm:cEK}
	The divisor $\cEK$ is a $\Gamma$-gerbe over the GIT quotient stack 
	\[[Y^{ss}_8/\PGL_2],\]
	where a degree $8$ polynomial $f_8(u,v)$ parametrized by $Y^{ss}_8$ corresponds to 
	\beq\label{eq:curvesinP114}\{w^2=f_8(u,v)\}\subset \PP(1,1,4),\eeq
	where $u,v,w$ denote the homogeneous coordinates of $\PP(1,1,4)$.
\end{theorem}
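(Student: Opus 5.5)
The plan is to read $\cEK$ directly off the Kirwan-type weighted blowup description of Theorem~\ref{thm:Kwallcrossing}, using the \'etale local model of Lemma~\ref{lem:etalenbd.DC}.

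First, since $Z_\DC\cong \bar G/\PGL_2$ is a single orbit, the exceptional divisor $E_\DC\subset U$ of the blowup of $X^{ss}$ along $Z_\DC$ is the projectivized normal bundle $\PP(N_{Z_\DC/X^{ss}})$. It is therefore $\bar G$-equivariantly isomorphic to $\bar G\times^{\PGL_2}\PP(H^0(Q,\cO_Q(8)))=\bar G\times^{\PGL_2} Y_8$, by Lemma~\ref{lem:etalenbd.DC} applied at a fixed double conic $Q_2$. Consequently
\[[E_\DC/\bar G]\cong [Y_8/\PGL_2].\]

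Second, I will identify the semistable part. Kirwan's analysis of GIT on the blowup (equivalently \cite[Theorem~5.9]{ADL}) says that for $a\gg0$ a point of $E_\DC$ is semistable for $\cO(a)\otimes\cO(-E_\DC)$ if and only if its image in the fibre $\PP(N)$ is semistable for the stabilizer $\PGL_2$ acting linearly on $N$. Hence $E_\DC\cap U^{ss}=\bar G\times^{\PGL_2}Y_8^{ss}$, and
\[[E_\DC\cap U^{ss}/\bar G]\cong[Y^{ss}_8/\PGL_2].\]

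Third, I will pass to the square root stack. The inverse image of $E_\DC$ in $U^{1/2}$ is the reduced divisor $\sqrt{E_\DC}$, which is a $\mu_2$-gerbe over $E_\DC$. This gerbe is the gerbe of square roots of the normal bundle $\cO(E_\DC)|_{E_\DC}$, and it is compatible with the $\bar G$-action. Descending gives that $\cEK$ is a $\mu_2=\Gamma$-gerbe over $[Y_8^{ss}/\PGL_2]$.

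Finally, I will check the modular interpretation: the pair over a point $f_8\in Y_8^{ss}$ is $\{w^2=f_8(u,v)\}\subset\PP(1,1,4)$, with $\Gamma$ acting by $w\mapsto -w$. To see this, take the one-parameter degeneration of $\PP^2$ to the cone over the conic, which is $\PP(1,1,4)$, along the normal direction $f_8$ to $Q_2$. Concretely, $q^2+tf$ with $f|_Q=f_8$ degenerates, after the weighted deformation to the normal cone, to $w^2=f_8$ on $\PP(1,1,4)$. This is exactly the computation in \cite[Section~5]{ADL}, and the $\mu_2$ generic stabilizer matches the gerbe structure.

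The main obstacle is this last compatibility: identifying the gerbe coming from the weight-two root stack with the automorphism $w\mapsto -w$ of the family. That requires checking that the universal family over $U^{1/2,ss}$ restricted to $\sqrt{E_\DC}$ is the family \eqref{eq:curvesinP114}, with the root-stack $\mu_2$ acting on $w$. For this I would rely on \cite[Theorems~5.9, 5.14--5.15]{ADL}.
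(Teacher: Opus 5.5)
Your proposal is correct and follows essentially the paper's route. The paper also obtains the statement directly from three ingredients: the normal-space computation of Lemma~\ref{lem:etalenbd.DC}, which makes the exceptional divisor over $\pDC\cong B\PGL_2$ equal to $[Y_8/\PGL_2]$ cut down to its semistable part; the description $\cPK\cong[U^{1/2,ss}/\bar G]$; and \cite[Theorem~5.9]{ADL}, which identifies the root-stack $\mu_2$ with $\Gamma$ and the pairs with $\{w^2=f_8\}\subset\PP(1,1,4)$. Your write-up just makes the homogeneous-bundle, Kirwan-semistability, and root-gerbe steps explicit, where the paper leaves them implicit.
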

Recall that $\Gamma=\mu_2$.
We write $\Gamma$ for $\mu_2$ here to emphasize that it acts by sending $w$ to $-w$, thus corresponds to the hyperelliptic involution of \eqref{eq:curvesinP114}.

\subsection{Strictly polystable tacnodal curves}

We describe the preimage in $U^{1/2,ss}$ of the strictly polystable locus of $\cPK$.
Since this locus is the $G$-orbit of the fixed locus of a one-parameter subgroup of $G$, it suffices to describe the fixed locus itself.

By Mumford's GIT classification, the strictly polystable locus in $X^{ss}$ consists of double conics and tacnodal curves with $\Gm$-stabilizer (namely, oxes and cateyes) \cite[p.~80]{GIT}. 
In particular, the points in $X^{ss}$ with maximal-dimensional reductive stabilizer are precisely the double conics, which form the center of the weighted blowup $\cPK\to \cPGIT$. 
Since this locus is blown up, such maximal-dimensional stabilizers do not occur in $U^{ss}$ or $U^{1/2,ss}$.
Consequently, any positive-dimensional stabilizer in these spaces is, up to finite index, isomorphic to $\Gm$, and hence its identity component is conjugate to a one-parameter subgroup of $G$.

Fix a one-parameter subgroup $R\subset G$ defined by
\[\lambda_R:\Gm\lra G,\quad t\mapsto \diag(t,t^{-1},1).\]
Then the strictly polystable locus in $U^{1/2,ss}$ is the proper transform of the $G$-orbit of the $R$-fixed locus $(X^{ss})^R$.

The fixed locus $(X^{ss})^R$ is isomorphic to $\A^2_{s,p}\setminus\{0\}$, parametrizing quartics 
\[x^2y^2+sxyz^2+pz^4=0,\qquad (s,p)\neq (0,0).\]
Equivalently, such a quartic can be written as
\beq\label{eq:plane.tacnodal.eq}
(xy+az^2)(xy+bz^2)=0,\qquad (a,b)\neq (0,0),
\eeq
with $(s,p)=(a+b,ab)$.
Let $\Delta_R\subset (X^{ss})^R$ denote the locus of double conics. It is the vanishing locus of the discriminant $s^2-4p$, hence smooth.

\begin{lemma}\label{lem:Rfixedlocus}
	The natural map $(U^{ss})^R\to (X^{ss})^R$ is an isomorphism. 
	In particular, it restricts to $(U^{ss})^R\cap E_\DC\cong \Delta_R$, and hence the intersection $(U^{ss})^R\cap E_\DC$ is transverse.
	\end{lemma}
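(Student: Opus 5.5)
Since $U^{ss}$ is an open subset of the blowup $U=\Bl_{Z_\DC}X^{ss}$, I will compute the $R$-fixed locus of $U$ over $(X^{ss})^R$ by working separately over $(X^{ss})^R\setminus\Delta_R$ and over $\Delta_R$. Over the complement of $Z_\DC$ the map $U\to X^{ss}$ is an isomorphism. So away from $\Delta_R$ the fixed loci agree; this uses that $(X^{ss})^R\cap Z_\DC=\Delta_R$, which holds because a fixed quartic is a double conic exactly when $s^2-4p=0$. The whole content is therefore over $\Delta_R$. There the fiber of $E_\DC\to Z_\DC$ at a point $q$ is $\PP(N_{Z_\DC/X^{ss}}|_q)$, and the $R$-fixed points in it are the $R$-eigenlines of this normal space. (Fixed points in the projectivization correspond to eigenlines, so $R$-invariant points of $U$ over $q$ are exactly these.)

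For $q=[(xy+az^2)^2]$ with $a\neq0$, take the conic $Q=\{xy+az^2=0\}$. By Lemma~\ref{lem:etalenbd.DC} the normal space is $H^0(Q,\cO_Q(8))$. I would parametrize $Q$ by $[u:v]\mapsto[u^2:-av^2:uv]$, on which $R$ acts with $u,v$ of weights $1,-1$. Then $H^0(Q,\cO_Q(8))$ has basis $u^iv^{8-i}$ of weights $2i-8$, each weight occurring with multiplicity one. So the $R$-fixed points of $E_\DC|_q$ are the nine coordinate points $[u^iv^{8-i}]$.

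Next I determine which of these lie in $U^{ss}$. Semistability with respect to $\cO(a)\otimes\cO(-E_\DC)$ near $E_\DC$ is controlled by $\PGL_2$-semistability of binary octics, as in Theorem~\ref{thm:cEK}. The monomial $u^iv^{8-i}$ is semistable only when $i=4$, since otherwise it has a root of multiplicity greater than $4$. Thus exactly one fixed point survives over each $q\in\Delta_R$, namely $[u^4v^4]$, i.e.\ the weight-zero direction. This direction is the tangent direction of $(X^{ss})^R$ transverse to $\Delta_R$: it is the image of $\partial_s$, corresponding to $x^2y^2z^4$, which restricts to $u^4v^4$ up to scalar. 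Hence $(U^{ss})^R$ agrees with the proper transform of $(X^{ss})^R$. That proper transform maps isomorphically onto $(X^{ss})^R$, because $\Delta_R$ is a Cartier divisor in the smooth surface $(X^{ss})^R$, so blowing it up changes nothing. The exceptional part is then the proper transform's intersection with $E_\DC$, which maps isomorphically to $\Delta_R$.

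For transversality, I would note that the proper transform $\widetilde{(X^{ss})^R}$ is smooth and meets $E_\DC$ in the preimage of the Cartier divisor $\Delta_R$. The pullback of $\Delta_R$ is reduced because $\Delta_R=\{s^2=4p\}$ is reduced. Alternatively, $E_\DC$ restricts to the divisor $\Delta_R$, which is reduced and smooth, giving transversality directly.

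I expect the main obstacle to be the precise semistability claim on the exceptional divisor. Specifically, I need that a point of $E_\DC$ lies in $U^{ss}$ if and only if its normal direction is a GIT-semistable octic, with the non-fixed directions handled by Kirwan's description. I would cite \cite[Theorem~5.9]{ADL} and Kirwan's partial desingularization for this.
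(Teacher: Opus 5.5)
Your proposal is correct. Its core computation is the same as the paper's: over $q\in\Delta_R$, the $R$-fixed points of $\PP(N_{Z_\DC/X^{ss}}|_q)$ are the weight lines of $H^0(Q,\cO_Q(8))$, each weight occurs once, and only $[u^4v^4]$ is a semistable octic. Here you are more careful than the paper, which only singles out the invariant line; you explicitly discard the other eight eigenlines as unstable.

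The difference is in how you conclude. The paper observes that $(U^{ss})^R$ and $(X^{ss})^R$ are smooth and checks that the map between them is bijective. You instead identify $(U^{ss})^R$ with the proper transform $\Bl_{\Delta_R}(X^{ss})^R\cong (X^{ss})^R$. This makes the isomorphism transparent, and it gives transversality directly, because blowing up a reduced Cartier divisor is an isomorphism whose exceptional divisor is that divisor.

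Your route leaves two routine checks implicit.
\begin{itemize}
\item To put the whole proper transform inside $U^{ss}$ (i.e.\ for surjectivity), you need the fixed points off $\Delta_R$, namely the quartics $(xy+az^2)(xy+bz^2)$ with $a\neq b$, to remain semistable in $U$. You only check semistability over $\Delta_R$. The off-$\Delta_R$ case holds because these quartics are polystable and their closed orbits are disjoint from $Z_\DC$, so they do not map to the double-conic point of $\PGIT$.
\item To pass from set-theoretic to scheme-theoretic equality, you need $(U^{ss})^R$ to be reduced. This follows from smoothness of $\Gm$-fixed loci in the smooth $U^{ss}$, which is the fact the paper invokes. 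Likewise, the scheme-theoretic intersection $(X^{ss})^R\cap Z_\DC=(Z_\DC)^R$ is smooth. That is what justifies treating it as the reduced Cartier divisor $\{s^2=4p\}$ in both the blowup step and the transversality step.
\end{itemize}

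There are also two slips.
\begin{itemize}
\item Since $x=u^2$ has $R$-weight $-1$, the coordinates $u,v$ have weights $\mp\tfrac12$, so the monomial $u^iv^{8-i}$ has weight $4-i$, not $2i-8$. The multiplicity-one conclusion is unaffected.
\item The direction $\partial_s$ corresponds to $xyz^2$, not $x^2y^2z^4$; its restriction to $Q$ is indeed a multiple of $u^4v^4$.
\end{itemize}
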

\begin{proof}
Since $X^{ss}$ and $U^{ss}$ are smooth, their $R$-fixed loci are smooth as well. It is therefore enough to show that the map $(U^{ss})^R\to (X^{ss})^R$ is bijective.

For injectivity, it suffices to consider points over $\Delta_R$, since $U^{ss}\to X^{ss}$ is injective away from $Z_\DC$. Let $Q_2$ be an $R$-fixed double conic. 
The map \eqref{eq:coker.DC} is $R$-equivariant, and the induced $R$-action on the cokernel \eqref{eq:normalspace.DC} has a one-dimensional invariant subspace.
This determines a unique point in $[Y_8^{ss}/\PGL_2]$, represented by $f_8(u,v)=u^4v^4$, so the fiber over $[Q_2]$ consists of a single point.

For surjectivity, \eqref{eq:plane.tacnodal.eq} with $a\neq b$ remains semistable in $U$, and the point corresponding to $f_8=u^4v^4$ is also semistable. 
This proves the first assertion.
The second assertion follows from the first and the smoothness of $\Delta_R$.
\end{proof}

By abuse of notation, we use the same notation $\Delta_R$ for its inverse image in $(U^{ss})^R$.
Comparing $(U^{ss})^R$ and $(U^{1/2,ss})^R$, we obtain the following description of $(U^{1/2,ss})^R$.

\begin{lemma}\label{lem:Rfixedlocus2}
	The natural map $(U^{1/2,ss})^R\to (U^{ss})^R$ is the square-root stack along $\Delta_R$.
	Thus, 
	\beq\label{eq:Rfixedlocus2}(U^{1/2,ss})^R\cong [(\A^2_{a,b}\setminus\{0\})/\Gamma],\eeq
	where $\Gamma$ interchanges the coordinates $a$ and $b$ of $\A^2_{a,b}$, and the map to $(X^{ss})^R$ is induced by $(a,b)\mapsto(a+b,ab)=(s,p)$. 
	In particular, $(U^{1/2,ss})^R$ is smooth.
\end{lemma}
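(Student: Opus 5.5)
Since $U^{1/2}\to U$ is the square-root stack along the smooth divisor $E_\DC$ with its $\bar G$-action, and taking $R$-fixed loci commutes with this construction, I will first identify $(U^{1/2,ss})^R$ with the square-root stack of $(U^{ss})^R$ along the pullback divisor $E_\DC\cap (U^{ss})^R$. Two facts justify this. First, the $\mu_2$-stabilizer coming from the root construction commutes with $R$, so the fixed locus of $R$ in $U^{1/2,ss}$ is the preimage of $(U^{ss})^R$; equivalently, the root stack restricted to an $R$-invariant smooth subvariety transverse to $E_\DC$ is the root stack of that subvariety along the restricted divisor. Second, by Lemma~\ref{lem:Rfixedlocus} the intersection $(U^{ss})^R\cap E_\DC$ is transverse and equals $\Delta_R$ with its reduced structure. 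Hence the restricted divisor is exactly $\Delta_R$, with multiplicity one, and $(U^{1/2,ss})^R\to (U^{ss})^R$ is the square-root stack along $\Delta_R$.

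Next I compute this root stack explicitly. By Lemma~\ref{lem:Rfixedlocus}, $(U^{ss})^R\cong (X^{ss})^R\cong \A^2_{s,p}\setminus\{0\}$, and $\Delta_R=\{s^2-4p=0\}$. Consider the map $\A^2_{a,b}\setminus\{0\}\to \A^2_{s,p}\setminus\{0\}$, $(a,b)\mapsto (a+b,ab)$. It is the quotient by $\Gamma$ swapping $a$ and $b$. It is simply branched along the diagonal $a=b$, which maps isomorphically onto $\Delta_R$, since $s^2-4p=(a-b)^2$. I will then invoke the standard fact that for a smooth variety $V$ with a $\mu_2$-action whose fixed locus is a smooth divisor $D$, the stack $[V/\mu_2]$ is the square-root stack of $V/\mu_2$ along the image of $D$. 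Concretely, after changing coordinates to $u=a+b$ and $v=a-b$, $\Gamma$ acts by $v\mapsto -v$, and $[\A^1_v/\mu_2]$ is the root stack of $\A^1_{v^2}$ at the origin. Also $(a,b)\neq 0$ exactly when $(s,p)\neq 0$. Together these give \eqref{eq:Rfixedlocus2}, and smoothness follows because $\A^2\setminus\{0\}$ is smooth.

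The main obstacle is the first step: justifying that forming $R$-fixed loci commutes with the root-stack construction. The $R$-action on the stack $U^{1/2}$ must be handled carefully, since fixed points of a group acting on a stack carry extra data. I would argue étale locally. Near a point of $\Delta_R$, choose $R$-equivariant coordinates on $U^{ss}$ in which $E_\DC=\{e=0\}$, with $e$ an $R$-semi-invariant. Because $E_\DC$ meets the fixed locus transversely, $e$ restricts to a nonzero function on $(U^{ss})^R$, so it must have trivial $R$-weight. Then $U^{1/2}$ is locally $[\{w^2=e\}/\mu_2]$ with $R$ acting trivially on $w$, and its $R$-fixed locus is visibly $[\{w^2=e|_{(U^{ss})^R}\}/\mu_2]$, which is the root stack of $(U^{ss})^R$ along $\Delta_R$.
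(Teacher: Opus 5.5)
Your proposal is correct and takes the same route as the paper. The paper's proof cites only Lemma~\ref{lem:Rfixedlocus}, whose transversality makes the root stack along $E_\DC$ pull back to the square-root stack along the reduced divisor $\Delta_R$, and the smoothness of $\Delta_R$, which lets $(a,b)\mapsto(a+b,ab)$, branched along $s^2-4p=(a-b)^2$, realize that root stack as $[(\A^2_{a,b}\setminus\{0\})/\Gamma]$. You spell out what the paper leaves implicit: base change of root stacks, the trivial $R$-weight of the local equation of $E_\DC$ along the fixed locus, and the coordinate change $u=a+b$, $v=a-b$.
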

\begin{proof}
	This follows immediately from Lemma~\ref{lem:Rfixedlocus} and smoothness of $\Delta_R$.	
\end{proof}

\subsection{Strictly polystable locus of $\cPK$}

Let $\cZ_\pTac \subset \cPK$ denote the strictly polystable locus. 
It parametrizes quartics of the form \eqref{eq:plane.tacnodal.eq} with $a\neq b$, as well as the K-semistable replacements of double conics, namely pairs isomorphic to \eqref{eq:curvesinP114} with $f_8=u^4v^4$ (see Theorem~\ref{thm:cEK}).

We describe this locus via the $R$-fixed loci and their quotients by the normalizer of $R$, following \cite[Section~5]{Kir85}.
Let $N_G(R)$ denote the normalizer of $R$ in $G$, so that
\[N_G(R)\cong T\rtimes \mu_2,\]
where $T\cong \Gm^2$ is the diagonal torus of $G$, and $\mu_2$ permutes the first two diagonal entries.
Similarly, for the images $\bar R$ and $\bar T$ in $\bar G$ of $R$ and $T$, we have $N_{\bar G}(\bar R)\cong \bar T\rtimes\mu_2$. 
Then the $R$-fixed loci of $U^{1/2,ss}$ and $U^{ss}$ are preserved by $N_{\bar G}(\bar R)$, and their $G$-orbits are given by
\[\begin{aligned}
	Z_\pTac^{1/2}&:=G\cdot (U^{1/2,ss})^R\cong G\times^{N_{G}(R)}(U^{1/2,ss})^R, \\ 
	Z_\pTac&:=G\cdot (U^{ss})^R\cong G\times^{N_{G}(R)}(U^{ss})^R.
\end{aligned}
\]
Here $G\times^H Z$ denotes the quotient of $G\times Z$ by the action of a subgroup $H\subset G$ given by $h\cdot (g,z)=(gh^{-1},hz)$, and the inverse isomorphisms are given by  $[g,z]\mapsto g\cdot z$.
Consequently, 
\[\cZ_\pTac \cong [Z_\pTac^{1/2}/ \bar G ]\cong [(U^{1/2,ss})^R / N_{\bar G}(\bar R)] \and [Z_\pTac/ \bar G ] \cong [(U^{ss})^R / N_{\bar G}(\bar R)].\]

\begin{remark}\label{rem:exact.TR}
	We will freely use $G$, $N_G(R)$, and $T$ when working with the corresponding structures for $\bar G$, $N_{\bar G}(\bar R)$, and $\bar T$.
	Note that the morphism $R\to \bar R$ is an isomorphism, while $G\to \bar G$ and $T\to \bar T$ are $\mu_3$-covers. These maps fit into commuting exact sequences
	\beq\label{eq:exact.TR}
	\begin{tikzcd}
		1 \arrow[r]&R\arrow[r]\arrow[d,"\cong"']&T\arrow[r,"\chartheta"]\arrow[d]& \Gm\arrow[d]\arrow[r]&1\\
		1 \arrow[r]&\bar R\arrow[r]	&\bar T\arrow[r,"\charthetabar"]	& \Gm	\arrow[r]&1
	\end{tikzcd}\eeq
	where $\chartheta$ and $\charthetabar$ are the characters defined by 
	\[\chartheta(t)=t_3^{-1}\and \charthetabar(t)=t_1t_2t_3^{-2}\]
	for $t=\diag(t_1,t_2,t_3)$ in $T$ or $\bar T$. The vertical map $\Gm\to\Gm$ sends $s$ to $s^3$.
	
	The exact sequences in \eqref{eq:exact.TR} for $T$ and $\bar T$ admit natural analogues for $N_G(R)$ and $N_{\bar G}(\bar R)$, with $R\rtimes \mu_2$ and $\bar R\rtimes \mu_2$ in place of $R$ and $\bar R$, respectively. These sequences split. In particular,
	\[N_G(R)\cong (R\rtimes\mu_2)\times \Gm \and N_{\bar G}(\bar R)\cong (\bar R \rtimes \mu_2)\times \Gm.\]
\end{remark}

We now show that $\cZ_\pTac$ is a trivial gerbe over $[\PP^1/\Gamma]$ arising from \eqref{eq:Rfixedlocus2}.

\begin{proposition}\label{prop:gerbe}
	There are isomorphisms
	\[\cZ_\pTac\cong B(\bar R\rtimes \mu_2)\times [\PP^1/\Gamma] \and [Z_\pTac/\bar G]\cong B(\bar R\rtimes \mu_2)\times \cP(1,2)\]
	Under these identifications, the natural morphism between them is given by 
	\[[\PP^1/\Gamma]\lra \cP(1,2),\quad [a:b]\mapsto [a+b:ab].\]
\end{proposition}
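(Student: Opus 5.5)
We start from the identifications already established, namely
\[\cZ_\pTac\cong[(U^{1/2,ss})^R/N_{\bar G}(\bar R)] \and [Z_\pTac/\bar G]\cong[(U^{ss})^R/N_{\bar G}(\bar R)].\]
By Lemmas~\ref{lem:Rfixedlocus} and \ref{lem:Rfixedlocus2}, these fixed loci are $\A^2_{s,p}\setminus\{0\}$ and $[(\A^2_{a,b}\setminus\{0\})/\Gamma]$. By Remark~\ref{rem:exact.TR}, we use the splitting $N_{\bar G}(\bar R)\cong(\bar R\rtimes\mu_2)\times\Gm$. The plan is to compute how each factor acts on the fixed loci, and then to take the quotient in two stages.

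First, $\bar R$ acts trivially on its own fixed locus. The task is then to determine the action of the $\mu_2$ factor, which swaps $x$ and $y$. Since $(xy+az^2)(xy+bz^2)$ involves $x,y$ only through the product $xy$, this element fixes every point of $(X^{ss})^R$. On the square-root stack it fixes every point of $[(\A^2_{a,b}\setminus\{0\})/\Gamma]$.

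Second, we compute the action of the complementary $\Gm$. We choose the splitting so that it is realized by a one-parameter subgroup of $\bar T$ commuting with the $\mu_2$. A convenient choice is $\diag(1,1,s)$: it commutes with the swap, and $\charthetabar$ sends it to $s^{-2}$, so it maps onto $\Gm$ with kernel $\mu_2$. It acts by $z\mapsto sz$, so $(s,p)$ are scaled with weights $(2,4)$, and $(a,b)$ with weights $(2,2)$. After adjusting by the kernel (or by picking a section of $\charthetabar$ up to the appropriate root), the effective action has weights $(1,2)$ on $(s,p)$ and $(1,1)$ on $(a,b)$. If a generic $\mu_2$-stabilizer survives at this point, it has to be absorbed into the gerbe factor. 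The cleanest route is to exhibit an explicit complement $\Gm'\subset\bar T$ commuting with $\mu_2$ that acts on $(s,p)$ with weights exactly $(1,2)$, checking the weights through $\charthetabar(t)=t_1t_2t_3^{-2}$.

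Granting this, the quotient splits as a product:
\[[(U^{ss})^R/N_{\bar G}(\bar R)]\cong B(\bar R\rtimes\mu_2)\times[(\A^2\setminus0)/\Gm']=B(\bar R\rtimes\mu_2)\times\cP(1,2).\]
The product decomposition holds because $\bar R\rtimes\mu_2$ acts trivially and the group is a direct product. In the same way, the $\Gamma$ action commutes with $\Gm'$, giving
\[[(\A^2_{a,b}\setminus0)/(\Gm'\times\Gamma)]=[\PP^1/\Gamma],\]
since $\Gm'$ acts on $(a,b)$ with weights $(1,1)$. The map between the two quotients is induced by $(a,b)\mapsto(a+b,ab)$, which is $\Gm'$-equivariant for weights $(1,1)\to(1,2)$. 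Hence it descends to $[a:b]\mapsto[a+b:ab]$.

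One subtlety is that a trivial action of a group $H$ on a space $Y$ gives $[Y/H]\cong Y\times BH$ only when the action is genuinely trivial, including the $2$-morphisms. For a direct product acting through a projection, this is automatic.

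The main obstacle is the weight bookkeeping for the $\Gm$ factor. We must make sure that the splitting in Remark~\ref{rem:exact.TR} can be chosen so that the resulting $\Gm$ acts with weights exactly $(1,2)$, rather than $(2,4)$, which would introduce an extra $\mu_2$-gerbe. We also must make sure it commutes with both the $\mu_2$ swap and $\Gamma$. If a naive choice gives weights $(2,4)$, we would instead use an element such as $\diag(t,1,1)$ composed with $\bar R$ to obtain a section of $\charthetabar$ of degree one, and recheck that its action on $(s,p)$ has weights $(1,2)$. Here we use that $\bar T=T/\mu_3$ allows cube roots that are unavailable in $T$.
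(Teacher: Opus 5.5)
\textbf{There is a genuine gap, and it sits exactly at the step you leave as ``granting this''.} Your route is the paper's: the kernel $\bar R\rtimes\mu_2$ of the character through which $N_{\bar G}(\bar R)$ acts is killed, and then the direct-product splitting $N_{\bar G}(\bar R)\cong(\bar R\rtimes\mu_2)\times\Gm$ from Remark~\ref{rem:exact.TR} splits off the gerbe. That splitting does not exist, so no bookkeeping will produce your $\Gm'$.

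A complement commuting with the $\mu_2$ is connected, so it lies in $\bar T$. It must also be fixed by conjugation by the swap $w_0$ of $e_1,e_2$. The swap-fixed part of $\bar T$ is exactly $\{[\diag(1,1,w)]\}\cong\Gm$, and on it $\charthetabar$ is $w\mapsto w^{-2}$. Degree-one sections do exist, e.g. $s\mapsto[\diag(s,1,1)]$ or its translates by $\bar R$, which is your fallback. None of them commutes with the swap: $[\diag(s,1,1)]$ conjugates $w_0$ to $\lambda_R(s)w_0$. Passing to $\bar T=T/\mu_3$ is irrelevant, since the obstruction is a factor of $2$.

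What actually holds is
\[N_{\bar G}(\bar R)\cong\big((\bar R\rtimes\mu_2)\times\Gm\big)/\mu_2,\]
where the $\Gm$ is $\{[\diag(1,1,w)]\}$ and $\mu_2$ is embedded diagonally via $\lambda_R(-1)=[\diag(1,1,-1)]$. This is not a direct product. Its center $\{[\diag(1,1,w)]\}$ is connected, whereas $(\bar R\rtimes\mu_2)\times\Gm$ has center $\mu_2\times\Gm$. The paper's proof uses the same splitting, so this gap is not specific to your write-up.

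\textbf{The conclusion itself fails at $s=0$.} Take the cateye $x^2y^2-z^4=0$, which corresponds to $[0:1]\in\cP(1,2)$ and $[1:-1]\in\PP^1$.
\begin{itemize}
\item It is not a double conic, so it lies away from $E_\DC$. Its automorphism group in both $\cZ_\pTac$ and $[Z_\pTac/\bar G]$ is therefore its stabilizer $\charthetabar^{-1}(\mu_2)\subset N_{\bar G}(\bar R)$, with $\charthetabar$ extended by $w_0\mapsto 1$.
\item This stabilizer contains $\tau=[\diag(1,1,i)]$. We have $\charthetabar(\tau)=-1$, $\tau$ commutes with $\bar T$ and with $w_0$, and $\tau^2=\lambda_R(-1)\neq 1$.
\item Hence the stabilizer has four components and cyclic center $\langle\tau\rangle\cong\mu_4$.
\item In $B(\bar R\rtimes\mu_2)\times\cP(1,2)$ or $B(\bar R\rtimes\mu_2)\times[\PP^1/\Gamma]$, every automorphism group is either $\bar R\rtimes\mu_2$, which has two components, or $(\bar R\rtimes\mu_2)\times\mu_2$, whose center is $\mu_2\times\mu_2$.
\end{itemize}
So neither claimed isomorphism can exist.

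\textbf{What your argument does prove} is the gerbe statement. Both stacks are $\bar R\rtimes\mu_2$-gerbes over $\cP(1,2)$ and $[\PP^1/\Gamma]$, pulled back from $BN_{\bar G}(\bar R)\to B\Gm$. They are compatible with $[a:b]\mapsto[a+b:ab]$, and trivial away from $s=a+b=0$. With $\Q$-coefficients this still gives the facts used later, namely $A^*(\cZ_\pTac)\cong\Q[e,c_2]/(e^2)$ and $P_t(\cZ_\pTac)=1/(1-t^2)$, by taking swap-invariants in $A^*_{\bar T}$.
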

\begin{proof}
	By Lemmas~\ref{lem:Rfixedlocus} and~\ref{lem:Rfixedlocus2}, $(U^{ss})^R\cong \A^2_{s,p}\setminus\{0\}$ and $(U^{1/2,ss})^R\cong [(\A^2_{a,b}\setminus\{0\})/\Gamma]$.
	Since the $N_{\bar G}(\bar R)$-actions on them factor through the character $\charthetabar:N_{\bar G}(\bar R)\to \Gm$, the corresponding quotients are $\bar R\rtimes\mu_2$-gerbes over $\cP(1,2)$ and $[\PP^1/\Gamma]$, respectively.
	Since the exact sequence in the lower row of \eqref{eq:exact.TR} splits, these gerbes are trivial.
	This proves the first statement.
	
	The second statement follows from Lemma~\ref{lem:Rfixedlocus2}.
\end{proof}

\begin{remark}\label{rem:P1}
Under the above identifications, the good moduli space map $\cPK\to \PK$ restricts to the projection $\cZ_\pTac\to \PP^1$. The image $\PP^1$ is precisely the exceptional locus of $\PK\to \PCY$, contracted to a point; see \cite[Proposition~21]{HL} and \cite[Section~6.2.1]{ADL}.
\end{remark}
We record for later use the following result on the intersection $\cZ_\pTac\cap\cEK$.

\begin{proposition}\label{prop:transverse}
	$\cZ_\pTac\cap \cEK$ consists of a single  point and is transverse.
\end{proposition}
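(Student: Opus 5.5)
Both assertions are local, so I will work $\bar G$-equivariantly on the atlas $U^{1/2,ss}$, where $\cZ_\pTac$ is the descent of $Z^{1/2}_\pTac=G\cdot(U^{1/2,ss})^R$ and $\cEK$ is the descent of the preimage $E^{1/2}$ of $E_\DC$. Because $Z^{1/2}_\pTac\cong G\times^{N_G(R)}(U^{1/2,ss})^R$, the intersection $Z^{1/2}_\pTac\cap E^{1/2}$ equals $G\times^{N_G(R)}\bigl((U^{1/2,ss})^R\cap E^{1/2}\bigr)$. Hence both claims reduce to the $R$-fixed locus, together with the observation that transversality is preserved by the $G$-saturation, which is étale-locally a product with the normal directions of the orbit.

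\textbf{Set-theoretic part.} By Lemma~\ref{lem:Rfixedlocus}, $(U^{ss})^R\cap E_\DC\cong\Delta_R=\{s^2=4p\}\setminus\{0\}$. Lemma~\ref{lem:Rfixedlocus2} identifies $(U^{1/2,ss})^R$ with $[(\A^2_{a,b}\setminus 0)/\Gamma]$, and under this identification the preimage of $\Delta_R$ is the diagonal $\{a=b\}$. Passing to the quotient by $N_{\bar G}(\bar R)$ as in Proposition~\ref{prop:gerbe}, the diagonal maps to the single point $[1:1]\in[\PP^1/\Gamma]$. So $\cZ_\pTac\cap\cEK$ is a single point, namely $B(\bar R\rtimes\mu_2)\times B\Gamma$ sitting over $[1:1]$. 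This point corresponds to the pair $w^2=u^4v^4$ on $\PP(1,1,4)$.

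\textbf{Transversality, and the main difficulty.} Transversality has to be checked on $U^{1/2,ss}$, where $E^{1/2}$ is the reduced preimage of $E_\DC$ in the root stack. Working in an étale chart of $U$ near a point of $\Delta_R$, let $e$ be a local equation of $E_\DC$. The root stack is then $[\operatorname{Spec}(\ldots)[\sigma]/(\sigma^2=e)\,/\,\mu_2]$, and $E^{1/2}=\{\sigma=0\}$.

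On the fixed locus, Lemma~\ref{lem:Rfixedlocus} gives transversality of $(U^{ss})^R$ with $E_\DC$. Therefore $e|_{(U^{ss})^R}$ vanishes to order one along $\Delta_R$, which is cut out by $s^2-4p$. Pulling back to $\A^2_{a,b}$ gives $s^2-4p=(a-b)^2$, so $\sigma|_{(U^{1/2,ss})^R}=\pm(a-b)$ up to a unit. This is a reduced local equation for the diagonal in the smooth fixed locus. Consequently $E^{1/2}$ restricts to $(U^{1/2,ss})^R$ as a reduced smooth divisor, i.e. the intersection is transverse inside $U^{1/2,ss}$. The delicate point here is matching the $\Gamma$ appearing in Lemma~\ref{lem:Rfixedlocus2} with the root-stack $\mu_2$, so that $\sigma$ really corresponds to $a-b$; I would verify this via the explicit description in Lemma~\ref{lem:Rfixedlocus2}.

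\textbf{Descent.} Finally, I apply the $G$-saturation and descend. Since $E^{1/2}$ is $G$-invariant and $Z^{1/2}_\pTac$ is smooth, I will use the splitting $T Z^{1/2}_\pTac=T(\text{orbit})+T(\text{fixed locus})$. The orbit directions are tangent to $E^{1/2}$, so transversality of $Z^{1/2}_\pTac$ and $E^{1/2}$ reduces to the fixed-locus transversality proved above. Descending along $\bar G$, the intersection $\cZ_\pTac\cap\cEK$ is transverse in $\cPK$.
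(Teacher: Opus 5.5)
Your proof is correct and follows the paper's route. The paper's proof is the one-line remark that the statement is immediate from the transversality assertion in Lemma~\ref{lem:Rfixedlocus}; you spell out the steps that remark leaves implicit, namely that the preimage of $\Delta_R$ is the diagonal $\{a=b\}$ mapping to the single point $[1:1]\in[\PP^1/\Gamma]$, that transversality persists on the square-root stack since $\sigma=\pm(a-b)$ by Lemma~\ref{lem:Rfixedlocus2}, and that it passes to the $G$-saturation.
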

\begin{proof}
	This is immediate from the second statement of Lemma~\ref{lem:Rfixedlocus}.
\end{proof}

\begin{definition}
	We denote by $\pHT$ the hyperelliptic tacnodal point, i.e. the point $\cZ_\pTac \cap \cEK$ corresponding to the K-polystable pair
	\beq\label{eq:HT}\{w^2=u^4v^4\}\subset \PP(1,1,4).\eeq
	It is the unique closed point of $\cZ_\pTac$ lying over $[1:1]\in [\PP^1/\Gamma]$.
\end{definition}

\section{Normal bundle of the strictly polystable locus}\label{s:normal}

In this section, we study the normal bundle of the strictly polystable locus $\cZ_\pTac\subset \cPK$.
Our main result is a decomposition of $N_{\cZ_\pTac/\cPK}$ into indecomposable rank $2$ bundles (Proposition~\ref{prop:normalbundle.decomp}). 
This decomposition is a key ingredient in the construction of $\cPhat$ (Section~\ref{s:resolution}) and in the computation of its Chow ring (Section~\ref{s:chow.phat}).

To state the result, we work with an \'etale double cover of $\cZ_\pTac$, 
\[\widetilde\cZ_\pTac:=\big[(U^{1/2,ss})^R/\bar T\big]\cong B\bar R\times [\PP^1/\Gamma],\]
on which the normal bundle splits into line subbundles corresponding to the $\bar T$-characters.

For a character $\chi:\bar T\to \Gm$ and the associated $\bar T$-representation $\C_\chi$,
let
\[\cO_{\widetilde \cZ_\pTac}(\chi):=\big[\big((U^{1/2,ss})^R\times \C_\chi\big)/\bar T\big] ~\in \Pic \widetilde\cZ_\pTac\]
denote the associated line bundle. 
Define $\mu,\nu\in \Hom(\bar T,\Gm)$ by
\[\mu(t)=t_1^{-1}t_3 \and \nu(t)=t_2^{-1}t_3\]
for $t=\diag(t_1,t_2,t_3)\in \bar T$. 
Then $N_{\bar G}(\bar R)/\bar T\cong \mu_2$ interchanges $\mu$ and $\nu$, so
\[ N_j:=\cO_{\widetilde\cZ_\pTac}(j\mu)\oplus \cO_{\widetilde\cZ_\pTac}(j\nu),\quad j\in \Z,\]
descends to a rank $2$ vector bundle on $\cZ_\pTac$, which we also denote by $ N_j$.

\begin{proposition}\label{prop:normalbundle.decomp}
	The normal bundle $N_{\cZ_\pTac/\cPK}$ 
	decomposes as
	\[N_{\cZ_\pTac/\cPK}\cong   N_2\oplus  N_3\oplus  N_4,\]
	where each $ N_j$ is the descended rank $2$ bundle defined above.
\end{proposition}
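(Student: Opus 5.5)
The plan is to compute the normal bundle $T$-equivariantly on the $R$-fixed locus and then descend. Since $\cZ_\pTac\cong[(U^{1/2,ss})^R/N_{\bar G}(\bar R)]$ and $Z^{1/2}_\pTac\cong G\times^{N_G(R)}(U^{1/2,ss})^R$, I would work on the double cover $\widetilde\cZ_\pTac=[(U^{1/2,ss})^R/\bar T]$. Here the normal bundle is the $\bar T$-equivariant bundle on $F:=(U^{1/2,ss})^R$ given by
\[
\big(T U^{1/2,ss}|_F\big)^{R\neq 0}\big/\big(\fg^{R\neq 0}\cdot F\big).
\]
This identification holds because $T_xF$ is exactly the $R$-weight-zero part of $T_xU^{1/2,ss}$, and the tangent directions of the $G$-orbit transverse to $F$ come from the root spaces of $\fg=\mathfrak{sl}_3$ with nonzero $R$-weight.

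\textbf{Step 1: away from $\Delta_R$.} Here $U^{1/2,ss}\to X^{ss}$ is étale, so $T_{[f]}X=H^0(\cO(4))/\C f$ with $f=(xy+az^2)(xy+bz^2)$. A monomial $x^iy^jz^k$ has $R$-weight $i-j$. The weight-zero monomials $x^2y^2$, $xyz^2$, $z^4$ span the tangent space to $F$ together with $f$. The remaining twelve monomials have weights $\pm1$ (two each), $\pm2$ (two each: $x^2z^2$, $x^3y$ and their mirrors), $\pm3$ (one each: $x^3z$, $y^3z$), and $\pm4$ (one each: $x^4$, $y^4$). The six roots of $\mathfrak{sl}_3$ have $R$-weights $\pm1$ (two each) and $\pm2$ (one each).
\begin{itemize}
\item I will check directly that the weight $\pm1$ root vectors $e_{13},e_{32}$ (and their mirrors) act isomorphically onto the weight $\pm1$ monomials $xz^3$, $x^2yz$ whenever $a\neq b$. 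This amounts to a $2\times 2$ determinant, which I expect to be a multiple of $a-b$.
\item I will check that $e_{12}\cdot f$ is nonzero in the weight-$2$ space.
\end{itemize}
This leaves a rank-$6$ quotient, with one line in each $R$-weight $\pm2,\pm3,\pm4$.

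The $\bar T$-character on each such line is the monomial character twisted by the inverse of the character of $f$ (tangent directions of $\PP^{14}$ are $\Hom(\C f,\,\cdot\,)$). For example, $x^3z\cdot(xy\,\text{-type})^{-1}$ modulo the $\charthetabar$-part gives $3\mu$ up to the $\Gm$-factor that acts on $\A^2_{a,b}$. I will verify that the $\charthetabar$-twist is absorbed, because the line bundles are defined via $\bar T$ acting on $F$ itself. This yields $\cO(j\mu)$ and $\cO(j\nu)$ for the weights $j$ and $-j$. The swap $\mu_2=N_{\bar G}(\bar R)/\bar T$ exchanges weights $j$ and $-j$, as well as $\mu$ and $\nu$. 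Hence the weight-space splitting is canonical and $N_{\bar G}(\bar R)$-stable, and it descends to $N_2\oplus N_3\oplus N_4$.

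\textbf{Step 2: near $\pHT$.} This is the point where $F$ meets $E_\DC$, and it is the main obstacle. Here I would use the Luna slice of Lemma~\ref{lem:etalenbd.DC}: near $[Q_2]$, the stack $\cPK$ is locally the weight-two weighted blowup of $[H^0(\cO_Q(8))/\PGL_2]$, and $\pHT$ corresponds to $u^4v^4$. In the chart of $U^{1/2}$ centered at $u^4v^4$, the tangent space is spanned by:
\begin{itemize}
\item the octic deformations $u^{8-k}v^k$, of $R$-weight $2(4-k)$ after matching $R$ with the torus of $\PGL_2$;
\item the exceptional direction, of weight $0$;
\item the directions along $Z_\DC$, namely $\bar G/\PGL_2$.
\end{itemize}
Subtracting the $\mathfrak{sl}_2$ roots and the remaining root directions of $\mathfrak{sl}_3$ should again leave one line in each weight $\pm2,\pm3,\pm4$ (with weights normalized consistently with Step 1). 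Since $R$-weight spaces vary in locally free families over the smooth $F$, the rank of each weight-$j$ piece of the normal bundle is constant. So it suffices to confirm rank one at $\pHT$ and that the equivariant line bundle agrees with the generic description. The latter follows because $\Pic$ of $B\bar R\times[\PP^1/\Gamma]$-type stacks is detected by the $\bar T$-character, up to a possible twist by $\cO_{\PP^1}(m)$ coming from the $\charthetabar$-direction.

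Excluding that twist is the delicate point. I would do it by computing the $\Gm=\charthetabar$-weight of each line on the fiber over $\pHT$ and over a generic point, and checking that these weights agree.
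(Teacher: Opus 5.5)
Your Step~1 is correct as far as it goes. The $R$-weight bookkeeping, the ranks and the nondegeneracy checks are right; the $2\times 2$ determinant is in fact $2(s^2-4p)=2(a-b)^2$, so it vanishes exactly on $\Delta_R$. But Step~1 only identifies the weight line bundles over $F\setminus\Delta_R$, and there a $\bar T$-character is defined only modulo $6\chartheta$. The reason is that $s^2-4p$ is an invertible function of weight $6\chartheta$ on $F\setminus\Delta_R$, so $\cO(6\chartheta)\cong\cO(\Delta_R)$ is equivariantly trivial there.

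Moreover, the ``generic description'' you would extend is not $\cO(j\mu)$. Since $\cO_X(1)|_F\cong\cO(2\chartheta)$, the monomial line $\C x^3z\otimes\cO_X(1)$ has character $3\mu+6\chartheta$; likewise the weight-$2$ quotient comes out as $2\mu+6\chartheta$ and the weight-$4$ line as $4\mu+6\chartheta$. So the $\charthetabar$-twist is not ``absorbed''. Relative to the monomial answer there is a genuine correction by $\cO(-\Delta_R)=\cO(-2\charthetabar)$, a nontrivial twist along $[\PP^1/\Gamma]$, and your reason for discarding it is not an argument.

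The check you propose for excluding the twist cannot work. The factor $\bar T/\bar R\cong\Gm$ acts on $\A^2_{a,b}\setminus\{0\}$ by scaling and fixes no point, so no fiber carries a $\charthetabar$-weight. More decisively, the automorphism group of $\pHT$ in $\widetilde\cZ_\pTac$ is $\bar R\times\Gamma$. The bundles $\cO(j\mu)$ and $\cO(j\mu+6\chartheta)$ have isomorphic fibers as stabilizer representations at every point, including $\pHT$: $6\chartheta=2\charthetabar$ is trivial on $\bar R$, and $\Gamma$ acts trivially on the fibers of every $\cO(\chi)$. So no pointwise computation, at $\pHT$ or elsewhere, can distinguish the correct answer from the naive one. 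What is at stake is a degree along $[\PP^1/\Gamma]$, which is global.

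What is missing is a comparison across $\Delta_R$ with a bundle defined on all of $F$. This is what the paper does:
\begin{enumerate}
\item The natural map $N_{(U^{ss})^R/U^{ss}}\to N_{(X^{ss})^R/X^{ss}}$ is injective, with cokernel the excess bundle on $\Delta_R$, which has exactly one line in each weight $\pm1,\dots,\pm4$.
\item This gives $L_j=L'_j\otimes\cO(-\Delta_R)$ and $\det M_j=\det M'_j\otimes\cO(-\Delta_R)$.
\item The identities $\cO(\Delta_R)\cong\cO(6\chartheta)$ and $\cO_X(1)|_F\cong\cO(2\chartheta)$ then cancel the $6\chartheta$, once the $\fg/\ft$ piece is subtracted in weights $\pm1,\pm2$.
\end{enumerate}
Your Luna-slice chart at $\pHT$ could be repurposed to prove exactly this first-order vanishing along $\Delta_R$. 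In the blowup chart, the octic coordinate directions map to the exceptional coordinate times the corresponding quartic directions. As written, however, Step~2 only recomputes ranks, which Step~1 already gave.
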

While the decomposition of each fiber of $N_{\cZ_\pTac/\cPK}$ into weight spaces with respect to the identity component of the stabilizer (isomorphic to $\Gm$) is immediate, it is not clear how these weight spaces fit together globally over $\cZ_\pTac$. The key point is to identify the corresponding subbundles with the descended bundles $ N_2, N_3, N_4$.

\smallskip

The proof proceeds in several steps. 
We first show that $N_{\cZ_\pTac/\cPK}$ descends along the square-root stack morphism described in Proposition~\ref{prop:gerbe}:
\[\varrho:\cZ_\pTac\lra \big[Z_\pTac/\bar G\big]\cong \big[(U^{ss})^R/N_{\bar G}(\bar R)\big].\]
\begin{lemma}\label{lem:normal.reduction}
	There is a canonical isomorphism $N_{\cZ_\pTac/\cPK}\cong \varrho^*\cN$, where 
	\[\cN:=\big[(N_{Z_\pTac/U^{ss}}|_{(U^{ss})^R})/N_{\bar G}(\bar R)\big].\]
\end{lemma}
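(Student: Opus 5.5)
We will compare the two normal bundles through the square-root stack structure $U^{1/2,ss}\to U^{ss}$ along $E_\DC$, working $\bar G$-equivariantly and then descending. First we reduce to the $R$-fixed loci. Since $Z_\pTac^{1/2}\cong G\times^{N_G(R)}(U^{1/2,ss})^R$ and $Z_\pTac\cong G\times^{N_G(R)}(U^{ss})^R$, the equivariant normal bundles satisfy
\[N_{Z^{1/2}_\pTac/U^{1/2,ss}}\cong G\times^{N_G(R)}\big(N_{Z^{1/2}_\pTac/U^{1/2,ss}}|_{(U^{1/2,ss})^R}\big),\]
and similarly for $Z_\pTac\subset U^{ss}$. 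Passing to quotients, $N_{\cZ_\pTac/\cPK}$ is the descent of the first restricted bundle to $[(U^{1/2,ss})^R/N_{\bar G}(\bar R)]$, and $\cN$ is the descent of the second. It therefore suffices to construct a canonical $N_{\bar G}(\bar R)$-equivariant isomorphism between $N_{Z^{1/2}_\pTac/U^{1/2,ss}}|_{(U^{1/2,ss})^R}$ and the pullback of $N_{Z_\pTac/U^{ss}}|_{(U^{ss})^R}$ along $\sigma:(U^{1/2,ss})^R\to (U^{ss})^R$.

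Let $\pi:U^{1/2,ss}\to U^{ss}$ be the root stack. Because $\pi$ is an isomorphism away from $E_\DC$, the differential gives a natural map $d\pi:T_{U^{1/2,ss}}\to \pi^*T_{U^{ss}}$. This map is injective with cokernel supported on $\pi^{-1}E_\DC$, and that cokernel is the normal direction to $E_\DC$. By Lemma~\ref{lem:Rfixedlocus2}, $Z^{1/2}_\pTac=\pi^{-1}(Z_\pTac)$ and the restriction of $\pi$ to it is the root stack along $Z_\pTac\cap E_\DC$. By Lemma~\ref{lem:Rfixedlocus} and Proposition~\ref{prop:transverse}, $Z_\pTac$ meets $E_\DC$ transversely. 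Transversality means the normal direction of $E_\DC$ is tangent to $Z_\pTac$. Hence the defect of $d\pi$ lies entirely inside the tangent bundle of $Z_\pTac$.

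We then compare the two tangent sequences
\[0\to T_{Z^{1/2}_\pTac}\to T_{U^{1/2,ss}}|_{Z^{1/2}_\pTac}\to N_{Z^{1/2}_\pTac/U^{1/2,ss}}\to 0\]
and the pullback under $\pi$ of the corresponding sequence for $Z_\pTac\subset U^{ss}$. The differential $d\pi$ maps the first sequence to the second. The induced map on the left terms and the induced map on the middle terms have isomorphic cokernels, namely the same line bundle on the preimage of $E_\DC$ twisted appropriately. This holds because the root stack along a transverse divisor restricts to the root stack along its trace on $Z_\pTac$. The snake lemma then shows that the induced map on normal bundles is an isomorphism. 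Since it is built from $d\pi$, it is canonical and $\bar G$-equivariant, and it descends to the desired isomorphism $N_{\cZ_\pTac/\cPK}\cong\varrho^*\cN$.

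The main obstacle is making the cokernel comparison rigorous on the stacky locus $\Delta_R$, where the inertia is $\Gamma$. The plan is to verify it étale locally, using the chart $[(\A^2_{a,b}\setminus\{0\})/\Gamma]$ of Lemma~\ref{lem:Rfixedlocus2} together with a $\bar T$-equivariant transverse coordinate. In these coordinates $U^{1/2}$ is locally $[\A^1_e\times\A^n/\mu_2]\to\A^1_{e^2}\times\A^n$, with $Z_\pTac$ containing the $e$-direction, and the claim reduces to a direct computation of differentials.
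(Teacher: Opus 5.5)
Your proposal is correct and follows the paper's proof. Both use the induction isomorphism $G\times^{N_G(R)}\big(N_{Z_\pTac/U^{ss}}|_{(U^{ss})^R}\big)\cong N_{Z_\pTac/U^{ss}}$, together with the fact that, because $Z_\pTac$ meets $E_\DC$ transversely, the normal bundle in the square-root stack is pulled back from the normal bundle of $Z_\pTac\subset U^{ss}$. Your differential/snake-lemma argument makes the second step explicit, where the paper only asserts it; note that the snake lemma needs the natural map of cokernels $N_{Z_\pTac\cap E_\DC/Z_\pTac}\to N_{E_\DC/U^{ss}}|_{Z_\pTac\cap E_\DC}$ to be an isomorphism (which is exactly transversality), not merely an abstract isomorphism between the two cokernels.
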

\begin{proof}
	The morphism $\varrho$ is the square-root stack morphism along the divisor induced by $E_\DC$. Since $Z_\pTac$ intersects $E_\DC$ transversely by Lemma~\ref{lem:Rfixedlocus}, it follows that the normal bundle $N_{\cZ_\pTac/\cPK}$ is pulled back from the normal bundle of $[Z_\pTac/\bar G]\subset [U^{ss}/\bar G]$.
	Hence it suffices to identify the latter normal bundle with $\cN$. This follows from the fact that the natural map
	\[G\times^{N_{G}(R)}\big(N_{Z_\pTac/U^{ss}}|_{(U^{ss})^R}\big)\lra N_{Z_\pTac/U^{ss}}, \quad [g,v]\longmapsto g\cdot v\]
	is an isomorphism of $G$-vector bundles over $G\times^{N_{G}(R)}(U^{ss})^R\cong Z_\pTac$.
\end{proof}

Since $(U^{ss})^R\subset Z_\pTac\subset U^{ss}$, there is an $N_G(R)$-equivariant exact sequence
\beq\label{eq:ses.normal}
0\lra N_{(U^{ss})^R/Z_\pTac}\lra N_{(U^{ss})^R/U^{ss}}\lra N_{Z_\pTac/U^{ss}}|_{(U^{ss})^R}\lra 0.
\eeq
Thus, to compute $\cN$, it suffices to understand the first two terms of \eqref{eq:ses.normal}.

To describe these bundles equivariantly, we introduce the following notation.
For a character $\chi$ of $\bar T$ and a $\bar T$-scheme $Z$, we denote by
\[\cO_Z(\chi):=\cO_Z\otimes \C_\chi\] 
the associated $\bar T$-equivariant line bundle on $Z$. 
We use the same notation with $T$ in place of $\bar T$, and omit the subscript when the underlying scheme is clear from the context.
\begin{lemma}\label{lem:normal.g/t}
	There is a canonical $N_{G}(R)$-equivariant isomorphism 
	\[N_{(U^{ss})^R/Z_\pTac}\cong\cO_{(U^{ss})^R}\otimes(\fg/\ft),\]
	where $\fg$ and $\ft$ denote the Lie algebras of $G$ and $T$ respectively, endowed with the adjoint action of $N_{G}(R)$. 
	In particular,
	\[N_{(U^{ss})^R/Z_\pTac}\cong \big(\cO(\mu)\oplus \cO(\nu)\big)\oplus \big(\cO(-\nu)\oplus \cO(-\mu)\big)\oplus \big(\cO(\mu-\nu)\oplus \cO(\nu-\mu)\big).\]
\end{lemma}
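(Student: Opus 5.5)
The plan is to use the identification $Z_\pTac\cong G\times^{N_G(R)}(U^{ss})^R$ from the previous subsection and compute the normal bundle of the zero section inside an associated bundle. The map
\[a\colon G\times (U^{ss})^R\lra Z_\pTac,\qquad (g,z)\mapsto g\cdot z,\]
is an $N_G(R)$-torsor, with $N_G(R)$ acting by $h\cdot(g,z)=(gh^{-1},hz)$. Its restriction to $N_G(R)\times (U^{ss})^R$ has image $(U^{ss})^R$. Since $(U^{ss})^R$ is $N_G(R)$-stable, the normal bundle of $(U^{ss})^R$ in $Z_\pTac$ is identified with the normal bundle of $N_G(R)\times (U^{ss})^R$ in $G\times(U^{ss})^R$, descended along the torsor. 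Restricting to the identity slice $\{e\}\times (U^{ss})^R$, this normal bundle is $\cO\otimes \fg/\mathfrak{n}$, where $\mathfrak{n}=\mathrm{Lie}\,N_G(R)=\ft$.

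Concretely, differentiating $a$ at $(e,z)$ gives the infinitesimal action map $\fg\to T_z Z_\pTac$, $\xi\mapsto \xi\cdot z$. Composing with the projection to the normal space yields a map
\[\cO_{(U^{ss})^R}\otimes\fg\lra N_{(U^{ss})^R/Z_\pTac}.\]
This map is surjective, since $Z_\pTac=G\cdot(U^{ss})^R$ and $a$ is smooth. It contains $\ft$ in its kernel, since $T$ preserves $(U^{ss})^R$. A rank count gives
\[\dim Z_\pTac-\dim (U^{ss})^R=\dim G-\dim N_G(R)=8-2=6=\dim\fg/\ft,\]
so the kernel is exactly $\ft$. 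For $N_G(R)$-equivariance, note that for $h\in N_G(R)$ we have $h\cdot(\xi\cdot z)=(\mathrm{Ad}_h\xi)\cdot(hz)$. Hence the map is equivariant for the adjoint action on $\fg/\ft$, and the isomorphism is canonical.

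The root decomposition then gives the explicit splitting. Write $\fg/\ft=\bigoplus_{i\ne j}\fg_{ij}$, where $T$ acts on $E_{ij}$ by $t_it_j^{-1}$. The character of $E_{31}$ is $t_3t_1^{-1}=\mu$ and that of $E_{32}$ is $\nu$; these pair as $\cO(\mu)\oplus\cO(\nu)$. Their negatives come from $E_{13},E_{23}$, giving $\cO(-\mu)\oplus\cO(-\nu)$. Finally $E_{21}$ has character $t_2t_1^{-1}=\mu-\nu$ and $E_{12}$ has character $\nu-\mu$. The $\mu_2$ factor of $N_G(R)$ swaps indices $1$ and $2$, so it permutes the summands within each listed pair, as the statement records.

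The main care point is that the roots are characters of $T$, not a priori of $\bar T$. However, roots are trivial on the center $\mu_3$, so they descend to $\bar T$, consistent with the definition of $\mu,\nu$ on $\bar T$. One must also confirm that the stabilizer Lie algebra equals $\ft$ at every point of $(U^{ss})^R$, which is what the rank count uses. This holds because the stabilizers there have identity component $R$ (or $T$ acts through $\charthetabar$), so the orbit map has generic rank $6$, and the smoothness of $Z_\pTac$ together with the torsor structure makes the rank constant.
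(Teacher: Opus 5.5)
Your proposal is correct and follows the paper's argument: via $Z_\pTac\cong G\times^{N_G(R)}(U^{ss})^R$, the inclusion becomes $N_G(R)\times^{N_G(R)}(U^{ss})^R\hookrightarrow G\times^{N_G(R)}(U^{ss})^R$, whose normal bundle is trivial with fiber $T_e(G/N_G(R))=\fg/\ft$, and you make explicit the differential and rank count, as well as the root-space bookkeeping ($E_{31},E_{32}$ giving $\mu,\nu$, and so on), that the paper leaves implicit. One small correction to your last paragraph: the stabilizer Lie algebra at points of $(U^{ss})^R$ is $\mathrm{Lie}(R)$, not $\ft$; what you need is that the kernel of $\fg\to N_z$ is $\ft$, and your surjectivity-plus-rank argument already proves this at every point, so no further check is required.
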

\begin{proof}
	The closed immersion $(U^{ss})^R\hookrightarrow Z_\pTac$ can be identified with
	\[
	(U^{ss})^R\cong N_{G}(R)\times^{N_{G}(R)}(U^{ss})^R\hooklongrightarrow  G\times^{N_{G}(R)}(U^{ss})^R \cong Z_\pTac.
	\]
	Hence its normal bundle is the trivial bundle with fiber the tangent space of $G/N_G(R)$ at the identity point, namely $\fg/\ft$.
\end{proof}

We compute $N_{(U^{ss})^R/U^{ss}}$ by comparison with $N_{(X^{ss})^R/X^{ss}}$.
We begin by describing the latter explicitly, and then account for the correction along the exceptional divisor.
Let $\C[x,y,z]_4$ denote the space of quartic polynomials, with the natural $G$-action.

\begin{lemma}\label{lem:normal.Rfixed.X}
	There is a canonical $N_G(R)$-equivariant isomorphism
	\[N_{(X^{ss})^R/X^{ss}}\cong\frac{\C[x,y,z]_4}{\langle x^2y^2,xyz^2,z^4\rangle}\otimes \cO_X(1)\big|_{(X^{ss})^R}.\]
	The line bundle $\cO_X(1)\big|_{(X^{ss})^R}$ has the trivial $R\rtimes\mu_2$-linearization. 
	Consequently, the normal bundle admits a decomposition
	\[N_{(X^{ss})^R/X^{ss}}\cong (M'_{-1}\oplus M'_{1})\oplus (M'_{-2}\oplus M'_{2})\oplus (L'_{-3}\oplus L'_{3})\oplus (L'_{-4}\oplus L'_{4}), \]
	where $M'_j$ (resp.\ $L'_j$) is a $T$-equivariant vector bundle of rank $2$ (resp.\ rank $1$) with $R$-weight $j$,
	and the involution $\mu_2$ acts by interchanging the summands of weights $-j$ and $j$.
	More explicitly, these summands are given by
	\[
	\begin{aligned}
		M'_{-1}&\cong \C\cdot x^2yz \otimes\cO_X(1)\oplus \C\cdot xz^3\otimes \cO_X(1),\\
		M'_{1}&\cong \C\cdot xy^2z \otimes \cO_X(1)\oplus \C\cdot yz^3\otimes \cO_X(1),\\
		M'_{-2}&\cong \C\cdot x^3y \otimes \cO_X(1)\oplus \C\cdot x^2z^2\otimes \cO_X(1),\\
		M'_{2}&\cong \C\cdot xy^3 \otimes \cO_X(1)\oplus \C\cdot y^2z^2\otimes \cO_X(1),\\
		L'_{-3}&\cong \C\cdot x^3z\otimes \cO_X(1),\\
		L'_{3}&\cong \C\cdot y^3z\otimes \cO_X(1),\\
		L'_{-4}&\cong \C\cdot x^4\otimes \cO_X(1),\\
		L'_{4}&\cong \C\cdot y^4\otimes \cO_X(1).
	\end{aligned}
	\]
\end{lemma}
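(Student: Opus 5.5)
The plan is to compute everything at the level of the ambient linear space $\PP(\C[x,y,z]_4)$ and then restrict to the fixed locus. Since $X=\PP(V)$ with $V=\C[x,y,z]_4$, the $G$-equivariant Euler sequence identifies the tangent bundle as
\[T_X\cong \big(V\otimes \cO_X(1)\big)/\cO_X.\]
The fixed locus $(X^{ss})^R$ is the open subset of $\PP(V^R)$, where $V^R=\langle x^2y^2,xyz^2,z^4\rangle$ is the weight-zero subspace of $V$. Restricting the Euler sequence to $\PP(V^R)$, the tangent bundle of the fixed locus is $(V^R\otimes\cO(1))/\cO$. Taking the quotient, we obtain a canonical $N_G(R)$-equivariant isomorphism
\[N_{(X^{ss})^R/X^{ss}}\cong (V/V^R)\otimes \cO_X(1)|_{(X^{ss})^R},\]
because the tautological $\cO$ lies inside $V^R\otimes\cO(1)$. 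The normalizer $N_G(R)$ preserves $V^R$, and all the maps above are equivariant, so the isomorphism is equivariant.

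For the claim about $\cO_X(1)|_{(X^{ss})^R}$, I would argue as follows. The fiber of $\cO_X(-1)$ at a point $[f]$ is the line $\C f$, with $f=x^2y^2+sxyz^2+pz^4$. The group $R$ acts trivially on each such $f$. The involution $x\leftrightarrow y$ in $\mu_2$ also fixes each such $f$ (the natural lift permuting the first two coordinates, with determinant handled by a sign on $z$, or a sign change that fixes even-degree monomials in $z$). So the action of $R\rtimes\mu_2$ on the fibers is trivial. The point that needs care is choosing the lift of $\mu_2$ into $N_G(R)\subset\SL_3$. I would take $(x,y,z)\mapsto(y,x,-z)$, which has determinant $1$. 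Every monomial in $V^R$ has even degree in $z$, so this lift fixes $f$.

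The decomposition then comes from weight bookkeeping. Under $t\mapsto\diag(t,t^{-1},1)$, the monomial $x^ay^bz^c$ has $R$-weight $a-b$ up to a sign convention. I would list the $15-3=12$ monomials not in $V^R$ and group them by weight:
\begin{itemize}
\item weight $\mp1$: $\{x^2yz,\,xz^3\}$ and $\{xy^2z,\,yz^3\}$;
\item weight $\mp2$: $\{x^3y,\,x^2z^2\}$ and $\{xy^3,\,y^2z^2\}$;
\item weight $\mp3$: $x^3z$ and $y^3z$;
\item weight $\mp4$: $x^4$ and $y^4$.
\end{itemize}
These match the stated summands, with the sign convention fixed so that $x$-heavy monomials have negative weight. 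Each monomial spans a $T$-stable line, so each group gives a $T$-equivariant subbundle after tensoring with $\cO_X(1)$. The lift of $\mu_2$ swaps $x$ and $y$ up to sign, so it exchanges weight $j$ with weight $-j$.

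The only mild obstacle is consistency of signs and conventions: the weight sign convention, and the precise lift of $\mu_2$. Neither affects the statement beyond the labeling.
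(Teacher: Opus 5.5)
Your proposal is correct and follows the paper's own proof. The Euler sequence restricted to $\PP(V^R)$ identifies the normal bundle with $(V/V^R)\otimes\cO_X(1)$, the triviality of the $R\rtimes\mu_2$-linearization is read off from the action on the fiber $\C\cdot f$ of $\cO_X(-1)$, and the decomposition is then weight bookkeeping. Your explicit lift $(x,y,z)\mapsto(y,x,-z)$ in $\SL_3$, together with the observation that $x^2y^2$, $xyz^2$, $z^4$ all have even $z$-degree, shows that $f$ itself (and not merely the point $[f]$) is fixed, which the paper's one-line argument leaves implicit.
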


\begin{proof}
	The first assertion follows immediately from the Euler sequence for $X$, since $X^R\cong \PP^2$ is the projectivization of the subspace
	\[\langle x^2y^2, xyz^2, z^4\rangle \subset \C[x,y,z]_4.\]
	Moreover, every point $[f]\in X^R$ is fixed by $R\rtimes\mu_2$, so the fiber $\cO_X(-1)\big|_{[f]}\cong \C\cdot f$ is the trivial $R\rtimes \mu_2$-representation. Therefore, $\cO_X(1)\big|_{(X^{ss})^R}$ has the trivial $R\rtimes\mu_2$-linearization.
\end{proof}

We compare $N_{(U^{ss})^R/U^{ss}}$ and $N_{(X^{ss})^R/X^{ss}}$.
The natural morphism
\beq\label{eq:normaltonormal}N_{(U^{ss})^R/U^{ss}}\lra N_{(X^{ss})^R/X^{ss}},\eeq
is an isomorphism away from $\Delta_R$, and its cokernel is supported on $\Delta_R$. 
\begin{lemma}\label{lem:normaltonormal}
	The map \eqref{eq:normaltonormal} fits into a short exact sequence
	\beq\label{eq:ses.normaltonormal}0\lra N_{(U^{ss})^R/U^{ss}}\lra N_{(X^{ss})^R/X^{ss}}\lra \iota_{\Delta_R*}E_\mathrm{exc}\lra 0,\eeq
	where $\iota_{\Delta_R}:\Delta_R\hookrightarrow (U^{ss})^R$ is the inclusion, and $E_\mathrm{exc}$ is the excess bundle
	\[E_\mathrm{exc}:=\Big(N_{(X^{ss})^R/X^{ss}}\big|_{\Delta_R}\Big)\big/N_{\Delta_R/Z_\DC}\]
	associated to the Cartesian diagram
	\beq\label{eq:diagram.DeltaR}\begin{tikzcd}
		\Delta_R\arrow[r,hook]\arrow[d,hook']&Z_\DC\arrow[d,hook']\\ (X^{ss})^R\arrow[r,hook]&X^{ss}.
	\end{tikzcd}\eeq
	Moreover, the short exact sequence \eqref{eq:ses.normaltonormal} is $N_G(R)$-equivariant, and the rank $8$ bundle $E_{\mathrm{exc}}$ has $R$-weights $\pm1,\pm2,\pm3,\pm 4$, each with multiplicity one.
\end{lemma}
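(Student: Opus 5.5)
We use the standard description of how normal bundles change under blowups. Near $\Delta_R$ everything is governed by the Cartesian square \eqref{eq:diagram.DeltaR}. By Lemma~\ref{lem:Rfixedlocus}, $(U^{ss})^R\to (X^{ss})^R$ is an isomorphism, so $(U^{ss})^R$ is the proper transform of $(X^{ss})^R$. It meets $E_\DC$ exactly in $\Delta_R$, and this intersection is transverse.

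First we check that $(X^{ss})^R$ and $Z_\DC$ meet cleanly along $\Delta_R$. Both are smooth, and on the level of tangent spaces $T\Delta_R=T(X^{ss})^R\cap TZ_\DC$, since $\Delta_R=(Z_\DC)^R$ is the fixed locus of a smooth variety. We then invoke the standard fact about blowups along a center meeting a smooth subvariety $W$ cleanly. With $\tilde W$ the proper transform and $\pi:\tilde W\to W$ the blowup along $W\cap Z$, there is an exact sequence
\[0\to N_{\tilde W/U}\to \pi^*N_{W/X}\to \iota_*\big(N_{W/X}|_{W\cap Z}/N_{W\cap Z/Z}\big)\otimes(\text{twist})\to 0.\]
Here $\Delta_R$ is a divisor in $(X^{ss})^R$, so $\pi$ is an isomorphism. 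The tautological twist $\cO_{E}(-1)$ restricted to $\Delta_R$ should be identified with the conormal direction $N_{\Delta_R/(X^{ss})^R}$. We will verify this identification by a local computation: choose local coordinates adapted to the clean intersection, with $Z_\DC=\{v=0\}$ and $(X^{ss})^R=\{w=0\}$, and compute directly in a blowup chart. The main delicate point is checking that this twist is equivariantly trivialized, or at least has $R$-weight zero. It does, because $R$ acts trivially on $(X^{ss})^R$, hence on its normal direction to $\Delta_R$. So the twist does not affect $R$-weights. This gives \eqref{eq:ses.normaltonormal} with $E_{\mathrm{exc}}$ as stated, up to a line-bundle twist. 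All constructions are canonical, so the sequence is $N_G(R)$-equivariant.

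For the rank and weights, $\dim X^{ss}=14$, $\dim (X^{ss})^R=2$, $\dim Z_\DC=\dim \bar G/\PGL_2=5$, and $\dim\Delta_R=1$. Hence $\operatorname{rank} N_{(X^{ss})^R/X^{ss}}=12$ and $\operatorname{rank}N_{\Delta_R/Z_\DC}=4$, so $E_{\mathrm{exc}}$ has rank $8$. The weights of $N_{(X^{ss})^R/X^{ss}}$ are read off from Lemma~\ref{lem:normal.Rfixed.X}: $\pm1,\pm2$ each with multiplicity two and $\pm3,\pm4$ each with multiplicity one. For $N_{\Delta_R/Z_\DC}$, at a point $Q_2$ we have $Z_\DC\cong\bar G/\PGL_2$, so $N_{\Delta_R/Z_\DC}\cong T(\bar G/\PGL_2)/T(\Delta_R)$. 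Here $T(\bar G/\PGL_2)=\mathfrak{sl}_3/\mathfrak{so}_3$ for the conic $xy+z^2$, and this quotient is $\Sym^2$ of the standard representation minus the trivial part. Under $R$, the standard representation has weights $1,-1,0$, so $\Sym^2$ has weights $2,-2,1,-1,0,0$. Removing one $0$ gives $T_{Q_2}Z_\DC$ with weights $\pm1,\pm2,0$. The weight-$0$ part is $T\Delta_R$, leaving $\pm1,\pm2$ each once. Subtracting, $E_{\mathrm{exc}}$ has weights $\pm1,\pm2,\pm3,\pm4$, each with multiplicity one.

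Equivalently, $E_{\mathrm{exc}}$ is the restriction of $H^0(\cO_Q(8))$ of Lemma~\ref{lem:etalenbd.DC} with its invariant line removed. Its weights are those of $u^iv^{8-i}$ for $i\ne4$, namely $\pm1,\dots,\pm4$, which gives a consistency check.
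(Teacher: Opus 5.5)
Your proof has a gap at the ``standard fact'' you invoke: it is misstated. As a result you only obtain \eqref{eq:ses.normaltonormal} ``up to a line-bundle twist'', whereas the lemma identifies the cokernel with $\iota_{\Delta_R*}E_{\mathrm{exc}}$ itself, $N_G(R)$-equivariantly. For a blowup along $Z_\DC$ and a smooth $W$ meeting it cleanly, there is no twist at all. The cokernel of $N_{\widetilde W/U}\to \pi^*N_{W/X^{ss}}$ is exactly the pushforward of the pulled-back excess bundle. The line $\cO_{E_\DC}(-1)|_{\Delta_R}\cong N_{\Delta_R/(X^{ss})^R}$ does appear, but as the subbundle of $N_{Z_\DC/X^{ss}}|_{\Delta_R}$ that is divided out, not as a tensor factor.

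Checking that a putative twist has $R$-weight zero would not close the gap either. For example, $\cO(\Delta_R)\cong\cO(6\chartheta)$ on $(U^{ss})^R$ (Lemma~\ref{lem:linear.relation}) has $R$-weight zero but is not $T$-equivariantly trivial. Your twist would in fact be harmless, but only because $\Delta_R$ is a single $T$-orbit whose stabilizer acts trivially on $(X^{ss})^R$. You do not give that argument, and the chart computation meant to settle the point is only announced.

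The repair is short and makes your route complete. Write $f\colon U\to X^{ss}$ for the blowup, $j\colon E_\DC\hookrightarrow U$, $g\colon E_\DC\to Z_\DC$, $W=(X^{ss})^R$ and $\widetilde W=(U^{ss})^R$. Restrict the blowup tangent sequence
\[0\lra T_U\lra f^*T_{X^{ss}}\lra j_*\big(g^*N_{Z_\DC/X^{ss}}/\cO_{E_\DC}(-1)\big)\lra 0\]
to $\widetilde W$. It stays exact because a local equation of $E_\DC$ is a nonzerodivisor on $\widetilde W$. The last term becomes $\iota_{\Delta_R*}$ of the restriction to $\Delta_R=E_\DC\cap\widetilde W$.

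Since $\widetilde W\to W$ is an isomorphism (Lemma~\ref{lem:Rfixedlocus}), so is $T_{\widetilde W}\to T_W$. The snake lemma, applied to the two tangent--normal sequences, then shows at once that \eqref{eq:normaltonormal} is injective. It also shows that its cokernel is $\iota_{\Delta_R*}\big(N_{Z_\DC/X^{ss}}|_{\Delta_R}/N_{\Delta_R/W}\big)$, using $\cO_{E_\DC}(-1)|_{\Delta_R}=N_{\Delta_R/W}\subset N_{Z_\DC/X^{ss}}|_{\Delta_R}$. By clean intersection, both this quotient and $E_{\mathrm{exc}}$ are canonically $T_{X^{ss}}/(T_{Z_\DC}+T_W)$ along $\Delta_R$. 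All maps are canonical, hence $N_G(R)$-equivariant.

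The paper argues differently. It deduces injectivity from the kernel being torsion, then identifies the image over $\Delta_R$ fiberwise through $N_{(U^{ss})^R/U^{ss}}|_{\Delta_R}\cong N_{\Delta_R/E^{ss}_\DC}$. The corrected version of your route also shows directly that the cokernel is annihilated by the ideal of $\Delta_R$, a point the fiberwise argument leaves implicit.

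Your clean-intersection check and your rank and weight count are correct and agree with the paper's. You use $\mathfrak{sl}_3/\mathfrak{so}_3$ where the paper uses $(xy+az^2)\cdot\langle x^2,xz,yz,y^2\rangle$.
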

\begin{proof}
	Since the map \eqref{eq:normaltonormal} is an isomorphism away from $\Delta_R$ and $N_{(U^{ss})^R/U^{ss}}$ is locally free, its kernel is torsion, hence zero.
	Over $\Delta_R$, \eqref{eq:normaltonormal} restricts to
	\[N_{\Delta_R/E_\DC^{ss}}\lra N_{(X^{ss})^R/X^{ss}}|_{\Delta_R}\]
	where $E_\DC^{ss}=E_\DC\cap U^{ss}$. 
	By \eqref{eq:diagram.DeltaR}, the image identifies with $N_{\Delta_R/Z_\DC}$, and hence the cokernel is $E_\mathrm{exc}$. 
	Therefore the cokernel of \eqref{eq:normaltonormal} is $\iota_{\Delta_R*}E_\mathrm{exc}$.
	
	The short exact sequence \eqref{eq:ses.normaltonormal} is $N_G(R)$-equivariant. To compute the $R$-weights of the excess bundle, observe that $N_{\Delta_R/Z_\DC}$ has $R$-weights $\pm1,\pm2$, each with multiplicity one. Indeed, its fiber at the point $[(xy+az^2)^2]$ with $a\neq 0$ is canonically identified with the subspace
	\[(xy+az^2)\cdot\langle x^2,xz,yz,y^2\rangle\]
	inside the tangent space
	$\C[x,y,z]_4\big/\langle (xy+az^2)^2\rangle$
	of $Z_\DC$ at $[(xy+az^2)^2]$.
	The last assertion follows from the fact that $N_{(X^{ss})^R/X^{ss}}$ has $R$-weights $-1,1,-2,2,-3,3,-4,4$ with multiplicities $2,2,2,2,1,1,1,1$ respectively.
\end{proof}

To refine \eqref{eq:ses.normaltonormal} according to $R$-weights, recall that $N_G(R)=T\rtimes\mu_2$ and $R$ is central in $T$. On a $T$-scheme with trivial $R$-action, any $T$-equivariant vector bundle decomposes canonically into its $R$-weight subbundles. 

\begin{lemma}\label{lem:normal.Rfixed.U}
	The normal bundle $N_{(U^{ss})^R/U^{ss}}$ decomposes as
	\[N_{(U^{ss})^R/U^{ss}}\cong (M_{-1}\oplus M_{1})\oplus (M_{-2}\oplus M_{2})\oplus (L_{-3}\oplus L_{3})\oplus (L_{-4}\oplus L_{4}), \]
	as an $N_G(R)$-equivariant vector bundle,
	where $M_j$ (resp.\ $L_j$) is a $T$-equivariant vector bundle of rank $2$ (resp.\ rank $1$) with $R$-weight $j$. 
	The involution $\mu_2$ acts by interchanging the summands of weights $-j$ and $j$.
	Moreover,
	\[\det M_j=\det M'_j\otimes \cO(-\Delta_R) \and L_j=L'_j\otimes \cO(-\Delta_R)\]
	as $T$-equivariant line bundles.
\end{lemma}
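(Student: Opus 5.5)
Since $R$ is central in $T$ and acts trivially on $(U^{ss})^R$, every $T$-equivariant vector bundle on $(U^{ss})^R$ splits canonically into $R$-weight subbundles. Applying this to \eqref{eq:ses.normaltonormal}, which is $N_G(R)$-equivariant by Lemma~\ref{lem:normaltonormal}, gives for each $j$ an exact sequence
\[0\lra N_j^U\lra N'_j\lra \iota_{\Delta_R*}(E_{\mathrm{exc}})_j\lra 0,\]
where $N_j^U$ and $N'_j$ denote the weight-$j$ parts of $N_{(U^{ss})^R/U^{ss}}$ and $N_{(X^{ss})^R/X^{ss}}$. Set $M_j:=N_j^U$ for $|j|\le 2$ and $L_j:=N_j^U$ for $|j|=3,4$.

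By Lemma~\ref{lem:normal.Rfixed.X}, $N'_j$ equals $M'_j$ (rank $2$) or $L'_j$ (rank $1$). The sequence shows that $N_j^U$ is locally free of the same rank, being a weight summand of a locally free sheaf. The $\mu_2\subset N_G(R)$ conjugates $R$ by inversion, so it carries weight $j$ to weight $-j$. This yields the $N_G(R)$-equivariant decomposition with $\mu_2$ interchanging the summands of weights $\pm j$.

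For the determinant formulas, use Lemma~\ref{lem:normaltonormal}: $E_{\mathrm{exc}}$ has each weight $\pm1,\dots,\pm4$ with multiplicity one. Hence $(E_{\mathrm{exc}})_j$ is a line bundle on the smooth divisor $\Delta_R$, which is smooth by Lemma~\ref{lem:Rfixedlocus}. Each weight-$j$ sequence is therefore an elementary modification of $N'_j$ along the Cartier divisor $\Delta_R$ with rank-one quotient. Taking determinants, with $T$-equivariance, gives
\[\det N_j^U\cong \det N'_j\otimes \cO(-\Delta_R)\otimes(\text{correction}),\]
where the correction is the determinant of $\iota_{\Delta_R*}$ of a line bundle on a divisor. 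That determinant is $\cO(\Delta_R)$ regardless of which line bundle is pushed forward, using the resolution $0\to\cO(-\Delta_R)\otimes\widetilde F\to\widetilde F\to\iota_*F\to 0$ for any extension $\widetilde F$ of $F$. So $\det N_j^U\cong\det N'_j\otimes\cO(-\Delta_R)$. Here $\cO(\Delta_R)$ carries its canonical $T$-linearization, the one induced by the $T$-invariant equation $s^2-4p$. In rank one this says exactly $L_j=L'_j\otimes\cO(-\Delta_R)$, i.e.\ $L_j$ is the ideal-twisted subsheaf.

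The main obstacle is keeping $T$-equivariance exact in the determinant identity. The discriminant $s^2-4p$ may have a nontrivial $T$-weight, and the quotient line bundle on $\Delta_R$ carries a $T$-character. I would handle this by computing directly on $(U^{ss})^R\cong\A^2_{s,p}\setminus\{0\}$. By the $R$-weight count, over $\Delta_R$ the map $N_j^U|_{\Delta_R}\to N'_j|_{\Delta_R}$ has image of corank one in each weight. Locally one then chooses equivariant frames in which the map is $\mathrm{diag}(1,\mathrm{disc})$, where $\mathrm{disc}$ is the section $s^2-4p$ of $\cO(\Delta_R)$. This exhibits $\det N_j^U\to\det N'_j$ as multiplication by the canonical section of $\cO(\Delta_R)$, which is $T$-equivariant by construction and gives the claimed equivariant isomorphism.
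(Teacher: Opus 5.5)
Your proof is correct. Its skeleton matches the paper's: split \eqref{eq:ses.normaltonormal} into $R$-weight pieces, read off ranks and the $\mu_2$-symmetry, and reduce the determinant formula to $\det(\iota_{\Delta_R*}(E_{\mathrm{exc}})_j)\cong\cO(\Delta_R)$.

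The difference is in how you make that last step $T$-equivariant.

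\textbf{The paper's route.} It uses that $\Delta_R$ is a single $T$-orbit with stabilizer $R$. Hence every $T$-equivariant line bundle on $\Delta_R$ is determined by its $R$-weight. So $(E_{\mathrm{exc}})_j\cong\cO(j\mu)|_{\Delta_R}$ extends equivariantly, and the two-term resolution then gives the determinant.

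\textbf{Your route.} Your first formulation (``for any extension $\widetilde F$'') tacitly assumes that such an equivariant extension exists. That existence is exactly what the orbit argument provides, and you do not justify it. Your last paragraph, however, removes the need for it. Write $\phi\colon N^U_j\to N'_j$ for the weight-$j$ map. Then $\det\phi$ is a $T$-invariant section of $\det N'_j\otimes(\det N^U_j)^{-1}$. Its divisor is exactly $\Delta_R$, with simple vanishing, because the cokernel is a line bundle on the reduced divisor $\Delta_R$. So the image of $\det\phi$ is $\det N'_j\otimes I_{\Delta_R}$ equivariantly, where $I_{\Delta_R}$ is the ideal sheaf.

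This version is more general: it uses neither the orbit structure nor an extension. The local frames you use to check simple vanishing also need not be equivariant, since only the invariance of $\det\phi$ matters.

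\textbf{One slip to fix.} $s^2-4p$ is not $T$-invariant. By the computation in Lemma~\ref{lem:linear.relation}, it is semi-invariant of weight $6\theta$. What is invariant is the divisor $\Delta_R$. So $\cO(\Delta_R)$ must carry the linearization for which its canonical section is invariant. This is the linearization the paper uses, and it gives $\cO(\Delta_R)\cong\cO(6\theta)$.
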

\begin{proof}
	Let $(E_{\mathrm{exc}})_j$ denote the $T$-equivariant $R$-weight-$j$ subbundle of $E_{\mathrm{exc}}$. 
	The short exact sequence \eqref{eq:ses.normaltonormal} decomposes into eight short exact sequences, one for each $R$-weight $j$:
	\[\begin{aligned}
		0\lra M_j\lra M_j'\lra \iota_{\Delta_R*}(E_{\mathrm{exc}})_j\lra 0 \quad \text{for }j=\pm1,\pm2;\\
		0\lra L_j\lra L_j'\lra \iota_{\Delta_R*}(E_{\mathrm{exc}})_j\lra 0 \quad \text{for }j=\pm3,\pm4.
	\end{aligned}\]
	This proves all assertions except the statement on determinants.

	By Lemma~\ref{lem:normaltonormal}, each $(E_{\mathrm{exc}})_j$ is a line bundle on $\Delta_R$. Since $\Delta_R$ is a single $T$-orbit with stabilizer $R$, any $T$-equivariant line bundle on $\Delta_R$ is uniquely determined by its $R$-weight. In particular, $(E_{\mathrm{exc}})_j$ is isomorphic to the restriction of any $T$-equivariant line bundle on $(U^{ss})^R$ of $R$-weight $j$, for instance $\cO_{(U^{ss})^R}(j\mu)\big|_{\Delta_R}$. 
	Therefore, $\det(\iota_{\Delta_R*}(E_{\mathrm{exc}})_j)\cong \cO(\Delta_R)$.
	
	The final assertion follows from the eight short exact sequences above.
\end{proof}

Combining \eqref{eq:ses.normal} with Lemmas~\ref{lem:normal.g/t} and~\ref{lem:normal.Rfixed.U}, we compute the $N_G(R)$-equivariant decomposition of $N_{Z_\pTac/U^{ss}}|_{(U^{ss})^R}$, and hence of $\cN$.

To determine the quotient in \eqref{eq:ses.normal}, we rewrite the first two terms in terms of $\mu$ and $\nu$ and compare them. 
We first express $\cO_{(U^{ss})^R}(\Delta_R)$ and $\cO_X(1)|_{(U^{ss})^R}$ in terms of $\chartheta=-\frac{1}{3}(\mu+\nu)$.

\begin{lemma}\label{lem:linear.relation}
	There are $N_{G}(R)$-equivariant isomorphisms
	\[\cO_{(U^{ss})^R}(\Delta_R)\cong \cO_{(U^{ss})^R}(6\chartheta) \and \cO_X(1)|_{(U^{ss})^R}\cong \cO_{(U^{ss})^R}(2\chartheta).\]
\end{lemma}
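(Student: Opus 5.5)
The plan is to reduce both isomorphisms to a computation of characters, using the identification $(U^{ss})^R\cong(X^{ss})^R\cong\A^2_{s,p}\setminus\{0\}$ from Lemma~\ref{lem:Rfixedlocus}. Under this identification $\Delta_R$ corresponds to the discriminant divisor $\{s^2-4p=0\}$, and $\cO_X(1)|_{(U^{ss})^R}$ is the pullback of $\cO_X(1)|_{(X^{ss})^R}$. The group $N_G(R)=T\rtimes\mu_2$ acts on $\A^2_{s,p}$ linearly, and this action factors through a character. Since $\A^2$ is normal and the origin has codimension two, restriction gives an isomorphism $\Pic^{N_G(R)}(\A^2)\to\Pic^{N_G(R)}(\A^2\setminus\{0\})$. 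Moreover, equivariant line bundles on the linear representation $\A^2$ are classified by characters of $N_G(R)$, via the fiber at the origin. So it suffices to identify the character attached to each of the two line bundles on $\A^2$ (or, equivalently, to exhibit an equivariant nowhere-vanishing section twisted by the appropriate character).

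For $\cO(\Delta_R)$, I would use the canonical section $s^2-4p$, which cuts out $\Delta_R$ transversally (it is smooth, as noted before Lemma~\ref{lem:Rfixedlocus}). Then $\cO(\Delta_R)\cong\cO(\chi)$, where $\chi$ is the character by which $N_G(R)$ scales the function $s^2-4p$, with the sign convention fixed by how $\cO(\chi)=\cO\otimes\C_\chi$ is defined.

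To compute $\chi$, let $t=\diag(t_1,t_2,t_3)\in T$ act on the representative $x^2y^2+sxyz^2+pz^4$. Using $t_1t_2t_3=1$, the monomials $x^2y^2$, $xyz^2$, $z^4$ acquire factors that are powers of $t_3$ alone: $(t_1t_2)^2=t_3^{-2}$, $t_1t_2t_3^2=t_3$, and $t_3^4$ (up to inversion, depending on the convention for the action on polynomials). Normalizing the coefficient of $x^2y^2$ to $1$, the coordinate $s$ therefore transforms by $t_3^{\pm3}$ and $p$ by $t_3^{\pm6}$. Hence $s^2-4p$ is homogeneous of weight $\pm6$ in $t_3$, that is, weight $6\chartheta$ up to the global sign convention, since $\chartheta(t)=t_3^{-1}$. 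The $\mu_2$ factor swaps $x$ and $y$; it fixes $s$, $p$, and $s^2-4p$, so it contributes no extra character. This is consistent with $\chartheta$ being $\mu_2$-invariant.

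For $\cO_X(1)$, the fiber of $\cO_X(-1)$ at $[f]$ is $\C\cdot f$. On the chart where the coefficient of $x^2y^2$ equals $1$, this line is trivialized by $f$ itself. Acting on $f$ then multiplies it by the weight of $x^2y^2$, namely $(t_1t_2)^{\pm2}=t_3^{\mp2}$. The chart is all of $(X^{ss})^R$, since points with vanishing $x^2y^2$-coefficient are unstable. Dualizing gives $\cO_X(1)\cong\cO(2\chartheta)$, again with trivial $\mu_2$ part, in agreement with Lemma~\ref{lem:normal.Rfixed.X}.

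The main obstacle is keeping the sign conventions consistent: the action on polynomials versus on coordinates, the definition of $\cO(\chi)$, and the dual in passing from $\cO_X(-1)$ to $\cO_X(1)$. The answers must come out as $+6\chartheta$ and $+2\chartheta$ simultaneously. As a check, I would use Lemma~\ref{lem:normal.Rfixed.X}: the $R$-weights there force the convention, and the relation $\cO(\Delta_R)\cong\cO_X(3)$ should hold, because $s^2-4p$ is the discriminant in $(s,p)$, which is a degree-$3$ homogeneous expression relative to the coefficient of $x^2y^2$.
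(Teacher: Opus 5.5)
Your proposal is correct and follows the paper's proof: both normalize the $x^2y^2$-coefficient and read off $t\cdot(s,p)=(\chartheta(t)^3s,\chartheta(t)^6p)$ to get $6\chartheta$ from $s^2-4p$, and both obtain $2\chartheta$ from the weight on the fiber of $\cO_X(\pm1)$; your trivialization of $\cO_X(-1)$ by the normalized $f$ is equivalent to the paper's reading of the fiber coordinate $c$ on the punctured cone $[1:s:p:c]$. The one point you leave open is the sign, and it is fixed by the dual action on $V^*$ (visible in Lemma~\ref{lem:normal.Rfixed.X}, where $x^3z$ has $R$-weight $-3$): $x^2y^2$ is scaled by $t_1^{-2}t_2^{-2}=t_3^2=\chartheta(t)^{-2}$, which gives exactly $+6\chartheta$ and $+2\chartheta$, whereas your consistency check $\cO(\Delta_R)\cong\cO_X(3)$ cannot detect this overall sign.
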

\begin{proof}
	An element $t=\diag(t_1,t_2,t_3)\in T$ acts on $(s,p)\in (U^{ss})^R$ by
	\[t\cdot(s,p)=(t_3^{-3}s,t_3^{-6}p)=(\chartheta(t)^3s,\chartheta(t)^6p)\]
	since $t_1t_2t_3=1$. The first isomorphism follows since $\Delta_R\subset (U^{ss})^R$ is cut out by $s^2-4p=0$.

	For the second isomorphism, recall that $X=\PP(\C[x,y,z]_4)$. 
	The total space of $\cO_X(1)$ can be identified with the complement 
	\[\PP(\C[x,y,z]_4\oplus \C)\setminus\{[0:1]\}\] 
	of the vertex in the projective cone over $X$.
	Restricting to $X^R=\PP(\C[x,y,z]_4^R)$, the total space of $\cO_X(1)|_{X^R}$ is $N_G(R)$-equivariantly identified with a subscheme of $\PP(\C[x,y,z]_4^R\oplus \C)=\PP^4$.
	
	Writing homogeneous coordinates on $\PP(\C[x,y,z]_4^R\oplus \C)$ as $[1:s:p:c]$, 
	$t\in T$ acts by
	\[t\cdot[1:s:p:c]=[t_1^{-2}t_2^{-2}:t_1^{-1}t_2^{-1}t_3^{-2} s:t_3^{-4}p:c]=[1:\chartheta(t)^3s:\chartheta(t)^6p:\chartheta(t)^2c].\]
	This shows that $\cO_X(1)|_{(X^{ss})^R}$ corresponds to $2\chartheta$, completing the proof.
\end{proof}

\begin{proposition}\label{prop:normalbundle.R-fixed}
	There is an $N_G(R)$-equivariant isomorphism
	\[N_{Z_\pTac/U^{ss}}\big|_{(U^{ss})^R}\cong \bigoplus_{j=2,3,4}\big(\cO(j\mu)\oplus \cO(j\nu)\big).\]
	Consequently, the descended bundle $\cN$ inherits the same decomposition.
\end{proposition}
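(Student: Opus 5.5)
Our plan is to split the short exact sequence \eqref{eq:ses.normal} into $R$-weight pieces. Since $R$ is central in $T$ and acts trivially on $(U^{ss})^R$, every $N_G(R)$-equivariant bundle on $(U^{ss})^R$ splits canonically by $R$-weight, with $\mu_2$ exchanging weights $j$ and $-j$. We record the $R$-weights: $R$ acts by $t\mapsto\diag(t,t^{-1},1)$, so $\mu|_R$ has weight $-1$ and $\nu|_R$ has weight $1$.

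In Lemma~\ref{lem:normal.g/t}, $N_{(U^{ss})^R/Z_\pTac}$ then has weight pieces $\cO(\mu)\oplus\cO(-\mu)$ of weight $\mp1$ and $\cO(\nu)\oplus\cO(-\nu)$ of weight $\pm1$, together with $\cO(\mu-\nu)$ and $\cO(\nu-\mu)$ of weights $\mp2$. These have total rank $2$ in weight $\pm1$ and rank $1$ in weight $\pm2$. The middle term from Lemma~\ref{lem:normal.Rfixed.U} has ranks $2,2,1,1$ in weights $\pm1,\pm2,\pm3,\pm4$.

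Taking weight-$j$ pieces of \eqref{eq:ses.normal} gives the following:
\begin{enumerate}
\item For $j=\pm1$, the inclusion $(\cO(\mp\mu)\oplus\cO(\pm\nu))\to M_{j}$ is an injection of rank-$2$ bundles, hence an isomorphism. So weight $\pm1$ contributes nothing to the quotient.
\item For $j=\pm2$, the quotient is a line bundle $\det M_{\pm2}\otimes\cO(\mp(\nu-\mu))^{-1}$, using the determinant.
\item For $j=\pm3,\pm4$, the quotient is $L_j$ itself.
\end{enumerate}
The quotient therefore has rank $6$, matching the claim.

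It remains to identify each of these line bundles as a $T$-equivariant line bundle, using Lemma~\ref{lem:normal.Rfixed.U}, Lemma~\ref{lem:linear.relation}, and the explicit monomials of Lemma~\ref{lem:normal.Rfixed.X}. Note that $\Hom(T,\Gm)$ (or $\bar T$) modulo nothing is generated by $\mu,\nu$ up to torsion, and $\chartheta=-\tfrac13(\mu+\nu)$. The monomial $y^4$, for instance, spans a tangent direction $\C\cdot y^4\otimes \cO_X(1)$. We must get its $T$-character right, taking care of the dual convention in the Euler sequence: the direction $\Hom(\C f,\C y^4)$ has character given by the weight of $y^4$ minus that of $f=z^4$ (or $x^2y^2$). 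Hence $L'_4\cong\cO(\text{char}(y^4)-\text{char}(z^4))$, which after computing is $4\nu$ (or $4\mu$, depending on sign conventions). Then $L_4=L'_4\otimes\cO(-\Delta_R)=\cO(\cdots-6\chartheta)$, and we compare with $4\nu$.

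We expect exactly this bookkeeping to be the main obstacle. In particular, one must check that the twist by $\cO(-\Delta_R)=\cO(-6\chartheta)=\cO(2\mu+2\nu)$ is precisely what turns the naive weights of $M'_j,L'_j$ into $j\mu$ or $j\nu$. If the naive calculation is off by a multiple of $\mu+\nu$, the sign convention for $\chartheta$ and the $\cO_X(1)\cong\cO(2\chartheta)$ twist should be rechecked; these lemmas exist to absorb exactly this discrepancy.

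We then do the same for $j=3$ with $y^3z$, and for $j=2$ via $\det M'_2$ (monomials $xy^3,y^2z^2$) twisted by $-\Delta_R$ and divided by the character of $\nu-\mu$. The $\mu_2$-symmetry gives the negative weights for free.

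Finally, the rank-$1$ quotient pieces are line bundles, so each short exact sequence determines them up to isomorphism. Assembling, the weight $\pm j$ pieces give $\cO(j\mu)\oplus\cO(j\nu)$ with $\mu_2$ swapping them, compatibly with the $N_G(R)$-structure. This yields the claimed isomorphism; descent to $\cN$ follows by Lemma~\ref{lem:normal.reduction}.
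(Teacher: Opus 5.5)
Your skeleton is the same as the paper's. You split \eqref{eq:ses.normal} by $R$-weight; weights $\pm1$ cancel for rank reasons; in weight $\pm2$ the quotient is $\det M_{\pm2}$ divided by the root line; and weights $\pm3,\pm4$ contribute $L_{\pm3},L_{\pm4}$.

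\textbf{The gap.} The real content of the proposition is identifying these six line bundles as $\cO(j\mu)$ and $\cO(j\nu)$. You do not carry this out, and the recipe you sketch gives the wrong answer.

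\begin{itemize}
\item Points of $(X^{ss})^R$ are not $T$-fixed. So ``the weight of $f$'' only makes sense after choosing a global equivariant trivialization of $\cO_X(-1)$ over $(X^{ss})^R$.
\item Normalizing by the $z^4$-coefficient is not such a trivialization, since $p$ vanishes on part of $(X^{ss})^R$.
\item If you use it anyway, you get $L'_4\cong\cO(4\nu)$, hence $L_4\cong L'_4\otimes\cO(-\Delta_R)\cong\cO(2\mu+6\nu)$. This is off by exactly $\cO(-\Delta_R)$.
\item Your fallback, that a discrepancy by a multiple of $\mu+\nu$ is ``absorbed'' by the lemmas, presupposes the result rather than proving it.
\end{itemize}

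There is also a sign slip in your weight-$\pm2$ formula. The root line of $R$-weight $+2$ is $\cO(\nu-\mu)$, so the quotient is $\det M_2\otimes\cO(\nu-\mu)^{-1}$, not $\det M_2\otimes\cO(\nu-\mu)$. Your later phrase ``divided by the character of $\nu-\mu$'' is the correct version.

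\textbf{The fix.} Use Lemma~\ref{lem:linear.relation} as stated.
\begin{itemize}
\item The $x^2y^2$-coefficient is nowhere zero on $(X^{ss})^R$, which gives $\cO_X(1)|_{(U^{ss})^R}\cong\cO(2\chartheta)$.
\item Also $\cO(-\Delta_R)\cong\cO(-6\chartheta)=\cO(2\mu+2\nu)$.
\end{itemize}
Write $\epsilon_i$ for the character $t\mapsto t_i$, so that $\mu=\epsilon_3-\epsilon_1$, $\nu=\epsilon_3-\epsilon_2$ and $\chartheta=-\epsilon_3$. The monomial $x^ay^bz^c$ (with $a+b+c=4$) spans the character $-a\epsilon_1-b\epsilon_2-c\epsilon_3$. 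Hence the line $\C\cdot x^ay^bz^c\otimes\cO_X(1)$ is $\cO\big((a-2)\mu+(b-2)\nu\big)$.

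Since each of $M_j$ and $L_j$ receives a single twist by $\cO(-\Delta_R)$ (Lemma~\ref{lem:normal.Rfixed.U}), this gives:
\begin{itemize}
\item $L_{-4}\cong\cO(4\mu)$ and $L_4\cong\cO(4\nu)$;
\item $L_{-3}\cong\cO(3\mu)$ and $L_3\cong\cO(3\nu)$;
\item $\det M_{-2}\cong\cO(3\mu-\nu)$ and $\det M_2\cong\cO(3\nu-\mu)$.
\end{itemize}
As a consistency check, $\det M_{\pm1}\cong\cO(\pm(\nu-\mu))$, which matches the determinants of the root lines in weights $\pm1$. The weight-$\pm2$ quotients are therefore $\cO(2\mu)$ and $\cO(2\nu)$.

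With this list in place, your argument is complete and agrees with the paper's proof.
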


\begin{proof}
	From Lemmas~\ref{lem:normal.Rfixed.U} and~\ref{lem:linear.relation}, we obtain, as $T$-equivariant bundles,
	\[\begin{aligned}
		&\det M_{-1}\cong \cO(\mu-\nu), \quad
		&&\det M_{1}\cong \cO(\nu-\mu),\\
		&\det M_{-2}\cong \cO(3\mu-\nu),\quad
		&&\det M_{2}\cong \cO(3\nu-\mu),\\
		&\qquad L_{-3}\cong \cO(3\mu),
		&&\qquad L_{3}\cong \cO(3\nu),\\
		&\qquad L_{-4}\cong \cO(4\mu),
		&&\qquad L_{4}\cong \cO(4\nu).
	\end{aligned}
	\]

	The short exact sequence \eqref{eq:ses.normal} is compatible with the $R$-weight decomposition.
	For weights $\pm1$, both $N_{(U^{ss})^R/Z_\pTac}$ and $N_{(U^{ss})^R/U^{ss}}$ have rank $2$, so these weights do not contribute to $N_{Z_\pTac/U^{ss}}|_{(U^{ss})^R}$.
	For weights $\pm2$, the bundle $N_{(U^{ss})^R/Z_\pTac}$ contributes $\cO(\mu-\nu)\oplus \cO(\nu-\mu)$, while $N_{(U^{ss})^R/U^{ss}}$ contributes $M_{-2}\oplus M_2$.
	Hence the quotient is given by
	\[\left(\det M_{-2}\otimes \cO(\mu-\nu)^*\right)\oplus \left( \det M_2\otimes \cO(\nu-\mu)^*\right)\cong \cO(2\mu)\oplus \cO(2\nu).\]

	For weights $\pm3$ and $\pm4$, the bundle $N_{(U^{ss})^R/Z_\pTac}$ has no such weight spaces, so the quotient is simply
	$L_{-3}\oplus L_3\oplus L_{-4}\oplus L_4\cong\cO(3\mu)\oplus \cO(3\nu)\oplus \cO(4\mu)\oplus \cO(4\nu)$.
	This proves the claim.
\end{proof}

\begin{proof}[Proof of Proposition~\ref{prop:normalbundle.decomp}]
	This is now immediate from Lemma~\ref{lem:normal.reduction} and Proposition~\ref{prop:normalbundle.R-fixed}.
\end{proof}

\section{Resolution of the flip}\label{s:resolution}
In this section, we use the decomposition of $N_{\cZ_\pTac/\cPK}$ obtained in Section~\ref{s:normal} to construct the smooth Deligne--Mumford stack $\cPhat$ as a Kirwan-type stack-theoretic weighted blowup of $\cPK$ along $\cZ_\pTac$.
We then construct a morphism $\cPhat\to \cPH$ and identify 
$\cPhat$ with the stack-theoretic weighted blowup of $\cPH$ along $\cZ_2$ with weight two. 
In particular, this proves Theorem~\ref{thm:mainthmintro}.

\subsection{Construction of $\cPhat$}
We refer the reader to \cite[Definition~5.10]{ADL} or \cite{abramovich-temkin-wlodarczyk,QuekRydh} for the definition of a stack-theoretic weighted blowup.

\begin{definition} \label{def:Phat}
Let $N_{\cZ_\pTac/\cPK}\cong \bigoplus_{i=2,3,4} N_i$ as in Proposition~\ref{prop:normalbundle.decomp}.
Define
\[\cPhat'\lra \cPK\]
to be the stack-theoretic weighted blowup along $\cZ_\pTac$ with weights $2,3,4$ on the summands $ N_2, N_3, N_4$, 
namely the weighted blowup determined by the weight filtration on the ideal of $\cZ_\pTac$ induced by the above decomposition.
Let $\cEhat'$ denote its exceptional divisor.
Define
\[\cPhat:=(\cPhat')^{ss}\and \cEhat:=(\cEhat')^{ss}\]
to be the semistable loci with respect to the line bundle
\[\cO_{\cPGIT}(m_1)|_{\cPhat'}\otimes \cO_{\cPK}(-m_2\cEK)|_{\cPhat'}\otimes \cO_{\cPhat'}(-\cEhat')\]
for $m_1\gg m_2\gg1$ in the sense of \cite[Section~11]{alper}.

As quotient stacks,
\[\cPhat' \cong [\Uhat/\bar G] \and \cEhat' \cong [\Ehat/\bar G],\]
where $\Uhat\to U^{1/2,ss}$ is the stack-theoretic weighted blowup along $Z_\pTac^{1/2}$ with weights $2,3,4$, with a natural $\bar G$-action, and $\Ehat\subset \Uhat$ is the exceptional divisor.

Let $\Uhat^{ss}\subset \Uhat$ denote the GIT semistable locus with respect to 
\[\cO_X(m_1)\big|_{\Uhat}\otimes \cO(-m_2E_\DC)\big|_{\Uhat}\otimes \cO(-\Ehat),\]
for $m_1\gg m_2\gg 1$, and let $\Ehat^{ss}:=\Ehat \cap \Uhat^{ss}$.
Then
\[\cPhat \cong [\Uhat^{ss}/\bar G] \and \cEhat \cong [\Ehat^{ss}/\bar G].\]

We write $\Psi:\cPhat\to \cPK$ for the natural projection.
\end{definition}

\begin{remark}\label{rem:cPhat.local}
Near $\cEhat'$, the morphism $\cPhat'\to \cPK$ is \'etale locally modeled on the stack-theoretic weighted blowup of $[\A^6/(R\rtimes \mu_2)]\times B$ along the closed substack $B(R\rtimes \mu_2)\times B$, with weights $(2,2,3,3,4,4)$, where $B=\A^1$ or $[\A^1/\Gamma]$.
In particular, $\cPhat'$ is smooth, and hence so is its open substack $\cPhat$.

Moreover, by \cite{Kir85}, $\cPhat$ is a proper Deligne--Mumford stack, and its coarse moduli space $\Phat$ is the weighted Kirwan partial desingularization of $\PGIT$.
\end{remark}

We next describe the exceptional divisor of the morphism $\Psi:\cPhat\to\cPK$.
\begin{proposition}\label{prop:cEhat}
	The exceptional divisor $\cEhat$ is a $\mu_2$-gerbe over the stack
	\beq\label{eq:exceptional.fiber}\cEhat^\dagger:=\big[\big(\cP(2,3,4)\times \cP(2,3,4)\big)/\mu_2\big]\times [\PP^1/\Gamma].\eeq
	Its coarse moduli space $\Ehat$ is isomorphic to $Z_2\times \PP^1$.
\end{proposition}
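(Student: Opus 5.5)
The plan is to compute $\cEhat'$ as the weighted projectivization of the normal bundle, and then to identify its semistable locus fiberwise over $\cZ_\pTac$. By Definition~\ref{def:Phat} and Proposition~\ref{prop:normalbundle.decomp}, the exceptional divisor of the weighted blowup is
\[
\cEhat'\cong \cP_{\cZ_\pTac}(N_2\oplus N_3\oplus N_4),
\]
the weighted projective stack bundle with weights $2,3,4$. I would compute it on the \'etale double cover $\widetilde\cZ_\pTac=[(U^{1/2,ss})^R/\bar T]\cong B\bar R\times[\PP^1/\Gamma]$ and then descend along $N_{\bar G}(\bar R)/\bar T\cong\mu_2$.

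On the cover, the bundle splits as $\bigoplus_j(\cO(j\mu)\oplus\cO(j\nu))$. The residual stabilizer acting on the fibers is $\bar R$. It acts on $\cO(j\mu)$ with $R$-weight $-j$ and on $\cO(j\nu)$ with $R$-weight $+j$, since $\mu(\lambda_R(t))=t^{-1}$ and $\nu(\lambda_R(t))=t$. The weighted projectivization introduces a second $\Gm$, acting with weights $(2,2,3,3,4,4)$. The combined action of $\bar R\times\Gm$ on $\A^6$ therefore has weights $(2,-2)$, $(2,2)$, $(3,-3)$, $(3,3)$, $(4,-4)$, $(4,4)$ on the coordinates $(a_2,a_3,a_4)$ of $\cO(j\mu)$ and $(b_2,b_3,b_4)$ of $\cO(j\nu)$. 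Reparametrize the torus by $(s,t)\mapsto$ the characters $st$ and $st^{-1}$. The $a$-coordinates then transform by $(st^{-1})^j$ and the $b$-coordinates by $(st)^j$. The map $(s,t)\mapsto(st^{-1},st)$ is surjective onto $\Gm^2$ with kernel $\{(\pm1,\pm1)\}_{\mathrm{diag}}\cong\mu_2$.

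Next I would compute the semistable locus. The relevant line bundle in Definition~\ref{def:Phat} restricts on the fiber to $\cO(-\cEhat')$, i.e. to the weighted $\cO(1)$, twisted by an $\bar R$-character which is trivial because the other factors are pulled back from $\cPK$. So the fiber is $[(\A^6\setminus 0)^{ss}/\bar R\times\Gm]$ with respect to the $\Gm$-weight. By Hilbert--Mumford, semistability should amount to requiring that both $(a_2,a_3,a_4)\neq0$ and $(b_2,b_3,b_4)\neq0$. This is the standard Kirwan blowup picture, in which points with only positive or only negative $R$-weights are destabilized. The fiber quotient is then
\[
[(\A^3\setminus0)^2/\Gm^2]\times B\mu_2=\cP(2,3,4)^2\times B\mu_2,
\]
where the $B\mu_2$ factor is the kernel computed above.

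To globalize, note that over $\widetilde\cZ_\pTac$ the gerbe $B\bar R$ is absorbed by the fiber computation. Since the $\bar\theta$-direction acts trivially on the fibers (the twists by $\mu,\nu$ pair off), the bundle is trivialized over the $[\PP^1/\Gamma]$ factor. This gives $\mu_2$-gerbe $\to\cP(2,3,4)^2\times[\PP^1/\Gamma]$ on the cover. The remaining $\mu_2=N_{\bar G}(\bar R)/\bar T$ swaps $\mu$ and $\nu$, hence swaps the two $\cP(2,3,4)$ factors, and commutes with $\Gamma$; quotienting yields \eqref{eq:exceptional.fiber}. For the coarse space, one passes to coarse spaces in \eqref{eq:exceptional.fiber}. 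That gives $(\PP(2,3,4)^2/\mu_2)\times\PP^1=Z_2\times\PP^1$ by Proposition~\ref{prop:Z2}.

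I expect the main obstacle to be the global triviality. I must check that the line bundles $\cO(j\mu)$ and $\cO(j\nu)$, as bundles over the $[\PP^1/\Gamma]$ factor rather than just equivariantly over a point, become trivial after projectivization. The key input is that the $T$-action on $(U^{1/2,ss})^R$ factors through $\bar\theta$, and that $j\mu$ and $j\nu$ restricted to the splitting $\Gm$ of Remark~\ref{rem:exact.TR} agree with the $\Gm$-scaling up to the weight $j$. The scaling can then be absorbed into the weighted projective $\Gm$, possibly modifying the gerbe by a further $\mu_2$. I would verify this by explicitly choosing the splitting $\Gm\to\bar T$ and checking that $\mu$ and $\nu$ restrict to it with weights proportional to $(2,3,4)\cdot$const.
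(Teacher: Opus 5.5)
Your outline is the paper's route: weighted projectivization of $N_2\oplus N_3\oplus N_4$, the fiber action of $R\times\Gm$ with weights $(\mp j,j)$, Hilbert--Mumford giving $(\A^3\setminus 0)^2$, the isogeny $(t,u)\mapsto(t^{-1}u,tu)$ with kernel $\mu_2$, absorbing $\cO(j\mu)=\cO(\mu)^{\otimes j}$ into the weighted $\Gm$, and descent along the swap. Two statements in it are wrong, and the second is exactly at the step you flagged as the obstacle.

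\textbf{The fiber is not a trivial gerbe.} It is not $\cP(2,3,4)^2\times B\mu_2$. The map $R\times\Gm\to\Gm^2$, $(t,u)\mapsto(t^{-1}u,tu)$, does not split, since its image on character lattices has index two. So $[(\A^3\setminus0)^2/(R\times\Gm)]$ is the root gerbe of $\cO(1)\boxtimes\cO(1)$ over $\cP(2,3,4)^2$. This gerbe is nontrivial: its Picard group is the torsion-free $\Z^2$. The proposition only asserts a $\mu_2$-gerbe, so this is harmless, but the factor $\times B\mu_2$ must be dropped.

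\textbf{The globalization argument rests on a false claim.} No direction of $\bar T$ over $\bar\theta$ acts trivially on the fibers. Since $\mu+\nu=-\bar\theta$, every cocharacter $\sigma$ with $\bar\theta\circ\sigma=\mathrm{id}$ satisfies $\langle\mu,\sigma\rangle+\langle\nu,\sigma\rangle=-1$. So $\sigma$ acts on $a_j$ and $b_j$ with exponents $kj$ and $k'j$, where $k\neq k'$. Consequently your proposed check cannot work with the weighted $\Gm$ alone, and absorbing $\sigma$ into $\bar R\times\Gm$ forces $u^2=s^{-1}$, i.e.\ a square root.

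The clean fix uses the fact that $\mu,\nu$ form a basis of $\Hom(\bar T,\Gm)$. Write $(m,n)=(\mu(t),\nu(t))$ and set $p=mu$, $q=nu$. Then $\bar T\times\Gm\cong\Gm^3_{p,q,u}$ acts as follows:
\begin{itemize}
\item on $(\A^3\setminus0)^2$ through $(p,q)$, with weights $(2,3,4)$ on each factor;
\item on $(U^{1/2,ss})^R$ through the character $\bar\theta=u^2(pq)^{-1}$.
\end{itemize}
The homomorphism $(p,q,u)\mapsto(p,q,u^2(pq)^{-1})$ is surjective. Its kernel is $\{(1,1,\pm1)\}$, which is exactly the $\mu_2$ generated by $(-1,-1)\in R\times\Gm$. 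So over $\widetilde\cZ_\pTac$ one gets a $\mu_2$-gerbe over $\cP(2,3,4)^2\times[\PP^1/\Gamma]$, with no further $\mu_2$ as you feared. The swap in $N_{\bar G}(\bar R)/\bar T$ exchanges $p$ and $q$ and fixes $u$ and $u^2(pq)^{-1}$, so everything descends to $\big[(\cP(2,3,4)\times\cP(2,3,4))/\mu_2\big]\times[\PP^1/\Gamma]$. This makes precise the paper's terse remark that the exponent of $\cO(\mu)$ in $\cO(j\mu)$ equals the weight $j$.
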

\begin{proof}
	By Proposition~\ref{prop:normalbundle.decomp}, the normal bundle $N_{\cZ_\pTac/\cPK}$ decomposes as
	\[\big(\cO(2\mu)\oplus \cO(3\mu)\oplus \cO(4\mu)\big)\oplus\big(\cO(2\nu)\oplus \cO(3\nu)\oplus \cO(4\nu)\big)\]
	with $R$-weights $-2,-3,-4,2,3,4$, where the weighted blowup is taken with weights $2,3,4,2,3,4$ on the six one-dimensional weight spaces.
	
	Fix a closed point of $[\PP^1/\Gamma]$.
	The fiber of $\cEhat'$ over this point is 
	\beq\label{eq:normal.space.Zptac}
	\Big[\big(\A^3\times\A^3\big)/\big((R\rtimes\mu_2)\times\Gm\big)\Big],
	\eeq
	where $(t,u)\in R\times\Gm\cong\Gm^2$ acts by
	\[(t,u)\cdot(a_2,a_3,a_4,b_2,b_3,b_4)
	=(t^{-2}u^2a_2,t^{-3}u^3a_3,t^{-4}u^4a_4, t^{2}u^{2}b_2,t^{3}u^{3}b_3,t^{4}u^{4}b_4),\]
	and $\mu_2$ interchanges $a_j$ and $b_j$ for each $j$.

	By the Hilbert--Mumford criterion \cite[Theorem~2.1]{GIT}, a point is unstable precisely when either all coordinates of positive $R$-weight vanish or all coordinates of negative $R$-weight vanish.
	Thus the semistable locus is
	\beq\label{eq:exceptional.fiber2}
	\Big[\big((\A^3\setminus\{0\})\times(\A^3\setminus\{0\})\big)/\big((R\rtimes\mu_2)\times\Gm\big)\Big].
	\eeq
	This quotient stack is precisely the fiber of $\cEhat\to[\PP^1/\Gamma]$.

	The action of $R\times\Gm$ factors through the homomorphism
	\[R\times\Gm \lra \Gm^2,\qquad (t,u)\longmapsto (t^{-1}u,tu),\]
	whose kernel is the central subgroup $\mu_2$ generated by $(-1,-1)$.
	Under this identification, $t^{-1}u$ (resp.\ $tu$) acts on the first (resp.\ second) factor $\A^3\setminus\{0\}$ with weights $2,3,4$.
	It follows that the stack \eqref{eq:exceptional.fiber2}  
	is a $\mu_2$-gerbe over $\big[\big(\cP(2,3,4)\times \cP(2,3,4)\big)/\mu_2\big]$.

	To complete the proof, it remains to show that the fibration $\cEhat\to[\PP^1/\Gamma]$ is trivial.
	Each summand $\cO(j\mu)$ in $\cO(2\mu)\oplus \cO(3\mu)\oplus \cO(4\mu)$ is the $j$-th power of the fixed line bundle $\cO(\mu)$, and the exponent $j$ coincides with the corresponding $\Gm$-weight.
	It follows that the weighted projectivization of $\cO(2\mu)\oplus \cO(3\mu)\oplus \cO(4\mu)$ is a trivial $\cP(2,3,4)$-bundle over $[\PP^1/\Gamma]$.
	The same argument applies to $\cO(2\nu)\oplus\cO(3\nu)\oplus \cO(4\nu)$. 
	This proves the proposition.
\end{proof}

\subsection{Construction of the morphism $\Phi$}
Let
\beq\label{eq:univ.fam.K}
\pi^\rmK:(\cS^\rmK,\cC^\rmK)\lra \cPK
\eeq
be the universal family of K-semistable pairs over $\cPK$.
We construct the morphism $\Phi:\cPhat \to \cPH$ by modifying the pullback of the universal family \eqref{eq:univ.fam.K} along the exceptional divisor $\cEhat$.

The fibers of $\cC^\rmK$ over $\cZ_\pTac$ have two tacnodes. 
Pulling back to $\cPhat$, we get
\[\cZhat\lra\cEhat\]
whose fibers consist of two distinct points. 
Let
$(\cShatK,\cChatK):=(\cS^\rmK,\cC^\rmK)|_{\cPhat}$
be the pullback of \eqref{eq:univ.fam.K} to $\cPhat$. 
Then $\cZhat$ is a closed substack in $\cChatK$ and $\cShatK$, of codimension $2$ and $3$, respectively.
\[\begin{tikzcd}
	\cZhat\arrow[r,hook]\arrow[d]&(\cShatK,\cChatK)\arrow[d]\arrow[r]&(\cS^\rmK,\cC^\rmK)\arrow[d]\\\cEhat\arrow[r,hook]&\cPhat \arrow[r,"\Psi"]&\cPK
\end{tikzcd}\]
Let $\cShat$ and $\cChat$ be the weighted blowups of $\cShatK$ and $\cChatK$ along $\cZhat$ with weights $(1,1,2)$ and $(1,2)$, respectively. 
Here the weight-two coordinate corresponds to the direction transverse (in the surface fiber) to the common tangent line of the tacnodal branches, so that the proper transforms of the two branches are disjoint along the exceptional divisor.
\[(\cShat,\cChat) \xrightarrow{\Bl^{w}_{\cZhat}} (\cShatK,\cChatK)\lra \cPhat.\]
The exceptional divisor $\cFhat$ of $\cShat\to \cShatK$ has fibers $\PP(1,1,2)\sqcup \PP(1,1,2)$ over $\cEhat$.

The blowups only affect the fibers over $\cEhat$.
We describe these fibers.

\begin{proposition}\label{prop:fiber.Shat}
	Fix a closed point of $\cEhat$, and let $S^\rmK$ be the fiber of $\cShatK$ over this point, so that $S^\rmK\cong \PP^2$ or $\PP(1,1,4)$.
	Let $\Shat$ be the fiber of $\cShat$ over this point.
	Then
	\[\Shat = \Shat_1 \cup_{\ell_1} \Shat_0 \cup_{\ell_2}\Shat_2,\]
	where
	\begin{itemize}
		\item $\Shat_0$ 
		is the weighted blowup of $S^\rmK$ at two distinct smooth points $q_1,q_2$
		with weights $(1,2)$.
	\item Let $\ell_i\subset \Shat_0$ denote the exceptional divisor over $q_i$.
	Then $\Shat_i$ is glued to $\Shat_0$ along $\ell_i$, viewed as a line in $\PP(1,1,2)$ through the cone point.
\end{itemize}
\end{proposition}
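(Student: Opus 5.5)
The proof describes the fiber of the weighted blowup $\cShat\to\cShatK$ over a closed point $e\in\cEhat$ by local computation. The blowup is taken along $\cZhat$, which is flat over $\cPhat$ of relative dimension zero. So the fiber $\Shat$ has two parts: the proper transform of $S^\rmK$, and the fibers of the exceptional divisor $\cFhat$ over the two points $q_1,q_2$ of $\cZhat$ above $e$. These $q_i$ are the two tacnodes of $C^\rmK$, and they are smooth points of $S^\rmK$. For $\PP^2$ this is automatic. For $\PP(1,1,4)$, the only point over $\cZ_\pTac\cap\cEK=\pHT$ is the surface $\{w^2=u^4v^4\}$, whose tacnodes $[1:0:0]$ and $[0:1:0]$ lie away from the cone point $[0:0:1]$.

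\textbf{Step 1: local model.} Near $q_i$, choose étale local coordinates on $\cShatK$ of the form $(x,y,h)$. Here $(x,y)$ are fiber coordinates with the common tangent line of the tacnodal branches being $\{y=0\}$. The coordinate $h$ is a local equation of $\cEhat$ in $\cPhat$; this is possible because $\cEhat$ is a Cartier divisor and $\cShatK\to\cPhat$ is smooth near $q_i$. Then $\cZhat$ is cut out by $(x,y,h)$, and the weighted blowup has weights $(1,2,1)$ on $(x,y,h)$, with weight $2$ on the transverse direction $y$, as specified in the construction.

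\textbf{Step 2: the exceptional fiber.} The exceptional divisor of a weighted blowup with weights $(1,1,2)$ at a point of a smooth threefold is $\cP(1,1,2)$, with coarse space $\PP(1,1,2)$ in coordinates $[x:h:y]$. Since $\cZhat\to\cEhat$ is étale of degree two, the fiber of $\cFhat$ over $e$ is $\PP(1,1,2)\sqcup\PP(1,1,2)$; call these components $\Shat_1$ and $\Shat_2$.

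\textbf{Step 3: proper transform of the fiber.} The fiber of $\cShat$ over $e$ is the total transform of $\{h=0\}$. The proper transform of the central fiber $\{h=0\}$ is the weighted blowup of $S^\rmK$ at $q_1,q_2$ with weights $(1,2)$ on $(x,y)$; this is $\Shat_0$. Its exceptional curve over $q_i$ is $\ell_i=\{h=0\}\subset\PP(1,1,2)_{[x:h:y]}$. This is a line $\cO(1)$ passing through the cone point $[0:0:1]$, and it is identified with $\PP(1,2)$, the exceptional curve of the $(1,2)$-blowup of the surface. Gluing along $\ell_i$ then gives $\Shat=\Shat_1\cup_{\ell_1}\Shat_0\cup_{\ell_2}\Shat_2$.

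\textbf{Main obstacle: multiplicities.} The point needing care is that the pullback of $h$ vanishes to order exactly one along the exceptional divisor, since $h$ has weight one. This makes the fiber of $\cShat$ reduced, equal to the union of the proper transform and the two exceptional components with multiplicity one, with no embedded or extra components. One checks this in the weighted charts, that is, on the chart quotients $[\A^3/\mu_w]$: the function $h$ pulls back to $e\cdot\tilde h$ in weight-one charts, and to $e$ times a unit in the $h$-chart.
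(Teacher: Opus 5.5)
Your proposal is correct and follows the paper's proof. Both arguments reduce \'etale locally to $E\times\Bl^{w}_0\A^3\to E\times\A^1$ with the base coordinate of weight one, and read off the fiber at each tacnode as the $(1,2)$-weighted blowup of the surface glued to a copy of $\PP(1,1,2)$ along a line through the cone point. Your extra checks (that the tacnodes avoid the cone point of $\PP(1,1,4)$, and that the fiber is reduced because $h$ has weight one) make explicit what the paper leaves implicit. One small slip: $\cZhat$ is finite \'etale over $\cEhat$, not flat over $\cPhat$, since it lies over a divisor; you never use the flatness claim, so nothing breaks.
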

\begin{proof}
	The stack $\cShatK$ is smooth along $\cZhat$.
	Fix one of the two points in a fiber of $\cZhat$ over $\cEhat$.
	\'Etale locally at this point, 
	the morphism $\cShatK 
	\to \cPhat$ is isomorphic to the projection
	\[\id_E\times \pr_1: E\times \A^3 \lra E\times \A^1,\]
	for some smooth scheme $E$.  
	Under this identification, $\cZhat$ and $\cEhat$ correspond to $E\times\{0\}$ in $E\times \A^3$ and $E\times \A^1$, respectively.
	Thus the morphism $\cShat\to \cPhat$ is \'etale-locally isomorphic to 
	\[E\times \Bl^{w}_0\A^3 \lra E\times \A^1\]
	where $\Bl^w_0\A^3$ denotes the weighted blowup of $\A^3$ at $0$ with weights $(1,1,2)$.
		
	Hence the fiber of this morphism over a point in $E\times\{0\}$ is the union of two components:
	the weighted blowup $\Bl^{w}_0\A^2$ of $\A^2$ at the origin with weights $(1,2)$, and a copy of $\PP(1,1,2)$, glued along the exceptional divisor of $\Bl^{w}_0\A^2$ and the line in $\PP(1,1,2)$ through its cone point.

	Applying this to the two points in the fiber of $\cZhat\to \cEhat$, we obtain the desired description.
\end{proof}

Recall that the restriction $\cChatK|_{\cEhat} \to \cEhat$ parametrizes tacnodal curves admitting $\Gm$-stabilizers, namely cateyes, oxes, and the curve \eqref{eq:HT} in $\PP(1,1,4)$ parametrized by $p_{\HT}$.
\begin{proposition}\label{prop:fiber.Chat}
	Fix a closed point of $\cEhat$, and let $C^\rmK$ be the fiber of $\cChatK$ over this point, which is the union of two branch curves $C^\rmK_1$ and $C^\rmK_2$ meeting at two tacnodes $q_1$ and $q_2$.
	Let $\Chat$ be the fiber of $\cChat$ over this point.
	Then
	\[\Chat = \Chat_1 \cup \Chat_0 \cup \Chat_2,\]
	where
	\begin{itemize}
		\item $\Chat_0\subset \Shat_0$ is the disjoint union of the proper transforms of the two branch curves $C^\rmK_1$ and $C^\rmK_2$ under the weighted blowup $\Shat_0\to S^\rmK$.
		\item For $i=1,2$, $\Chat_i\subset \Shat_i\cong\PP(1,1,2)$ is a degree $4$ curve disjoint from the cone point and meeting $\ell_i$ transversely at two distinct points.
	\end{itemize}
	For each $i=1,2$, the pair $(\Shat_i,\Chat_i)$ is of the form~\eqref{eq:normal.form.Z2}.
\end{proposition}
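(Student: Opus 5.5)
The plan is to work étale locally near one of the two tacnodes, using the same local model as in the proof of Proposition~\ref{prop:fiber.Shat}, and to track the curve through the weighted blowup. At a tacnode $q_i$ of a fiber over a point of $\cEhat$, I will choose local analytic coordinates $(x,y)$ on $S^\rmK$ with the common tangent line $\{y=0\}$. Then $C^\rmK$ is locally $y^2=x^4$, after normalizing the fixed point, which is an ox, a cateye, or the point $\pHT$. Let $\e$ be a local equation of $\cEhat$ in $\cPhat$, so that $\cEhat=\{\e=0\}$. The family $\cChatK$ is then locally $y^2 - x^4 - \e\, g(x,y,\e)=0$ in $E\times\A^3_{x,y,\e}$. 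The key input is that the blowup $\Psi$ has weights $2,3,4$ on $N_2,N_3,N_4$ (Proposition~\ref{prop:normalbundle.decomp}). From this I expect the first-order deformation of the tacnode, restricted to the weight-graded piece at $q_i$, to be
\[
y^2=x^4+\e(a_2x^2+a_3x+a_4)+\cdots,
\]
with the three coefficients $a_j$ having weights $j$. Here $(a_2,a_3,a_4)\neq 0$ is the point of $\cP(2,3,4)$ given by Proposition~\ref{prop:cEhat}. To justify this I will identify the weight spaces $\cO(j\mu)$ at $q_1$ and $\cO(j\nu)$ at $q_2$ with the monomials $x^2z^2,\ x^3z,\ x^4$ (and symmetrically in $y$) of Lemma~\ref{lem:normal.Rfixed.X}. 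I will then translate these monomials into the local versal deformation of the tacnode, on which $R$ acts.

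Next I will perform the weighted blowup with weights $(1,1,2)$ on $(x,\e,y)$. On the chart where the exceptional divisor is $\PP(1,1,2)$ with coordinates $[x:\e:y]$, the leading weighted-homogeneous part of the equation has degree $4$. After the substitution it becomes
\[
y^2 = x^4 + a_2 x^2\e^2+a_3x\e^3+a_4\e^4 .
\]
This requires a weighting in which $\e$ has weight $1$, so I must check that the correct base parameter is the weight-one coordinate on the blowup. Over $\cEhat$ the ideal is graded, so the pullback of the coefficients $a_j$ comes with $\e^{j}$; this is the consistency check. The exceptional curve $\Chat_i$ is then $\{z^2=x^4+a_2x^2y^2+a_3xy^3+a_4y^4\}$ after renaming $y\leftrightarrow \e$ and $z\leftrightarrow y$. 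It misses the cone point $[0:0:1]$ because the $z^2$ coefficient is $1$. Its intersection with $\ell_i=\{\e=0\}$ is $z^2=x^4$, which gives the two distinct points $[1:0:\pm1]$, and the intersection is transverse there. This is exactly the normal form~\eqref{eq:normal.form.Z2}, and $(a_2,a_3,a_4)\ne0$ holds by semistability, as in the proof of Proposition~\ref{prop:cEhat}.

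For $\Chat_0$, I will look at the other component $\Bl^w_0\A^2$ with weights $(1,2)$ on $(x,y)$. Substituting $y=x^2y'$, the equation $y^2=x^4$ becomes $y'^2=1$. So the proper transforms of the two branches $y=\pm x^2$ are separated: they meet $\ell_i$ at the distinct points $y'=\pm1$, which match the two points $[1:0:\pm1]$ of $\Chat_i\cap\ell_i$. Doing this at both $q_1$ and $q_2$ shows that $\Chat_0$ is the disjoint union of the proper transforms of $C^\rmK_1$ and $C^\rmK_2$. Finally, I have to show that the total transform contains no extra exceptional multiple, so that the fiber is exactly $\Chat_1\cup\Chat_0\cup\Chat_2$. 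For this I will compare weighted multiplicities: both $\cChatK$ and its blowup are taken inside the same chart, and $\cChat$ is defined as the weighted blowup of $\cChatK$ itself, so it is the proper transform.

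The main obstacle is the first step. I have to match the global weight decomposition $N_2\oplus N_3\oplus N_4$ with the local deformation coordinates of each tacnode, in such a way that the leading term is precisely $a_2x^2+a_3x+a_4$ with no $x^3$ term and with the correct weights. To do this, I will first verify it directly for the cateye $(xy+az^2)(xy+bz^2)$ in the affine chart $z=1$ near $[1:0:0]$. The quartic monomials $x^4,x^3z,x^2z^2$ in that chart become local deformations of the tacnode. The $x^3y$-type term has weight $\pm2$ and is absorbed by $\fg/\ft$ as in the proof of Proposition~\ref{prop:normalbundle.R-fixed}, which explains why no $x^3$ term appears. For the point $\pHT$ I will argue by continuity of the trivial $\cP(2,3,4)$-bundle.
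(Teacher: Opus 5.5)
Your strategy matches the paper's: take a curve through the given point transverse to $\cEhat$, deform each tacnode by the monomials $x^2z^2,x^3z,x^4$ (resp.\ $y^2z^2,y^3z,y^4$) representing $\cO(j\mu)$ (resp.\ $\cO(j\nu)$), perform the $(1,1,2)$ blowup, and read off the leading weighted quartic. The paper does this with the global family
\[
\bigl(xy+\tfrac{a+b}{2}z^2\bigr)^2=\bigl(\tfrac{a-b}{2}\bigr)^2\Bigl(z^4+\sum_j a_jt^jx^jz^{4-j}+\sum_j b_jt^jy^jz^{4-j}\Bigr).
\]
Your \'etale-local version is the same computation, with a more explicit check on $\Chat_0$.

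\textbf{The gap is at $\pHT$.} ``Continuity of the trivial $\cP(2,3,4)$-bundle'' does not work as stated. It only shows that the map from the weight-$j$ normal line bundle of $\cZ_\pTac$ to the weight-$j$ part of the versal deformation of the tacnode is an isomorphism away from $\pHT$. It cannot exclude that this map vanishes at $\pHT$. If it did, the leading form would degenerate and the fiber of $\cChat$ could contain an entire $\PP(1,1,2)$. A closure argument over $\cEhat$ would also need flatness of $\cChat$, which Remark~\ref{rem:flat.cShat} establishes only after, and using, this proposition.

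The worry is real. The monomial sections do degenerate along $\Delta_R$, since $L_j=L'_j\otimes\cO(-\Delta_R)$ (Lemma~\ref{lem:normal.Rfixed.U}). This is reflected in the factor $(\tfrac{a-b}{2})^2=\tfrac14(s^2-4p)$ above, which vanishes exactly over $\Delta_R$. Moreover, at $\pHT$ the surface itself jumps to $\PP(1,1,4)$.

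So a separate computation at $\pHT$ is needed. By Proposition~\ref{prop:transverse}, $N_{\cZ_\pTac/\cPK}|_{\pHT}$ is the normal space of $\pHT$ in $\cEK$. By Theorem~\ref{thm:cEK}, this space is spanned by the octics $u^{4+j}v^{4-j}$ and $u^{4-j}v^{4+j}$, $j=2,3,4$. Applying the same blowup computation to $w^2=u^4v^4+\sum_j a_jt^ju^{4+j}v^{4-j}+\sum_j b_jt^ju^{4-j}v^{4+j}$ in $\PP(1,1,4)$ then gives the normal form, exactly as the paper does.

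\textbf{Two smaller corrections.}

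First, your display $y^2=x^4+\e(a_2x^2+a_3x+a_4)+\cdots$ is wrong, and taken literally it breaks the next step: with $\e$ of weight one, the term $a_4\e$ (weighted degree one) would dominate. The weighted blowup $\Psi$ pulls a weight-$j$ normal coordinate back to $\e^ja_j$. So the family is $y^2=x^4+a_2\e^2x^2+a_3\e^3x+a_4\e^4+(\text{weighted order}\ge 5)$ from the outset; the $b_j$-terms have weighted order $4+2j$ at $q_1$. Your ``consistency check'' should be the starting point, not an afterthought.

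Second, at $q_1=[1:0:0]$ the relevant chart is $x=1$, not $z=1$. There $x^4,x^3z,x^2z^2$ become $1,z,z^2$ in the tangent coordinate $z$.
\begin{itemize}
\item The $\fg/\ft$-absorption of $x^3y$ in weight $-2$ only explains why $x^2z^2$ alone represents $\cO(2\mu)$.
\item A term $\e z^3$ would come from $xz^3$, which has $R$-weight $-1$. It is absent because $N_{\cZ_\pTac/\cPK}$ has no weight-one summand.
\item In any case a cubic term is harmless for the statement, since the automorphism $x\mapsto x+ay$ used in Lemma~\ref{lem:normal.form.Z2} removes it.
\end{itemize}
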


\begin{proof}
	Fix a closed point of $\cEhat$ lying over $\cZ_\pTac\setminus\{\pHT\}$, and write its image in $[(\cP(2,3,4)\times \cP(2,3,4))/\mu_2]$ as $([a_2:a_3:a_4],[b_2:b_3:b_4])$.
	This point lies on the proper transform of the curve defined by the one-parameter family
	\[
	\left(xy+\frac{a+b}{2}z^2\right)^2 = \left(\frac{a-b}{2}\right)^2 \bigg(z^4+\sum_{j=2,3,4}a_jt^jx^jz^{4-j} + \sum_{j=2,3,4}b_jt^jy^jz^{4-j}\bigg)\]
	over $\A^1_t$, whose central fiber at $t=0$ is~\eqref{eq:plane.tacnodal.eq}. 

	Take the weighted blowup along $(q_1,0)$ and $(q_2,0)$ with weights $(1,1,2)$.
	On the exceptional divisor $\Shat_1\cong\PP(1,1,2)$ over $(q_1,0)$, the proper transform of the family 
	is defined by 
	\[\left(y+\frac{a+b}{2}z^2\right)^2 = \left(\frac{a-b}{2}\right)^2\big(z^4+a_2t^2z^2+a_3t^3z+a_4t^4\big).\]
	Up to automorphisms of $\PP(1,1,2)$, this curve is of the normal form~\eqref{eq:normal.form.Z2}, and similarly for the curve $\Chat_2$ over $(q_2,0)$ obtained by replacing $a_j$ with $b_j$.

	For a closed point of $\cEhat$ lying over $\pHT\in \cPK$, since $\cZ_\pTac$ and $\cEK$ intersect transversely, 
	the same argument applies to the one-parameter family of curves in $\PP(1,1,4)$ over $\A^1_t$ defined by
	\[w^2=u^4v^4+a_2t^2u^6v^2+a_3t^3u^7v+a_4t^4u^8+b_2t^2u^2v^6+b_3t^3uv^7+b_4t^4v^8.\]
	The same construction produces curves $\Chat_i \subset \Shat_i$ of the form~\eqref{eq:normal.form.Z2}.
\end{proof}

We construct a morphism $\Shat_0 \to \PP^1$ by a base-point free linear system.

\begin{lemma}\label{lem:contr.main.comp}
	Let $L_0$ be the line bundle on $\Shat_0$ associated to the proper transform of a branch curve of $C^\rmK$. Then $L_0$ is base-point free and satisfies
	\beq\label{eq:cohom.L0}
	\dim H^0(\Shat_0,L_0)=2\and H^i(\Shat_0,L_0)=0 \text{ for } i>0.
	\eeq
	In particular, the complete linear series $|L_0|$ is a pencil whose two distinguished members are the proper transforms of the two branch curves, and the induced morphism $\Shat_0 \to \PP^1$ restricts to an isomorphism $\ell_i \cong \PP^1$ for $i=1,2$.
\end{lemma}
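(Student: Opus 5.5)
The plan is to compute the class of $L_0$ explicitly on the weighted blowup $\pi:\Shat_0\to S^\rmK$, show that the two proper transforms are disjoint members of $|L_0|$, and then read off the cohomology from a restriction sequence. By Proposition~\ref{prop:fiber.Chat}, the branch curves $C^\rmK_1,C^\rmK_2$ have the following explicit forms. When $S^\rmK=\PP^2$ they are the conics $xy+az^2=0$ and $xy+bz^2=0$ with $a\neq b$. When $S^\rmK=\PP(1,1,4)$ they are $w=\pm u^2v^2$. In both cases $C^\rmK_1\sim C^\rmK_2$, both curves pass through the smooth points $q_1,q_2$, and they are tangent to each other there.

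\emph{Step 1: the class of $L_0$.} At $q_i$ choose local coordinates $(x',y')$ with $x'$ of weight $1$ along the common tangent line and $y'$ of weight $2$. Each branch has local equation $y'=c_kx'^2+\dots$ with $c_1\neq c_2$ (this is the tacnode condition). This equation has weighted order exactly $2$. Hence the proper transforms are
\[\widehat C_k=\pi^*C^\rmK_k-2\ell_1-2\ell_2,\qquad k=1,2,\]
as $\Q$-Cartier divisors. They meet $\ell_i\cong\PP(1,2)$ at the distinct points $[1:c_k]$, which are away from the $\tfrac12(1,1)$ point of $\ell_i$. It follows that $\widehat C_1\sim\widehat C_2$, that $\widehat C_1\cap\widehat C_2=\emptyset$ (as already noted in Proposition~\ref{prop:fiber.Chat}), and that $L_0:=\cO(\widehat C_1)$ is an honest line bundle, since $\widehat C_1$ avoids the singular points.

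\emph{Step 2: numerics.} For a $(1,2)$-weighted blowup of a smooth point we have $\ell_i^2=-\tfrac12$. Moreover $(C^\rmK_k)^2=4$ in both cases: for the conic on $\PP^2$, and for the degree-$4$ curve on $\PP(1,1,4)$ where $4\cdot4/4=4$. Therefore
\[L_0^2=4-4\cdot\tfrac12\cdot 2=0\qquad\text{and}\qquad L_0\cdot\ell_i=-2\ell_i^2=1,\]
which is consistent with the disjointness in Step 1. Since $L_0$ has two disjoint effective members $\widehat C_1,\widehat C_2$, it is base-point free, provided $h^0(L_0)\ge2$. That inequality holds because the two members give two independent sections.

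\emph{Step 3: cohomology.} I would use the exact sequence
\[0\to\cO_{\Shat_0}\to L_0\to L_0|_{\widehat C_1}\to0.\]
Here $\widehat C_1\cong C^\rmK_1\cong\PP^1$, and $L_0|_{\widehat C_1}\cong\cO_{\widehat C_1}(\widehat C_2)$ is trivial because the curves are disjoint. The surface $\Shat_0$ is rational with only quotient singularities, so $H^i(\cO_{\Shat_0})=0$ for $i>0$. The long exact sequence then gives $h^0(L_0)=2$ and $H^i(L_0)=0$ for $i>0$, which is \eqref{eq:cohom.L0}. Thus $|L_0|$ is the pencil spanned by $\widehat C_1$ and $\widehat C_2$, and it defines a morphism $\Shat_0\to\PP^1$. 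Its restriction to $\ell_i$ has degree $L_0\cdot\ell_i=1$, computed on the coarse space $\ell_i\cong\PP^1$. A degree-one map from $\PP^1$ to $\PP^1$ is an isomorphism, so $\ell_i\cong\PP^1$ under this morphism.

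The main obstacle is Step 1: verifying uniformly in both cases that each branch has weighted multiplicity exactly $2$ at $q_i$, and that the two branches have distinct "slopes" $c_1\neq c_2$ on $\ell_i$. This amounts to checking the tacnode normal form at each $q_i$. For $\PP^2$, in the chart $x=1$ with local coordinates $(z,y)$, the branches are $y=-az^2$ and $y=-bz^2$, so the slopes are $-a\neq -b$. For $\PP(1,1,4)$, in the chart $v=1$ the branches are $w=\pm u^2$, so the slopes are $\pm1$.
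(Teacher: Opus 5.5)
Your proof is correct, and it takes a different route from the paper's.

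\emph{The paper's route.} The paper determines the class $\pi^*\cO(2)-2(\ell_1+\ell_2)$ (resp.\ $\pi^*\cO(4)-2(\ell_1+\ell_2)$) by intersecting with $\ell_i$, and gets Cartierness from $2$-divisibility. It then computes cohomology from the sequence for the Cartier divisor $2\ell_1+2\ell_2$. This identifies $H^0(\Shat_0,L_0)$ with the conics (resp.\ quartics) on the base surface satisfying the weighted vanishing conditions at $q_1,q_2$, and uses surjectivity of the residue map onto $H^0(\cO_{2q_1})\oplus H^0(\cO_{2q_2})$. Base-point freeness is deduced separately: the base locus lies on $\ell_1\sqcup\ell_2$, and $H^0(\Shat_0,L_0)\to H^0(\ell_i,\cO(1))$ is surjective.

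\emph{Your route.} You derive everything from the disjointness of the two proper transforms, which your slope computation at the tacnodes establishes:
\begin{itemize}
\item Cartierness, because $\widehat C_1$ misses the singular points;
\item base-point freeness, because two sections have no common zeros;
\item cohomology, from $0\to\cO_{\Shat_0}\to L_0\to\cO_{\widehat C_1}\to 0$ together with $H^{>0}(\cO_{\Shat_0})=0$, which holds since the singularities are rational.
\end{itemize}

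\emph{Comparison.} Your argument is more conceptual: it is the standard ``pencil with two disjoint members'' argument on a surface with $H^{>0}(\cO)=0$. It avoids verifying the residue surjectivity and analyzing the base locus, and it treats the $\PP(1,1,4)$ case explicitly rather than by analogy. The paper's computation, in exchange, exhibits $H^0(\Shat_0,L_0)$ concretely as the pencil spanned by the two branch equations on the base surface.

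\emph{One small slip.} For an ox, $xy(xy+bz^2)$, one branch curve is the line pair $xy=0$, so $\widehat C_1\cong\PP^1$ fails for that branch. Either take $\widehat C_1$ to be the smooth branch, or note that you only use $h^0(\cO_{\widehat C_1})=1$ and $h^1(\cO_{\widehat C_1})=0$. These hold for any reduced connected plane conic.
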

\begin{proof}
	When $S^\rmK=\PP^2$ and $C^\rmK$ is a tacnodal curve of the form \eqref{eq:plane.tacnodal.eq}, the two branch curves are conics meeting in two tacnodes $q_1$ and $q_2$. Thus their proper transforms are linearly equivalent to
	$c_1(\cO_{\PP^2}(2))|_{\Shat_0}-m(\ell_1+\ell_2)$
	for some $m$. Intersecting with $\ell_i$ shows that $m=2$.
	
	Since $\ell_1$ and $\ell_2$ are $2$-Cartier divisors and the class of $C^\rmK$ is $2$-divisible, this divisor class is Cartier. Hence the line bundle $L_0$ is well-defined.

	Tensoring $L_0$ with the short exact sequence associated to the Cartier divisor $2\ell_1+ 2\ell_2$ and taking cohomology, we obtain
	\[0\lra H^0(\Shat_0,L_0)\lra H^0(\PP^2,\cO(2))\xrightarrow{~\res~} H^0(\cO_{2q_1}) \oplus H^0(\cO_{2q_2})\lra H^1(\Shat_0,L_0)\lra 0\]
	with $H^2(\Shat_0,L_0)=0$. Since $\res$ is surjective, this proves \eqref{eq:cohom.L0}.

	Moreover, $H^0(\Shat_0,L_0)$, viewed as a linear system on $\PP^2$, has base locus supported at $q_1$ and $q_2$, 
	so after the blowup the base locus of $L_0$ is supported on $\ell_1\sqcup \ell_2$. On the other hand, $L_0$ restricts to $\cO_{\PP^1}(1)$ on $\ell_i$, and the restriction map $H^0(\Shat_0,L_0)\to H^0(\ell_i,\cO(1))$ is surjective (in fact, an isomorphism).
	Consequently, $L_0$ is base-point free, and its complete linear system defines the desired morphism to $\PP^1$. This proves the assertion when $S^\rmK=\PP^2$.

	When $S^\rmK=\PP(1,1,4)$ and $C^\rmK$ is as in \eqref{eq:HT}, the same argument applies, replacing $\PP^2$ and $\cO_{\PP^2}(2)$ by $\PP(1,1,4)$ and $\cO_{\PP(1,1,4)}(4)$, respectively.
\end{proof}

We extend this construction to the whole fiber.
By abuse of notation, we identify the target $\PP^1$ of the pencil with $\ell$.

\begin{lemma}\label{lem:contr.fibers}
	Fix a closed point of $\cEhat$, and let $(\Shat,\Chat)$ be the fiber of $(\cShat,\cChat)$ over this point. 
	Suppose that $L$ is a line bundle on $\Shat$ such that
	\[L|_{\Shat_0}\cong L_0 \and L|_{\Shat_i}\cong \cO_{\PP(1,1,2)}(2) \quad\text{for }\,i=1,2.\]
	Then $L$ is base-point free and satisfies $H^i(\Shat,L)=0$ for $i>0$. 
	The complete linear system of $L$ defines a morphism
	\[\Shat \lra \Shat_1\cup_\ell \Shat_2\]
	contracting $\Shat_0$ onto $\ell$. Restricting to $\Chat$, we obtain the induced contraction
	\[\Chat \lra \Chat_1\cup \Chat_2.\]
\end{lemma}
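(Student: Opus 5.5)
The plan is to work component by component, using the normalization sequence for the chain $\Shat=\Shat_1\cup_{\ell_1}\Shat_0\cup_{\ell_2}\Shat_2$. Since $\Shat$ is a chain of three components glued along the curves $\ell_1,\ell_2\cong\PP^1$, we have an exact sequence
\[0\lra L\lra L|_{\Shat_1}\oplus L|_{\Shat_0}\oplus L|_{\Shat_2}\lra L|_{\ell_1}\oplus L|_{\ell_2}\lra 0.\]
Here $L|_{\ell_i}\cong\cO_{\PP^1}(1)$: on the $\Shat_0$ side by Lemma~\ref{lem:contr.main.comp}, and on the $\PP(1,1,2)$ side because a line through the cone point has $\cO(2)|_\ell=\cO_{\PP^1}(1)$ (with $\ell\cong\PP(1,2)\cong\PP^1$). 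The cohomology of the middle term is known. For $\Shat_0$ this is \eqref{eq:cohom.L0}. For $\Shat_i$, the groups $H^j(\PP(1,1,2),\cO(2))$ vanish for $j>0$, and $h^0=4$ (spanned by $x^2,xy,y^2,z$). The restriction maps $H^0(\Shat_i,\cO(2))\to H^0(\ell_i,\cO(1))$ are surjective, and so is $H^0(\Shat_0,L_0)\to H^0(\ell_i,\cO(1))$, which is even an isomorphism by Lemma~\ref{lem:contr.main.comp}. Hence the map on $H^0$ in the long exact sequence is surjective, and $H^j(\Shat,L)=0$ for $j>0$. A dimension count gives $h^0(\Shat,L)=4+2+4-2-2=6$.

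For base-point freeness, I will show that every section on a single component extends to $\Shat$. A section on $\Shat_0$ determines its restrictions to $\ell_1,\ell_2$, and these lift to $\Shat_1,\Shat_2$ by the surjectivity above. Conversely, a section $s_1$ on $\Shat_1$ determines $s_1|_{\ell_1}$, which extends uniquely to $\Shat_0$ because $H^0(L_0)\cong H^0(\ell_i,\cO(1))$. The resulting restriction to $\ell_2$ then lifts to $\Shat_2$. Thus $H^0(\Shat,L)\to H^0(\Shat_j,L|_{\Shat_j})$ is surjective for each $j$. Since $L_0$ and $\cO_{\PP(1,1,2)}(2)$ are base-point free (the latter because $z$ and $x^2,y^2$ have no common zero), $L$ is base-point free.

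Next I identify the morphism. On each $\Shat_i$ the linear system $|\cO(2)|$ embeds $\PP(1,1,2)$ as the quadric cone in $\PP^3$. On $\Shat_0$ the system restricts to the pencil $|L_0|$, so $\Shat_0$ maps onto a line. The gluing identifies this line with the images of $\ell_1$ and $\ell_2$ via the isomorphisms $\ell_i\cong\PP^1$ of Lemma~\ref{lem:contr.main.comp}. The image in $\PP^5$ is therefore two quadric cones spanning complementary $\PP^3$'s meeting in a common ruling line $\ell$, that is, $\Shat_1\cup_\ell\Shat_2$. The gluing of $\Shat_1$ to $\Shat_2$ along $\ell$ is given by the composite $\ell_1\cong\PP^1\cong\ell_2$ induced by the pencil. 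Finally, $\Chat_0$ consists of two members of the pencil, each mapping isomorphically onto $\ell$ because a branch meets $\ell_i$ in one point and has degree $1$ over the base. Since $\Chat_i\subset\Shat_i$ maps isomorphically, the restriction to $\Chat$ contracts $\Chat_0$ onto $\ell$ and gives $\Chat\to\Chat_1\cup\Chat_2$.

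I expect the main obstacle to be making sure the glued image is exactly the reducible surface in $\cZ_2$, with $\ell$ identified compatibly on both sides. In particular, I must check that the two points $\Chat_1\cap\ell$ and $\Chat_2\cap\ell$ coincide under the identification, so that $\Chat_1\cup\Chat_2$ is a nodal union meeting $\ell$ at two common points. I will verify this using the explicit family in the proof of Proposition~\ref{prop:fiber.Chat}: the points $\Chat_i\cap\ell_i$ are the intersections of $\ell_i$ with the proper transforms of the two branches, and these are exactly where the two distinguished pencil members meet $\ell_1$ and $\ell_2$. Since those members are fibers of $\Shat_0\to\PP^1$, the points match. I also need normality of the image to identify it with the glued surface, which follows because the map is an isomorphism off $\Shat_0$ and the image is seminormal.
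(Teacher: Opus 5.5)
Your cohomology vanishing and base-point-freeness arguments are the same as the paper's. You tensor the normalization sequence of the chain with $L$, and use surjectivity of $H^0(\Shat_i,\cO(2))\to H^0(\ell_i,\cO(1))$ together with the isomorphism $H^0(\Shat_0,L_0)\cong H^0(\ell_i,\cO(1))$. This kills $H^1$ and lets sections on each component lift to $\Shat$, after which base-point freeness follows from the components. Your explicit picture of the image (two quadric cones in $\PP^5$) is a reasonable elaboration of the paper's one-line conclusion. Two small corrections: the spans of the two cones are two $\PP^3$'s meeting in a line (not complementary), and the glued surface $\Shat_1\cup_\ell\Shat_2$ is not normal. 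What you actually need is that the two cones meet scheme-theoretically in the reduced line, which holds because that line is a ruling of both; then their union is the pushout.

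There is, however, one false step in the part about $\Chat$. The components of $\Chat_0$ do not map isomorphically onto $\ell$. They are the two distinguished members of the pencil $|L_0|$ that defines $\Shat_0\to\ell$, so they are fibers of this map and are each contracted to a point (consistent with $L_0^2=0$). It is $\ell_1$ and $\ell_2$ that are sections, mapping isomorphically onto $\ell$. If the branches mapped isomorphically onto $\ell$, the image of $\Chat$ would be $\Chat_1\cup\ell\cup\Chat_2$, not $\Chat_1\cup\Chat_2$.

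The correct picture is the one your last paragraph already uses when you call these members fibers. Each component of $\Chat_0$ is a rational bridge meeting $\ell_1$ and $\ell_2$ once each, and it collapses to a single point of $\ell$. Hence its point on $\ell_1$ is identified with its point on $\ell_2$, and the two resulting points of $\ell$ are the two nodes $\Chat_1\cap\Chat_2$. With that sentence corrected, your argument is complete and agrees with the paper's.
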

\begin{proof}
	We first compute the cohomology of $L$.
	Tensoring the exact sequence
	\[0\lra \cO_{\Shat}\lra \cO_{\Shat_0}\oplus \cO_{\Shat_1}\oplus \cO_{\Shat_2}\lra \cO_{\ell_1}\oplus \cO_{\ell_2}\lra 0\]
	with $L$ and taking cohomology, we obtain an exact sequence
	\[\begin{split}
		0&\lra H^0(\Shat,L)\lra H^0(\Shat_0,L_0)\oplus \bigoplus_{i=1,2}H^0(\Shat_i,\cO(2))\lra \bigoplus_{i=1,2}H^0(\ell_i,\cO(1))\\
		&\lra H^1(\Shat,L)\lra 0,
	\end{split}
	\]
	and $H^2(\Shat,L)=0$ by Lemma~\ref{lem:contr.main.comp}. Since the restriction maps $H^0(\Shat_i,\cO(2))\to H^0(\ell_i,\cO(1))$ are surjective for $i=1,2$, it follows that $H^1(\Shat,L)=0$.
	
	We show that $L$ is base-point free.
	By Lemma~\ref{lem:contr.main.comp}, $L_0$ is base-point free on $\Shat_0$, and $\cO_{\PP(1,1,2)}(2)$ is very ample on each $\Shat_i$. Thus it suffices to show that the  restriction maps
	\[H^0(\Shat,L)\lra H^0(\Shat_0,L_0)\quad\text{and}\quad H^0(\Shat,L)\lra H^0(\Shat_i,\cO(2)),\ i=1,2, \]
	are surjective.
	This follows from the exact sequence and the surjectivity of 
	$H^0(\Shat_0,L_0)\to H^0(\ell_i,\cO(1))$ and $H^0(\Shat_i,\cO(2))\to H^0(\ell_i,\cO(1))$
	for $i=1,2$.
	
	The morphism defined by the complete linear system of $L$ restricts to the morphism $\Shat_0\to \ell$ constructed in Lemma~\ref{lem:contr.main.comp}, and to the immersion induced by $\cO_{\PP(1,1,2)}(2)$ on each $\Shat_i$ for $i=1,2$.
	Consequently, it contracts $\Shat_0$ onto $\ell$ and induces the corresponding contraction of $\Chat$.
\end{proof}

\begin{remark} \label{rem:flat.cShat} 
	Since the fibers of $\cShat\to \cPhat$ are Cohen--Macaulay of pure dimension two and $\cPhat$ is smooth, $\cShat$ is flat over $\cPhat$ by the miracle flatness theorem \cite[Exercise~26.2.H]{vakil}. Since $\cChat\subset \cShat$ is a Cartier divisor on each fiber, it follows that $\cChat\to \cPhat$ is also flat. 
\end{remark}

We globalize the fiberwise contractions of Lemma~\ref{lem:contr.fibers}.

\begin{proposition}\label{prop:contract}
	There exist morphisms
	\[\cShat \lra \cShatH \qquad\text{and}\qquad \cChat \lra \cChatH\]
	over $\cPhat$, which are isomorphisms over $\cPhat\setminus\cEhat$ and 
	whose fibers over $\cEhat$ agree with the contractions of $\Shat$ and $\Chat$ in Lemma~\ref{lem:contr.fibers}.
	The pair $\left(\cShatH,(\frac{3}{4}+\epsilon)\cChatH\right)$ defines a flat family of Hacking stable pairs over $\cPhat$ for all sufficiently small $\e>0$, and hence defines a morphism
	\[\Phi:\cPhat \lra \cPH\]
	which is an isomorphism over $\cPhat\setminus\cEhat$. Moreover, $\Phi^{-1}(\cZ_2)$ is supported on $\cEhat$.
\end{proposition}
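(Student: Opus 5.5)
The plan is to glue the fiberwise contractions of Lemma~\ref{lem:contr.fibers} by a relative linear system. First I would construct a line bundle $\cL$ on $\cShat$ whose restriction to every fiber over $\cEhat$ is a bundle $L$ as in Lemma~\ref{lem:contr.fibers}. The natural candidate is
\[
\cL:=\beta^*\cO_{\cShatK}(\tfrac12\cChatK)\otimes\cO_{\cShat}(-a\,\cFhat),
\]
possibly twisted by a line bundle pulled back from $\cPhat$. Here $\beta:\cShat\to\cShatK$ is the weighted blowup, and $\cChatK$ is locally $2$-divisible near $\cZhat$ because the two branches are linearly equivalent. If $2$-divisibility fails globally, I would instead use $\beta^*\cO(\cChatK)\otimes\cO(-2a\cFhat)$ and work with $L^{\otimes 2}$; this still contracts exactly the same curves.

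I would choose $a$ from the intersection numbers in Lemma~\ref{lem:contr.main.comp}. We want degree $1$ on $\ell_i$ from the $\Shat_0$ side, and $\cO_{\Shat_i}(-a\cFhat)|_{\Shat_i}=\cO_{\PP(1,1,2)}(2)$ on the exceptional components. The normal bundle of the exceptional divisor of a $(1,1,2)$ weighted blowup is $\cO(-1)$, so I expect $a=2$. Over $\cPhat\setminus\cEhat$ I would add a relatively ample twist, for instance a power of $-K$ of the fibers or of $\cO(\cChat)$, so that the bundle is relatively semiample everywhere.

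Next comes cohomology and base change. Lemma~\ref{lem:contr.fibers} gives $H^i=0$ for $i>0$ on fibers over $\cEhat$, and $\cShat\to\cPhat$ is flat by Remark~\ref{rem:flat.cShat}. Hence $\pi_*\cL^{m}$ is locally free and commutes with base change, and $\cL$ is relatively base-point free. We then define
\[
\cShatH:=\mathrm{Proj}_{\cPhat}\bigoplus_m \pi_*\cL^{m}
\]
and let $\cChatH$ be the image of $\cChat$. By construction, the fibers over $\cEhat$ are the surfaces $\Shat_1\cup_\ell\Shat_2$ of Lemma~\ref{lem:contr.fibers}, and over the complement the map is an isomorphism. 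For flatness of $\cShatH$ I would again use vanishing of higher cohomology fiberwise, namely constancy of the Hilbert polynomial, and then check the $\QQ$-Gorenstein condition. Near $\cEhat$ this reduces to the \'etale-local model $E\times\Bl^w_0\A^3\to E\times\A^1$ of Proposition~\ref{prop:fiber.Shat}. Contracting the $\Bl^w_0\A^2$ component there should give Hacking's standard $\QQ$-Gorenstein smoothing of $\PP(1,1,2)\cup\PP(1,1,2)$, the index-$2$ cyclic quotient $\frac12(1,1,\cdot)$ model.

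Once the family is in place, Proposition~\ref{prop:fiber.Chat} shows that each fiber over $\cEhat$ is a pair in normal form \eqref{eq:normal.form.Z2} on each side. By Lemma~\ref{lem:normal.form.Z2} and the description of $\cZ_2$ these fibers are slc, and the general fibers are K-semistable plane quartics with coefficient $\frac34+\epsilon$ or $\PP(1,1,4)$ pairs away from $\cZ_\pTac$. Here I would need to restrict to the locus where such pairs are Hacking stable, that is, where the quartic has at worst ADE-type singularities. Given this, Hacking's moduli functor yields $\Phi$.

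The identity $\Phi^{-1}(\cZ_2)=\cEhat$ set-theoretically follows from the fiber description, since every fiber over the complement is irreducible. I expect the main obstacle to be the $\QQ$-Gorenstein/Koll\'ar condition for the contracted family along $\ell$. I would handle it by an explicit toric computation in the local model rather than a global argument.
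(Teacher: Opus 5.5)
Your construction of the contraction follows the paper. You use the same bundle $\cL|_{\cShat}\otimes\cO_{\cShat}(-2\cFhat)$ with $a=2$, where $\cL^{\otimes 2}\cong\cO_{\cS^\rmK}(\cC^\rmK)$ is the global square root supplied by \cite{ADL}, so your fallback is not needed. You then get flatness from Remark~\ref{rem:flat.cShat} and apply cohomology and base change via Lemma~\ref{lem:contr.fibers}.

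\textbf{The gap is the stability of the fibers over $\cPhat\setminus\cEhat$.} You propose to ``restrict to the locus where such pairs are Hacking stable.'' That cannot prove the statement, which asserts a family of Hacking stable pairs, and hence a morphism $\Phi$, over all of $\cPhat$. You also never address the claim that $\Phi$ is an isomorphism over $\cPhat\setminus\cEhat$.

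The missing input is that no restriction is needed. With the linearization of Definition~\ref{def:Phat}, the unstable locus of $\cPhat'$ is exactly the proper transform of the non-stable locus \cite[Remark~7.17]{Kir85}, as used in Section~\ref{s:chow.phat}. Hence $\cPhat\setminus\cEhat\cong\cPKs$. The fibers there are K-\emph{stable} pairs for coefficient $\frac34-\epsilon$, not K-semistable pairs for $\frac34+\epsilon$. By \cite[Theorem~9.18]{ADL} these are exactly the Hacking stable pairs for $\frac34+\epsilon$ on $\PP^2$ and $\PP(1,1,4)$, so $\cPKs\cong\cPH\setminus\cZ_2$. This one fact gives both the Hacking stability over the complement and the isomorphism assertion.

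Your proposed criterion is also wrong. ADE is not the condition: a tacnode ($A_3$) has log canonical threshold exactly $\frac34$. On $\PP^2$ only nodes and cusps are allowed, and these are precisely the GIT-stable quartics, which is why nothing needs to be removed.

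\textbf{Smaller points.}
\begin{itemize}
\item No twist is needed over the complement. $\cL$ restricts to $\cO(2)$ on $\PP^2$ and $\cO(4)$ on $\PP(1,1,4)$, so the bundle is already relatively very ample there. Any twist would in any case have to be trivial on the fibers of the pencil $\Shat_0\to\ell$ to preserve the contraction.
\item If you use $\mathrm{Proj}_{\cPhat}\bigoplus_m\pi_*\cL^m$, you need the vanishing of Lemma~\ref{lem:contr.fibers} for all powers $L^m$, not only $m=1$. It extends by the same exact sequence. The paper instead uses the morphism given by $\hat\pi^*\hat\pi_*\cLhat\to\cLhat$.
\item The paper infers the family from fiberwise Hacking stability and does not make a separate $\QQ$-Gorenstein check, so flagging it is extra care on your part. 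However, your reduction to the local model of Proposition~\ref{prop:fiber.Shat} does not work, because that model does not see the contraction: the pencil fibers cross $\Shat_0$ from $\ell_1$ to $\ell_2$. The relevant computation is along $\ell$. Over an arc transverse to $\cEhat$, a general pencil fiber $f$ satisfies $\Shat_0\cdot f=-2$ and $K\cdot f=0$ in the total space. So the contraction is crepant and produces, generically along $\ell$, transversal $A_1$ singularities $x_1x_2=s^2\cdot(\text{unit})$. This is the pullback of Hacking's smoothing $x_1x_2=b$ under $b=s^2$, consistent with Proposition~\ref{prop:mult}.
\end{itemize}
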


\begin{proof}
	To globalize the fiberwise contractions in Lemma~\ref{lem:contr.fibers}, we use the line bundle $\cL\to \cS^\rmK$ in \cite[page~45]{ADL}, which satisfies $\cL^{\otimes 2}\cong \cO_{\cS^\rmK}(\cC^\rmK)$. Define
	\[\cLhat :=\cL|_{\cShat}\otimes \cO_{\cShat}(-2\cFhat)\]
	on $\cShat$,
	where $\cFhat$ is the exceptional divisor of the weighted blowup $\cShat\to \cShatK$. 
	The divisor $\cFhat$ is $2$-Cartier and supported over $\cEhat$, with fibers $\Shat_1\sqcup \Shat_2$. 
	
	The line bundle $\cLhat$ is relatively very ample over $\cPhat\setminus\cEhat$. 
	Over $\cEhat$, it restricts to a line bundle satisfying the assumptions 
	in Lemma~\ref{lem:contr.fibers}. 
	Hence, for every point $p\in \cPhat$, we have $H^i(\Shat_p,\cLhat|_{\Shat_p})=0$ for $i>0$, and $\cLhat|_{\Shat_p}$ is base-point free.

	Let $\hat\pi:\cShat\to\cPhat$ be the structure morphism. 
	By Remark~\ref{rem:flat.cShat}, $\hat\pi$ is flat.
	By cohomology and base change (\cite[Theorem~25.1.6]{vakil}), we have $R^i\hat\pi_*\cLhat=0$ for $i>0$, and $\hat\pi_*\cLhat$ is locally free and compatible with base change. In particular, the natural evaluation morphism
	\[\hat\pi^*\hat\pi_*\cLhat \lra \cLhat\]
	is surjective, i.e., $\cLhat$ is relatively base-point free over $\cPhat$. It defines a morphism
	\[\cShat \lra \cShatH\]
	over $\cPhat$.
	Its restriction to each fiber agrees with the contraction in Lemma~\ref{lem:contr.fibers}.
	The induced contraction $\cChat\to \cChatH$ is obtained by restricting to $\cChat$.	
	Moreover, by the argument of Remark~\ref{rem:flat.cShat}, $\cShatH$ and $\cChatH$ are also flat over $\cPhat$.

	Over $\cPhat\setminus\cEhat$ the fibers of $(\cShatH,\cChatH)$ coincide with those of $(\cS^\rmK,\cC^\rmK)$, and are therefore Hacking stable with respect to the coefficient $\frac{3}{4}+\epsilon$ (\cite[Theorem~9.18]{ADL}).
	On the other hand, by Proposition~\ref{prop:fiber.Chat} and Lemma~\ref{lem:contr.fibers}, the fibers over $\cEhat$ are isomorphic to the pairs parametrized by $\cZ_2$, and hence are also Hacking stable with respect to the same coefficient.
	Hence $(\cShatH,\cChatH)$ defines a morphism $\Phi:\cPhat\to\cPH$. It is an isomorphism over $\cPhat\setminus\cEhat$, and the scheme-theoretic inverse image $\Phi^{-1}(\cZ_2)$ is supported on $\cEhat$.
\end{proof}

\subsection{Identification of $\Phi$ with the weighted blowup}
We complete the proof of Theorem~\ref{thm:mainthmintro}. 
We begin by determining the inverse image $\Phi^{-1}(\cZ_2)$.

\begin{proposition}\label{prop:mult}
The scheme-theoretic inverse image $\Phi^{-1}(\cZ_2)$ is $2\cEhat$.
\end{proposition}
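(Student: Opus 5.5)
Since $\Phi^{-1}(\cZ_2)$ is supported on $\cEhat$ (Proposition~\ref{prop:contract}) and $\cPhat$ is smooth, $\Phi^{-1}(\cZ_2)$ is an effective Cartier divisor supported on the irreducible divisor $\cEhat$ (Proposition~\ref{prop:cEhat}). Hence $\Phi^{-1}(\cZ_2)=m\cEhat$ for a single integer $m\ge1$. The plan is to compute $m$ along a transverse test curve through a general point of $\cEhat$. We pull back the local equations of $\cZ_2\subset\cPH$ and read off their order of vanishing along the curve.

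For the test curve we take the one-parameter family from the proof of Proposition~\ref{prop:fiber.Chat}. Fix generic $a\neq b$ and generic $(a_j),(b_j)$, and consider
\[
\Big(xy+\tfrac{a+b}{2}z^2\Big)^2=\Big(\tfrac{a-b}{2}\Big)^2\Big(z^4+\textstyle\sum_j a_jt^jx^jz^{4-j}+\sum_j b_jt^jy^jz^{4-j}\Big)
\]
over $\A^1_t$. By the weighted blowup description in Definition~\ref{def:Phat} and Proposition~\ref{prop:cEhat}, the normal coordinates to $\cZ_\pTac$ in weight $j$ are $a_jt^j,b_jt^j$, so the lift of this curve to $\cPhat$ is transverse to $\cEhat$ with $t$ a local equation for $\cEhat$. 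Since $t$ and $-t$ give the same curve, up to the $R$-action rescaling $x,y$, we must check transversality at the level of the stack, accounting for the $\mu_2$ in the stabilizer $(R\rtimes\mu_2)\times\Gm$. I would verify it directly from the local model of Remark~\ref{rem:cPhat.local}: in the chart where $a_2=1$, the blowup coordinate is the weight-one parameter $u$ with $a_2t^2=u^2$.

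Next we compute the image curve in $\cPH$ via the Hacking family. Near $\cZ_2$, $\cPH$ is smooth, and $\cZ_2$ is the intersection of the two smoothing directions of the double-curve $\ell$. The two branches of $\partial\cPH$ near $\cZ_2$ correspond to smoothing either node of the curve, i.e.\ either point of $C\cap\ell$ in the slc surface. By Hacking's local deformation theory (\cite[Theorems~3.12 and~8.2]{Hac}), the $\QQ$-Gorenstein smoothing of the $T$-type singularities along $\ell$ is governed locally by parameters $s_1,s_2$ with $\cZ_2=\{s_1=s_2=0\}$. The total space $\cShatH$ near each end of $\ell$ is obtained by contracting $\Shat_0$, which is a fibration by $(1,2)$-weighted blowups. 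The main computation is to show that in the total family, $s_i$ pulls back to $t^2$ times a unit, equivalently $u^2$ in the blowup chart. I expect this to come from the local model $xy=t^2$ type singularities of the threefold $\cShatH$ along the points of $\ell$. These arise because the weighted blowup with weight $2$ in the direction transverse to the tacnodal tangent, followed by contraction, produces an $A_1$-type threefold singularity $uv=t^{2}$ at the relevant points. Smoothing parameters for such a singularity then vanish to order $2$.

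This identification of the local smoothing parameter is the main obstacle. I would attempt it by writing explicit local coordinates on $\cShat$ near $\ell_i\cap\Shat_i$ in the weighted blowup chart. I would track the contraction of $\Shat_0$ given by $\cLhat$, and compute the local equation of $\cShatH$ near a point of $\ell$. I expect this to be $\{\alpha\beta=t^2\}\times\A^1$ up to units, via the index-two quotient structure of $\PP(1,1,2)$ along $\ell$.

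Given this, $\Phi^*s_i=(\text{unit})\cdot t^2$ for $i=1,2$. The ideal of $\cZ_2$ therefore pulls back to $(t^2)$ along the test curve, so $m=2$ and $\Phi^{-1}(\cZ_2)=2\cEhat$.

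As a consistency check, I would compare discrepancies. The weighted blowup of the codimension-two $\cZ_2$ with weights $(1,1)$ and the stack structure $\mu_2$ should match the $\mu_2$-gerbe structure of $\cEhat$ over $\cEhat^\dagger$ (Proposition~\ref{prop:cEhat}). This makes multiplicity $2$ the expected outcome for the weight-two blowup claimed in Theorem~\ref{thm:mainthmintro}(1).
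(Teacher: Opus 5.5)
You assert that $\Phi^{-1}(\cZ_2)$ is automatically an effective Cartier divisor because it is supported on the irreducible divisor $\cEhat$ in a smooth stack. That does not follow. A closed substack with that support need not be Cartier: locally, $I=(t^3,t^2w)$ is supported on $\{t=0\}$ with generic multiplicity $2$, but has an embedded component along $\{t=w=0\}$. Ruling out embedded components is the real content of the proposition, because Theorem~\ref{thm:roof.diagram} needs $\Phi^{-1}(\cZ_2)$ to be Cartier in order to invoke the universal property of the blowup. So computing $m$ at a general point of $\cEhat$ is not enough.

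The paper runs the test-curve argument through \emph{every} point of $\cEhat$. The families of Proposition~\ref{prop:fiber.Chat}, including those over $\pHT$, pass through every point and are transverse to $\cEhat$. Locally the $(t)$-primary component of $I$ is $(t^m)$, so $I=t^mJ$. A general point gives $m=2$. If some transverse curve through $p$ pulls $I$ back to exactly $(t^2)$, then $J$ is a unit at $p$, since otherwise the pullback would lie in $(t^3)$.

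The decisive step, $\Phi^*s_i=t^2\cdot(\text{unit})$, is only announced, and the local picture behind it is wrong. $\partial\cPH$ does not have two branches along $\cZ_2$: it is smooth there and contains $\cZ_2$ as a divisor, since $\Psi^{-1}(\cEK)\to\overline\cZ_1$ is a square-root stack along $\cZ_2$ (Remark~\ref{rem:Z1.Kirwan}). Keeping one node of $C_1\cup C_2$ while smoothing the other generically produces a nodal plane quartic, which is a point of $\cZ_0$, not of $\partial\cPH$. The normal directions to $\cZ_2$ come from the surface smoothings, as in $x_1x_2=ax_0^2+bx_3$ of Lemma~\ref{lem:etalenbd.bpCY}. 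So you would still need to show that the ideal of $\cZ_2$ is generated by the local smoothing parameters you test.

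Your $A_1$ model is plausible but not carried out. For instance, the two components of $\Chat_0$ are $(-2)$-curves in the total space of the curve family over $\A^1_t$, so contracting them gives $\alpha\beta=t^2$ at the nodes.

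The paper gets the factor $2$ from a symmetry you noticed but used only to worry about transversality. The test family is invariant under $\iota:([x:y:z],t)\mapsto([-x:-y:z],-t)$. This acts trivially on the central Hacking fibre: on $\Shat_1\cong\PP(1,1,2)$, with $z,t$ of weight one and $y$ of weight two, it is $[-z:-t:y]=[z:t:y]$. Hence $\A^1_t\to\cPH$ factors through $\A^1_t/\iota$, while the lift $\A^1_t\to\cPhat$ is transverse to $\cEhat$. The explicit degeneration of Proposition~\ref{prop:fiber.Chat} then gives exactly $(t^2)$. Put differently, $\iota$ realizes the $\mu_2$ of the gerbe $\cEhat\to\cEhat^\dagger$, which $\Phi$ kills, and that is the source of the multiplicity $2$.
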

\begin{proof}
	For the one-parameter slices $\A^1_t\to\cPhat$ constructed in the proof of Proposition~\ref{prop:fiber.Chat}, the corresponding families are invariant under the involution $\iota:([x:y:z],t)\mapsto([-x:-y:z],-t)$ (or $([u:v:w],t)\mapsto ([u:-v:w],-t)$). Hence the composite map $\A^1_t\to\cPhat\to\cPH$ factors through $\A^1_t/\iota$. By the explicit description of the degeneration in Proposition~\ref{prop:fiber.Chat}, the scheme-theoretic inverse image of $\cZ_2$ in $\A^1_t$ is precisely the Cartier divisor $t^2=0$.

	Since such one-parameter families run over local curves in the normal direction to $\cEhat$ at every point of $\cEhat$, the vanishing order along the normal direction is everywhere equal to two. 
	Hence $\Phi^{-1}(\cZ_2)$ has no embedded components and coincides with the Cartier divisor $2\cEhat$.
\end{proof}

We next show that the $\mu_2$-gerbe structure along $\cEhat$ in $\cPhat$ can be eliminated.

\begin{lemma}\label{lem:Phi}
The relative coarse moduli space $\cPhat^\dagger\to \cPH$ of $\Phi$ exists, and $\Phi$ factors as
$\cPhat\to \cPhat^\dagger\to \cPH$
and restricts to $\cEhat\to \cEhat^\dagger\to \cZ_2$.
Moreover, $\cPhat^\dagger$ is smooth, $\cEhat^\dagger\subset \cPhat^\dagger$ is Cartier, 
and $\cPhat\to \cPhat^\dagger$ is the square-root stack along $\cEhat^\dagger$.
\end{lemma}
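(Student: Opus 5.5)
The plan is to identify the kernel of $\Phi$ on inertia and then remove it by a relative coarse space. First I would check that $\Phi:\cPhat\to\cPH$ is a morphism of Deligne--Mumford stacks with separated, finite-type diagonal. Then, by the theory of relative coarse moduli spaces (Abramovich--Olsson--Vistoli), the relative coarse space $\cPhat^\dagger\to\cPH$ exists. It is obtained by rigidifying, étale locally, the kernel $K\subset I_{\cPhat}$ of the induced map on inertia $I_{\cPhat}\to\Phi^*I_{\cPH}$.

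Since $\Phi$ is an isomorphism over $\cPhat\setminus\cEhat$ by Proposition~\ref{prop:contract}, $K$ is trivial there. Along $\cEhat$, I would compute $K$ pointwise from the local models.

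By Proposition~\ref{prop:cEhat}, the stabilizer of a point of $\cEhat$ contains the central $\mu_2$ generated by $(-1,-1)\in R\times\Gm$, where $\Gm=\bar T/\bar R$. This element acts trivially on the fiber coordinates $a_j,b_j$. It also fixes the normal direction $t$ to $\cEhat$ only up to sign: in the weighted blowup chart the exceptional coordinate $t$ has weight one under the $\Gm$ scaling, so $(-1,-1)$ acts by $t\mapsto -t$.

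The involution $\iota$ in the proof of Proposition~\ref{prop:mult} is exactly this element. The families are $\iota$-invariant, so $\iota$ maps to the identity automorphism of the image pair in $\cZ_2$. I would then check that the remaining stabilizers are detected by $\Phi$: $\Gamma$ and the swap $\mu_2$ act on $\Shat_1\cup\Shat_2$ nontrivially, matching Proposition~\ref{prop:Z2}. Hence $K$ is exactly this $\mu_2$ along $\cEhat$ and trivial elsewhere, and the restriction to $\cEhat$ is the rigidification $\cEhat\to\cEhat^\dagger$ onto the stack of \eqref{eq:exceptional.fiber}. Finally, $\cEhat^\dagger\to\cZ_2$ is the map induced by Proposition~\ref{prop:Z2} times the projection to $[\PP^1/\Gamma]$.

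Next I would verify smoothness and the root-stack claim étale locally. Near a point of $\cEhat$, $\cPhat$ looks like $[(\A^1_t\times W)/\mu_2]\times(\text{rest})$ with $\mu_2$ acting by $t\mapsto -t$ and trivially on $W$. The quotient by this $\mu_2$ is $\A^1_{s}\times W$ with $s=t^2$, which is smooth, and $\{s=0\}$ is a Cartier divisor. The map $[\A^1_t/\mu_2]\to\A^1_s$ is the square-root stack of $\A^1_s$ along $0$. These local descriptions glue because the relative coarse space is compatible with étale base change on $\cPH$, so $\cPhat\to\cPhat^\dagger$ is globally the square-root stack along $\cEhat^\dagger$. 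This is consistent with Proposition~\ref{prop:mult}, since $2\cEhat$ is the pullback of $\cEhat^\dagger$.

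The main obstacle is showing that $K$ is exactly the central $\mu_2$ and that it acts on the normal direction as a reflection, i.e.\ by $-1$ on $N_{\cEhat/\cPhat}$. This requires tracking the weights of the weighted blowup: in the chart the normal coordinate to the exceptional divisor has $\Gm$-weight one, while the $a_j,b_j$ carry weights $j$ under $t^{-1}u$ and $tu$. I would check this directly from the presentation of Definition~\ref{def:Phat} in the local model of Remark~\ref{rem:cPhat.local}, rather than relying on the involution $\iota$ alone.
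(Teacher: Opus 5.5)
Your proposal is correct and follows essentially the same route as the paper. Both show that the relative inertia of $\Phi$ is finite: trivial off $\cEhat$, and the central $\mu_2$ generated by $(-1,-1)$ along $\cEhat$. This gives the relative coarse space by Abramovich--Olsson--Vistoli, and the fact that this $\mu_2$ acts as a reflection in the local model gives smoothness of $\cPhat^\dagger$, the Cartier property of $\cEhat^\dagger$, and the square-root stack structure.

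One small correction: the $\Gm$ in the local model of Proposition~\ref{prop:cEhat} is the $\Gm$ of the weighted blowup, not $\bar T/\bar R$, which has already been used up in forming the base $[\PP^1/\Gamma]$. This $\Gm$ acts with weights $2,3,4$ on the normal coordinates and with weight $\pm 1$ (and trivial $R$-weight) on the exceptional coordinate. With this reading, your computation that $(-1,-1)$ acts by $-1$ exactly on the normal direction is right.
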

\begin{proof}
The morphism $\Phi$ is an isomorphism over $\cPhat\setminus \cEhat$, and along $\cEhat$, it factors as $\cEhat\to \cEhat^\dagger\to \cZ_2$, where $\cEhat\to \cEhat^\dagger$ is the $\mu_2$-gerbe in Proposition~\ref{prop:cEhat} and $\cEhat^\dagger\to \cZ_2$ is representable.
It follows that $\Phi$ has finite inertia and admits a relative coarse moduli space $\cPhat^\dagger \to \cPH$ by \cite[Theorem~3.1]{abramovich-olsson-vistoli}.

Moreover, at each point of $\cEhat$, the generator of the relative stabilizer subgroup $\mu_2$ 
corresponds to $(-1,-1)\in R\times \Gm$ in the local model (cf.~Remark~\ref{rem:cPhat.local} and Proposition~\ref{prop:cEhat}), and in particular it is central in its stabilizer. 

By Luna's \'etale slice theorem \cite{alper-hall-rydh-luna}, the map $\cPhat\to\cPhat^\dagger$ is \'etale locally 
\[[W/(H\times \mu_2)] \to [(W/\mu_2)/H],\]
where $W$ is a smooth variety, $H$ is finite, and $\mu_2$ acts on $W$ by a pseudo-reflection.
Therefore $\cPhat^\dagger$ is smooth, $\cEhat^\dagger$ is a Cartier divisor in $\cPhat^\dagger$, and $\cPhat\to\cPhat^\dagger$ is the square-root stack along $\cEhat^\dagger$.
\end{proof}

Combining the above results, we identify $\Phi$ as the weighted blowup.

\begin{theorem}\label{thm:roof.diagram}
	The morphism $\cPhat^\dagger \to \cPH$ is the blowup along $\cZ_2$ with exceptional divisor $\cEhat^\dagger$. Equivalently, $\Phi$ is the stack-theoretic weighted blowup along $\cZ_2$ with weight two.
\end{theorem}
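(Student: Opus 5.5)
The plan is to identify $\cPhat^\dagger\to\cPH$ with $\Bl_{\cZ_2}\cPH$ by the universal property of the blowup, and then to show that the resulting morphism $\cPhat^\dagger\to \Bl_{\cZ_2}\cPH$ is an isomorphism.

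First, we check that the ideal of $\cZ_2$ pulls back to an invertible ideal on $\cPhat^\dagger$. By Proposition~\ref{prop:mult}, $\Phi^{-1}(\cZ_2)=2\cEhat$ on $\cPhat$. By Lemma~\ref{lem:Phi}, $\cPhat\to\cPhat^\dagger$ is the square-root stack along $\cEhat^\dagger$, so $\cEhat^\dagger$ pulls back to $2\cEhat$. Pulling back along the coarsening is faithfully flat on ideals (the square-root stack map is flat and cohomologically affine with $\cO_{\cPhat^\dagger}$ as pushforward of $\cO_{\cPhat}$). Hence the inverse image of $\cZ_2$ in $\cPhat^\dagger$ is the Cartier divisor $\cEhat^\dagger$. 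The universal property then gives a morphism $\beta:\cPhat^\dagger\to \Bl_{\cZ_2}\cPH$ over $\cPH$, and $\beta$ carries $\cEhat^\dagger$ to the exceptional divisor $\PP(N_{\cZ_2/\cPH})$.

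Second, we show $\beta$ is an isomorphism. Both stacks are smooth: $\cPhat^\dagger$ by Lemma~\ref{lem:Phi}, and the blowup because $\cZ_2$ is smooth of codimension two in the smooth stack $\cPH$, by \cite{Hac}. Both are proper over $\cPH$, and $\beta$ is an isomorphism away from the exceptional divisors. It therefore suffices to show that $\beta$ is quasi-finite and representable. Then $\beta$ is proper, birational and quasi-finite between smooth (hence normal) stacks, so Zariski's main theorem makes it an isomorphism.

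For this, we compare the exceptional divisors over $\cZ_2$. By Proposition~\ref{prop:cEhat}, $\cEhat^\dagger\cong \cZ_2'\times[\PP^1/\Gamma]$, where $\cZ_2'=[(\cP(2,3,4)^2)/\mu_2]$. By Proposition~\ref{prop:Z2}, $\cZ_2\cong \cZ_2'\times B\Gamma$. The map to $\cZ_2$ is the projection $\cEhat^\dagger\to\cZ_2'$ together with the $\Gamma$-structure, so each fiber is $[\PP^1/\Gamma]\to B\Gamma$. On the other side, $\PP(N_{\cZ_2/\cPH})$ is a $\PP^1$-bundle over $\cZ_2$. The main obstacle is to show that $\beta$ restricted to a fiber, $\PP^1\to\PP(N_p)$, is finite, representable and of degree one, rather than, say, constant. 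The approach is to use the one-parameter slices from the proof of Proposition~\ref{prop:fiber.Chat}: the point $[a:b]\in\PP^1$ gives the smoothing direction $\bigl(\tfrac{a-b}{2}\bigr)^2$ versus $\tfrac{a+b}{2}$ in the deformations of the two nodes of $C_1\cup C_2$. The two local smoothing parameters of the nodes (equivalently, of the double curve $\ell$ of $S$ at the two points of $C\cap\ell$) span $N_{\cZ_2/\cPH}$. Reading off from the family $(y+\tfrac{a+b}{2}z^2)^2=(\tfrac{a-b}{2})^2(\cdots)$, the ratio of the two node-smoothing parameters depends on $[a:b]$ through a degree-one, $\Gamma$-equivariant map. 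Alternatively, one can compute degrees: the normal bundle of $\cEhat^\dagger$ restricted to a fiber $\PP^1$ equals the pullback of $\cO(-1)$, using $\Phi^*\cZ_2$-data from Proposition~\ref{prop:mult} and $\cO(-\cEhat)$ having $\Gm$-weight computed in Proposition~\ref{prop:cEhat}. This forces the fiber map to be an isomorphism.

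Given this, $\beta$ is bijective on points with matching stabilizers, hence an isomorphism. The equivalent statement then follows from Lemma~\ref{lem:Phi}: composing with the square-root stack along $\cEhat^\dagger$ is precisely the stack-theoretic weighted blowup with weight two.
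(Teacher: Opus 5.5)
Your overall strategy is the paper's. You show that the inverse image of $\cZ_2$ in $\cPhat^\dagger$ is the Cartier divisor $\cEhat^\dagger$; your faithful-flatness argument along the root stack is a fine justification of this. The universal property then gives $\beta:\cPhat^\dagger\to\Bl_{\cZ_2}\cPH$, and Zariski's Main Theorem finishes once $\beta$ is known to be representable and quasi-finite.

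The gap is in the step you yourself call the main obstacle: quasi-finiteness over $\cZ_2$ is never established. Neither of your two routes is carried out, and the first does not work as stated. The quantities $(\tfrac{a-b}{2})^2$ and $\tfrac{a+b}{2}$ that you read off from the family are symmetric in $a,b$, i.e.\ invariant under $\Gamma$, and are not even of the same degree in $(a,b)$. A nonconstant map built from them factors through $\PP^1\to\PP^1/\Gamma$, so it has even degree and cannot be the degree-one map you claim. Also, identifying $N_{\cZ_2/\cPH}$ with the node-smoothing parameters of $C_1\cup C_2$ is asserted, not proved, for deformations of the pairs $(S,C)$. The second route, computing $\deg\cO(\cEhat^\dagger)|_{\PP^1}=-1$, could work but is only gestured at.

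The missing observation is that none of this is needed:
\begin{enumerate}
\item Surjectivity. $\beta$ is proper and an isomorphism over the dense open complement of the exceptional divisor in the irreducible stack $\Bl_{\cZ_2}\cPH$, so it is surjective.
\item Fiber maps. Since $\beta$ lies over $\cPH$, the preimage under $\beta$ of the exceptional divisor is exactly $\cEhat^\dagger$. So for each point $p\in\cZ_2$, the map from the fiber $\PP^1$ of $\cEhat^\dagger\to\cZ_2$ to $\PP(N_p)\cong\PP^1$ is surjective, hence nonconstant, hence finite. Thus $\beta$ is quasi-finite.
\item Representability. $\cPhat^\dagger\to\cPH$ is a relative coarse moduli space (Lemma~\ref{lem:Phi}), hence representable, so $\beta$ is representable too; you need not compare stabilizers by hand.
\end{enumerate}
Zariski's Main Theorem then gives the isomorphism. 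The degree-one statement you were trying to prove is a consequence, not an input. This is exactly how the paper concludes.
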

\begin{proof}
	By Proposition~\ref{prop:mult} and Lemma~\ref{lem:Phi}, the scheme-theoretic inverse image of $\cZ_2$ in $\cPhat^{\dagger}$ is the Cartier divisor $\cEhat^{\dagger}$.
	Since $\cPhat^{\dagger}\to \cPH$ is representable, by the universal property of blowups, it factors through a morphism
	$\cPhat^{\dagger}\to \Bl_{\cZ_2}\cPH$,
	where $\Bl_{\cZ_2}\cPH$ denotes the (ordinary) blowup of $\cPH$ along $\cZ_2$. 
	This morphism is proper and an isomorphism over $\cPH\setminus\cZ_2$, hence surjective. 
	Moreover, it is quasi-finite, since fiberwise over $\cZ_2$ it induces a surjective morphism $\PP^1\to\PP^1$, hence finite.
	By Zariski's Main Theorem, it follows that $\cPhat^{\dagger}\to \Bl_{\cZ_2}\cPH$ is an isomorphism. 
\end{proof}
Let $\Phat$ be the coarse moduli space of $\cPhat$.
Since the blowup centers $\cZ_2\subset \cPH$ and $\cZ_\pTac\subset \cPK$ are both contracted to the unique boundary point of $\PCY$,
the two induced morphisms $\Phat \to \PH \to \PCY$ and $\Phat \to \PK \to \PCY$ agree. 

\begin{corollary}
	The morphism $\Phat\to \PH\times_{\PCY}\PK$ is the normalization.
\end{corollary}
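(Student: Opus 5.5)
To prove the Corollary, I will show that the induced morphism $\Phat\to \PH\times_{\PCY}\PK$ is finite and birational onto its image. Since $\Phat$ is normal, being the coarse space of the smooth stack $\cPhat$, this gives the claim. More precisely, it identifies $\Phat$ with the normalization of the unique irreducible component of the fiber product dominating $\PH$ and $\PK$.

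The map exists because the two composites $\Phat\to\PH\to\PCY$ and $\Phat\to\PK\to\PCY$ agree, as noted just before the statement. Write $p_0\in\PCY$ for the unique boundary point, which is the common image of $Z_2$ and of $\PP^1=\psi^{-1}(p_0)$ (Remark~\ref{rem:P1}). Away from $p_0$, both $\varphi$ and $\psi$ are isomorphisms. Hence over $\PCY\setminus\{p_0\}$ the fiber product is isomorphic to $\PH\setminus Z_2$. By Proposition~\ref{prop:contract} and its analogue for $\Psi$, the map $\Phat\to \PH\times_{\PCY}\PK$ is an isomorphism over this open set, which is dense in $\Phat$. Thus the map is birational onto the closure of this open set, the main component.

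Properness is automatic, since $\Phat$ is proper. For finiteness it therefore suffices to check quasi-finiteness over $p_0$. The fiber of the fiber product over $p_0$ is $Z_2\times\PP^1$. The fiber of $\Phat$ over $p_0$ is the coarse space of $\cEhat$, which is $Z_2\times \PP^1$ by Proposition~\ref{prop:cEhat}.

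I then have to show that the map $Z_2\times\PP^1\to Z_2\times\PP^1$ is the identity, or at least finite. This is the main step. The first factor is the coarse map of $\cEhat\to\cEhat^\dagger\to\cZ_2$ induced by $\Phi$. Under the identification \eqref{eq:exceptional.fiber} and Proposition~\ref{prop:Z2}, this map is the identity on $(\PP(2,3,4)\times\PP(2,3,4))/\mu_2$: Proposition~\ref{prop:fiber.Chat} shows that the point $([a_2:a_3:a_4],[b_2:b_3:b_4])$ is sent to the pair of normal-form curves with exactly these coefficients. The second factor is the coarse map of $\cEhat\to\cZ_\pTac\to[\PP^1/\Gamma]\to\PP^1$, which is the projection by construction of $\Psi$ and Proposition~\ref{prop:cEhat}. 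Hence the product map is an isomorphism on $p_0$-fibers, and in particular quasi-finite. The subtle point is compatibility of the $\mu_2$-swap actions on the two factors; I will handle it by working on the \'etale double cover $\widetilde\cZ_\pTac$ and checking equivariance.

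Combining the two steps, the map is finite and birational onto the main component, with normal source. It is therefore the normalization of that component; if the fiber product is irreducible, it is the normalization of the fiber product itself. I expect that no other component exists, since over $p_0$ the fiber product is $Z_2\times \PP^1$, which has dimension $5$ and so cannot be a $6$-dimensional component.
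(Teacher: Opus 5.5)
Your proposal is correct and follows the paper's route. The paper simply notes that the map is proper and bijective on closed points by Proposition~\ref{prop:cEhat} and concludes by Zariski's Main Theorem; you fill in the details, namely the isomorphism away from the boundary point of $\PCY$ and the identification of the map on the fibers over that point.

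One small repair: the dimension count alone does not rule out an extra component of the fiber product. Your bijectivity over $p_0$ does rule it out, because it places the whole fiber $Z_2\times\PP^1$ in the image of $\Phat$, which is the closure of the dense open part.
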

\begin{proof}
	The morphism is proper and bijective on closed points by Proposition~\ref{prop:cEhat}. 
	Hence the assertion follows from Zariski's Main Theorem.
\end{proof}
This completes the proof of Theorem~\ref{thm:mainthmintro}.

\begin{remark}\label{rem:Z1.Kirwan}
	Restricting $\Phi$ and $\Psi$, we obtain the following diagram:
	\[\xymatrix{&\Psi^{-1}(\cEK)\ar[ld]\ar[rd]&\\ \overline\cZ_1&&\cEK\cong[Y_8^{ss}/\PGL_2]}\]
	where the left arrow is the square-root stack along $\cZ_2$, and the right arrow is the stack-theoretic weighted blowup of Kirwan type at the point $\pHT$, with weights $(2,3,4)$. 
	Since $\pHT=\cZ_\pTac\cap \cEK$ is transverse (Proposition~\ref{prop:transverse}), $N_{\pHT/\cEK}$ is the restriction of $N_{\cZ_\pTac/\cPK}$. 
	
	The exceptional divisor of the right arrow is a $\mu_2$-gerbe over $\cZ_2$, and
	in particular, the coarse moduli space of $\overline\cZ_1$ is obtained as a weighted Kirwan partial desingularization $\widehat Y_8/\!\!/\SL_2$ of the GIT quotient $Y_8/\!\!/\SL_2$, and its exceptional divisor is the coarse moduli space of $\cZ_2$.\end{remark}

\subsection{Reduction of stabilizers}\label{ss:can.red}

In this section we prove Corollary~\ref{cor:bpCY}, showing that $\Phat$ is naturally determined by $\cPCY$, as a weighted analogue of the canonical reduction of stabilizers in the sense of \cite{ER}. 

By \cite[16.1.2]{bpCY}, the moduli stack $\cPCY$ of boundary polarized Calabi--Yau pairs has a unique strictly polystable point $p_\infty$, represented by the pair 
	\beq\label{eq:bpCYpt}S=\{x_1x_2=0\}\subset \PP(1,1,1,2) \and C=\{x_0^4=x_3^2\}\cap S.\eeq
	Here $x_0,x_1,x_2,x_3$ are the homogeneous coordinates of $\PP(1,1,1,2)$. 
	
	Let $p_1=[0:1:0:0]$ and $p_2=[0:0:1:0]$, and let $S_1=\{x_2=0\}$ and $S_2=\{x_1=0\}$ so that $p_i\in S_i$. Then,
	$S=S_1\cup S_2$ with $S_i\cong \PP(1,1,2)$,
	and $C_i:=C\cap S_i$ is the union of two degree $2$ curves avoiding the cone point and meeting at $p_i$ (see \cite[Figure~1]{bpCY}).
\begin{lemma}\label{lem:etalenbd.bpCY}
	The stack $\cPCY$ is \'etale-locally at $p_\infty$ isomorphic to 
	\beq\label{eq:etale.nbd.CY}[\A^8/((\Gm^2\rtimes \mu_2)\times \Gamma)],\eeq
	where, on the coordinates $a,b,a_2,b_2,a_3,b_3,a_4,b_4$ of $\A^8$, $(t_1,t_2)\in \Gm^2$ acts by 
	\[(t_1,t_2)\cdot \left(a,b,a_2,b_2,a_3,b_3,a_4,b_4\right)=\left(t_1t_2a,t_1t_2b, t_1^{-2}a_2,t_2^{-2}b_2, t_1^{-3}a_3,t_2^{-3}b_3,t_1^{-4}a_4,t_2^{-4}b_4\right),\]
	$\mu_2$ fixes $a,b$ and swaps $a_i$ and $b_i$, and $\Gamma$ sends $b$ to $-b$ and fixes the others.
\end{lemma}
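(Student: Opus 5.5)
The plan is to apply Luna's étale slice theorem for algebraic stacks \cite{alper-hall-rydh-luna} at the closed point $p_\infty$. Since $\cPCY$ admits a good moduli space, it is étale-locally at $p_\infty$ isomorphic to $[T_{p_\infty}/\Aut(p_\infty)]$, provided it is smooth at $p_\infty$. So there are three steps: compute the automorphism group, compute the deformation space, and check that the deformation space is unobstructed.

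\emph{Automorphism group.} The automorphisms of $\PP(1,1,1,2)$ preserving $S=\{x_1x_2=0\}$ and $C=\{x_0^4=x_3^2\}\cap S$ can be listed directly.
\begin{itemize}
\item The torus $\Gm^2$ acts by $(x_0,x_1,x_2,x_3)\mapsto(x_0,t_1x_1,t_2x_2,x_3)$. It fixes $C$ because $C$ involves only $x_0$ and $x_3$.
\item The swap $x_1\leftrightarrow x_2$ gives the $\mu_2$ factor.
\item The involution $x_3\mapsto -x_3$ gives $\Gamma$.
\end{itemize}
I would then check that the reductive part of the stabilizer is exactly $(\Gm^2\rtimes\mu_2)\times\Gamma$. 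Scaling $x_0$ is absorbed by the weighted $\Gm$ of $\PP(1,1,1,2)$ together with $\Gamma$, since the equation $x_0^4=x_3^2$ rigidifies it. The unipotent substitutions such as $x_3\mapsto x_3+cx_0^2$ or $x_0\mapsto x_0+\ldots$ do not preserve $C$. This step is bookkeeping.

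\emph{Deformation space and weights.} The deformations split into two kinds.
\begin{itemize}
\item Smoothing of the double locus: deformations of $x_1x_2=0$ inside $\PP(1,1,1,2)$ that preserve the Calabi--Yau condition are $x_1x_2=\alpha x_3+\ldots$. After using the quadric $x_0^4=x_3^2$, these reduce to two parameters, $x_1x_2=a(x_3-x_0^2)+b(x_3+x_0^2)$, or equivalently smoothings at the two branches of $C$ along the double curve. Under $(t_1,t_2)$ they have weight $t_1t_2$. The involution $\Gamma:x_3\mapsto -x_3$ swaps these two; after a change of basis to $a\pm b$ it fixes one and negates the other. This matches the statement, where $\Gamma$ sends $b$ to $-b$ and $\mu_2$ fixes $a,b$.
\item Deformations of $C_i$ inside $S_i\cong\PP(1,1,2)$ near $p_i$: take $x_3^2=x_0^4+\sum_{j=2}^4 a_j x_1^jx_0^{4-j}$ on $S_1$ and the analogue with $b_j$, $x_2$ on $S_2$. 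The $x_1x_0^3$ term is removed by the unipotent change $x_0\mapsto x_0+cx_1$, as in Lemma~\ref{lem:normal.form.Z2}. Then $a_j$ has weight $t_1^{-j}$ and $b_j$ has weight $t_2^{-j}$, and $\mu_2$ swaps them.
\end{itemize}
Counting gives $2+6=8$ parameters, matching $\A^8$. This is consistent with the normal form of Lemma~\ref{lem:normal.form.Z2}, Lemma~\ref{lem:etalenbd.DC}-type reasoning, and Proposition~\ref{prop:cEhat}.

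\emph{Main obstacle: unobstructedness and completeness.} I must show that $\cPCY$ is smooth of dimension $8-4=\dim$ at $p_\infty$. The dimension check is $\dim\cPCY=6$, and $8$ parameters minus $\dim\Gm^2=2$ gives $6$. I must also show that the $\Q$-Gorenstein deformation functor of the pair has tangent space exactly these eight parameters, with no obstructions. My first attempt would be to realize the universal deformation explicitly as the family
\[x_1x_2=(\text{smoothing in }a,b),\qquad x_3^2=x_0^4+\textstyle\sum_j a_jx_1^jx_0^{4-j}+\sum_j b_jx_2^jx_0^{4-j}\]
inside $\PP(1,1,1,2)$. Every fiber is a complete intersection, hence $\Q$-Gorenstein. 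I would then argue that the family is versal by identifying its Kodaira--Spencer map with the tangent computation above, using the local $\Q$-Gorenstein smoothing theory for the slc double-line singularity \cite{Hac}. Smoothness of the base then follows, and Luna's theorem gives the stated model.
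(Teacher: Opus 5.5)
Your proposal is correct and follows essentially the paper's argument. The paper also reads off the stabilizer $(\Gm^2\rtimes\mu_2)\times\Gamma$ (citing \cite[Proposition~8.11]{bpCY}) and takes the explicit family $x_1x_2=ax_0^2+bx_3$, $x_3^2=x_0^4+\sum_{j=2}^4 a_jx_0^{4-j}x_1^j+\sum_{j=2}^4 b_jx_0^{4-j}x_2^j$ over $\A^8$ as an \'etale slice. Your Kodaira--Spencer/versality step just spells out the \'etaleness at the origin that the paper asserts without detail.

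There are a few cosmetic slips. The reduction of the surface deformations to $\langle x_0^2,x_3\rangle$ comes from absorbing $x_0x_1,x_0x_2,x_1^2,x_2^2$ by linear changes of $x_1,x_2$, not from the quartic. In your basis $\Gamma$ acts by $(a,b)\mapsto(-b,-a)$ rather than by a plain swap. The dimension count should read $8-2=6$, not $8-4$.
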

\begin{proof}
	The stabilizer of $p_\infty$ is $(\Gm^2\rtimes \mu_2)\times \Gamma$, where $\Gm^2$ acts by 
\[(t_1,t_2)\cdot[x_0:x_1:x_2:x_3]=[x_0:t_1x_1:t_2x_2:x_3],\] 
$\mu_2$ swaps $x_1$ and $x_2$, and $\Gamma$ sends $x_3$ to $-x_3$  (cf.~\cite[Proposition~8.11]{bpCY}).

An explicit \'etale slice is given as follows. Consider the family of pairs
\[\begin{aligned}
	\cS'&=\{x_1x_2=ax_0^2+bx_3\}\subset \A^8\times \PP(1,1,1,2)\\
	\cC'&=\{x_3^2=x_0^4+a_2x_0^2x_1^2+a_3x_0x_1^3+a_4x_1^4+b_2x_0^2x_2^2+b_3x_0x_2^3+b_4x_2^4\}\cap \cS'
\end{aligned}\]
over $\A^8$. This family is preserved by the action of $(\Gm^2\rtimes \mu_2)\times \Gamma$. The fiber over $0$ is \eqref{eq:bpCYpt}, and the induced map from $[\A^8/((\Gm^2\rtimes \mu_2)\times \Gamma)]$ to $\cPCY$ is \'etale at the origin.
\end{proof}
\begin{corollary}\label{cor:bpCY}
For any $k\in\ZZ_{>0}$,
the coarse moduli space of the Kirwan-type stack-theoretic weighted blowup of $\cPCY$ at the point $p_\infty$ with weights $(k,k,2,2,3,3,4,4)$ is isomorphic to $\Phat$. 
\end{corollary}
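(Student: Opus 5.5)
The plan is to compute both sides étale locally over the unique point of $\PCY$ that is blown up, and to glue using the fact that both sides are isomorphisms over the complement of that point. Write $\cPCY^{(k)}$ for the Kirwan-type weighted blowup at $p_\infty$ with weights $(k,k,2,2,3,3,4,4)$. Away from $p_\infty$, $\cPCY$ coincides with $\cPH$ minus $\cZ_2$ and with $\cPK$ minus $\cZ_\pTac$. Also $\cPhat$ is isomorphic to these over the complement of $\cEhat$. So the coarse spaces of $\cPCY^{(k)}$ and of $\cPhat$ agree over $\PCY\setminus\{p_\infty\}$, and the whole problem sits over an étale neighborhood of $p_\infty$.

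\textbf{Local model of the blowup.} By Lemma~\ref{lem:etalenbd.bpCY}, near $p_\infty$ the stack $\cPCY^{(k)}$ is the Kirwan-type weighted blowup of $[\A^8/((\Gm^2\rtimes\mu_2)\times\Gamma)]$ at the origin. Let $s$ be the scaling $\Gm$ acting with weights $(k,k,2,2,3,3,4,4)$. The exceptional divisor is $[(\A^8\setminus 0)^{ss}/(s\times \Gm^2\rtimes\mu_2\times\Gamma)]$, where semistability is taken for the Kirwan linearization. By Hilbert--Mumford for the torus $\Gm^2$ (weights $t_1t_2$ on $a,b$, and $t_1^{-j}$, $t_2^{-j}$ on $a_j,b_j$), I expect the semistable locus to be exactly
\[
(a,b)\neq 0,\qquad (a_2,a_3,a_4)\neq 0,\qquad (b_2,b_3,b_4)\neq 0,
\]
which is the point where the choice of linearization must be checked. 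Using $s$ together with $\Gm^2$, one rescales each triple independently with weights $(2,3,4)$ and then absorbs the remaining torus in $(a,b)$. The result is $[(\cP(2,3,4)\times\cP(2,3,4))/\mu_2]\times[\PP^1_{[a:b]}/\Gamma]$ up to gerbes. Its coarse space is $Z_2\times\PP^1$, which matches $\Ehat$ by Proposition~\ref{prop:cEhat}. The same computation shows that the blown-up stack has finite stabilizers, so it is a proper DM stack with a coarse space $P^{(k)}$ mapping to $\PCY$.

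\textbf{Comparison maps.} The key step is to produce a morphism $P^{(k)}\to\Phat$, or, better, to show that $P^{(k)}$ is normal and maps to $\PH\times_{\PCY}\PK$ bijectively, and then to invoke the corollary that $\Phat$ is the normalization of this fiber product. For the map to $\PH$, restrict the family $(\cS',\cC')$ of Lemma~\ref{lem:etalenbd.bpCY}. On the locus $b=0$, $a\neq 0$ (up to $\Gamma$, in the $\PP^1$ direction $[a:b]=[1:0]$ and its translates), the surface $x_1x_2=ax_0^2$ is a $\PP(1,1,4)$- or $\PP^2$-type degeneration, and the curves give K-side points; this supplies the map to $\PK$ via the $[a:b]$ coordinate recovering Proposition~\ref{prop:gerbe}'s $[\PP^1/\Gamma]$. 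The locus $a=b=0$, approached radially with the $a_j,b_j$ fixed, gives the Hacking pairs of Lemma~\ref{lem:normal.form.Z2} glued along $\ell$, and hence the map to $\PH$. In practice the maps out of $P^{(k)}$ are most easily obtained on coarse spaces: the morphisms $\cPCY^{(k)}\to\cPCY$ factor through the good moduli spaces $\PH$ and $\PK$, and I would check that these factorizations are defined via GIT on the local quotient, namely as the VGIT chambers of the $\Gm^2$-action obtained by dropping $(a,b)$ versus dropping the $a_j,b_j$.

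\textbf{Conclusion and main obstacle.} The induced map $P^{(k)}\to \PH\times_{\PCY}\PK$ is proper and birational. It is bijective over $p_\infty$ by the identification of exceptional loci with $Z_2\times\PP^1$. $P^{(k)}$ is normal, being the coarse space of a smooth DM stack. Zariski's Main Theorem therefore identifies $P^{(k)}$ with the normalization, which is $\Phat$; this holds for every $k$, since $k$ affects only the stack structure and not the coarse space. The main obstacle is the middle step: showing rigorously that the local VGIT picture matches the global maps to $\PH$ and $\PK$. I would handle it by identifying the two GIT chambers of $[\A^8/\Gm^2\rtimes\mu_2\times\Gamma]$ with étale neighborhoods of $\cZ_2\subset\cPH$ and $\cZ_\pTac\subset\cPK$, using \cite[Theorem~1.3]{bpCY} for the wall crossing.
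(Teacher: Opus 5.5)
Your strategy is sound in outline: show that $P^{(k)}$ is normal and maps finitely and birationally onto $\PH\times_{\PCY}\PK$, then invoke the normalization corollary. But the step that carries all of the content is not supplied. Everything rests on morphisms $P^{(k)}\to\PH$ and $P^{(k)}\to\PK$ over $\PCY$ whose restrictions to the exceptional locus are $([a:b],[a_\bullet],[b_\bullet])\mapsto([a_\bullet],[b_\bullet])$ and $\mapsto[a:b]$. You do not construct them.

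The sentence that $\cPCY^{(k)}\to\cPCY$ ``factors through'' $\PH$ and $\PK$ does not make sense as written, since there is no morphism $\PH\to\cPCY$. What is needed is a factorization of $P^{(k)}\to\PCY$ through $\PH$ and through $\PK$, and that is exactly the claim to be proved. Identifying $\cPH$ and $\cPK$ \'etale-locally with the loci $\{a_\bullet\neq 0,\ b_\bullet\neq 0\}$ and $\{(a,b)\neq 0\}$ of $[\A^8/((\Gm^2\rtimes\mu_2)\times\Gamma)]$ is correct and checkable from the slice family. However, it only gives local VGIT models of $\PH$ and $\PK$. It produces no morphism from the Kirwan quotient of the weighted blowup of $\A^8$ into these models, and the wall-crossing theorem of \cite{bpCY} does not provide one either.

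For the same reason your bijectivity claim is unsupported. That the exceptional locus is abstractly $Z_2\times\PP^1$, like the fiber of $\PH\times_{\PCY}\PK$ over the boundary point, says nothing about whether the unconstructed map between them is bijective. (A minor point: $\cPCY\setminus\{p_\infty\}$ is not $\cPH\setminus\cZ_2$, since it still contains $\cZ_2$ and $\cZ_\pTac$. Only the statement over $\PCY$ minus the boundary point holds, and that is what you actually use.)

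The gap can be closed by a semi-invariant computation in the toric model.
\begin{itemize}
\item On the $\cPK$ side, monomials of $\Gm^2$-weight $n(1,1)$ vanish along the exceptional divisor to weighted order at least $kn$, with equality exactly for $a^ib^{n-i}$.
\item On the $\cPH$ side, monomials of weight $-n(1,1)$ vanish to order at least $2n$. Equality holds exactly for products of a weighted-degree-$n$ monomial in $a_\bullet$ and one in $b_\bullet$; take $12\mid n$.
\end{itemize}
After dividing by the corresponding power of a local equation of the exceptional divisor, these have no common zero on the semistable locus and restrict there to $[a:b]$ and $([a_\bullet],[b_\bullet])$. This gives both maps and the bijectivity.

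The paper never needs the $\cPK$ side. It builds a family of Hacking-stable pairs over $\cPhatCY$: take the weighted blowup of the pulled-back universal family along the sections through $p_1,p_2$ with weights $(1,1,2)$, then contract the middle components $\widetilde S_i$ via $\rho_i^*\cO_{S_i}(2)\otimes\cO(-2\ell_i)$. This gives a morphism $\cPhatCY\to\cPH$. The universal-property argument of Theorem~\ref{thm:roof.diagram} then shows that its relative coarse space is $\cPhat^\dagger=\Bl_{\cZ_2}\cPH$, with $\cPhatCY\to\cPhat^\dagger$ a $(k+2)$-th root stack along $\cEhat^\dagger$. That is a stack-level statement from which the coarse one follows. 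Your route, once completed, yields only the coarse statement, but is purely GIT-theoretic.
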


\begin{proof}
	Let $\cPhatCY\to \cPCY$ be the Kirwan-type stack-theoretic weighted blowup at $p_\infty$ with weights $(k,k,2,2,3,3,4,4)$, with respect to the local coordinates in Lemma~\ref{lem:etalenbd.bpCY}, and let $\cEhatCY\subset \cPhatCY$ denote its exceptional divisor. 
	
	We first observe that, away from $\cEhatCY$, the induced family of pairs is already Hacking stable. Indeed, the complement $\cPCY\setminus \cPH$ has the unique closed point $p_\infty$, and hence this complement is precisely the strictly semistable locus of $\cPCY$. The semistable locus of the weighted blowup, which is $\cPhatCY$, is then the complement of the proper transform of $\cPCY\setminus \cPH$. This implies that $\cPhatCY \to \cPCY$ restricts to an isomorphism $\cPhatCY\setminus \cEhatCY\to \cPH$.
	
	Therefore, to construct a morphism $\cPhatCY\to \cPH$, it suffices to work on the \'etale-local model in Lemma~\ref{lem:etalenbd.bpCY}.	
	Arguing as in the proof of Theorem~\ref{thm:mainthmintro}, we construct such a map to $\cPH$ by constructing a family of Hacking stable pairs over $\cPhatCY$, and then identify the map with the stack-theoretic weighted blowup along $\cZ_2$ using the universal property of blowups.
	The second assertion then follows immediately from the first by Lemma~\ref{lem:Phi}.
	
	We blow up the origin in \eqref{eq:etale.nbd.CY} stack-theoretically with the stated weights. 
	With respect to the inverse of the line bundle associated with the exceptional divisor, the unstable locus is the proper transform of
	\[\{a=b=0\}\cup \{a_2=a_3=a_4=0\}\cup \{b_2=b_3=b_4=0\}.\]
	After removing this unstable locus, $\cEhatCY$ is the quotient stack of 
	\[\left(\A^2_{a,b}\setminus \{0\}\right) \times \left(\A^3_{a_2,a_3,a_4}\setminus \{0\}\right) \times \left(\A^3_{b_2,b_3,b_4}\setminus\{0\}\right)\]
	by $\Gm\times (\Gm^2\rtimes \mu_2)\times \Gamma$, where the identity component $\Gm^3$ acts by
	\[\begin{split}
		&(t,t_1,t_2)\cdot \left(a,b,a_2,b_2,a_3,b_3,a_4,b_4\right)\\
		&=\left(t^kt_1t_2a,t^kt_1t_2b, (tt_1^{-1})^2a_2,(tt_2^{-1})^2b_2, (tt_1^{-1})^3a_3,(tt_2^{-1})^3b_3,(tt_1^{-1})^4a_4,(tt_2^{-1})^4b_4\right).
	\end{split}
	\]
	Hence, $(t,t_1,t_2)\in \Gm^3$ acts trivially if and only if $t=t_1=t_2$ and $t^{k+2}=1$. 
	
	Since $\mu_2$ and $\Gamma$ act as described before, this induces a natural morphism 
	\[\cEhatCY\lra\cEhat^\dagger,\] 
	which is a $\mu_{k+2}$-gerbe. 
	In particular, $\cPhatCY$ is a Deligne--Mumford stack.
	
	We now construct the family over $\cPhatCY$.
	We pull back the universal family $(\cS^{\mathrm{CY}},\cC^{\mathrm{CY}})$ over $\cPCY$ to $\cPhatCY$, and modify it along the exceptional divisor $\cEhatCY$.
	More precisely, over $\cEhatCY$,
	consider the two sections corresponding to the points $p_1$ and $p_2$. 
	Let 
	\beq\label{eq:wtblowup.CY}\cShatCY\lra \cS^{\mathrm{CY}}\times_{\cPCY}\cPhatCY\eeq
	denote the weighted blowup along these two sections over $\cEhat$ with weights $(1,1,2)$, where the weight-two direction corresponds to the direction toward the cone point $[0:0:0:1]$ on each component $S_i\cong \PP(1,1,2)$. 
	Let $\cChatCY$ denote the proper transform of the pullback of $\cC^{\mathrm{CY}}$.
	Thus we obtain a pair $(\cShatCY,\cChatCY)$ over $\cPhatCY$.
	Let $\hat \pi^{\mathrm{CY}}:\cShatCY\to \cPhatCY$ denote the projection.
	
	Fiberwise over $\cEhatCY$, the weighted blowup produces a chain
	\beq\label{eq:fiber.CY}S_1'\cup \widetilde S_1\cup \widetilde S_2\cup S_2',\eeq
	where $S_i'\cong \PP(1,1,2)$ is the exceptional component lying over $p_i$, and $\widetilde S_i$ is the weighted blowup of $S_i\cong \PP(1,1,2)$ at $p_i$ with weights $(1,2)$.
	Let $\rho_i:\widetilde S_i\to S_i$ be the weighted blowup with exceptional divisor $\ell_i\subset \widetilde S_i$. 
	The line bundle
	\beq\label{eq:linebundle.CY}\rho_i^*\cO_{S_i}(2)\otimes \cO_{\widetilde S_i}(-2\ell_i)\eeq
	is base-point free and defines a contraction
	$\widetilde S_i\to \ell\cong \PP^1$
	onto the double curve. Thus, after contracting the two middle components $\widetilde S_1$ and $\widetilde S_2$, the resulting surface is the union $S_1'\cup_\ell S_2'$.
	
	By the same local calculation as in Proposition~\ref{prop:fiber.Chat}, the proper transform of the family of curves on each component $S_i'$ is a degree $4$ curve disjoint from the cone point and meeting $\ell$ transversely in two points.
	Hence the fibers over the exceptional divisor are precisely the Hacking stable pairs parametrized by $\cZ_2$.

	As in the proof of Proposition~\ref{prop:contract}, this contraction globalizes via 
	\[(\hat \pi^\mathrm{CY})^*\hat\pi^{\mathrm{CY}}_*\cL^{\mathrm{CY}}\lra \cL^{\mathrm{CY}},\]
	where
	$\cL^{\mathrm{CY}}:=\omega_{\cS^{\mathrm{CY}}/\cPCY}^{-2}(\cC^{\mathrm{CY}})|_{\cShatCY} \otimes \cO_{\cShatCY}(-2\cFhatCY)$,
	and $\cFhatCY$ denotes the exceptional divisor of \eqref{eq:wtblowup.CY}. 
	Indeed, $\cL^{\mathrm{CY}}$ restricts to \eqref{eq:linebundle.CY}  
	on $\widetilde S_i$, and to $\cO_{S_i'}(2)$ on $S_i'$ in \eqref{eq:fiber.CY}.

	Together with the previous observation that $\cPhatCY\setminus\cEhatCY\xrightarrow{\cong}\cPH$, this modified family defines a morphism
	$\cPhatCY\to \cPH$.
	By the same argument as in the proof of Theorem~\ref{thm:roof.diagram}, its relative coarse moduli space is $\cPhat^\dagger\to \cPH$, and the induced morphism $\cPhatCY\to \cPhat^\dagger$ is a $(k+2)$-th root stack of $\cPhat^\dagger$ along $\cEhat^\dagger$. In particular, the coarse moduli space of $\cPhatCY$ is isomorphic to $\Phat$.
\end{proof}
\begin{remark}\label{rem:bpCY}
	Heuristically, the stack $\cPhat$ may be viewed as a ``$k=0$'' version of the stack $\cPhatCY$ constructed in the proof of Corollary~\ref{cor:bpCY}, in which the smoothing directions of the surface are unweighted.
	
	The same argument shows that the coarse moduli space is unchanged if the weights are replaced by
	$(k,k,2i,2j,3i,3j,4i,4j)$ for any $i,j,k\in \Z_{>0}$.
\end{remark}

\section{Comparison of Hodge bundles}\label{s:Hodge}
In this section, we determine the relationship between the Hodge bundles on $\cPH$, $\cPK$, and $\cPGIT$. 
These bundles are defined as the pushforwards 
\[\EE^\rmH:= \pi_{\cC*}\omega_{\cC/\cPH}, \qquad  \EE^\rmK:=\pi_{\cC^\rmK*}\omega_{\cC^\rmK/\cPK}, \qquad \EE^{\mathrm{GIT}}:=\pi_{\cC^{\mathrm{GIT}}*}\omega_{\cC^{\mathrm{GIT}}/\cPGIT},\]
where
\[\pi_\cC:\cC\lra \cPH, \qquad \pi_{\cC^\rmK}:\cC^{\rmK}\lra \cPK, \qquad \pi_{\cC^{\mathrm{GIT}}}:\cC^{\mathrm{GIT}}\lra \cPGIT\]
denote the universal families of curves.
All fibers $C$ are Gorenstein with $\dim H^0(C,\omega_C)=3$.
Hence they are vector bundles of rank three. 

We fix $V=\C^3$ so that $\PP^2=\PP V$ and $G=\SL(V)$.

\begin{lemma}\label{lem:Hodge.GIT}
	$\EE^{\mathrm{GIT}}$  is the descent of the $\bar G$-linearized bundle $V^*\otimes \cO_{X^{ss}}(1)$.
\end{lemma}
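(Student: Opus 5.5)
We work $\bar G$-equivariantly on $X^{ss}$ and then descend. The universal family of curves over $X=|\cO_{\PP^2}(4)|$ is the universal quartic $\cC_X\subset X\times\PP V$, cut out by a section of $\cO_X(1)\boxtimes\cO_{\PP V}(4)$. Since the curve is a Cartier divisor in the smooth surface $\PP V$, adjunction gives
\[\omega_{\cC_X/X}\cong\big(\omega_{\PP V}\otimes\cO_X(1)\boxtimes\cO_{\PP V}(4)\big)\big|_{\cC_X}.\]
We keep track of the equivariant structure by writing $\omega_{\PP V}\cong\cO_{\PP V}(-3)\otimes\det V^*$ $G$-equivariantly; for $G=\SL(V)$ the factor $\det V^*$ is trivial. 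Thus
\[\omega_{\cC_X/X}\cong\cO_X(1)\boxtimes\cO_{\PP V}(1)\big|_{\cC_X}.\]

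Next we push forward along $\pi:\cC_X\to X$, using the sequence
\[0\to\cO_X\boxtimes\cO_{\PP V}(-3)\to\cO_X(1)\boxtimes\cO_{\PP V}(1)\to\omega_{\cC_X/X}\to0.\]
Here we have $\pi_*$ of the first term equal to $0$, and $R^1$ of the first term also $0$, since $H^0(\PP^2,\cO(-3))=H^1(\PP^2,\cO(-3))=0$. The middle term pushes forward to $\cO_X(1)\otimes H^0(\PP V,\cO(1))=V^*\otimes\cO_X(1)$. It follows that $\pi_*\omega_{\cC_X/X}\cong V^*\otimes\cO_X(1)$, $G$-equivariantly. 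Its formation commutes with base change, since all fibres have $h^0=3$ and the sequence is fibrewise exact. Restricting to $X^{ss}$ and descending identifies this with $\EE^{\mathrm{GIT}}$ on $[X^{ss}/G]$.

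The main obstacle is that $\cPGIT=[X^{ss}/\bar G]$ rather than $[X^{ss}/G]$, so we must check that $V^*\otimes\cO_X(1)$ is genuinely $\bar G$-linearized. The centre $\mu_3$ acts on $V^*$ by $\zeta^{-1}$. It acts on $\cO_X(1)$ by the inverse of its action on quartic forms, which is $\zeta^{4}=\zeta$ up to the chosen convention. We must fix the convention so that these two characters cancel. This should be automatic: the universal family $\cC_X\subset X\times\PP V$ is pulled back from $\cPGIT$, so the isomorphism above is compatible with the $\mu_3$ action, and the cancellation is a consistency check rather than an input. Once this holds, $V^*\otimes\cO_X(1)$ descends along $X^{ss}\to[X^{ss}/\bar G]$ to $\EE^{\mathrm{GIT}}$, which proves the lemma.
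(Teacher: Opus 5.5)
Your proposal is correct and follows the same route as the paper. You apply adjunction to the universal quartic of bidegree $(1,4)$ to get $\omega_{\cC_X/X}\cong\cO_X(1)\boxtimes\cO_{\PP V}(1)|_{\cC_X}$, push forward to obtain $V^*\otimes\cO_X(1)$, and then check that the centre $\mu_3$ acts trivially.

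The hedging in your last paragraph is unnecessary, because no choice of convention is involved. Under the natural linearization, $\zeta\in\mu_3$ acts on $\cO_X(-1)|_{[F]}=\C F$ by $\zeta^{-4}=\zeta^{-1}$, hence on $\cO_X(1)$ by $\zeta$. This cancels the $\zeta^{-1}$ on $V^*$ outright, which is exactly what the paper asserts.
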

\begin{proof}
	The divisor $\cC^{\mathrm{GIT}}\subset \cS^{\mathrm{GIT}}=[(\PP^2\times X^{ss})/\bar G]$ is the descent of a divisor $D^{\mathrm{GIT}} \subset \PP^2\times X^{ss}$ of bidegree $(4,1)$. 
	By adjunction, $\omega_{\pi_{\cC^{\mathrm{GIT}}}}$ is the descent of $\cO_{\PP^2\times X^{ss}}(1,1)\big|_{D^{\mathrm{GIT}}}$. 
	Taking the pushforward, $\EE^{\mathrm{GIT}}$ is the descent of $V^*\otimes \cO_{X^{ss}}(1)$.
	Note that the natural $G$-linearization on $V^*\otimes\cO_{X^{ss}}(1)$ is trivial on the center $\mu_3\subset G$, and hence it descends to a $\bar G$-linearization.
\end{proof}

We define the Hodge line bundle $\Lambda^{\mathrm{GIT}}$ on $\cPGIT$ to be the descent of
\beq\label{eq:Hodge.GIT}\cO_{\cS^{\mathrm{GIT}}}(4K_{\cS^{\mathrm{GIT}}/\cPGIT}+3\cC^{\mathrm{GIT}}).\eeq
\begin{proposition}\label{prop:Lambda.GIT}
	We have $\det \EE^{\mathrm{GIT}}\cong\Lambda^{\mathrm{GIT}}$.
\end{proposition}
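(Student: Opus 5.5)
We compute both sides as $\bar G$-linearized line bundles on $X^{ss}$ and compare them. Both descend from $\bar G$-equivariant bundles, and $\Pic^{\bar G}(X^{ss})$ is detected by the underlying line bundle together with the linearization.

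\textbf{Left side.} By Lemma~\ref{lem:Hodge.GIT}, $\det\EE^{\mathrm{GIT}}$ is the descent of
\[\det(V^*\otimes\cO_{X^{ss}}(1))\cong \det V^*\otimes \cO_{X^{ss}}(3)\cong\cO_{X^{ss}}(3).\]
Here $\det V^*$ is the trivial $G$-representation, since $G=\SL(V)$. The linearization is the natural one on $\cO_X(3)$, which is trivial on $\mu_3$.

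\textbf{Right side.} On $\PP^2\times X^{ss}$, the relative canonical bundle of the projection to $X^{ss}$ is
\[K_{\PP^2\times X^{ss}/X^{ss}}=\cO(-3,0)\otimes \det V^{\pm1}.\]
The twist $\det V$ is again trivial as a $G$-representation, because $\omega_{\PP V}\cong\cO(-3)\otimes\det V^*$ canonically. The divisor $D^{\mathrm{GIT}}$ is cut out by the universal section of $\cO(4,1)$, which is canonically $G$-invariant. Therefore
\[\cO(4K+3D^{\mathrm{GIT}})\cong\cO(-12,0)\otimes\cO(12,3)=\cO(0,3)=\mathrm{pr}_2^*\cO_{X^{ss}}(3),\]
with the natural linearization. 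This bundle is visibly trivial on fibers, and it descends to $\cO_{X^{ss}}(3)$ on $X^{ss}$, compatibly with the $\bar G$-action. Hence $\Lambda^{\mathrm{GIT}}$ is the descent of $\cO_{X^{ss}}(3)$.

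\textbf{Comparison and main obstacle.} Both sides are therefore the descent of $\cO_{X^{ss}}(3)$ with the same $\bar G$-linearization, which gives the isomorphism. The only step needing care is bookkeeping of the equivariant structures: the $\det V$ factors in $\omega_{\PP V}$ and in $\det(V^*)$ must carry trivial characters. This holds because $G=\SL(V)$, and both linearizations restrict trivially to the center $\mu_3$, so they agree as $\bar G$-linearizations. Alternatively, since the character group of $\bar G=\PGL_3$ is trivial, any two $\bar G$-linearizations of the same line bundle on the connected $X^{ss}$ coincide, and the statement reduces to the non-equivariant identity $\cO(3)\cong\cO(3)$.
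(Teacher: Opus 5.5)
Your proposal is correct and follows essentially the same route as the paper: Lemma~\ref{lem:Hodge.GIT} identifies $\det\EE^{\mathrm{GIT}}$ with the descent of $\det(V^*)\otimes\cO_{X^{ss}}(3)\cong\cO_{X^{ss}}(3)$, and the bidegree computation $4K+3D^{\mathrm{GIT}}\sim\cO(-12,0)\otimes\cO(12,3)=\cO(0,3)$ on $\PP^2\times X^{ss}$ identifies $\Lambda^{\mathrm{GIT}}$ with the descent of the same bundle. Your extra bookkeeping (triviality of $\det V^{\pm1}$ for $G=\SL(V)$, triviality of the $\mu_3$-action, and the absence of characters of $\PGL_3$) only makes explicit the equivariance checks that the paper's two-line proof leaves implicit.
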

\begin{proof}
	By Lemma~\ref{lem:Hodge.GIT}, $\det \EE^{\mathrm{GIT}}$ is the descent of $\cO_{X^{ss}}(3)$. 
	The same is true for $\Lambda^{\mathrm{GIT}}$, since the line bundle \eqref{eq:Hodge.GIT} is the descent of $\cO_{\PP^2\times X^{ss}}(0,3)$.
\end{proof}
We compare the Hodge line bundles on $\cPGIT$ and $\cPK$.
\begin{proposition}\label{prop:Lambda.K}
	On $\cPK$, we have $\Lambda\cong \Lambda^{\mathrm{GIT}}|_{\cPK}\otimes \cO_{\cPK}(-2\cEK)$.
\end{proposition}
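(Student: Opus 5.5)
We compare both sides on the quotient presentation $\cPK\cong[U^{1/2,ss}/\bar G]$ and the universal families there. Write $\rho:U^{1/2,ss}\to X^{ss}$ for the blowup composed with the root stack. Away from $\cEK$, the universal family over $\cPK$ coincides with the pullback of $(\cS^{\mathrm{GIT}},\cC^{\mathrm{GIT}})$. Hence the two line bundles $\Lambda$ and $\Lambda^{\mathrm{GIT}}|_{\cPK}$ are canonically isomorphic on $\cPK\setminus\cEK$. Both are line bundles on the smooth stack $\cPK$, so they differ by $\cO(m\cEK)$ for a single integer $m$. The task is to show $m=-2$, with $\cEK$ the reduced exceptional divisor on the root stack, so that $\cO(2\cEK)$ corresponds to $\cO(E_\DC)$ downstairs.

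To find $m$, I would work over a general point of $\cEK$ and use a one-parameter test family transverse to $\cEK$. By Lemma~\ref{lem:etalenbd.DC} and Theorem~\ref{thm:cEK}, such a family is the degeneration $(xy-z^2)^2+t^2 f_8=0$, or in Luna slice terms $Q_2+s\,f$ with $f\in H^0(\cO_Q(8))$ and $s=t^2$. The factor $t^2$ appears because of the square root stack of weight two. The K-semistable replacement of this family is obtained by the standard degeneration of $\PP^2$ to $\PP(1,1,4)$. Concretely, after the base change $s=t^2$ one takes the weighted blowup of $\PP^2\times\A^1_t$ along $Q\times\{0\}$ and contracts the proper transform of the central $\PP^2$. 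Equivalently, the total space is the hypersurface $\{tw=q(x,y,z)\}\subset\PP(1,1,1,2)\times\A^1_t$, a family of quadrics in $\PP(1,1,1,2)$ degenerating to the cone $\PP(1,1,4)$, with curve $w^2=f_8$ on it.

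Both Hodge line bundles come from $\cO(4K+3C)$ on the respective total spaces. The two total spaces are birational over $\A^1_t$ and isomorphic away from $t=0$. A generator of $\Lambda^{\mathrm{GIT}}$ therefore gives a rational section of $\Lambda$, and $m$ is the order of vanishing of that section at $t=0$. I would compute this order via discrepancy. Consider the common resolution given by the blowup along $Q\times 0$. Let $F$ be the exceptional divisor, and let $P$ be the proper transform of the central $\PP^2$, which is contracted on the K-side. The key inputs are the coefficients of $F$ and $P$ in $K_{\cS}+\tfrac34\cC$ on each model, together with the central fiber relation $P+2F\sim 0$ or its analogue.

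An independent cross-check for the sign and size of $m$ is to restrict to the $\Gm$-fixed point $\pHT$, or to any point of $\cEK$ with a $\Gm$ or $\mu_2$ stabilizer. There one compares the weights of the fibers of $\Lambda$ and $\Lambda^{\mathrm{GIT}}$ under the stabilizer of $Q_2$. The fiber of $\Lambda^{\mathrm{GIT}}$ is $\cO_{X}(3)$ at $[Q_2]$. On the $\PP(1,1,4)$ side, the relevant fiber is computed via $H^0(\omega_S^{-4}(-3C))$ as a graded piece in the coordinates $u,v,w$. The normal direction to $\cEK$ has weight one for the root-stack coordinate $t$.

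The main obstacle is the discrepancy computation: getting the multiplicities of $P$ and $F$ right in $4K+3C$ on both models, and correctly accounting for the factor of two coming from the root stack. I would first try the explicit hypersurface model in $\PP(1,1,1,2)\times\A^1_t$, where $K$ and $C$ are given by explicit degrees, so the comparison reduces to tracking powers of $t$ in the section $4K+3C$. I expect this to produce exactly $t^{-2}$, that is, $\Lambda\cong\Lambda^{\mathrm{GIT}}(-2\cEK)$.
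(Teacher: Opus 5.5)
Your route is correct, but it differs from the paper's.

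\textbf{The paper's argument.} The paper does not work transversally to $\cEK$. It restricts both sides to the contracted curve $\cZ_\pTac$. There $\Lambda$ is trivial, since it comes from $\PCY$ and $\cZ_\pTac$ maps to a point. It then reads off the coefficient from Lemma~\ref{lem:linear.relation}. On $\cZ_\pTac$, $\Lambda^{\mathrm{GIT}}$ (the descent of $\cO_X(3)$) restricts to $\cO(6\theta)$. Also $\cO(\cEK)$ restricts to a square root of $\cO(\Delta_R)\cong\cO(6\theta)$, namely $\cO(3\theta)$. Since $\theta$ is non-torsion there, triviality of $\Lambda|_{\cZ_\pTac}$ forces $\Lambda\cong\Lambda^{\mathrm{GIT}}(-2\cEK)$.

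\textbf{Your argument.} Your test-curve argument does not need the descent of $\Lambda$ to $\PCY$. It is essentially the discrepancy computation the paper carries out later in Theorem~\ref{thm:Hodge}(1), and it works. Two corrections are needed.

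First, the relevant model is the ordinary blowup $W$ of $\PP^2\times\A^1_t$ along $Q\times 0$, not a weighted one. On $W$ the central fiber is $P+F$, not $P+2F$. The inputs are
\begin{itemize}
\item $K_W=\pi_{\mathrm{GIT}}^*K+F=\pi_{\rmK}^*K+\tfrac12P$, using that $P\cong\PP^2$ has normal bundle $\cO(-2)$;
\item $\pi_{\mathrm{GIT}}^*\cC=\widetilde\cC+2F$ and $\pi_{\rmK}^*\cC^{\rmK}=\widetilde\cC$.
\end{itemize}
Hence the two pullbacks of $4K+3C$ differ by $2(P+F)$, which is twice the fiber over $t=0$. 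In your hypersurface model, work in the chart $z=1$ and let $f_4$ be a quartic lifting $f_8$. There one has $\mathrm{Res}\,\frac{dx\wedge dy\wedge dw}{tw-q}=\pm t^{-1}\,dx\wedge dy$ and $w^2+f_4=(q^2+t^2f_4)/t^2$. Taking fourth powers over cubes of the curve equations gives $\sigma^{\rmK}=t^{2}\,\sigma^{\mathrm{GIT}}$, exactly the $t^{-2}$ you predicted. You should also note that this family is the pullback of the universal family, by uniqueness of the K-stable limit for general $f_8$.

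Second, your cross-check at $\pHT$ carries no information. At $\pHT$ the normal direction to $\cEK$ is the $R$-invariant direction along $\cZ_\pTac$. Moreover, $\cO_X(1)$ has trivial $R$-linearization on $(X^{ss})^R$. So both $\Lambda^{\mathrm{GIT}}$ and $\cO(\cEK)$ have $R$-weight zero at $\pHT$, and the rest of the stabilizer is finite. The paper's proof is the correct global version of that check: it uses the $T/R$-character $\theta$ along all of $\cZ_\pTac$, not a weight at a single point.
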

\begin{proof}
	Over $\cPK\setminus\cEK$, $\Lambda$ and $\Lambda^{\mathrm{GIT}}$ are naturally identified. Thus $\Lambda\cong\Lambda^{\mathrm{GIT}}|_{\cPK}\otimes \cO_{\cPK}(-m\cEK)$ 
	for some $m$. 
	Since $\Lambda$ is trivial on $\cZ_\pTac$ (see \cite{ADL}), we conclude $m=2$ by Lemma~\ref{lem:linear.relation}. 
\end{proof}

We now relate the Hodge bundles on $\cPGIT$, $\cPK$, and $\cPH$.

\begin{theorem}\label{thm:Hodge}
\begin{enumerate}
	\item On $\cPK$, there is an isomorphism 
	\[\EE^\rmK\cong \EE^{\mathrm{GIT}}|_{\cPK}\otimes \cO_{\cPK}(-\cEK).\]
	\item On $\cPhat$, there is a short exact sequence
	\[0\lra\Psi^*\EE^\rmK(-\cEhat)\lra \Phi^*\EE^\rmH \lra \cO_{\cEhat}\lra 0.\]
\end{enumerate}	
\end{theorem}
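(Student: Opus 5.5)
For (1), I will compare both sides as line-bundle twists of each other. Over $\cPK\setminus\cEK$ the universal curves of $\cPK$ and $\cPGIT$ agree, so $\EE^\rmK$ and $\EE^{\mathrm{GIT}}|_{\cPK}$ are canonically identified there. This gives $\EE^\rmK\cong \EE^{\mathrm{GIT}}|_{\cPK}\otimes\cO(-m\cEK)$ for some integer $m$, provided we show the comparison map extends as an isomorphism after twisting. To see this, I will work on the explicit local model. Near $\cEK$, the universal family over $U^{1/2,ss}$ degenerates plane quartics with double conic limit $Q_2$ into curves $w^2=f_8(u,v)$ in $\PP(1,1,4)$. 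Using Lemma~\ref{lem:etalenbd.DC} and the $\Gm$-weight-two blowup description, along a one-parameter family $F=q^2+t^2 f$ one has an explicit relative dualizing sheaf. The residue forms $\frac{\ell\,\Omega}{\partial F/\partial z}$, for $\ell\in V^*$, vanish to order exactly one in the square-root coordinate $t$ along the new component, after the change of coordinates $w=(F-q^2)/t$-type substitution $q=tw$. Hence $m=1$. As a consistency check, taking determinants should reproduce Proposition~\ref{prop:Lambda.K} up to the correction $\det\EE^\rmK=\Lambda(-\cEK)$ predicted by the analogue of Corollary~\ref{cor:Hodge}, namely $-3\cEK=-2\cEK-\cEK$. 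I plan to use this check to pin down $m$ if the local computation is ambiguous.

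For (2), both $\Psi^*\EE^\rmK$ and $\Phi^*\EE^\rmH$ agree off $\cEhat$, since $\Phi$ and $\Psi$ are isomorphisms there and the families coincide. I will relate them through the intermediate family $\cChat$ of Section~\ref{s:resolution}: $\cChat\to\cChatK$ is the weighted blowup along $\cZhat$, and $\cChat\to\cChatH$ contracts $\Chat_0$. First, since $\cChat\to\cChatH$ contracts the two rational components $\Chat_0$ (proper transforms of the branches, each isomorphic to $\PP^1$ meeting the rest at two points), the pushforwards of relative dualizing sheaves agree. Concretely, $\omega$ restricted to each $\Chat_0$-component has degree $0$, and sections extend uniquely, so $\Phi^*\EE^\rmH\cong \hat\pi_*\omega_{\cChat/\cPhat}$. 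Second, I will compare $\hat\pi_*\omega_{\cChat}$ with $\Psi^*\EE^\rmK$ via the blowup $\beta:\cChat\to\cChatK$. The discrepancy formula $\omega_{\cChat}=\beta^*\omega_{\cChatK}\otimes\cO(a\cFhat_C)$ gives an inclusion $\beta^*\omega_{\cChatK}(-\cEhat)\hookrightarrow\omega_{\cChat}$, with exceptional divisor $\cFhat_C$ having fibers $\Chat_1\sqcup\Chat_2$. Pushing forward gives an injection $\Psi^*\EE^\rmK(-\cEhat)\to\Phi^*\EE^\rmH$ that is an isomorphism off $\cEhat$.

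The cokernel is then supported on $\cEhat$, and I will identify it by a fiberwise computation. Restricting to a fiber over $\cEhat$, the image consists of forms vanishing on $\Chat_1\cup\Chat_2$ appropriately. Genus three splits as $1+1+1$: the two elliptic tails $\Chat_i$ each contribute one form, and the middle contributes one form through the two tacnodes. Comparing with the tacnodal curve (each tacnode drops the genus by $2$, from the arithmetic genus $3$ down), the image should have rank exactly $2$ on the fiber. Hence the cokernel is a line bundle on $\cEhat$, and its length along a normal slice is $1$; Proposition~\ref{prop:mult} and the slices $\A^1_t$ of Proposition~\ref{prop:fiber.Chat} let me check this. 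Finally, to show this line bundle is trivial, i.e.\ $\cO_{\cEhat}$, I will compute determinants: $\det\Phi^*\EE^\rmH=\Phi^*\Lambda(-\Phi^*\partial\cPH)$ and $\det\Psi^*\EE^\rmK$ from (1), together with $\Phi^*\Lambda=\Psi^*\Lambda$, which holds since both are pulled back from $\PCY$. This gives the cokernel's first Chern class. Triviality would then follow from the description of $\cEhat$ in Proposition~\ref{prop:cEhat}, or more directly from a canonical generator: the residue of the middle-component form at the nodes.

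I expect the main obstacle to be this last step, namely the precise vanishing orders in the weighted blowup and the identification of the cokernel as exactly $\cO_{\cEhat}$, including its equivariant (gerbe) structure. I would first try to find a canonical nowhere-vanishing section of the cokernel via residues along $\ell_i$.
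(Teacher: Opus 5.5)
\textbf{Part (1).} Your local computation is the paper's argument in coordinates: $q=tw$ is the blowup $b$ of the conic, $dq\wedge du=t\,dw\wedge du$ (mod $dt$) is the discrepancy $+\cF^\rmK$, and $F=t^2(w^2+f)$ is the multiplicity $-2\cF^\rmK$. However, you have the sign backwards. We have $\mathrm{Res}(\ell\,dq\wedge du/F)=t^{-1}\,\mathrm{Res}(\ell\,dw\wedge du/(w^2+f))$, so the GIT residue forms acquire a simple \emph{pole} as sections of $\omega_{\cC^\rmK}$. It is $t$ times them that restricts to the basis $\ell(u)\,du/w$ of $H^0(\omega)$ on $w^2=f_8$. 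Read literally, ``vanishing to order one'' gives $m=-1$.

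Your fallback check is circular, because the paper deduces $\det\EE^\rmK\cong\Lambda(-\cEK)$ from (1). The paper avoids both issues, and also your proviso that bundles agreeing off a divisor need not differ by a twist. It proves the line-bundle identity $\omega_{\cC^\rmK}\cong\omega_{\cC^{\mathrm{GIT}}}|_{\cC^\rmK}(-\cEK)$ on the total space, using that $(\cC^\rmK)'$ misses $\cDK$, and then pushes forward.

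\textbf{Part (2).} The architecture matches the paper, but the decisive input is missing and your fibrewise picture is wrong. Everything hinges on $a$ in $\omega_{\cChat}=\beta^*\omega_{\cChatK}(a\,\cFhat|_{\cChat})$.
The inclusion needs $a\ge -1$, and the cokernel has rank one only if $a=-1$. For $a\ge 0$ the image lies in $\Phi^*\EE^\rmH(-\cEhat)$ and the cokernel has rank $3$ on $\cEhat$.

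Here $a$ is \emph{negative}, because $\cChatK$ is singular along $\cZhat$. It comes from discrepancy $3$ of the $(1,1,2)$-blowup of $\cShatK$ against weighted multiplicity $4$ of $\cChatK$. These are the paper's relations $K_{\hat\pi^\rmH}|_{\cShat}\sim K_{\hat\pi^\rmK}|_{\cShat}+3\cFhat$ and $\cChatH|_{\cShat}\sim\cChatK|_{\cShat}-4\cFhat$. Combined with $\hat\pi^*\cEhat\sim\cFhat+\cDhat$, they give $\beta^*\omega_{\cChatK}(-\cEhat)=\omega_{\cChat}(-\cDhat)$.

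So the image consists of forms vanishing on the bridges $\Chat_0$, i.e.\ $H^0(\omega_{\Chat_1})\oplus H^0(\omega_{\Chat_2})$, not forms vanishing on $\Chat_1\cup\Chat_2$ as you wrote. Tensoring $0\to\cO(-\cDhat)\to\cO\to\cO_{\cDhat}\to 0$ and pushing forward embeds the cokernel in the rank-$2$ bundle $\hat\pi_*(\omega|_{\cChat\cap\cDhat})$ on $\cEhat$. Hence no length check along slices is needed. The paper then identifies the image as a line subbundle through the factorization of $H^0(C,\omega_C)\to H^0(C\cap\ell,\omega_C)\cong\C^2$ via $H^0(\ell,\omega_S(C)|_\ell)\cong\C$.

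Your determinant route cannot replace this, for three reasons. First, $\det\EE^\rmH\cong\Lambda(-\partial\cPH)$ is Corollary~\ref{cor:Hodge}, which is deduced from this theorem. Second, $c_1(i_*L)=[\cEhat]$ for every line bundle $L$ on $\cEhat$ (with $i\colon\cEhat\hookrightarrow\cPhat$), so determinants cannot detect $L$. Third, Proposition~\ref{prop:cEhat} does not force $L$ to be trivial, since $\cEhat$ has nontrivial Picard group.

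Finally, your worry about the equivariant structure is justified. Over the $\Gamma$-fixed points of $\PP^1$, the stabilizer of a point of $\cEhat$ contains an element mapping to the elliptic involution of $(S,C)\in\cZ_2$. That involution acts by $-1$ on all of $H^0(C,\omega_C)$: it negates holomorphic forms on each $C_i$, and it swaps the two nodes, so $(r,-r)\mapsto(-r,r)$ on the residue part. It therefore acts by $-1$ on the quotient line. So no invariant generator exists, and equivariantly the cokernel is $\cO_{\cEhat}$ only up to a $2$-torsion twist. This is invisible with $\Q$-coefficients, which is how the statement is used later.
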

\begin{proof}
	(1) 
	Let $(\cS^{\mathrm{GIT}}|_{\cPK},\cC^{\mathrm{GIT}}|_{\cPK})$ denote the pullback of $(\cS^{\mathrm{GIT}},\cC^{\mathrm{GIT}})$. 
	By \cite[Section~5]{ADL}, there is a diagram
	\[\cS^\rmK\longleftarrow (\cS^{\rmK})' \xrightarrow{~b~} \cS^{\mathrm{GIT}}|_{\cPK},\] 
	in which  $b$ is the blowup along the reduced support of the double conics over $\cEK$ and the map to $\cS^\rmK$ 
	is the contraction of the proper transform $\cDK$ of the $\PP^2$-bundle over $\cEK$ under $b$.
	
	Let $(\cC^\rmK)' \subset (\cS^\rmK)'$ denote the proper transform of $\cC^{\mathrm{GIT}}|_{\cPK}$. 
	Then $(\cC^\rmK)'$ does not meet $\cDK$, and it is mapped isomorphically onto $\cC^\rmK$ over $\cPK$.
	In particular, we have $\omega_{\pi_{\cC^\rmK}} \cong \omega_{(\cC^\rmK)'/\cPK}$.

	Let $\cF^\rmK\subset (\cS^\rmK)'$ denote the exceptional divisor of the blowup.
	On $(\cS^\rmK)'$, 
	\[K_{(\cS^\rmK)'/\cPK}\sim b^*(K_{\cS^{\mathrm{GIT}}|_{\cPK}/\cPK})+\cF^\rmK \and 
	(\cC^\rmK)'\sim b^*(\cC^{\mathrm{GIT}}|_{\cPK})-2\cF^\rmK.\]
	Hence,
	\[\begin{split}
		\omega_{\pi_{\cC^\rmK}}\cong \omega_{(\cC^\rmK)'/\cPK}
		&\cong \omega_{(\cS^\rmK)'/\cPK}((\cC^\rmK)')|_{(\cC^\rmK)'}\\
		&\cong \omega_{\cS^{\mathrm{GIT}}|_{\cPK}/\cPK}(\cC^{\mathrm{GIT}}|_{\cPK})|_{(\cC^\rmK)'}\otimes (-\cF^\rmK)\\
		&\cong \omega_{\cS^{\mathrm{GIT}}/\cPGIT}(\cC^{\mathrm{GIT}})|_{(\cC^\rmK)'} \otimes (-\cF^\rmK)\\
		&\cong \omega_{\pi_{\cC^{\mathrm{GIT}}}}|_{(\cC^\rmK)'}\otimes (-\cF^\rmK)\\
		&\cong \omega_{\pi_{\cC^{\mathrm{GIT}}}}|_{\cC^\rmK}(-\cEK),
	\end{split}
	\]
	where the last isomorphism follows from  $\cEK\sim \cDK+\cF^\rmK$, $(\cC^\rmK)'\cap \cDK=\emptyset$, and $(\cC^\rmK)'\cong \cC^\rmK$.
	
	By cohomology and base change (\cite[Theorem~25.1.6]{vakil}), $\EE^{\mathrm{GIT}}|_{\cPK}$ is isomorphic to the pushforward of $\omega_{\pi_{\cC^{\mathrm{GIT}}}}|_{(\cC^{\mathrm{GIT}}|_{\cPK})}$.
	By adjunction, this coincides with the pushforward of $\omega_{\pi_{\cC^{\mathrm{GIT}}}}|_{\cC^\rmK}$.
	Applying $\pi_{\cC^\rmK *}$ and using the projection formula, we obtain the first assertion.
	
	(2) 
	We compare $\EE^\rmH$ and $\EE^\rmK$ after pulling back to $\cPhat$.
	In Section~\ref{s:resolution}, we construct morphisms
	\[(\cShatH,\cChatH)\longleftarrow(\cShat,\cChat)\lra(\cShatK,\cChatK),\]
	where $(\cShatH,\cChatH)$ and $(\cShatK,\cChatK)$ are obtained from the Cartesian diagrams:
	\[\begin{tikzcd}
		\cS\arrow[d,"\pi"'] &\cShatH\arrow[l]\arrow[d,"\hat \pi^\rmH"] \\ \cPH &\cPhat\arrow[l,"\Phi"']
	\end{tikzcd}
	\and
	\begin{tikzcd}
		\cShatK \arrow[r]\arrow[d,"\hat \pi^\rmK"']&\cS^\rmK\arrow[d,"\pi^\rmK"] \\ \cPhat \arrow[r,"\Psi"] &\cPK
	\end{tikzcd}
	\]
	By cohomology and base change, together with adjunction, we have 
	\[\Phi^*\EE^\rmH \cong (\hat\pi^\rmH|_{\cChatH})_* \Big(\omega_{\hat\pi^\rmH}(\cChatH)\big|_{\cChatH}\Big) \and
		\Psi^*\EE^\rmK \cong (\hat\pi^\rmK|_{\cChatK})_* \Big(\omega_{\hat\pi^\rmK}(\cChatK)\big|_{\cChatK}\Big).
	\]
	
	By the construction in Section~\ref{s:resolution}, we have
	$K_{\hat\pi^\rmH}|_{\cShat}\sim K_{\hat\pi^\rmK}|_{\cShat}+ 3\cFhat$ and $\cChatH|_{\cShat}\sim \cChatK|_{\cShat}-4\cFhat$ on $\cShat$.
	Since $\hat\pi^*\cEhat\sim \cFhat+\cDhat$ on $\cShat$, 
	it follows that
	\beq\label{eq:Hodge.HK}\omega_{\hat\pi^\rmH}(\cChatH)|_{\cShat}\otimes \cO_{\cShat}(-\cDhat)\cong  \omega_{\hat\pi^\rmK}(\cChatK)|_{\cShat}\otimes \hat\pi^*\cO_{\cPhat}(-\cEhat).\eeq
	
	Since $\cDhat$ is Cartier in a neighborhood of $\cChat$, we have a short exact sequence
	\[0 \lra \cO_{\cShat}(-\cDhat)\lra \cO_{\cShat}\lra \cO_{\cDhat}\lra 0\]
	in that neighborhood.
	Tensoring with $\omega_{\hat\pi^\rmH}(\cChatH)$ and restricting to $\cChat$, we get
	\[
	0 \lra\omega_{\hat\pi^\rmH}(\cChatH)|_{\cChat}\otimes \cO_{\cShat}(-\cDhat)|_{\cChat}
	\lra \omega_{\hat\pi^\rmH}(\cChatH)|_{\cChat}
	\lra \omega_{\hat\pi^\rmH}(\cChatH)|_{\cChat\cap \cDhat}\lra 0.
	\]
	Using \eqref{eq:Hodge.HK}, we write the left term as $\omega_{\hat\pi^\rmK}(\cChatK)|_{\cChat}\otimes \hat\pi_{\cChat}^*\cO_{\cPhat}(-\cEhat)$,
	with $\pi_{\cChat}:=\hat\pi|_{\cChat}$.
	
	Applying $\pi_{\cChat *}$ and using the projection formula, we obtain
	\beq\label{eq:exact.Hodge}0\lra \Psi^*\EE^\rmK\otimes \cO_{\cPhat}(-\cEhat)\lra \Phi^*\EE^\rmH\lra \pi_{\cChat*}(\omega_{\pi_{\cChat}}|_{\cChat\cap \cDhat}).\eeq
	Since each of the fibers of $\cChat\cap \cDhat$ is the union $\Chat_0=\PP^1\sqcup\PP^1$ (Proposition~\ref{prop:fiber.Chat}) and $\omega_{\pi_{\cChat}}|_{\Chat_0}\cong\cO_{\Chat_0}$, the right-most term is the rank $2$ bundle on $\cEhat$ whose pullback to the $\Gamma$-cover of $\cEhat$ is trivial, with $\Gamma$ exchanging the two summands.
	
	For $(S,C)\in \cZ_2$, by adjunction, the restriction map
	\[H^0(S,\omega_S(C))\xrightarrow{~\cong~} H^0(C,\omega_C)\lra H^0(\ell,\omega_C|_{C\cap \ell})\cong \C^2\]
	factors through $ H^0(\ell,\omega_S(C)|_\ell)\cong H^0(\ell,\cO_\ell)\cong\C$, with image identified with the diagonal in $\C^2$.
	Thus the image of the right-most map in \eqref{eq:exact.Hodge} is the diagonal line subbundle of 
	the above rank $2$ bundle on $\cEhat$, which is hence isomorphic to $\cO_{\cEhat}$.
	This completes the proof.
\end{proof}

As a consequence, we obtain the following.
\begin{corollary}\label{cor:Hodge}
	We have $\det \EE^\rmK\cong \Lambda(-\cEK)$ and $\det \EE^\rmH\cong\Lambda(-\partial \cPH)$.
\end{corollary}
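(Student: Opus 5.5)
We take determinants in Theorem~\ref{thm:Hodge} and combine with Propositions~\ref{prop:Lambda.GIT} and~\ref{prop:Lambda.K}.

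For $\cPK$: by Theorem~\ref{thm:Hodge}(1), $\EE^\rmK\cong \EE^{\mathrm{GIT}}|_{\cPK}(-\cEK)$. Since $\EE^\rmK$ has rank $3$,
\[\det\EE^\rmK\cong \det\EE^{\mathrm{GIT}}|_{\cPK}\otimes\cO(-3\cEK).\]
Proposition~\ref{prop:Lambda.GIT} gives $\det \EE^{\mathrm{GIT}}\cong\Lambda^{\mathrm{GIT}}$. Proposition~\ref{prop:Lambda.K} gives $\Lambda^{\mathrm{GIT}}|_{\cPK}\cong\Lambda(2\cEK)$. Combining these, $\det\EE^\rmK\cong\Lambda(2\cEK-3\cEK)=\Lambda(-\cEK)$.

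For $\cPH$, we work on $\cPhat$ and then descend.

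\textbf{Determinant on $\cPhat$.} In the exact sequence of Theorem~\ref{thm:Hodge}(2), the cokernel $\cO_{\cEhat}$ has determinant $\cO(\cEhat)$, since $\cEhat$ is a Cartier divisor on the smooth stack $\cPhat$. Hence
\[\det\Phi^*\EE^\rmH\cong \det(\Psi^*\EE^\rmK)\otimes\cO(-3\cEhat)\otimes\cO(\cEhat)\cong \Psi^*\Lambda\otimes\Psi^*\cO(-\cEK)\otimes\cO(-2\cEhat).\]

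\textbf{Matching with $\Phi^*\Lambda$.} The Hodge line bundle $\Lambda$ is defined on $\cPCY$, and both $\cPH$ and $\cPK$ are open in $\cPCY$. The families over $\cPhat$ pulled back via $\Phi$ and via $\Psi$ agree away from $\cEhat$. Therefore $\Phi^*\Lambda\cong\Psi^*\Lambda(m\cEhat)$ for some integer $m$, and $m$ should be $0$. To check this, compare $4K+3C$ on $\cShat$ using the relations from the proof of Theorem~\ref{thm:Hodge}(2): $K_{\hat\pi^\rmH}=K_{\hat\pi^\rmK}+3\cFhat$ and $\cChatH=\cChatK-4\cFhat$. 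These give $4K^\rmH+3C^\rmH=4K^\rmK+3C^\rmK+12\cFhat-12\cFhat$, so there is no $\cFhat$ correction. Both bundles are fiberwise trivial and pulled back from $\cPhat$, so $m=0$. Alternatively, $\Lambda$ is pulled back from $\PCY$ through both maps, and $\Phat\to\PCY$ factors through both, which gives $m=0$ directly.

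\textbf{Boundary divisors.} We also need $\Phi^*\partial\cPH$. Under $\Phi$, which is an isomorphism off $\cEhat$, the proper transform of $\overline\cZ_1$ is $\Psi^*\cEK$ minus its $\cEhat$-component. Since $\cZ_\pTac$ meets $\cEK$ transversely (Proposition~\ref{prop:transverse}), the pullback $\Psi^*\cEK$ is the proper transform of $\cEK$ with no $\cEhat$ component, because $\cZ_\pTac\not\subset\cEK$. The multiplicity of $\cEhat$ in $\Phi^*\partial\cPH$ is the main point to verify. Since $\partial\cPH\supset\cZ_2$ has multiplicity one and $\Phi^{-1}(\cZ_2)=2\cEhat$ (Proposition~\ref{prop:mult}), we expect $\Phi^*\partial\cPH=\widetilde{\cEK}+2\cEhat$. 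To confirm the coefficient $2$, we check on the one-parameter slices of Proposition~\ref{prop:fiber.Chat}: the surface smoothing parameter of $\PP(1,1,2)\cup\PP(1,1,2)$ vanishes to order $t^2$ by the same $\iota$-invariance argument.

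\textbf{Conclusion.} Combining the steps gives
\[\det\Phi^*\EE^\rmH\cong\Phi^*\Lambda(-\Phi^*\partial\cPH)=\Phi^*\bigl(\Lambda(-\partial\cPH)\bigr).\]
Finally, $\Phi_*\cO_{\cPhat}=\cO_{\cPH}$ because $\Phi$ is a weighted blowup of a smooth stack. Hence $\Phi^*$ is injective on Picard groups, and the isomorphism descends to $\det\EE^\rmH\cong\Lambda(-\partial\cPH)$ on $\cPH$. The only delicate step is the coefficient of $\cEhat$ in $\Phi^*\partial\cPH$.
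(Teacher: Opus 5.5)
Everything in your argument holds except the step you flag yourself, and there the justification is incomplete.

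\textbf{Where the gap is.} Part one, the determinant computation on $\cPhat$, the identification $\Phi^*\Lambda\cong\Psi^*\Lambda$, the absence of an $\cEhat$-component in $\Psi^*\cEK$, and the final descent are all fine. The problem is the coefficient of $\cEhat$ in $\Phi^*\partial\cPH$.

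The $\iota$-invariance of the slices only shows that $\A^1_t\to\cPH$ factors through $s=t^2$. So the local equation of $\partial\cPH$ pulls back to a function of $t^2$ vanishing at $0$. This gives a coefficient that is even and at least $2$, nothing more.

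Proposition~\ref{prop:mult} says the ideal of $\cZ_2$ pulls back to $(s)$. Since $\cZ_2$ has codimension two, this only means that \emph{some} local generator of that ideal vanishes to first order in $s$, not necessarily the equation of $\partial\cPH$. Equivalently, by Theorem~\ref{thm:roof.diagram} the coefficient equals $2\,\mathrm{mult}_{\cZ_2}(\partial\cPH)$. You have not shown that $\partial\cPH$ is generically smooth along $\cZ_2$; a priori it could have two branches through $\cZ_2$, which would give $4$.

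The value $2$ is in fact correct, but ``we expect'' is exactly where the proof is missing. You also cannot fill it by quoting Remark~\ref{rem:Hodge.HK}, because there the relation $\Phi^*\partial\cPH\sim\Psi^*\cEK+2\cEhat$ is \emph{derived from} this corollary.

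\textbf{How the paper avoids it.} The paper never works on $\cPhat$ for the second isomorphism. Since $\cZ_2$ has codimension two in the smooth stack $\cPH$, the restriction $\Pic(\cPH)\to\Pic(\cPH\setminus\cZ_2)$ is an isomorphism. On $\cPH\setminus\cZ_2\cong\cPKs\subset\cPK$ the universal families coincide, so $\EE^\rmH$, $\Lambda$ and $\partial\cPH$ restrict to $\EE^\rmK$, $\Lambda$ and $\cEK$. The second isomorphism then follows directly from the first, and your step $\Phi^*\Lambda\cong\Psi^*\Lambda$ becomes unnecessary.

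\textbf{How to repair your route.} If you want to keep your approach, obtain the coefficient independently as in Lemma~\ref{lem:Z1.relation}:
\begin{enumerate}
\item Write $\Phi^*[\partial\cPH]=\Psi^*[\cEK]+a[\cEhat]$.
\item Use $\Phi_*[\cEhat]=0$ and the projection formula.
\item Compute $\Phi_*(\Psi^*[\cEK]\cdot[\cEhat])=\tfrac12[\cZ_2]$ and $\Phi_*([\cEhat]^2)=-\tfrac14[\cZ_2]$.
\end{enumerate}
This forces $a=2$.
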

\begin{proof}
	The first isomorphism follows from Propositions~\ref{prop:Lambda.GIT} and \ref{prop:Lambda.K} and Theorem~\ref{thm:Hodge}(1). 
	For the second isomorphism, Theorem~\ref{thm:Hodge}(2) gives
	\beq\label{eq:Hodge.HK2}\Phi^*\det\EE^\rmH \cong \Psi^*\det\EE^\rmK\otimes \cO_{\cPhat}(-2\cEhat).\eeq
	Restricting \eqref{eq:Hodge.HK2} to $\cPhat\setminus \cEhat$, where $\Phi$ and $\Psi$ are isomorphisms, we obtain
	\[\det \EE^\rmH \cong \det \EE^\rmK \cong \Lambda(-\cEK)\]
	on $\cPH\setminus \cZ_2$, identified with the corresponding open substack of $\cPK$.
	
	Since $\cO_{\cPH}(\partial\cPH)$ corresponds to $\cO_{\cPK}(\cEK)$ under $\Pic(\cPH)\cong\Pic(\cPK)$, we deduce
	$\det \EE^\rmH \cong \Lambda(-\partial\cPH)$
	on $\cPH\setminus \cZ_2$.
	Since $\Pic(\cPH)\cong \Pic(\cPH\setminus \cZ_2)$, this isomorphism extends to $\cPH$.
\end{proof}

\begin{remark}\label{rem:Hodge.HK}
	Combining Corollary~\ref{cor:Hodge} with \eqref{eq:Hodge.HK2}, we obtain on $\cPhat$
	\[\Phi^*\partial\cPH\sim \Psi^*\cEK+2\cEhat.\]
	A more direct proof of this relation will be given in Lemma~\ref{lem:Z1.relation}.
\end{remark}

\section{Poincar\'e series and cycle class map}\label{s:Betti}

In this section, we compute the Poincar\'e series of the stacks $\cPGIT$, $\cPK$, $\cPhat$, and $\cPH$, and prove that their cycle class maps are isomorphisms. 
The argument uses Kirwan's equivariantly perfect stratification for GIT quotients \cite{Kir84} and the blowup formula in cohomology and Chow theory. 
We begin by recalling general facts and then apply them to the GIT stratification of plane quartic curves and to the birational modifications constructed earlier.

\subsection{Notation and basic facts}

Let $M$ be a Deligne--Mumford stack. We denote by $A_*(M)$ the Chow groups with rational coefficients. 
If an algebraic group $K$ acts on $M$, we denote by $A_*^K(M)$ the equivariant Chow groups.
For a quotient stack $[M/K]$, we define 
\[A_*([M/K]) := A_*^K(M).\]

If $K'\to K$ is a homomorphism of algebraic groups with finite kernel, then the natural morphism $[M/K]\to [M/K']$ induces canonical isomorphisms
\[A_*^{K'}(M)\cong A_*^{K}(M).\]

For a closed substack $N\subset M$, there is a right exact localization sequence 
\beq\label{eq:loc}
A_*(N)\lra A_*(M)\lra A_*(M\setminus N)\lra 0,\eeq
and similarly for equivariant Chow groups.

If $M$ is smooth and pure-dimensional, we use cohomological grading by 
\[A^i(M):=A_{\dim M-i}(M).\]
In this case, $A^*(M)$ is a ring under the intersection product, called the Chow ring of $M$.
Similarly, we set $A^i_K(M):=A_{\dim M-i}^K(M)$ and denote by $A^*_K(M)$ the $K$-equivariant Chow ring (equivalently, the Chow ring of $[M/K]$).

We refer to \cite{Fulton-intersection-theory, vistoli,EG-Inv} for definitions and basic properties of Chow groups and their equivariant versions.

\medskip

We write $H^*(M)$ (resp.\ $H^*_K(M)$) for the (resp.\ $K$-equivariant) cohomology with rational coefficients, and set
$H^*([M/K]) := H^*_K(M)$.
We define the Poincar\'e polynomial by
\[P_t(M):=\sum_{i\geq 0}\dim_\Q H^i(M)t^i \in \Z[t],\]
and the equivariant Poincar\'e series by
\[P_t^K(M):=\sum_{i\geq 0}\dim_\Q H^i_K(M)t^i \in \Z\llbracket t\rrbracket.\]
For a quotient stack $[M/K]$, we set $P_t([M/K]) := P_t^K(M)$.

\subsection{Equivariantly perfect stratification}\label{ss:Kirwan.strat}
We recall Kirwan's equivariantly perfect stratification for GIT quotients \cite{Kir84}.
Let $M$ be a smooth projective variety equipped with an action of a reductive group $K$. We fix a $K$-equivariant closed immersion of $M\hookrightarrow \PP^m=\PP(\C^{m+1})$ for a $K$-representation $\C^{m+1}$.
We fix a maximal torus $T_K\subset K$. 

There is a natural $K$-equivariant stratification
\[M=M^{ss}\sqcup \bigsqcup_{\beta\in \cB}S_\beta,\]
where $M^{ss}$ is the (open) GIT semistable locus and $S_\beta$ are $K$-invariant locally closed strata indexed by a finite subset $\cB$ of $\Hom(\Gm,T_K)_\Q$. 
This is a stratification in the sense that there is a partial order on $\cB$ such that $\overline S_\beta\subset \bigsqcup_{\beta'\leq \beta} S_{\beta'}$ for each $\beta\in \cB$.

This stratification is \emph{equivariantly perfect} in the sense that we have
\[H^*_K(M)\cong H^*_K(M^{ss})\oplus \bigoplus_{\beta\in \cB}H^{*-2\codim S_\beta}_K(S_\beta),\]
where $\codim S_\beta$ denotes the codimension of $S_\beta$ in 
$M$. In particular, we have 
\beq\label{eq:Poincare.Sbeta}P_t^K(M^{ss})=P_t^K(M)-\sum_{\beta\in \cB}t^{2\codim  S_\beta}P_t^K(S_\beta).\eeq
This follows from the splitting of the Thom--Gysin sequence into 
\beq\label{ses.coh}0\lra H^{*-2\codim S_{\beta_j}}_K(S_{\beta_j})\lra H^*_K(M_j)\lra H^*_K(M_{j-1})\lra 0,\eeq
where after refining the partial order on $\cB$ into any total order, we set 
\[M_j:=M^{ss}\sqcup \bigsqcup_{1\leq i\leq j}S_{\beta_i}\]
for $0\leq j\leq r$ and $\cB=\{\beta_1,\dots, \beta_r\}$ with $\beta_1>\cdots>\beta_r$.
Indeed, the normal bundle of $S_{\beta_j}$ in $M_j$ has a $K$-equivariant top Chern class that is not a zero divisor, so the composition
$H^{*-2\codim S_{\beta_j}}_K(S_{\beta_j}) \to H^*_K(M_j) \to H^*_K(S_{\beta_j})$
is injective; see \cite[Lemma~2.18]{Kir84}.

By the same argument, the localization exact sequence
\beq\label{ses.Chow}0\lra A^{*-\codim S_{\beta_j}}_K(S_{\beta_j})\lra A^*_K(M_j)\lra A^*_K(M_{j-1})\lra 0\eeq
is exact from the left. The short exact sequences \eqref{ses.coh} and \eqref{ses.Chow} are compatible by the $K$-equivariant cycle class maps $A^{*}_K(-)\to H^{2*}_K(-)$.

Moreover, we have
\beq\label{eq:Poincare.V}P_t^K(M)=P_t(M)P_t(BK),\eeq
since the spectral sequence associated to $[M/K]\to BK$ degenerates \cite[Proposition~5.8]{Kir84}.

As a consequence of \eqref{ses.Chow}, the cycle class map for $M^{ss}$
\beq\label{eq:cl.Vss}A^*_K(M^{ss})\lra H^*_K(M^{ss})\eeq
is an isomorphism if the cycle class maps for $M$ and $S_\beta$
\[A^*_K(M)\lra H^*_K(M)\and A^*_K(S_\beta)\lra H^*_K(S_\beta)\]
are isomorphisms for all $\beta\in \cB$.

\subsection{Unstable strata $S_\beta$}
We now describe the unstable strata $S_\beta$. 

Fix a positive Weyl chamber in $\Hom(\Gm,T_K)_\Q$, and let $\tau_+$ denote its closure.
Fix a $W_K$-invariant non-degenerate inner product $\cdot$ on $\Hom(\Gm,T_K)_\Q$ where $W_K=N_K(T_K)/T_K$, 
and let $\lVert-\rVert$ denote the associated norm.
 
Let $\alpha_0,\cdots, \alpha_m$ denote the $T_K$-weights of the representation $\C^{m+1}$. Using the inner product, we identify $\alpha_i$ with an element of $\Hom(\Gm,T_K)_\Q$.
Let $x_0,\cdots, x_m$ be the homogeneous coordinates of $\PP^m$ corresponding to the weight decomposition.

For each subset of the $\alpha_j$, consider the point $\beta$ of minimal norm in its convex hull. 
Among these, we take only the nonzero points in $\tau_+$.
For each such $\beta\in \tau_+\setminus\{0\}$, let
\[Z_\beta:=M\cap \left\{[x_0:\cdots:x_m]\in \PP^m:x_j=0 \text{ if }\alpha_j\cdot \beta\neq \lVert\beta\rVert^2\right\}.\]
We define the indexing set $\cB$ to consist of pairs $(\beta,i)$, where $\beta\in \tau_+\setminus\{0\}$ is as above and $Z_{\beta,i}$ is a connected component of $Z_\beta$.

To simplify the notation, we will denote such a pair simply by $\beta$, and write $Z_\beta$ for the corresponding connected component.
When $M=\PP^m$, the linear section $Z_\beta\subsetneq \PP^m$ is a linear subspace and hence irreducible. 
In this case, we identify $\cB$ with a subset of $\tau_+\setminus\{0\}$.

\medskip

Let $\Stab(\beta)\subset K$ denote the stabilizer of $\beta$ under the adjoint action of $K$ on $\Hom(\Gm,K)$. 
The restriction of $\cO_M(1)$ to $Z_\beta$ admits a natural $\Stab(\beta)$-linearization. We twist this linearization by the character corresponding to $\beta$, and denote the resulting semistable locus by $Z_\beta^{ss}$.
Equivalently, $Z_\beta^{ss}\subset Z_\beta$ is the (possibly empty) open subset consisting of the points for which $\beta$ is optimal in the sense of Kempf with respect to the original linearization $\cO_M(1)$; see \cite[8.11, Definition~12.20 and Remark~12.21]{Kir84}.

Consider the linear section 
\[Y_\beta':=M\cap \left\{[x_0:\cdots:x_m]\in \PP^m:x_j=0 \text{ if }\alpha_j\cdot \beta< \lVert\beta\rVert^2\right\}\]
which contains $Z_\beta$. 
Define $Y_\beta\subset Y_\beta'$ to be the open subscheme consisting of points for which there exists $0\leq j\leq m$ with $x_j\neq 0$ and $\alpha_j\cdot \beta=\lVert\beta\rVert^2$.
There is a natural projection
\[p_\beta:Y_\beta \lra Z_\beta,\]
defined by setting $x_j=0$ whenever $\alpha_j\cdot\beta> \lVert \beta\rVert^2$.
This map is $P_\beta$-equivariant, where $P_\beta$ acts on $Z_\beta$ via the natural projection $P_\beta \to \Stab(\beta)$, and it is a locally trivial fibration such that every fiber is an affine space (\cite[\S13]{Kir84}). 
Set
\[Y_\beta^{ss}:=p_\beta^{-1}(Z_\beta^{ss}).\]
Then $S_\beta$ is defined to be the $K$-translate of $Y_\beta^{ss}$:
\[S_\beta:=K\cdot Y_\beta^{ss}
~\subset M.\]

Let $P_\beta\subset K$ be the subgroup consisting of elements that preserve $Y_\beta\subset M$.
Then, $P_\beta$ is a parabolic subgroup (\cite[Lemma~6.9]{Kir84}), and the natural $K$-equivariant morphism $K\times^{P_\beta}Y_\beta^{ss}\xrightarrow{\cong} S_\beta$
is an isomorphism by \cite[Theorem~6.18]{Kir84}. Consequently, 
\[A^*_K(S_{\beta})\cong A^*_{P_\beta}(Y_{\beta}^{ss})\cong A^*_{\Stab(\beta)}(Z_{\beta}^{ss}) \and H^*_K(S_{\beta})\cong H^*_{P_\beta}(Y_{\beta}^{ss})\cong H^*_{\Stab(\beta)}(Z_{\beta}^{ss}).\]
Here we use that $P_\beta$ deformation retracts onto $\Stab(\beta)$.

Consequently, the formula \eqref{eq:Poincare.Sbeta} now reads
\beq\label{eq:poincare.git}P_t^K(M^{ss})=P_t^K(M)-\sum_{\beta\in \cB}t^{2\codim  S_\beta}P_t^{\Stab(\beta)}(Z_\beta^{ss})\eeq
where $\codim S_\beta=\dim M-(\dim K+\dim Y_\beta^{ss}-\dim P_\beta)$. 

Moreover, the $K$-equivariant cycle class map \eqref{eq:cl.Vss} is an isomorphism if 
\beq\label{cl:V}A^*_K(M)\lra H^*_K(M)\eeq
is an isomorphism, and the $\Stab(\beta)$-equivariant cycle class map
\beq\label{cl:Zbetass}A^*_{\Stab(\beta)}(Z_{\beta}^{ss})\lra H^*_{\Stab(\beta)}(Z_{\beta}^{ss})\eeq
is an isomorphism for all $\beta\in \cB$.

Analogous statements hold when $M$ is a smooth Deligne--Mumford stack with a projective coarse moduli space. 
For a vast generalization, see \cite[Theorem~3.1]{halpern2015theta}.

\smallskip

We now record a useful special case for projective space.
\begin{theorem}\label{thm:cl.projsp}
	Let $M=\PP^m$ with a linear action of a reductive group $K$. Then the $K$-equivariant cycle class map \eqref{eq:cl.Vss} for $M^{ss}$ is an isomorphism.  In particular, $H^{\mathrm{odd}}_K(M^{ss})=0$.
\end{theorem}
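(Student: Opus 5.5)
We argue by induction on the dimension $m$, combined with the criterion just established: by \eqref{ses.Chow} and \eqref{ses.coh}, the cycle class map \eqref{eq:cl.Vss} for $M^{ss}$ is an isomorphism as soon as \eqref{cl:V} is an isomorphism for $M=\PP^m$ and \eqref{cl:Zbetass} is an isomorphism for every $\beta\in\cB$. We therefore proceed in three steps.

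\emph{Step 1: the ambient space.} For $M=\PP^m$ with a linear $K$-action, the projective bundle formula gives
\[
A^*_K(\PP^m)\cong A^*_K(\pt)[h]\big/\Big(\textstyle\sum_i c_i^K(\C^{m+1})\,h^{m+1-i}\Big),
\]
and the same presentation holds in $H^*_K$. It therefore suffices to check that $A^*(BK)\to H^*(BK)$ is an isomorphism with $\Q$-coefficients. We reduce to a maximal torus via the Weyl group: $A^*_K(\pt)\cong A^*_{T_K}(\pt)^{W_K}$, and $H^*(BK)\cong H^*(BT_K)^{W_K}$ rationally. For a torus $T_K$ both sides are $\Sym(\Hom(T_K,\Gm))_\Q$. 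If $K$ is disconnected, we pass to $K^\circ$ and take invariants under the finite group $K/K^\circ$, which is harmless over $\Q$.

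\emph{Step 2: the strata.} Since $M=\PP^m$, each $Z_\beta$ is a linear subspace $\PP(W_\beta)$ with $W_\beta\subset\C^{m+1}$ a $\Stab(\beta)$-subrepresentation. Moreover $Z_\beta^{ss}$ is the GIT semistable locus for the $\Stab(\beta)$-linearization twisted by the character $\beta$. Two issues must be handled here:
\begin{itemize}
\item $\beta$ is only a rational character, so we replace it by a positive integer multiple;
\item the one-parameter subgroup generated by $\beta$ acts trivially on $Z_\beta$.
\end{itemize}
Following Kirwan, we pass to the quotient $\Stab(\beta)/T_\beta$, where $T_\beta$ is the subtorus generated by $\beta$. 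Because $\Stab(\beta)\to(\Stab(\beta)/T_\beta)\times T_\beta$ has finite kernel up to isogeny, we get
\[
A^*_{\Stab(\beta)}(Z_\beta^{ss})\cong A^*_{\Stab(\beta)/T_\beta}(Z_\beta^{ss})\otimes A^*(BT_\beta),
\]
and similarly in cohomology. The twisted linearization restricted to $\Stab(\beta)/T_\beta$ is then an honest linearization of $\cO(1)$ on $\PP(W_\beta)$, up to taking powers. So $Z_\beta^{ss}$ is the semistable locus of a linear action of a reductive group on a projective space of strictly smaller dimension, since $Z_\beta\subsetneq\PP^m$.

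\emph{Step 3: induction.} By induction on $m$, the cycle class map for $Z_\beta^{ss}$ is an isomorphism. Steps 1 and 2 then give the claim for $M^{ss}$. The base case $m=0$ is Step 1 applied to a point. Odd cohomology vanishes because the image of the cycle class map lies in even degrees.

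\emph{Main obstacle.} The step I expect to be most delicate is Step 2, namely the reduction of $\Stab(\beta)$-equivariant theory with the $\beta$-twisted linearization to an untwisted linear action of a smaller reductive group. This requires care with the Künneth-type splitting off of $BT_\beta$ in both Chow and cohomology; the point is that $T_\beta$ is central in $\Stab(\beta)$ and acts trivially on $Z_\beta$. It also requires checking that \eqref{ses.Chow} remains left exact in this setting, which follows from the nonvanishing top Chern class argument of \cite[Lemma~2.18]{Kir84} adapted to Chow groups.
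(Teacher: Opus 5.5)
Your proposal is correct and follows the paper's proof. Both reduce via \eqref{ses.Chow} and \eqref{ses.coh} to two cases: the ambient $\PP^m$, handled by the projective bundle formula and $A^*(BK)\cong A^*(BT_K)^{W_K}\cong H^*(BT_K)^{W_K}\cong H^*(BK)$, and the strata $Z_\beta^{ss}\subset Z_\beta\subsetneq\PP^m$, handled by induction on $m$. Your extra care with the $\beta$-twisted linearization (splitting off $BT_\beta$) and with disconnected $K$ only makes explicit what the paper compresses into ``the induced action on $Z_\beta$ is linear''. Note also that $Z_\beta=\PP^m$ can only occur when every weight satisfies $\alpha_j\cdot\beta=\lVert\beta\rVert^2$, in which case $M^{ss}=\emptyset$ and the claim is trivial.
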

\begin{proof}
	By the short exact sequences \eqref{ses.coh} and \eqref{ses.Chow}, together with their compatibility with the cycle class maps (via pushforward for closed immersions and restriction to open complements), it suffices to show that the cycle class maps \eqref{cl:V} and \eqref{cl:Zbetass} are isomorphisms.

	For \eqref{cl:V}, the projective bundle formula in equivariant Chow and cohomology theory implies that $A^*_K(\PP^m)$ (resp.\ $H^*_K(\PP^m)$) is a free module over $A^*(BK)$ (resp.\ $H^*(BK)$) with basis $1,h,\dots,h^m$, where $h=c_1^K(\cO_{\PP^m}(1))$ is the equivariant hyperplane class.
	Since 
	\[A^*(BK)\cong A^*(BT_K)^{W_K}\cong H^*(BT_K)^{W_K}\cong H^*(BK)\]
	for reductive $K$ \cite{edidin-graham-char-class}, and the cycle class map identifies the equivariant hyperplane class in Chow and cohomology, the map \eqref{cl:V} is an isomorphism.
	
	For \eqref{cl:Zbetass}, note that $Z_\beta\subsetneq \PP^m$ is a $\Stab(\beta)$-invariant projective subspace. Since $\Stab(\beta)$ is reductive and the induced action on $Z_\beta$ is linear, \eqref{cl:Zbetass} follows by induction on $m$.
\end{proof}

As an immediate corollary of Theorem~\ref{thm:cl.projsp}, 
we obtain the following.
\begin{corollary}\label{cor:cl.hypersurf}
Let $M=\PP H^0(\PP^n,\cO_{\PP^n}(d))$, and let $\cPGIT_{n,d}:=[M^{ss}/\PGL_{n+1}]$ be the GIT moduli stack of degree $d$ hypersurfaces in $\PP^n$.
The cycle class map
$A^*(\cPGIT_{n,d})\to H^*(\cPGIT_{n,d})$
is an isomorphism. In particular, $H^{\mathrm{odd}}(\cPGIT_{n,d})=0$.
\end{corollary}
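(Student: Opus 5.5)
The plan is to reduce directly to Theorem~\ref{thm:cl.projsp}, applied to $M=\PP H^0(\PP^n,\cO_{\PP^n}(d))$ with the linear action of the reductive group $K=\SL_{n+1}$. The action is induced by the natural representation of $\SL_{n+1}$ on $\Sym^d(\C^{n+1})^*$, so it is linear in the sense required there. Theorem~\ref{thm:cl.projsp} then gives that
\[A^*_{\SL_{n+1}}(M^{ss})\lra H^*_{\SL_{n+1}}(M^{ss})\]
is an isomorphism, and that $H^{\mathrm{odd}}_{\SL_{n+1}}(M^{ss})=0$.

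The remaining step is to pass from $\SL_{n+1}$ to $\PGL_{n+1}$. The homomorphism $\SL_{n+1}\to\PGL_{n+1}$ is surjective with finite kernel $\mu_{n+1}$. By the fact recalled at the start of this section, with $\Q$-coefficients the natural morphism $[M^{ss}/\SL_{n+1}]\to[M^{ss}/\PGL_{n+1}]$ induces an isomorphism
\[A^*_{\PGL_{n+1}}(M^{ss})\cong A^*_{\SL_{n+1}}(M^{ss}).\]
The analogous statement holds in rational cohomology. The morphism is a $\mu_{n+1}$-gerbe, and rational cohomology of $B\mu_{n+1}$ is trivial, so a Leray spectral sequence argument, or equivalently comparing the Borel constructions $E\SL_{n+1}\times^{\SL_{n+1}}M^{ss}\to E\PGL_{n+1}\times^{\PGL_{n+1}}M^{ss}$ whose fibers are $B\mu_{n+1}$, gives
\[H^*_{\PGL_{n+1}}(M^{ss})\cong H^*_{\SL_{n+1}}(M^{ss}).\]
These pullback isomorphisms commute with the equivariant cycle class maps, since cycle class maps are functorial under pullback. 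Therefore the cycle class map for $\cPGIT_{n,d}=[M^{ss}/\PGL_{n+1}]$ is identified with the $\SL_{n+1}$-equivariant one, which is an isomorphism, and the odd cohomology vanishes.

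There is essentially no obstacle here. The only point needing care is the compatibility of the finite-kernel comparison isomorphisms in Chow and in cohomology with the cycle class map, and this is standard functoriality.
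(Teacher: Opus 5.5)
Your proposal is correct and follows the same approach as the paper: reduce to the $\SL_{n+1}$-equivariant cycle class map on $M^{ss}$ (via the finite-kernel comparison $\SL_{n+1}\to\PGL_{n+1}$ with rational coefficients) and then apply Theorem~\ref{thm:cl.projsp}. You simply spell out the $\SL_{n+1}$-versus-$\PGL_{n+1}$ comparison in Chow and cohomology, which the paper leaves implicit.
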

\begin{proof}
It suffices to prove that the $\SL_{n+1}$-equivariant cycle class map for $M^{ss}$ is an isomorphism.
This follows from Theorem~\ref{thm:cl.projsp}.
\end{proof}

\subsection{GIT stratification of the space of plane curves}\label{ss:GIT.planecurve}
We describe $\cB, Z_\beta,$ and $ \Stab(\beta)$ in the case of plane curves, and compute $P_t(\cPGIT)$.
The main reference for this subsection is \cite{KL1}, whose arguments apply to arbitrary degree $d$; we recall them here in our setting.

\smallskip

Let $K=G=\SL_3$ and 
\[X_d=\PP H^0(\PP^2,\cO_{\PP^2}(d)),\] 
where $H^0(\PP^2,\cO_{\PP^2}(d))$ is naturally a $G$-representation of dimension $\binom{d+2}{2}$.

Let $T_K=T$. 
Then $\Hom(\Gm,T)_\Q\cong \Q^2$, with basis given by $t\mapsto \diag(t,1,t^{-1})$ and $t\mapsto \diag(1,t,t^{-1})$, 
and with norm induced from $\Q^3$ via $(a,b)\mapsto (a,b,-a-b)$.
Let
\[\begin{split}
	\tau_+=\{(a,b)\in \Q^2:a\leq b\leq -\textstyle\frac12a\}.
\end{split}
\]
The $T$-weights $(i,j,k)$ of $H^0(\PP^2,\cO(d))$ can be identified with lattice points $(i,j)$ in $\Q^2$. The following figure illustrates the case $d=4$, where 
the shaded region corresponds to $\tau_+$.

\[
	\begin{tikzpicture}[scale=1]
  \def\h{1.0}   
  \def\v{0.75}  

  \coordinate (A) at (0,0);
  \coordinate (B) at (2*\h,-4*\v);
  \coordinate (C) at (-2*\h,-4*\v);

  \coordinate (MBC) at (0,-4*\v);
  \coordinate (MCA) at (-\h,-2*\v);
  \coordinate (MAB) at (\h,-2*\v);

  \coordinate (G) at (0,-8*\v/3);

  \fill[gray!25] (A) -- (MAB) -- (G) -- cycle;

  \draw[thick] (A) -- (B) -- (C) -- cycle;

  \draw[dashed] (A) -- (MBC);
  \draw[dashed] (B) -- (MCA);
  \draw[dashed] (C) -- (MAB);

  \foreach \r in {0,...,4}{
    \foreach \i in {0,...,\r}{
      \pgfmathsetmacro{\x}{(\i - 0.5*\r)*\h}
      \pgfmathsetmacro{\y}{-(\r)*\v}
      \fill (\x,\y) circle (2pt);
    }
  }

  \node[above] at (A) {$(0,0)$};
  \node[below right] at (B) {$(0,4)$};
  \node[below left] at (C) {$(4,0)$};
\end{tikzpicture}
\]
Under the identification $\Hom(T,\Gm)_\Q\cong \Hom(\Gm,T)_\Q$, the lattice point $(i,j)$ corresponds to the one-parameter subgroup $(i-\frac{d}{3},j-\frac{d}{3},k-\frac{d}{3})$.
In particular, the barycenter $(\frac{d}{3},\frac{d}{3})$ corresponds to the origin in $\Hom(\Gm,T)_\Q$.

The set $\cB$ consists of those $\beta\in \tau_+\setminus\{0\}$ which are the closest points to the barycenter $(\tfrac{d}{3},\tfrac{d}{3})$ in the convex hull of some subset of lattice points, which is a point or a line segment.

For each $\beta\in\cB$, let $\ell_\beta\subset\Q^2$ be the line through $\beta$ orthogonal to $\beta$.
In particular, when the convex hull is a line segment, $\ell_\beta$ is precisely the line containing the convex hull.

The subspace $Z_\beta$ (resp.\ $Y_\beta'$) is the linear span in $X_d$ of the monomials corresponding to the lattice points on $\ell_\beta$ (resp. the lattice points on $\ell_\beta$ or on the opposite side of $\ell_\beta$ from the barycenter).
In particular, whenever $Z_\beta^{ss}\neq \emptyset$, the dimensions of $Z_\beta^{ss}$ and $Y_\beta^{ss}$ can be computed by counting lattice points. Moreover, $\Stab(\beta)\cong \GL_2$ whenever $\beta$ lies on the boundary of $\tau_+$, and otherwise $\Stab(\beta)=T$.

\smallskip
In the case $d=4$, this yields the following computation. Compare this with the corresponding computation for $d=6$ in \cite[Table~1]{KL1}.

\begin{table}[h]
\centering

\begin{tabular}{ccccc}
\toprule
$Z_\beta$ 
& $\Stab(\beta)$ & $P^{\Stab(\beta)}_t(Z_{\beta}^{ss})$ & $\dim Y_\beta^{ss}$ &$\mathrm{codim} S_\beta$ \\
\midrule
$(0,0)$ 		& $\GL_2$	&$\frac{1}{(1-t^2)(1-t^4)}$	&$0$&$12$\\ 
$(0,1)$  		& $T$		&$\frac{1}{(1-t^2)^2}$	&$1$&$10$\\
$(0,0),(0,1),(0,2),(0,3),(0,4)$&$\GL_2$	&$\frac{1+t^2-t^6}{(1-t^2)(1-t^4)}$	&$4$&$8$ \\
$(1,0),(0,2)$  	& $T$		&$\frac{1}{1-t^2}$	&$3$&$8$\\
$(1,0),(0,3)$  	& $T$		&$\frac{1}{1-t^2}$	&$4$&$7$\\
$(1,0),(0,4)$  	& $T$		&$\frac{1}{1-t^2}$	&$5$&$6$\\
$(2,0),(1,1),(0,2)$&$\GL_2$ 	&$\frac{1}{(1-t^2)(1-t^4)}$	&$5$&$7$\\
$(1,1),(0,3)$  	& $T$		&$\frac{1}{1-t^2}$	&$5$&$6$\\
$(1,1),(0,4)$  	& $T$		&$\frac{1}{1-t^2}$	&$6$&$5$\\
$(1,0),(1,1),(1,2),(1,3)$&$\GL_2$	&$\frac{1}{1-t^2}$	&$8$&$4$ \\
$(2,0),(0,3)$  	& $T$		&$\frac{1}{1-t^2}$	&$6$&$5$\\
\bottomrule
\noalign{\smallskip}\noalign{\smallskip}
\end{tabular}
\caption{Contribution of the unstable strata for $d=4$}
\label{table:GITus}
\end{table}
When $\Stab(\beta)=T$, there are two possibilities.
If $Z_\beta^{ss}$ is a point (as in Row~2 of Table~\ref{table:GITus}), then $P_t^{\Stab(\beta)}(Z_\beta^{ss})=\frac{1}{(1-t^2)^2}$.
Otherwise, $Z_\beta^{ss}\cong \Gm$, and hence $P_t^{\Stab(\beta)}(Z_\beta^{ss})=\frac{1}{1-t^2}$.

In Row~1 of Table~\ref{table:GITus}, where $\Stab(\beta)=\GL_2$ and $Z_\beta^{ss}$ is a point, we have
$P_t^{\Stab(\beta)}(Z_\beta^{ss})=\frac{1}{(1-t^2)(1-t^4)}$.
In Rows~3, 7, and 10 of Table~\ref{table:GITus}, where $\Stab(\beta)=\GL_2$ and $\dim Z_\beta>1$, the corresponding computations are similar to those in \cite[page~501]{KL1}, and are given by:
\[\begin{aligned}
	P_t^{\GL_2}(Y_4^{ss})=\frac{1}{1-t^2}P_t^{\SL_2}(Y_4^{ss})&
	=\frac{1+t^2-t^6}{(1-t^2)(1-t^4)},\\
	P_t^{\GL_2}(Y_2^{ss})=\frac{1}{1-t^2}P_t^{\SL_2}(Y_2^{ss})&
	=\frac{1}{(1-t^2)(1-t^4)},\\
	P_t^{\GL_2}(Y_3^{ss})=\frac{1}{1-t^2}P_t^{\SL_2}(Y_3^{ss})&
	=\frac{1}{1-t^2},
\end{aligned}
\]
respectively. Here $Y_d^{ss}\subset \PP^d$ is the $\GL_2$-semistable locus of binary forms of degree $d$. 
The Poincar\'e series $P_t^{\SL_2}(Y_d^{ss})$ is computed in \cite[16.2]{Kir84}.

By \eqref{eq:Poincare.V}, the $G$-equivariant Poincar\'e series of $X=X_4\cong \PP^{14}$ is
\beq\label{eq:poincare.X}P_t^G(X)=P_t(BG)\,P_t(X)=\frac{1-t^{30}}{(1-t^2)(1-t^4)(1-t^6)}.\eeq

Combining the above computations, we obtain the following.
\begin{theorem}\label{thm:poincare.git}
The Poincar\'e series of $\cPGIT$ is given by
\[\begin{split}
	P_t(\cPGIT)&=P_t^G(X^{ss})=\frac{1-t^{30}}{(1-t^2)(1-t^4)(1-t^6)}-\frac{t^8+t^{10}}{(1-t^2)^2}\\ 
	&=1+t^2+2t^4+3t^6+3t^8+2t^{10}+2t^{12}+\frac{t^{14}}{1-t^2}.
\end{split}\]
\end{theorem}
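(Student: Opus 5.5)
We apply Kirwan's formula \eqref{eq:poincare.git} with $K=G=\SL_3$ and $M=X=X_4\cong\PP^{14}$. Since the map $[X^{ss}/G]\to[X^{ss}/\bar G]$ is a $\mu_3$-gerbe, rational cohomology is unchanged. This gives the first equality $P_t(\cPGIT)=P_t^G(X^{ss})$. By \eqref{eq:poincare.X}, the first term is $P_t^G(X)=\frac{1-t^{30}}{(1-t^2)(1-t^4)(1-t^6)}$. It then remains to subtract $\sum_\beta t^{2\codim S_\beta}P_t^{\Stab(\beta)}(Z_\beta^{ss})$, using the eleven rows of Table~\ref{table:GITus}.

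First I would verify the table entries by the lattice-point description of Subsection~\ref{ss:GIT.planecurve}. For each $\beta$, I would identify the line $\ell_\beta$, count the lattice points on $\ell_\beta$ and on the far side of it to get $\dim Y_\beta^{ss}$, and compute
\[\codim S_\beta=14-(8+\dim Y_\beta^{ss}-\dim P_\beta).\]
Here $\dim P_\beta=6$ when $\Stab(\beta)=T$ and $\dim P_\beta=7$ when $\Stab(\beta)=\GL_2$. The equivariant Poincaré series of the $Z_\beta^{ss}$ come from the cases listed: a point or $\Gm$ for torus stabilizers, and the binary-form computations of \cite[16.2]{Kir84} for the $\GL_2$ rows. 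I take these as given from the text.

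The main step is the rational function simplification. I would sum the terms $t^{2c}P$ row by row, grouping the rows with the common denominator $1-t^2$. These are
\[t^{12}+t^{14}+t^{14}+t^{10}+t^{12}+t^{10}+t^{8}+t^{10}\]
over $1-t^2$, coming from Rows 4, 5, 6, 8, 9, 10, 11, where Row 10 has exponent $8$. To these I add
\[\frac{t^{24}+t^{14}}{(1-t^2)(1-t^4)},\quad \frac{t^{20}}{(1-t^2)^2},\quad \frac{t^{16}(1+t^2-t^6)}{(1-t^2)(1-t^4)}\]
from Rows 1, 7, 2 and 3. I would clear denominators over $(1-t^2)(1-t^4)(1-t^6)$ and check that the total equals
\[\frac{1-t^{30}}{(1-t^2)(1-t^4)(1-t^6)}-\frac{t^8+t^{10}}{(1-t^2)^2}.\]
The bookkeeping here is the obstacle: a single miscounted codimension would break the cancellation. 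So I would cross-check the answer by expanding both sides as power series up to $t^{16}$. Above degree $12$ the answer should stabilize to coefficient $1$, which reflects the $B\PGL_2$ contribution of the double-conic point, namely the tail $\frac{t^{14}}{1-t^2}$ plus the lower stabilizers.

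Finally, for the second displayed equality, I would expand
\[\frac{1-t^{30}}{(1-t^2)(1-t^4)(1-t^6)}\]
as the count of partitions into parts $1,2,3$ in degree $2n$, truncated at $n<15$, and subtract
\[\frac{t^8+t^{10}}{(1-t^2)^2}=\sum_n (n-3)t^{2n}+\sum_n(n-4)t^{2n}\]
in the appropriate ranges. The coefficients come out as $1,1,2,3,3,2,2$ and then constantly $1$ from $t^{14}$ onward. To confirm this, I would note that the partition count grows like $n^2/12$ only below the $t^{30}$ cutoff, and check directly that the difference stabilizes to $1$ for $7\le n\le 14$. For $n\ge15$, the $-t^{30}$ term corrects the growth, which I would verify in closed form.
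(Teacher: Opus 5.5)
Your route is the paper's: Kirwan's formula \eqref{eq:poincare.git}, with $P_t^G(X)$ from \eqref{eq:poincare.X} and the eleven rows of Table~\ref{table:GITus}. The gap is that the identity you set out to verify is false, so the key step of your plan cannot succeed. Summing all eleven rows gives
\[
\sum_{\beta\in\cB}t^{2\codim S_\beta}P_t^{\Stab(\beta)}(Z_\beta^{ss})=\frac{t^8+2t^{10}+t^{12}+t^{24}}{(1-t^2)(1-t^4)}=\frac{t^8+t^{10}}{(1-t^2)^2}+\frac{t^{24}}{(1-t^2)(1-t^4)}.
\]
The last summand is exactly Row~1 (the stratum of $z^4$), which your own grouping includes. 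So the total to subtract is not $\frac{t^8+t^{10}}{(1-t^2)^2}$. The middle expression in the statement omits the Row~1 term, and no correct bookkeeping will confirm it.

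You can see this without any bookkeeping. As $t\to1$, $P_t^G(X)\sim\frac{5/2}{(1-t^2)^2}$ while $\frac{t^8+t^{10}}{(1-t^2)^2}\sim\frac{2}{(1-t^2)^2}$. Their difference therefore has a double pole at $t^2=1$, whereas $1+t^2+\cdots+\frac{t^{14}}{1-t^2}$ has only a simple pole. Concretely, the coefficients of $t^{2n}$ ($n=0,1,2,\dots$) in $P_t^G(X)-\frac{t^8+t^{10}}{(1-t^2)^2}$ are
\[
1,1,2,3,3,2,2,1,1,1,1,1,2,2,3,3,4,\dots,
\]
growing linearly with slope $\frac12$. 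Both of your claims therefore fail: the difference is not $1$ for $7\le n\le 14$, and the $-t^{30}$ term does not ``correct the growth'' for $n\ge15$. The $-t^{30}$ only reduces the partition count from quadratic growth to linear growth of slope $\frac52$, against slope $2$ for the subtracted term.

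The fix is to subtract the full sum. Writing $u=t^2$, one checks that
\[
\Bigl(\frac{1-u^{15}}{(1-u)(1-u^2)(1-u^3)}-\frac{u^4+2u^5+u^6+u^{12}}{(1-u)(1-u^2)}\Bigr)(1-u)(1-u^2)(1-u^3)
\]
and $\bigl(1+u+2u^2+3u^3+3u^4+2u^5+2u^6+\frac{u^7}{1-u}\bigr)(1-u)(1-u^2)(1-u^3)$ both equal $1-u^4-2u^5-u^6+u^7+2u^8+u^9-u^{12}$. So the final closed form in the statement is correct, consistent with the palindromic $P_t(\cPhat)$ derived from it in Theorem~\ref{thm:Betti.PH}.

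There are two further slips in your bookkeeping:
\begin{itemize}
\item Rows~4, 5, 6, 8, 9, 10, 11 contribute $\frac{t^{16}+t^{14}+2t^{12}+2t^{10}+t^8}{1-t^2}$, not the eight-term list you wrote.
\item In $\SL_3$ a Borel subgroup has dimension $5$, and the maximal parabolic with Levi $\GL_2$ has dimension $6$. With your values $6$ and $7$, every codimension check would come out one larger than in the table (for example $13$ instead of $12$ for Row~1).
\end{itemize}
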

\begin{proof}
	The result follows from \eqref{eq:poincare.git}, \eqref{eq:poincare.X}, and Table~\ref{table:GITus}.
\end{proof}

\subsection{Poincar\'e series and the cycle class map of $\cPK$}\label{ss:poincare.PK}
Since $U^{1/2,ss}\to U^{ss}$ is a root stack and $G\to \bar G$ is a $\mu_3$-cover, we have  
\[H^*(\cPK)\cong H^*_G(U^{ss}) \and A^*(\cPK)\cong A^*_G(U^{ss}).\]

Recall that $U\to X^{ss}$ is the blowup along $Z_\DC$ with exceptional divisor $E_\DC$, and $U^{ss}\subset U$ is the corresponding GIT semistable locus.
Applying the results of Subsection~\ref{ss:Kirwan.strat} to the blowup of $X$ along $Z_\DC$ and restricting to $U$, we obtain a $G$-equivariantly perfect stratification
\[U=U^{ss}\sqcup \bigsqcup_{\beta\in \cB}S_\beta.\]

By \cite[Lemma~7.6]{Kir85}, each $Z_\beta^{ss}\subset S_\beta$ is contained in $E_\DC$.
Since $Z_\DC$ is a single $G$-orbit, the contribution of the unstable strata is determined by the induced linear action on the fiber of $E_\DC$ over a chosen base point. 
We take the point corresponding to the double conic 
$q_\DC=(xy-z^2)^2$. 
Then the fiber of $E_\DC$ over $[q_\DC]\in Z_\DC$ identifies with
\[\PP(N_{Z_\DC/X^{ss}}|_{[q_\DC]})\cong \PP H^0(\PP^1,\cO(8))=Y_8.\]
The stabilizer $\Stab_G(q_\DC)$ of $[q_\DC]$ in $G$ is a $\mu_3$-cover of $\PGL_2$, and its induced action on the fiber coincides with the natural action of $\PGL_2$.

Consequently, for each $\beta\in\cB$, the stack 
$[Z_\beta^{ss}/\Stab(\beta)]$
is naturally isomorphic to the corresponding quotient stack for the induced action on $Y_8$, and the associated unstable strata have the same codimension in $U$ and in $Y_8$.
Therefore, the contribution of the unstable strata to $P_t^G(U)$ coincides with the corresponding contribution to $P_t^{\PGL_2}(Y_8)$. 

\begin{theorem}\label{thm:PK.poincare} The Poincar\'e series of $\cPK$ is given by
\[\begin{split}
	P_t(\cPK)
	=1+2t^2+3t^4+5t^6+4t^8+3t^{10}+2t^{12}+\frac{t^{14}}{1-t^2}.
\end{split}\]
Moreover, the cycle class map $A^*(\cPK)\to H^*(\cPK)$ is an isomorphism. 
\end{theorem}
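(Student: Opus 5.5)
The plan is to compute $P_t^G(U^{ss})$ by Kirwan's partial desingularization method \cite{Kir85}, in two steps. First we pass from $X^{ss}$ to the blowup $U$. Then we excise the unstable strata of $U$, using the reduction to $Y_8$ explained just before the statement. Throughout we use $H^*(\cPK)\cong H^*_G(U^{ss})$ and $A^*(\cPK)\cong A^*_G(U^{ss})$. The cycle class statement will be proved in parallel, by checking that every exact sequence used exists in both Chow and cohomology, compatibly with the cycle class map.

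\emph{Step 1 (blowup).} Since $U\to X^{ss}$ is the blowup along the smooth $G$-invariant center $Z_\DC$, the equivariant blowup formula gives
\[H^*_G(U)\cong H^*_G(X^{ss})\oplus\bigoplus_{i=1}^{r-1}H^{*-2i}_G(Z_\DC),\]
where $r=\codim Z_\DC=\dim H^0(\PP^1,\cO(8))=9$ by Lemma~\ref{lem:etalenbd.DC}. The same decomposition holds for $A^*_G$, compatibly with the cycle class map.

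Next, $Z_\DC\cong G/\Stab_G(q_\DC)$, and $\Stab_G(q_\DC)$ is a $\mu_3$-cover of $\PGL_2$. Hence
\[A^*_G(Z_\DC)\cong A^*(B\PGL_2)\cong H^*(B\PGL_2)\cong H^*_G(Z_\DC),\]
which is the polynomial algebra on $c_2$, with Poincar\'e series $1/(1-t^4)$. This gives
\[P_t^G(U)=P_t(\cPGIT)+\frac{t^2+t^4+\cdots+t^{16}}{1-t^4},\]
where $P_t(\cPGIT)$ is known from Theorem~\ref{thm:poincare.git}. Combining this with Corollary~\ref{cor:cl.hypersurf} and the Chow-level blowup formula shows that $A^*_G(U)\to H^*_G(U)$ is an isomorphism.

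\emph{Step 2 (unstable strata).} The stratification $U=U^{ss}\sqcup\bigsqcup_\beta S_\beta$ is equivariantly perfect. By the discussion above, the total contribution of the strata $S_\beta$ equals the contribution of the unstable strata of $Y_8$ under $\PGL_2$, with the same codimensions. Applying \eqref{eq:Poincare.Sbeta} to $Y_8$ itself, this contribution is
\[P_t^{\SL_2}(Y_8)-P_t^{\SL_2}(Y_8^{ss})=\frac{1-t^{18}}{(1-t^2)(1-t^4)}-P_t^{\SL_2}(Y_8^{ss}).\]
Here $P_t^{\SL_2}(Y_8^{ss})$ is taken from \cite[16.2]{Kir84}; passing between $\SL_2$ and $\PGL_2$ does not change rational invariants. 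Subtracting this from the formula of Step~1 and expanding the series should give the stated Poincar\'e series, whose stable tail is $t^{14}/(1-t^2)$. I expect this bookkeeping to be the main computational obstacle: one has to make sure the codimension shift in $U$ really matches the codimension in $Y_8$ (Kirwan's \cite[Lemma~7.6]{Kir85} and the fibre description), and to get Kirwan's $Y_8^{ss}$ series exactly right.

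\emph{Step 3 (cycle class map).} The left-exact localization sequences \eqref{ses.Chow} and the split sequences \eqref{ses.coh} are compatible with the cycle class maps. Therefore it suffices to know that $A^*_{\Stab(\beta)}(Z_\beta^{ss})\to H^*_{\Stab(\beta)}(Z_\beta^{ss})$ is an isomorphism for each $\beta$, together with the isomorphism for $U$ obtained in Step~1. By the reduction to the fibre, each $[Z_\beta^{ss}/\Stab(\beta)]$ is the corresponding stack for a linear action on a linear subspace of $Y_8=\PP^8$. For these stacks the isomorphism holds by the argument of Theorem~\ref{thm:cl.projsp}, via induction on the dimension of the projective space. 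A five-lemma argument along the filtration $U_j$ then gives that $A^*_G(U^{ss})\to H^*_G(U^{ss})$ is an isomorphism. The subtle point here is the Chow-level blowup formula for a non-proper equivariant blowup, which I would justify via the formula for blowing up a smooth center in the quotient stack $[X^{ss}/G]$.
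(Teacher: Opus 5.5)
Your proposal is correct and follows the paper's proof essentially step by step. You apply the blowup formula along $Z_{\DC}$ with $P_t^G(Z_{\DC})=1/(1-t^4)$, subtract the unstable strata via the reduction to the $\PGL_2$-action on $Y_8$ with matching codimensions, and obtain the cycle class statement from the compatible Chow and cohomology sequences. The differences are cosmetic: your $P_t^{\SL_2}(Y_8)-P_t^{\SL_2}(Y_8^{ss})$ equals $(t^8+t^{10}+t^{12}+t^{14})/(1-t^2)$, which is the paper's four $B\mathbb{G}_m$ strata of codimensions $4,\dots,7$, so the arithmetic you left unfinished gives $P_t(\cPGIT)+t^2+t^4+2t^6+t^8+t^{10}$. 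Where you invoke the induction of Theorem~\ref{thm:cl.projsp}, the paper simply notes that each $[Z_\beta^{ss}/\Stab(\beta)]\cong B\mathbb{G}_m$.
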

\begin{proof}
We first compute $P_t^G(U)$. By the blowup formula \cite[Lemma~7.2]{Kir85}, 
\[P_t^G(U)=P_t^G(X^{ss})+\frac{t^2-t^{18}}{1-t^2}P_t^G(Z_\DC).\]
Since $[Z_\DC/G]$ is a $\mu_3$-gerbe over $[ B\PGL_2$, we have
$P_t^G(Z_\DC)=\frac{1}{1-t^4}$,
and 
\[P_t^G(U)=P_t(\cPGIT)+\frac{t^2+t^6+t^{10}+t^{14}}{1-t^2}.\]

Next, we subtract the contribution of the unstable strata in $U$. 
As explained above, it equals the contribution of the unstable strata for the $\PGL_2$-action on $Y_8=\PP H^0(\PP^1,\cO(8))$. 
In this case, each unstable stratum is of the form $B\Gm$, and the corresponding contributions are $t^{2(4+i)}P_t(B\Gm)$ for $i=0,\dots,3$ (see \cite[16.2]{Kir84}).
Therefore,
\[P_t(\cPK)=P_t^G(U^{ss})= P_t^G(U)-\frac{t^8+t^{10}+t^{12}+t^{14}}{1-t^2} = P_t(\cPGIT)+t^2+t^4+2t^6+t^8+t^{10}.\]

For the statement on the cycle class map, it suffices (by Subsection~\ref{ss:Kirwan.strat}) to know that the cycle class maps are isomorphisms for the blowup center $[Z_\DC/G]$ and for the stacks $[Z_\beta^{ss}/\Stab(\beta)]$ induced by the unstable strata. 
The former holds because $[Z_\DC/G]$ is a $\mu_3$-gerbe over $B\PGL_2$ and $\PGL_2$ is reductive, so $A^*(B\PGL_2)\cong H^*(B\PGL_2)$ by \cite{edidin-graham-char-class}. 
The latter holds because each stack $[Z_\beta^{ss}/\Stab(\beta)]$ is isomorphic to $B\Gm$.
\end{proof}

\subsection{Poincar\'e series and the cycle class map of $\cPhat$ and $\cPH$}
Applying the same strategy as in Subsection~\ref{ss:poincare.PK} to the weighted blowup $\cPhat' \to \cPK$ and the induced stratification of $\cPhat' \setminus \cPhat$, we obtain the Poincar\'e polynomial of $\cPhat$ and the isomorphism of its cycle class map.
Using the weighted blowup $\cPhat \to \cPH$, we deduce the corresponding statements for $\cPH$.

\begin{theorem}
\label{thm:Betti.PH}
The Poincar\'e polynomials of $\cPhat$ and $\cPH$ are given by
\[\begin{aligned}
	P_t(\cPhat)&=1+3t^2+5t^4+7t^6+5t^8+3t^{10}+t^{12},\\
	P_t(\cPH)&=1+2t^2+4t^4+5t^6+4t^8+2t^{10}+t^{12}.
\end{aligned}\]
Moreover, the cycle class maps for $\cPhat$ and $\cPH$ are isomorphisms.
\end{theorem}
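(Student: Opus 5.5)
The plan is to obtain $\cPhat$ from $\cPK$ in the same way $\cPK$ was obtained from $\cPGIT$ in Subsection~\ref{ss:poincare.PK}. We then descend to $\cPH$ using Theorem~\ref{thm:roof.diagram}.

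\emph{Step 1: the blowup formula for $\cPhat'\to\cPK$.} Rational cohomology and Chow groups are insensitive to root stacks, finite gerbes and finite stabilizers. So the weighted blowup $\cPhat'\to\cPK$ along $\cZ_\pTac$, which has codimension $6$, should satisfy the analogue of \cite[Lemma~7.2]{Kir85}:
\[P_t(\cPhat')=P_t(\cPK)+(t^2+t^4+t^6+t^8+t^{10})\,P_t(\cZ_\pTac).\]
By Proposition~\ref{prop:gerbe}, $\cZ_\pTac\cong B(\bar R\rtimes\mu_2)\times[\PP^1/\Gamma]$. Rationally $B(\Gm\rtimes\mu_2)$ has Poincaré series $1/(1-t^4)$, and $[\PP^1/\Gamma]$ has that of $\PP^1$. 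Hence $P_t(\cZ_\pTac)=(1+t^2)/(1-t^4)=1/(1-t^2)$, and the added term is $(t^2+\dots+t^{10})/(1-t^2)$.

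\emph{Step 2: removing the unstable locus.} Next we remove $\cPhat'\setminus\cPhat$ using the equivariantly perfect stratification of Subsection~\ref{ss:Kirwan.strat}. By the proof of Proposition~\ref{prop:cEhat}, in the fiber \eqref{eq:normal.space.Zptac} the unstable points are those with all positive-weight or all negative-weight coordinates zero. These two pieces are exchanged by $\mu_2$, so they form a single stratum. Its Kirwan data are as follows: $Z_\beta^{ss}$ is one copy $[(\A^3\setminus 0)/\Gm]=\cP(2,3,4)$, with an extra $B\Gm$ from $R$, fibered over the $\Gamma$-cover $\PP^1$ of $[\PP^1/\Gamma]$. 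The stratum has codimension $3$. Its contribution is therefore
\[t^{6}\,\frac{(1+t^2+t^4)(1+t^2)}{1-t^2}.\]
Subtracting this from the result of Step 1 should give exactly $1+3t^2+5t^4+7t^6+5t^8+3t^{10}+t^{12}$; the $t^{14}/(1-t^2)$ tail cancels. Identifying the correct strata, their codimensions and their $\Stab(\beta)$-quotients in this weighted setting is the step I expect to be the main obstacle. I would model it on the local description in Remark~\ref{rem:cPhat.local}, reducing to a linear action on the weighted projectivization of $N_{\cZ_\pTac/\cPK}$ exactly as was done with $Y_8$ for $\cPK$.

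\emph{Step 3: the cycle class map for $\cPhat$.} The sequences \eqref{ses.coh} and \eqref{ses.Chow} are compatible with the cycle class maps, and so is the blowup formula. Hence it suffices that the cycle class maps are isomorphisms for $\cPK$ (Theorem~\ref{thm:PK.poincare}), for $\cZ_\pTac$, and for the stratum quotient above. For $\cZ_\pTac$ this holds since $B(\Gm\rtimes\mu_2)$ is $B$ of a reductive group \cite{edidin-graham-char-class} and $\PP^1$ has cellular cohomology. For the stratum quotient it holds since it is a product of $B\Gm$, $\PP^1$ and a weighted projective stack, all of which have this property.

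\emph{Step 4: passing to $\cPH$.} By Theorem~\ref{thm:roof.diagram} and Lemma~\ref{lem:Phi}, $\cPhat$ is a square-root stack over $\Bl_{\cZ_2}\cPH$, and $\cZ_2$ has codimension $2$. The ordinary blowup formula then gives $P_t(\cPhat)=P_t(\cPH)+t^2P_t(\cZ_2)$. By Proposition~\ref{prop:Z2}, $H^*(\cZ_2)=\Sym^2H^*(\cP(2,3,4))$, with Poincaré polynomial $1+t^2+2t^4+t^6+t^8$. Subtracting $t^2$ times this from $P_t(\cPhat)$ yields the stated formula for $P_t(\cPH)$. For the cycle class map, the blowup formula expresses both $A^*(\cPhat)$ and $H^*(\cPhat)$ as $\Phi^*(\cdot)$ of $\cPH$ plus an exceptional summand, compatibly with the cycle class maps. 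The cycle class map for $\cZ_2$ is an isomorphism, since it is a finite quotient of a product of weighted projective stacks. Therefore the map for $\cPH$ is a direct summand of the map for $\cPhat$, and hence is an isomorphism.
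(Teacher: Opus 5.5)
Your proposal is correct and follows the same route as the paper: Kirwan's blowup formula for $\cPhat'\to\cPK$, then subtraction of the unstable strata, then the ordinary blowup formula for $\cPhat\to\cPH$ through the square-root stack of Lemma~\ref{lem:Phi}, with the cycle class statements checked on the centers and strata. The only difference is bookkeeping in Step~2: the paper uses three strata of codimensions $3,4,5$ with $BR$-fibers over $[\PP^1/\Gamma]$, whereas you use a single codimension-$3$ stratum with fiber $\cP(2,3,4)\times B\Gm$ (its base is really $[\PP^1/\Gamma]$, not $\PP^1$, because the involution exchanging the two unstable pieces is the Weyl-type $\mu_2$ rather than $\Gamma$, though this makes no difference rationally), and both give the same contribution $(t^6+t^8+t^{10})(1+t^2)/(1-t^2)$ since $P_t(\cP(2,3,4))=1+t^2+t^4$.
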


\begin{proof}
We first compute the Poincar\'e polynomial of $\cPhat'$.
Applying the blowup formula \cite[Lemma~7.2]{Kir85} to the blowup
$\cPhat' \to \cPK$ along $\cZ_{\pTac}$, we obtain
\[P_t(\cPhat')=P_t(\cPK)+\frac{t^2-t^{12}}{1-t^2}P_t(\cZ_{\pTac}).\]
Since $\cZ_{\pTac}\cong B(R\rtimes\mu_2)\times [\PP^1/\Gamma]$
and $H^*(B(R\rtimes \mu_2))\cong H^*(BR)^{\mu_2}$, we have
\[P_t(\cZ_{\pTac})=P_t(B(R\rtimes \mu_2))\,P_t([\PP^1/\Gamma])=\frac{1}{1-t^4}\cdot(1+t^2)=\frac{1}{1-t^2},\]
and hence
\[P_t(\cPhat')=P_t(\cPK)+\frac{t^2-t^{12}}{(1-t^2)^2}.\]

Next, we subtract the contribution of the unstable strata of $\cPhat'$, which are fibrations over $[\PP^1/\Gamma]$ with fibers given by the unstable strata for the induced $R\rtimes\mu_2$-action on $\PP(2,2,3,3,4,4)$. Their contribution is
$P_t([\PP^1/\Gamma])\cdot\sum_{i=0}^2 t^{2(3+i)}P_t(BR)=\frac{(1+t^2)(t^6+t^8+t^{10})}{1-t^2}$.
Therefore,
\[\begin{aligned}
P_t(\cPhat)&=P_t(\cPhat')-\frac{(1+t^2)(t^6+t^8+t^{10})}{1-t^2}
	=1+3t^2+5t^4+7t^6+5t^8+3t^{10}+t^{12}.
\end{aligned}\]

Applying the blowup formula to the weighted blowup $\cPhat \to \cPH$ along
$\cZ_2$, 
and using that $H^*(\cZ_2)\cong \big(H^*(\cP(2,3,4))\otimes H^*(\cP(2,3,4))\big)^{\mu_2}$,
we obtain
\[\begin{aligned}
	P_t(\cPH)&=P_t(\cPhat)-t^2P_t(\cZ_2)
	=1+2t^2+4t^4+5t^6+4t^8+2t^{10}+t^{12}.
\end{aligned}
\]

For the cycle class maps, it suffices to verify that they are isomorphisms for the blowup centers $\cZ_{\pTac}$ and $\cZ_2$, as well as the stacks $[Z_\beta^{ss}/\Stab(\beta)]$ appearing in $\cPhat'\setminus \cPhat$.
This follows from Propositions~\ref{prop:gerbe} and~\ref{prop:Z2} and the fact that each stack $[Z_\beta^{ss}/\Stab(\beta)]$ is a fibration with fiber $BR$ over $[\PP^1/\Gamma]$.
\end{proof}

We end this section with a related computation of the intersection Betti numbers of the good moduli spaces $\PGIT$ and $\PK$.
For a variety $M$ of pure dimension, let $IH^*(M)$ denote the intersection cohomology of $M$, with middle perversity and rational coefficients, and let 
	\[\begin{aligned}
		IP_t(M)&:=\sum_i \dim IH^i(M)t^i, \quad \text{and} \\
		IP'_t(M)&:=\sum_{i\leq \dim M}\dim IH^{i-2}(M)t^i+\sum_{i>\dim M}\dim IH^i(M)t^i.
	\end{aligned}
	\]
Note that when $M$ has only finite quotient singularities, $IP_t(M)=P_t(M)$.
\begin{theorem}[Intersection Betti numbers of $\PGIT$ and $\PK$] \label{thm:IP.GIT.K}
	We have
	\[\begin{aligned}
		&IP_t(\PGIT)=1+t^2+2t^4+3t^6+2t^8+t^{10}+t^{12},\\
		&IP_t(\PK)=1+2t^2+3t^4+5t^6+3t^8+2t^{10}+t^{12}.
	\end{aligned}\]
\end{theorem}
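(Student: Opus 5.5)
We will work on the level of good moduli spaces and apply the decomposition theorem to the two proper birational maps $\Phat\to\PK$ and $\PK\to\PGIT$. This follows the strategy of \cite{kirwan-ic}, and the normalization $IP'_t$ is set up for exactly this purpose. The starting point is that $\Phat$, being the coarse space of the smooth proper Deligne--Mumford stack $\cPhat$ (Remark~\ref{rem:cPhat.local}), has only finite quotient singularities. Hence $IP_t(\Phat)=P_t(\Phat)=P_t(\cPhat)$, which is known from Theorem~\ref{thm:Betti.PH}.

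\textbf{Step 1: $\PK$.} By Proposition~\ref{prop:cEhat} and Remark~\ref{rem:P1}, the map $\Phat\to\PK$ is an isomorphism away from the exceptional divisor. Over the curve $\PP^1\subset\PK$ it is the trivial fibration $Z_2\times\PP^1\to\PP^1$. Moreover, the local model transverse to $\PP^1$ is the same at every point, including at $\pHT$, because $\cZ_\pTac$ meets $\cEK$ transversely (Proposition~\ref{prop:transverse}); locally the map is therefore a product with $\PP^1$.

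The normal slice to $\PP^1$ is $5$-dimensional, and there the map contracts the $4$-dimensional fiber $Z_2$ to a point. In the decomposition theorem, the extra summands supported on $\PP^1$ are therefore $H^*(\PP^1)$ tensored with the ``contracted-point'' contribution of $Z_2$. This contribution is $IP'_t(Z_2)$: classes below the middle are shifted up by $t^2$, and the remaining ones are kept. This is the standard picture for contracting a divisor to an isolated point, as in the check that blowing up a point on a surface adds $t^2$.

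By Proposition~\ref{prop:Z2}, $P_t(Z_2)$ is the symmetric square of $1+t^2+t^4$, namely $1+t^2+2t^4+t^6+t^8$, so $IP'_t(Z_2)=t^2+t^4+t^6+t^8$. We then expect
\[IP_t(\PK)=P_t(\cPhat)-(1+t^2)(t^2+t^4+t^6+t^8),\]
which gives exactly the stated $1+2t^2+3t^4+5t^6+3t^8+2t^{10}+t^{12}$.

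\textbf{Step 2: $\PGIT$.} The map $\PK\to\PGIT$ contracts the coarse space $E^\rmK\cong Y_8/\!\!/\SL_2$ of $\cEK$ (Theorem~\ref{thm:cEK}), a $5$-dimensional variety, to the double conic point. In the same way, we expect
\[IP_t(\PK)=IP_t(\PGIT)+IP'_t(E^\rmK).\]
This requires $IH^*(Y_8/\!\!/\SL_2)$. We will compute it by running the same argument one level down, using the diagram of Remark~\ref{rem:Z1.Kirwan}. The coarse space $\widehat Y_8/\!\!/\SL_2$ of $\overline\cZ_1$ has finite quotient singularities, and it contracts $Z_2$ (of dimension $4$) to the point $\pHT$. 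Hence
\[IP_t(Y_8/\!\!/\SL_2)=P_t(\overline\cZ_1)-(t^2+t^4+t^6+t^8).\]
We will obtain $P_t(\overline\cZ_1)$ from the equivariantly perfect stratification for binary octics, namely $P^{\SL_2}_t(Y_8^{ss})$ from \cite[16.2]{Kir84}, combined with the weighted blowup formula, exactly as in Theorem~\ref{thm:Betti.PH}. The consistency check is that one must obtain $IP_t(Y_8/\!\!/\SL_2)=1+t^2+2t^4+2t^6+t^8+t^{10}$. Feeding this into $IP'_t$ gives $t^2+t^4+2t^6+t^8+t^{10}$, and subtracting yields the stated formula for $IP_t(\PGIT)$.

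\textbf{Main obstacle.} The main obstacle is justifying the local form of the decomposition theorem in both steps. One must show that the summands supported on the contracted locus are exactly the truncations encoded by $IP'_t$. For this I would invoke \cite{kirwan-ic} directly: there the correction terms for each stage of a (weighted) Kirwan partial desingularization are expressed through the equivariant cohomology of the normal slice $\PP(N_x)/\!\!/\Stab$. Here these slices are $\PP(2,2,3,3,4,4)/\!\!/(R\rtimes\mu_2)\cong Z_2$ and $Y_8/\!\!/\PGL_2$, respectively. The remaining work is to check that Kirwan's formula is unaffected by the weights and the finite gerbes; this holds because we work with rational coefficients.
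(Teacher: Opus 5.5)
Your proposal is correct and follows the paper's proof essentially verbatim. The paper applies \cite[Proposition~2.1]{kirwan-ic} to $\Phat\to\PK$ to get $IP_t(\PK)=P_t(\Phat)-(1+t^2)\,IP'_t(Z_2)$, then to $\PK\to\PGIT$, and finally to $\widehat Y_8/\!\!/\SL_2\to Y_8/\!\!/\SL_2$; the only difference is that the paper takes $P_t(\widehat Y_8/\!\!/\SL_2)=1+2t^2+3t^4+3t^6+2t^8+t^{10}$ directly from \cite[page~83]{Kir85} rather than recomputing it.

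One small correction to your verbal description of $IP'_t(M)$: the classes in degrees $\dim_\C M-1$ and $\dim_\C M$ are discarded, not kept. This is why the two middle-degree classes in $IH^4(Z_2)$ and in $IH^4(Y_8/\!\!/\SL_2)$ drop out. Your computed values $t^2+t^4+t^6+t^8$ and $t^2+t^4+2t^6+t^8+t^{10}$ do follow this rule and are correct.
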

\begin{proof}
	Applying \cite[Proposition~2.1]{kirwan-ic} to $\Phat\to \PK$, we have 
	\[\begin{split}
		IP_t(\PK)&=IP_t(\Phat)-IP_t(\PP^1/\Gamma)\cdot IP'_t(Z_2)=P_t(\Phat)-(1+t^2)(t^2+t^4+t^6+t^8),
	\end{split}\]
	which gives the asserted formula for $IP_t(\PK)$. 
	Similarly, 
	\[IP_t(\PGIT)=IP_t(\PK)-IP'_t(Y_8/\!\!/\SL_2).\]
	To compute the last term, we apply it 
	to 
	$\widehat Y_8/\!\!/\SL_2\to Y_8/\!\!/\SL_2$ and obtain 
	\[IP_t(Y_8/\!\!/\SL_2)=P_t(\widehat Y_8/\!\!/\SL_2)-IP'_t(Z_2)=1+t^2+2t^4+2t^6+t^8+t^{10}.\] 
	The last equality follows by \cite[page~83]{Kir85}.
	Subtracting $IP'_t(Y_8/\!\!/\SL_2)=t^2+t^4+2t^6+t^8+t^{10}$ 
	from $IP_t(\PK)$ gives the formula for $IP_t(\PGIT)$.
\end{proof}

\section{Chow ring of $\cPGIT$}\label{s:chow.pgit}

In this section, we compute the Chow ring of the GIT moduli stack $\cPGIT$. 
It suffices to compute $A_G^*(X^{ss})$. 
We determine $A_G^*(X^{ss})$ via the localization sequence by computing the relations in $A_G^*(X)$ induced by the unstable locus, following the approach of \cite{COP}.

\subsection{Unstable plane quartics}
Let $V=\C^3$ so that $\PP^2=\PP V$ and $X=\PP(\Sym^4 V^*)$.
Then 
\[X\setminus X^{ss}=\bigsqcup_{\beta\in \cB} S_\beta\]
has exactly two irreducible components, which are the closures of the two strata $S_\beta$ corresponding to the last two rows of Table~\ref{table:GITus}. These two components parametrize 
\begin{enumerate}
	\item plane quartics with a triple point, and 
	\item plane quartics that are unions of a cubic and its inflectional tangent line,
\end{enumerate}
 respectively (see \cite[\S4.2]{GIT}).
More precisely, every point in these components is, up to the $G$-action, represented by a quartic form
\beq\label{eq:quartic}F=\sum_{i,j\ge 0,~ i+j\le 4} a_{ij}x^iy^jz^{4-i-j}\in \C[x,y,z]_4\eeq
satisfying one of the two conditions, corresponding to (1) and (2) above:
\begin{enumerate}
	\item $a_{ij}=0$ for all $i>1$, equivalently $F\in (y,z)^3$;
	\item $F=zF_3$, where $F_3\in \langle x^2z, xyz, xz^2, y^3, y^2z, yz^2, z^3\rangle= \langle y^3\rangle + z\C[x,y,z]_2$.
\end{enumerate}
(These are the loci $Y_\beta'=\overline{Y_\beta^{ss}}$ in the last two rows of Table~\ref{table:GITus}.)

We denote the two unstable strata $S_\beta$ by $S_{\tr}$ and $S_{\flex}$.
In particular,
\[X\setminus X^{ss}=\overline S_\tr \cup \overline S_\flex.\]
These two components have codimensions $4$ and $5$, respectively, according to Table~\ref{table:GITus}.
The localization sequence then yields
\beq\label{eq:localize.X}A^{*-4}_G(\overline S_{\tr})\oplus A^{*-5}_G(\overline S_{\flex})\lra A^*_G(X) \lra A^*_G(X^{ss})\lra 0.\eeq

By the projective bundle formula, the equivariant Chow ring of $X$ is
\beq\label{eq:Chowring.X}A^*_G(X)=\Q[H,c_2,c_3]/(r_X),\eeq
where $H=c_1^G(\cO_X(1))$ is the equivariant hyperplane class, and $c_2,c_3$ are the Chern classes of the universal $G$-bundle.
The relation $r_X$ is given by
\[r_X=H^{15}+c_1^G(\Sym^4V^*)H^{14}+\cdots + c_{15}^G(\Sym^4V^*).\]

Thus, it remains to determine the images of $A^{*-4}_G(\overline S_{\tr})$ and $A^{*-5}_G(\overline S_{\flex})$ inside $A^*_G(X)$.

\subsection{Relations coming from the triple-point locus}\label{ss:rel.triple}

We determine the relations arising from excising the irreducible component $\overline S_{\tr}$ of plane quartics with a triple point.

Consider the exact sequence of $G$-equivariant vector bundles on $\PP V$
\[0 \lra \sK_{\tr}\lra \Sym^4 V^*\lra \sP^2\big(\cO_{\PP V}(4)\big)\lra 0,\]
where $\sP^m(\cO_{\PP V}(4))$ denotes the bundle of $m$-th principal parts of $\cO_{\PP V}(4)$.
By construction, 
\[\PP(\sK_{\tr})\subset\PP V\times X\]
is the incidence correspondence parametrizing pairs $(p,F)$ such that $F$ vanishes to order at least $3$ at $p$.
Hence, the projection $\PP(\sK_{\tr})\to X$ is $G$-equivariant and proper, with image $\overline S_{\tr}$.

Consequently, the image of the pushforward
\[A_*^G(\overline S_{\tr})\lra A_*^G(X)\]
coincides with that of the composition 
\beq\label{eq:comp.triple}
	A_*^G\big(\PP(\sK_{\tr})\big) \lra A_*^G(\PP V\times X)\xrightarrow{\;p_{X*}\;} A_*^G(X),
\eeq
where $p_X:\PP V\times X\to X$ is the second projection.

The subscheme $\PP(\sK_{\tr})\subset \PP V\times X$ is the zero locus of the universal section
$\cO_{\PP V}\boxtimes \cO_X(-1) \to \sP^2\big(\cO_{\PP V}(4)\big)\boxtimes \cO_X$.
Equivalently, it is the zero locus of a section of the rank $6$ bundle
\[\sE_{\tr}:=\sP^2\big(\cO_{\PP V}(4)\big)\boxtimes \cO_X(1).\]

It follows that the image of the first map in \eqref{eq:comp.triple} is the ideal generated by the top Chern class $c_6^G(\sE_{\tr})$ in $A_G^*(\PP V\times X)$.
Let 
\[\zeta:=c_1^G(\cO_{\PP V}(1)) \and H:=c_1^G(\cO_X(1)).\]
As $p_X$ is a $\PP^2$-bundle, the image of \eqref{eq:comp.triple} is generated by
\[p_{X*}\big(\zeta^j\,c_6^G(\sE_{\tr})\big), \qquad j=0,1,2.\]

Using the standard filtration of $\sP^2(\cO_{\PP V}(4))$ with graded pieces $\cO_{\PP V}(4)\otimes\Sym^k\Omega_{\PP V}$ for $k=0,1,2$ (see \cite[Theorem~7.2]{3264}), 
we reduce the computation of the Chern classes to that of symmetric powers of $\Omega_{\PP V}$.
The latter are computed using the exact sequences
\[0\lra \Sym^k\Omega_{\PP V}\lra \Sym^k\big(V^*(-1)\big)\lra \Sym^{k-1}\big(V^*(-1)\big)\lra 0,\]
where $\Sym^k(V^*(-1))\cong\cO_{\PP V}(-k)\otimes \Sym^kV^*$.
It follows that
\[c^G\big(\sP^2(\cO_{\PP V}(4))\big) = c^G\big(\cO_{\PP V}(2)\otimes \Sym^2 V^*\big). \]
This gives an explicit expression of $c_6^G(\sE_{\tr})$ in terms of $H,c_2,c_3$ and $\zeta$, and hence 
the following.
\[\begin{aligned}
	r^{\tr}_4 &:= p_{X*}\big(c^G_6(\sE_{\tr})\big)&&=60H^4-120c_2H^2-276c_3H,\\
	r^{\tr}_5 &:= p_{X*}\big(\zeta\,c^G_6(\sE_{\tr})\big)&&=12H^5-120c_2H^3-282c_3H^2+48c_2^2H+96c_2c_3,\\
	r^{\tr}_6 &:= p_{X*}\big(\zeta^2c^G_6(\sE_{\tr})\big)&&=H^6-55c_2H^4-167c_3H^3+124c_2^2H^2+304c_2c_3H+112c_3^2.
\end{aligned}\]

We summarize the result as follows.

\begin{proposition}\label{prop:rel.Str}
The image of the pushforward map
\[A_G^{*-4}(\overline S_{\tr})\lra A_G^*(X)\]
is the ideal generated by the three classes $r^\tr_4$, $r^\tr_5$ and $r^\tr_6$. 
\end{proposition}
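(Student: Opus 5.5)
The argument has two parts. First, the image of $A^G_*(\overline S_{\tr})\to A^G_*(X)$ equals the image of the composite \eqref{eq:comp.triple}. Second, that image is the ideal generated by $r^\tr_4,r^\tr_5,r^\tr_6$.

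For the first part, the proper $G$-equivariant map $q:\PP(\sK_{\tr})\to \overline S_{\tr}$ is birational. A general quartic with a triple point has a unique triple point, so $q$ is an isomorphism over a dense open subset. With rational coefficients, a proper surjective map induces a surjection on Chow groups. This is standard for schemes, since every subvariety is dominated by a subvariety of the same dimension up to a nonzero degree factor. It extends to equivariant Chow groups by applying it to the mixed spaces $(\PP(\sK_{\tr})\times U)/G\to(\overline S_{\tr}\times U)/G$ in the Edidin--Graham construction, where $U$ is an open subset of a representation with free action. Hence $A^G_*(\PP(\sK_{\tr}))\to A^G_*(\overline S_{\tr})$ is surjective, and the two images in $A^G_*(X)$ agree.

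For the second part, $\PP(\sK_{\tr})\to\PP V$ is a projective subbundle of the trivial bundle $\PP V\times X$. It follows that $A^*_G(\PP(\sK_{\tr}))$ is generated as an $A^*_G(\PP V\times X)$-module by $1$, i.e. restriction $A^*_G(\PP V\times X)\to A^*_G(\PP(\sK_{\tr}))$ is surjective. Indeed, both rings are generated over $A^*_G(\PP V)$ by the hyperplane class $H$, which restricts compatibly. The subscheme $\PP(\sK_{\tr})$ is the zero locus of a regular section of $\sE_{\tr}$: its codimension in $\PP V\times X$ is $6=\mathrm{rk}\,\sE_{\tr}$, because the fiberwise condition consists of six independent linear conditions on $\Sym^4V^*$. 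So $\iota_*\iota^*\alpha=\alpha\cdot c_6^G(\sE_{\tr})$. Together with surjectivity of $\iota^*$, this shows the image of $\iota_*$ is the ideal $(c_6^G(\sE_{\tr}))$.

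Next push forward along $p_X$. By the projective bundle formula, $A^*_G(\PP V\times X)$ is free over $A^*_G(X)$ with basis $1,\zeta,\zeta^2$. The projection formula then shows that $p_{X*}$ of the ideal is the $A^*_G(X)$-ideal generated by $p_{X*}(\zeta^j c_6^G(\sE_{\tr}))$ for $j=0,1,2$.

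It remains to compute these three classes explicitly. From the filtration of $\sP^2(\cO_{\PP V}(4))$ and the exact sequences for $\Sym^k\Omega$ we get
\[c^G\big(\sP^2(\cO_{\PP V}(4))\big)=c^G\big(\cO_{\PP V}(2)\otimes\Sym^2V^*\big).\]
The Chern roots of $\Sym^2V^*$ are $-(\alpha_i+\alpha_j)$, where $\alpha_i$ are the roots of $V$ and $c_1=0$. Tensoring with $\cO_X(1)$ and $\cO(2)$ gives
\[c_6^G(\sE_{\tr})=\prod_{i\le j}\big(H+2\zeta-\alpha_i-\alpha_j\big).\]
Expand this modulo the relation $\zeta^3+c_2\zeta-c_3=0$ for $\PP V$ (sign conventions to be checked). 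Then apply $p_{X*}$, which takes the coefficient of $\zeta^2$. This produces $r^\tr_4,r^\tr_5,r^\tr_6$.

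I expect this symmetric-function expansion to be the only laborious step. It is error-prone mainly in the signs of $c_2,c_3$ and in the dual/tensor conventions, so I would check it by restricting to the torus $T$ and comparing with a direct computation there.
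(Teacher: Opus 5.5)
Your proposal is correct and follows the paper's argument step for step: the incidence correspondence $\PP(\sK_{\tr})$ is the zero locus of a section of $\sE_{\tr}$, which gives the ideal $(c_6^G(\sE_{\tr}))$; you then push forward along the $\PP^2$-bundle $p_X$ against $1,\zeta,\zeta^2$ and use $c^G(\sP^2(\cO_{\PP V}(4)))=c^G(\cO_{\PP V}(2)\otimes\Sym^2V^*)$, and you correctly supply the surjectivity and regularity justifications the paper leaves implicit. On the sign you flagged: with $c_i=c_i(V)$ and $\PP V$ the space of lines, the relation is $\zeta^3+c_2\zeta+c_3=0$ as in \eqref{eq:rel.Fl}; with this sign your factorization $c_6^G(\sE_{\tr})=\prod_i(H+2\zeta-2\alpha_i)\prod_i(H+2\zeta+\alpha_i)$ reproduces $r^\tr_4,r^\tr_5,r^\tr_6$ exactly, whereas the sign you wrote would give $+108c_3H$ instead of $-276c_3H$ in $r^\tr_4$.
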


\subsection{Relations coming from the flex locus}\label{ss:rel.flex}

We determine the relations arising from excising the irreducible component $\overline S_{\flex}$ of quartics that are unions of a cubic and its inflectional tangent line.
As in the triple-point case, we replace $\overline S_{\flex}$ by an incidence correspondence recording the cubic, its flex point, and the flex line.

Let $X_d=\PP H^0(\PP^2,\cO(d))=\PP \Sym^d V^*$.
Consider the multiplication map
\[m:X_1\times X_3\lra X_4=X, \qquad (\ell,g)\longmapsto \ell g.\]
It is generically injective, and its image has codimension $3$ in $X$.
The locus $\overline S_{\flex}$ is contained in the image of $m$ and corresponds to those $\ell g$ for which  $\ell=0$ is a flex line of the cubic $g=0$.

To describe $\overline S_{\flex}$, we keep track of the flex point and the tangent line.
Accordingly, let 
\[\widetilde S_\flex\subset P_{1,3}:=\PP V\times X_1\times X_3\] 
denote the closure of the locus of triples $(p,\ell,g)$ such that:
\begin{itemize}
	\item the cubic $g=0$ is smooth and $p\in(\ell=0)\cap(g=0)$;
	\item the line $\ell=0$ is tangent to $g=0$ at $p$;
	\item the point $p$ is an inflection point of $g=0$.
\end{itemize}
The image of $\widetilde S_\flex$ under $P_{1,3}\to X_1\times X_3\xrightarrow{m} X$ is precisely $\overline S_{\flex}$.

We realize $\widetilde S_\flex$ as the zero locus of a natural section on $P_{1,3}$.
Let
\[\Fl:=\{(p,\ell)\in \PP V\times X_1: p\in \ell\},\]
be the incidence variety, and let $\eta:\Fl\to X_1$ denote the natural projection.
Consider the bundle of relative principal parts
$\sP^2_{\eta}\big(\cO_{\PP V}(3)\big)$
on $\Fl$.
For $(p,\ell)\in\Fl$, its fiber consists of principal parts of order at most $2$ of a cubic along $\ell$ at $p$.
There is a natural surjection
\[\Sym^3V^*\otimes \cO_\Fl \lra \sP^2_{\eta}\big(\cO_{\PP V}(3)\big),\]
given by restricting cubic forms to $\ell$ and truncating to principal parts.

Let $\sK_{\flex}$ denote its kernel bundle.
Pulling back this map to $\Fl\times X_3$ via the first projection and composing with the pullback of the universal section
\[\cO_{X_3}(-1)\longrightarrow \Sym^3V^*\otimes \cO_{X_3}\]
via the second projection, we obtain a section of the rank $3$ bundle 
\[\sE_{\flex}:=\sP^2_{\eta}\big(\cO_{\PP V}(3)\big)\boxtimes \cO_{X_3}(1)\]
on $\Fl\times X_3$, whose zero locus is $\PP(\sK_{\flex})$. 
Under the natural inclusion $\Fl\times X_3\hookrightarrow P_{1,3}$, $\PP(\sK_{\flex})$ is identified with $\widetilde S_\flex$ (cf.~\cite[Chapter~11]{3264}).

Let $\zeta, h_1, h_3$ denote the $G$-equivariant hyperplane classes of $\PP V$, $X_1$, and $X_3$, respectively.
Let $\cO_{\Fl}(a,b):=\cO_{\PP V\times\PP V^*}(a,b)|_{\Fl}$. Using $\Omega_\eta\cong\cO_{\Fl}(-2,1)$ and
\cite[Theorem~11.2]{3264}, we get
\[c_3^G\big(\sP^2_{\eta}(\cO_{\PP V}(3))\big)=\prod_{k=0}^2 c_1^G\big(\cO_{\Fl}(3-2k,k)\big)=\prod_{k=0}^2 \big((3-2k)\zeta+kh_1\big).\]
Hence the class of $\widetilde S_\flex$ in $A_G^4(P_{1,3})$ is given by
\[
	[\widetilde S_\flex]=(\zeta+h_1)(3\zeta+h_3)(\zeta+h_1+h_3)(-\zeta+2h_1+h_3),
\]
and the $G$-equivariant Chow ring of $P_{1,3}$ is
\[A^*_G(P_{1,3})=\frac{\Q[\zeta,h_1,h_3]}{(\zeta^3+c_2\zeta+c_3,\; h_1^3+c_2h_1-c_3,\; r_{X_3})},\]
where $r_{X_3}=h_3^{10}+c_1^G(\Sym^3V^*)h_3^9+\cdots+c_{10}^G(\Sym^3V^*)$.

We now push forward the classes $\zeta^j\cdot [\widetilde S_\flex]$ along the projection
\[p_{13}:P_{1,3}\longrightarrow X_1\times X_3 .\]
Since $p_{13*}(\zeta^j)=0$ for $j<2$ and $p_{13*}(\zeta^2)=1$, the image of the composition 
\beq\label{eq:pushtoX1X3}A^{*-2}_G(\widetilde S_\flex)\lra A^{*+2}_G(P_{1,3})\xrightarrow{p_{13*}} A^*_G(X_1\times X_3)\eeq 
is generated by $p_{13*}(\zeta^j\cdot[\widetilde S_\flex])$ for $j=0,1,2$.
Up to nonzero scalar multiples, 
\[
\begin{aligned}
	f_2&:=p_{13*}([\widetilde S_\flex])&&=3h_1^2+3h_1h_3+h_3^2+c_2,\\
	f_3&:=p_{13*}(\zeta[\widetilde S_\flex])&&=12h_1^2h_3+6h_1h_3^2+h_3^3-6h_1c_2+h_3c_2+9c_3,\\
	f_4&:=p_{13*}(\zeta^2[\widetilde S_\flex])&&=3h_1^2h_3^2+h_1h_3^3-9h_1^2c_2-11h_1h_3c_2-3h_3^2c_2-3c_2^2+3h_3c_3.
\end{aligned}
\]

We write $H$ for $m^*H$, so that $H=h_1+h_3$.
Substituting $h_3=H-h_1$, we conclude that the image of \eqref{eq:pushtoX1X3} is generated by the three classes:
\beq\label{eq:flex.gen.X1X3}
\begin{aligned}
    &f_2=(H^2+c_2)\cdot 1+H\cdot h_1+h_1^2,\\
    &f_3=(H^3+Hc_2+2c_3)\cdot 1+3H^2\cdot h_1+3H\cdot h_1^2,\\
    &f_4=(-3H^2c_2-3c_2^2)\cdot 1+(H^3-2Hc_2-c_3)\cdot h_1-3c_2\cdot h_1^2.
\end{aligned}
\eeq

Since $\overline S_{\flex}$ is the image of $\widetilde S_\flex$ under $m$, the image of the pushforward map
$A^G_*(\overline S_{\flex})\to A^G_*(X)$
is generated by $m_*$ applied to the three classes $f_2,f_3,f_4$. 
By the projection formula, this amounts to replacing $1, h_1, h_1^2$ in \eqref{eq:flex.gen.X1X3} with $m_*(1), m_*(h_1), m_*(h_1^2)$.
In particular, it suffices to compute $m_*(h_1^i)$ for $i=0,1,2$.

Each class $m_*(h_1^i)$ lies in $A_G^{i+3}(X)$ and can be written as
\[m_*(h_1^i)=\sum_{j=0}^{i+3}\alpha_{ij}H^{i+3-j},\quad \text{ where }~\alpha_{ij}\in A^j(BG).\]
To determine the coefficients $\alpha_{ij}$, we proceed as follows. 
For each $k\geq 0$, we multiply both sides by $H^{11-i+k}$ and push forward to $BG$ (cf.~\cite[Lemma~6]{COP}).
This yields
\[\sum_{j=0}^{k}\alpha_{ij}\,s^G_{k-j}(\Sym^4V^*) =\sum_{j=0}^{k}\binom{11-i+k}{2-i+j} s^G_{j}(V^*)\,s^G_{k-j}(\Sym^3V^*),\]
which determine $\alpha_{ij}$ inductively.
This computation uses the Segre classes
\beq\label{eq:segre}\begin{aligned}
	s^G(V^*)&=1-c_2+c_3+c_2^2-2c_2c_3+\cdots,\\
	s^G(\Sym^3V^*)&=1-15c_2+27c_3+162c_2^2-648c_2c_3+\cdots,\\
	s^G(\Sym^4V^*)&=1-35c_2+77c_3+798c_2^2-3828c_2c_3+\cdots,
\end{aligned}\eeq
whose expansions follow from the splitting principle.
We record the resulting formulas below.
\begin{lemma}\label{lem:pushforward.flex}
For $i=0,1,2$, we have
\[\begin{aligned}
	m_*(1)&=55H^3+40c_2H+224c_3,\\
	m_*(h_1)&=10H^4-50c_2H^2+296c_3H,\\
	m_*(h_1^2)&=H^5-35c_2H^3+170c_3H^2+24c_2^2H-96c_2c_3.
\end{aligned}\]
\end{lemma}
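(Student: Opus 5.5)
The plan is to compute each $m_*(h_1^i)$ through the ansatz already set up above: write
\[m_*(h_1^i)=\sum_{j=0}^{i+3}\alpha_{ij}H^{i+3-j},\qquad \alpha_{ij}\in A^j(BG)=\Q[c_2,c_3]_j,\]
and solve for the $\alpha_{ij}$ degree by degree. The ansatz is legitimate because $A^*_G(X)$ is free over $A^*(BG)$ with basis $1,H,\dots,H^{14}$ by \eqref{eq:Chowring.X}. Since $i+3\le 5<15$, no reduction modulo $r_X$ is needed, so the coefficients are uniquely determined.

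To produce equations, multiply both sides by $H^{11-i+k}$ and push forward to $BG$.

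\textbf{Left-hand side.} Let $\pi_X:X\to BG$ be the projection. We have $\pi_{X*}(H^{14+n})=s^G_n(\Sym^4V^*)$, with the sign convention for Segre classes of $\PP(\Sym^4V^*)$ fixed as in \cite[Lemma~6]{COP}. The left side therefore becomes $\sum_j\alpha_{ij}s^G_{k-j}(\Sym^4V^*)$.

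\textbf{Right-hand side.} Use the projection formula, $m^*H=h_1+h_3$, and the fact that $X_1\times X_3\to BG$ is a product of projective bundles of ranks $2$ and $9$. We get
\[\pi_*\big(h_1^i(h_1+h_3)^{11-i+k}\big)=\sum_{a}\binom{11-i+k}{a}\,\pi_{1*}\big(h_1^{i+a}\big)\,\pi_{3*}\big(h_3^{11-i+k-a}\big),\]
where $\pi_{1*}(h_1^{2+p})=s_p^G(V^*)$ and $\pi_{3*}(h_3^{9+q})=s_q^G(\Sym^3V^*)$. Setting $i+a=2+j$ recovers exactly the displayed identity with binomial $\binom{11-i+k}{2-i+j}$. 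The triangular structure (leading coefficient $s_0=1$) then gives $\alpha_{ik}$ recursively in terms of lower $\alpha$'s and Segre classes.

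\textbf{Segre classes.} These come from the splitting principle with Chern roots $t_1,t_2,t_3$ of $V$, subject to $t_1+t_2+t_3=0$. The weights of $\Sym^dV^*$ are $-(at_1+bt_2+ct_3)$ with $a+b+c=d$. One computes $c(\Sym^dV^*)$ up to degree $5$ in $c_2,c_3$ and inverts; this yields the expansions \eqref{eq:segre}, which must be carried to degree $5$, one step further than displayed. The $k=0$ equation gives $\alpha_{i0}=\binom{11-i}{2-i}$, i.e. $55,10,1$, matching the leading terms. The checks $k=1$ (giving $\alpha_{i1}=0$) and $k=2$ confirm the signs, e.g. $\alpha_{02}=40c_2$. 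The procedure is finished by running $k$ up to $i+3$ for each $i$.

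\textbf{Expected obstacles.} The main difficulty is bookkeeping rather than concept. First, the Segre-class sign convention has to be consistent between $\PP(\Sym^4V^*)$ and the sub/quotient convention used for $X_1,X_3$. Second, the Segre classes must be expanded correctly to degree $5$, including the $c_2^2$ and $c_2c_3$ terms that feed $m_*(h_1^2)$. I would fix conventions first with the classical check $m_*(1)$ in degree $3$, where the nonequivariant degree $55$ is the degree of the image of $\PP^2\times\PP^9$ in $\PP^{14}$. I would then carry out the expansions by computer algebra.
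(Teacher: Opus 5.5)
Your proposal is correct and follows the paper's argument exactly. The paper uses the same ansatz in the $A^*(BG)$-basis $1,H,\dots,H^{14}$, multiplies by $H^{11-i+k}$ and pushes forward to $BG$ (using $m^*H=h_1+h_3$ and the projection formula), obtaining the triangular recurrence with binomials $\binom{11-i+k}{2-i+j}$, which it solves with the Segre classes in \eqref{eq:segre}. One small correction: the expansions displayed in \eqref{eq:segre} already reach degree $5$ (the $c_2c_3$ terms), and since $k\le i+3\le 5$ nothing beyond them is needed.
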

\begin{proof}
These are obtained by solving the recurrence for $\alpha_{ij}$, using \eqref{eq:segre}. 
\end{proof}

\begin{proposition}\label{prop:rel.Sflex}
The image of the pushforward map
\[A_G^{*-5}(\overline S_{\flex})\lra A_G^*(X)\]
is the ideal generated by the following three classes:
\[\begin{aligned}
	r^\flex_5&:= 33H^5 + 5c_2H^3 + 345c_3H^2 + 32c_2^2H + 64c_2c_3,\\
	r^\flex_6&:= 22H^6 - 40c_2H^4 + 433c_3H^3 + 28c_2^2H^2 + 4c_2c_3H + 112c_3^2,\\
	r^\flex_7&:= 5H^7 - 119c_2H^5 + 143c_3H^4 - 40c_2^2H^3 - 862c_2c_3H^2
      - 96c_2^3H - 148c_3^2H - 192c_2^2c_3.
\end{aligned}\]
\end{proposition}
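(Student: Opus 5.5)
Since $\overline S_{\flex}$ is the image of $\widetilde S_\flex$ under the proper $G$-equivariant composite $P_{1,3}\to X_1\times X_3\xrightarrow{m}X$, and $\widetilde S_\flex\to\overline S_{\flex}$ is surjective (hence surjective on rational Chow groups of the target by the usual properness/alteration argument for equivariant Chow groups), the image of $A^{*-5}_G(\overline S_{\flex})\to A^*_G(X)$ coincides with the image of $A^G_*(\widetilde S_\flex)$. First, I will show that $\widetilde S_\flex=\PP(\sK_\flex)$ is a projective bundle over $\Fl$ (with fiber the linear subspace of cubics whose restriction to $\ell$ vanishes to order $3$ at $p$). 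Then $A^*_G(\widetilde S_\flex)$ is generated as an $A^*_G(P_{1,3})$-module by $1$, because it is generated over $A^*_G(\Fl)$ by powers of $h_3$, and $A^*_G(\Fl)$ is generated over $A^*(BG)[h_1]$ by $1,\zeta$. By the projection formula, the image in $A^*_G(P_{1,3})$ is therefore the ideal generated by $[\widetilde S_\flex]$. Its pushforward to $X_1\times X_3$ is the $A^*_G(X_1\times X_3)$-submodule generated by $p_{13*}(\zeta^j[\widetilde S_\flex])$ for $j=0,1,2$, because $A^*_G(P_{1,3})$ is free over $A^*_G(X_1\times X_3)$ with basis $1,\zeta,\zeta^2$. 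These classes are the $f_2,f_3,f_4$ of \eqref{eq:flex.gen.X1X3}.

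Next, I push forward along $m$. The map $m$ is finite onto its image but not surjective, so $m_*$ is only $A^*_G(X)$-linear. Since $A^*_G(X_1\times X_3)$ is generated over $A^*_G(X)=\Q[H,c_2,c_3]/(r_X)$ by $1,h_1,h_1^2$ (using $h_3=H-h_1$ and the relation $h_1^3=-c_2h_1+c_3$), the ideal generated by $f_2,f_3,f_4$ is spanned over $A^*_G(X)$ by $h_1^kf_i$ for $k=0,1,2$. I will rewrite each $h_1^kf_i$ in the basis $1,h_1,h_1^2$ with coefficients in $\Q[H,c_2,c_3]$, reducing $h_1^3$ and $h_1^4$ by the cubic relation. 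I then apply $m_*$ using Lemma~\ref{lem:pushforward.flex}. This gives nine classes in $A^*_G(X)$ of degrees $5,6,7,6,7,8,7,8,9$.

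The expected main step is to show that these nine classes generate the same ideal as $r^\flex_5,r^\flex_6,r^\flex_7$. The plan is to verify that $m_*(f_2)$, $m_*(f_3)$ and $m_*(f_4)$ are nonzero scalar multiples of $r^\flex_5,r^\flex_6,r^\flex_7$, possibly up to adding $A^*_G(X)$-multiples of lower-degree generators. After that, I will check that each remaining class $m_*(h_1^kf_i)$ with $k\ge1$ lies in the ideal $(r^\flex_5,r^\flex_6,r^\flex_7)$. This is a finite linear-algebra computation in each degree up to $9$, since $r_X$ has degree $15$ and is irrelevant there. Concretely, one writes each candidate as a combination with unknown coefficients of monomials in $H,c_2,c_3$ times the $r^\flex_i$ and solves. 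This bookkeeping, in particular confirming that no new generator appears in degrees $8$ and $9$, is the most error-prone part, and I would carry it out by computer algebra.
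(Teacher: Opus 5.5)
Your proposal is correct and follows the paper's route: the incidence correspondence $\widetilde S_\flex=\PP(\sK_\flex)$, pushforward along $p_{13}$ giving $f_2,f_3,f_4$, then $m_*$ via Lemma~\ref{lem:pushforward.flex}. The only difference is the last step, where you are more careful than the paper. The paper's proof asserts, via the projection formula, that the image is generated by $m_*f_2,m_*f_3,m_*f_4$. That does not by itself account for the classes $m_*(h_1^kf_i)$ with $k\ge 1$. As you note, $m_*$ is only $A^*_G(X)$-linear, so this step genuinely needs an argument.

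Your nine-class check does go through, but you can skip the degree $8$ and $9$ computations. Already in $A^*_G(X_1\times X_3)$, using only $h_3=H-h_1$ and $h_1^3=-c_2h_1+c_3$, one checks from \eqref{eq:flex.gen.X1X3} that
\begin{align*}
2h_1f_2&=f_3-Hf_2,\\
2h_1f_3&=3Hf_3-(3H^2+12c_2)f_2-4f_4,\\
2h_1f_4&=(H^2-2c_2)f_3-(H^3+4Hc_2+2c_3)f_2-2Hf_4.
\end{align*}
Hence the $\Q[H,c_2,c_3]$-span of $f_2,f_3,f_4$ is stable under multiplication by $h_1$, so it already equals the ideal $(f_2,f_3,f_4)$. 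Its image under $m_*$ is therefore the ideal generated by $m_*f_2,m_*f_3,m_*f_4$. Lemma~\ref{lem:pushforward.flex} then gives exactly $m_*f_2=2r^\flex_5$, $m_*f_3=4r^\flex_6$ and $m_*f_4=2r^\flex_7$, with no lower-degree corrections needed.

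The first identity also has a geometric origin. One has $p_{13*}[\widetilde S_\flex]=3f_2$ and $p_{13*}(\zeta[\widetilde S_\flex])=f_3$. The identity then amounts to $p_{13*}\big((h_3+3h_1-3\zeta)[\widetilde S_\flex]\big)=0$. This vanishing comes from the divisor where $\ell$ is a component of the cubic, which $p_{13}$ contracts.
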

\begin{proof}
The image is generated by $m_*f_i$ for $i=2,3,4$. 
Using Lemma~\ref{lem:pushforward.flex}, these are, up to nonzero scalar multiples, the classes $r^\flex_{i+3}$.
\end{proof}

We now combine the relations coming from $\overline S_\tr$ (Proposition~\ref{prop:rel.Str}) and from $\overline S_\flex$ 
(Proposition~\ref{prop:rel.Sflex}) to determine the Chow ring of $X^{ss}$.

\begin{theorem}[Chow ring of $\cPGIT$]\label{thm:Chowring.GIT}
We have
\[A^*(\cPGIT) \cong\Q[H,c_2,c_3]\big/\big(r^\tr_4, r^\tr_5, r^\tr_6, r^\flex_5\big).\]
\end{theorem}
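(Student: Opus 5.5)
Combining the localization sequence \eqref{eq:localize.X} with the presentation \eqref{eq:Chowring.X} and Propositions~\ref{prop:rel.Str} and~\ref{prop:rel.Sflex}, I obtain
\[A^*(\cPGIT)\cong A^*_G(X^{ss})\cong \Q[H,c_2,c_3]\big/\big(r_X,\,r^\tr_4,r^\tr_5,r^\tr_6,\,r^\flex_5,r^\flex_6,r^\flex_7\big),\]
where the identification $A^*(\cPGIT)=A^*_{\bar G}(X^{ss})\cong A^*_G(X^{ss})$ uses that $G\to\bar G$ has finite kernel. It then remains to show that $r_X$, $r^\flex_6$ and $r^\flex_7$ lie in the ideal $I:=(r^\tr_4,r^\tr_5,r^\tr_6,r^\flex_5)$ of $\Q[H,c_2,c_3]$.

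I will use Gr\"obner bases, or more simply linear algebra degree by degree. The quotient $\Q[H,c_2,c_3]/I$ should be finite-dimensional in high degrees, since $A^*_G(X^{ss})$ has Poincar\'e series $1+t^2+2t^4+3t^6+3t^8+2t^{10}+2t^{12}+t^{14}/(1-t^2)$ by Theorem~\ref{thm:poincare.git} and the cycle class map is an isomorphism (Corollary~\ref{cor:cl.hypersurf}). Here $H,c_2,c_3$ have degrees $1,2,3$. For $r^\flex_6$ (degree $6$) and $r^\flex_7$ (degree $7$), I will write down the finitely many monomial multiples of the generators of $I$ in degrees $6$ and $7$ and solve a linear system to express $r^\flex_6,r^\flex_7$ as combinations of them.

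A cleaner check is by dimension count. The dimensions of $(\Q[H,c_2,c_3]/I)_k$ must already be at most those prescribed by the Poincar\'e series, namely $1,1,2,3,3,2,2,1,1,\dots$ in degrees $k=0,1,2,\dots$. If $\dim I_k$ alone attains $\dim\Q[H,c_2,c_3]_k-b_{2k}$ in degrees $6$ and $7$, then the full ideal of relations, which is contained in the kernel, has the same dimension, and so the two ideals agree in those degrees. For example, in degree $4$ the polynomial ring has dimension $4$ ($H^4,H^2c_2,Hc_3,c_2^2$), and $b_8=3$ forces exactly one relation, namely $r^\tr_4$. The same check applies in degree $5$ with $r^\tr_5,r^\flex_5,Hr^\tr_4$.

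The relation $r_X$ has degree $15$. In each degree $k\ge 7$ the Poincar\'e series gives $b_{2k}=1$, so the quotient by the full ideal of relations is $1$-dimensional there, spanned presumably by a power of $c_3$ or of $H$ times it. It therefore suffices to show that $\dim(\Q[H,c_2,c_3]/I)_k\le 1$ for all $k\ge 7$. Then $I$ and the full relation ideal agree in every degree, which forces $r_X\in I$ with no explicit division by $r_X$ needed.

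The main obstacle is this stabilization. I must show that $I$ has the right Hilbert function in all large degrees, not just finitely many. My first approach is to compute a Gr\"obner basis of $I$ and read off that the standard monomials in degree $k\ge7$ form a single family such as $c_3^{a}H^{\epsilon}$ or $c_2^aH^\epsilon$. The inequality $\dim\le1$ in each degree, combined with the lower bound coming from $b_{2k}=1$, then completes the proof.
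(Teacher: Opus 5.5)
Your reduction to the kernel $J=(r_X,r^\tr_4,r^\tr_5,r^\tr_6,r^\flex_5,r^\flex_6,r^\flex_7)$ matches the paper's. The paper then only asserts that $r^\flex_6$, $r^\flex_7$, $r_X$ lie in $I$ by direct computation, and uses the agreement of the Hilbert series of $\Q[H,c_2,c_3]/I$ with Theorem~\ref{thm:poincare.git} only as an after-the-fact sanity check. Your dimension count is a genuinely different and valid way to close the argument.

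Since $I\subseteq J$, and $\dim(\Q[H,c_2,c_3]/J)_k=b_{2k}$ by Theorem~\ref{thm:poincare.git} and Corollary~\ref{cor:cl.hypersurf}, it suffices to show $\dim I_k\ge \dim\Q[H,c_2,c_3]_k-b_{2k}$. Both results are proved independently of the Chow computation via Kirwan's stratification, so there is no circularity.

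This is a finite check. $J$ is generated in degrees at most $15$, so you only need $\dim I_6\ge 5$, $\dim I_7\ge 7$ and $\dim I_{15}\ge 26$; the corresponding graded pieces of the polynomial ring have dimensions $7$, $8$ and $27$. The ``stabilization in all large degrees'' you flag as the main obstacle is therefore not actually needed, although a Gr\"obner basis would settle it anyway.

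Your route buys something real: you never have to write down $r^\flex_6$, $r^\flex_7$ or $r_X$, and the last of these requires all fifteen equivariant Chern classes of $\Sym^4V^*$. The cost is that the Chow presentation now depends on the topological computation of Section~\ref{s:Betti}. The Hilbert series agreement is then no longer an independent consistency check, and you lose the independence of the Chow and cohomology computations that the paper emphasizes.
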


\begin{proof}
Combining \eqref{eq:localize.X}, \eqref{eq:Chowring.X} and Propositions~\ref{prop:rel.Str} and~\ref{prop:rel.Sflex}, we see that the kernel of the natural surjection $\Q[H,c_2,c_3]\to A_G^*(X^{ss})$ is generated by $r^\tr_4,r^\tr_5,r^\tr_6,r^\flex_5,r^\flex_6,r^\flex_7$ and $r_X$.

One readily checks that $r^{\flex}_6$, $r^{\flex}_7$, and $r_X$
lie in the ideal $(r^{\tr}_4, r^{\tr}_5, r^{\tr}_6,r^{\flex}_5)$.
\end{proof}

\begin{remark}
As a sanity check, the Hilbert series of the quotient ring in Theorem~\ref{thm:Chowring.GIT} 
agrees with the Poincar\'e series of $\cPGIT$ computed in Theorem~\ref{thm:poincare.git}, after the substitution $t \mapsto \sqrt{t}$.
\end{remark}

\begin{remark}\label{rem:Hodge.GIT}
The generators are in fact given by tautological classes:
\[c_1(\EE^{\mathrm{GIT}}) = 3H \and  c_i\left((\EE^{\mathrm{GIT}})^* \otimes (\det \EE^{\mathrm{GIT}})^{1/3}\right) = c_i(V) = c_i \quad (i=2,3).\]
This follows from Lemma~\ref{lem:Hodge.GIT}.
\end{remark}

\section{Chow ring of $\cPK$}\label{s:chow.pk}

In this section, we compute the Chow ring of $\cPK$. We identify $A^*(\cPK)$ with $A^*_G(U^{1/2,ss})$.
The computation proceeds in three main steps. We
\begin{enumerate}
  \item We first compute $A^*_{G}(U^{1/2})$ by using the blowup formula. 
  \item We then determine the ideal generated by pushforwards from the unstable locus 
  	\[U^{1/2,us}:=U^{1/2}\setminus U^{1/2,ss}\]
  	by introducing its natural resolution.
  \item Finally, applying the localization sequence for  $U^{1/2,ss}\subset U^{1/2}$, we obtain a presentation of $A^*(\cPK)$; see Theorem~\ref{thm:Chowring.PK}.
\end{enumerate}

In doing so, we also consider a parallel resolution of the non-stable locus
\[U^{1/2,ns}:=U^{1/2}\setminus U^{1/2,s},\]
where $U^{1/2,s}\subset U^{1/2}$ denotes the GIT stable locus. 
This resolution will be used again in Section~\ref{s:chow.phat}.
As a byproduct, we determine the ideal generated by pushforwards from $U^{1/2,ns}$, and hence
the Chow rings of the stable loci
\[\cPGITs\cong \cPH\setminus \overline\cZ_1 \and \cPKs\cong \cPH\setminus \cZ_2,\]
where the superscript $s$ denotes the locus of stable objects; see Theorem~\ref{thm:Chowring.stable}.

\subsection{Blowup formulas for stack-theoretic weighted blowups}
We recall the formulas for the Chow ring of a stack-theoretic weighted blowup, as well as for the class of a proper transform. 
We refer to \cite{ArenaObinna,Arena} for general statements and proofs.

Let $\sE$ be a rank $r$ vector bundle over $Z$ equipped with a fiberwise $\Gm$-action.
We define 
\[P_\sE(t):=c_{r}^{\Gm}(\sE) \in A^*_{\Gm}(Z)\cong A^*(Z)[t],\]
viewed as a polynomial in the equivariant parameter $t$. 
The weighted Segre class of $\sE$ is 
\[s^{wt}(\sE):=\frac{1}{P_\sE(1)}.\]

\begin{theorem}[{\cite[Corollary~6.5]{ArenaObinna}}]\label{thm:blowup.formula1}
	Let $i:Z\hookrightarrow Y$ be a closed immersion, where $Y$ and $Z$ are smooth.
	Assume that 
	\begin{itemize}
		\item the restriction map $i^*:A^*(Y)\to A^*(Z)$ is surjective, and
		\item the normal bundle $N_{Z/Y}$ is equipped with a fiberwise $\Gm$-action. 
	\end{itemize}
	Let $f:\tY\to Y$ be the corresponding stack-theoretic weighted blowup along $Z$, with exceptional divisor $E$.
	Set $t:=-[E]$.
	Then $f^*$ and the class $t$ induce an isomorphism
	\[A^*(\tY)\cong\frac{A^*(Y)[t]}{\big(t\cdot \ker(i^*),\;Q_{N_{Z/Y}}\big)},\]
	where $Q_{N_{Z/Y}}:=P_{N_{Z/Y}}(t)-P_{N_{Z/Y}}(0)+f^*[Z]$. 
\end{theorem}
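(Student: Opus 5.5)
The plan is to reduce to the classical Keel-type blowup formula in an equivariant, weighted form. I would use the deformation to the weighted normal cone, i.e.\ the standard localization sequence for the blowup. First I would set up the basic geometry. The exceptional divisor $E$ is the weighted projectivization $\cP(N_{Z/Y}) = [(N_{Z/Y}\setminus 0)/\Gm]$ over $Z$, with $j:E\hookrightarrow \tY$ and $g:E\to Z$. Under this identification, $\cO_{\tY}(-E)|_E$ corresponds to the tautological line bundle of weight one, so $t|_E=-[E]|_E$ is the $\Gm$-equivariant parameter. The weighted projective bundle formula then gives
\[A^*(E)\cong A^*(Z)[t]/(P_{N_{Z/Y}}(t)).\]
I would derive this from the localization sequence for $N_{Z/Y}\setminus 0\subset N_{Z/Y}$ in $\Gm$-equivariant Chow, using the top equivariant Chern class of $N_{Z/Y}$.

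Next I would use the localization sequence $A^*(E)\to A^*(\tY)\to A^*(\tY\setminus E)=A^*(Y\setminus Z)\to 0$, together with the one for $Z\subset Y$. Since $j_*$ of a class $g^*\alpha\cdot t^k$ equals $-t^{k+1}f^*\tilde\alpha$ for any lift $\tilde\alpha\in A^*(Y)$ (here we use surjectivity of $i^*$), one gets surjectivity of the map $A^*(Y)[t]\to A^*(\tY)$. Surjectivity follows because $A^*(Y)\to A^*(Y\setminus Z)$ is onto and the image of $j_*$ is generated by the elements $t^{k+1}f^*\tilde\alpha$.

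Then I would check that the relations hold. First, $t\cdot f^*\beta=0$ for $\beta\in\ker i^*$, since $t\cdot f^*\beta=-j_*g^*i^*\beta$. Second, $Q_{N_{Z/Y}}=0$. For this I would use the weighted key formula $f^*[Z]=j_*(\text{excess class})$, where the excess bundle is $g^*N/\cO(-E)$ in the weighted sense. The excess class equals $-(P_N(t)-P_N(0))/t$ restricted to $E$, and applying $j_*$ multiplies by $-t$. I expect this to be the main obstacle: proving the weighted excess formula $f^*[Z]=-(P_N(t)-P_N(0))$ carefully. I would try to prove it on the deformation to the normal cone, where everything reduces to the model $\mathrm{Bl}^w_{Z}N_{Z/Y}$ and an explicit torus computation, and then transport it back.

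Finally I would prove injectivity by a dimension/filtration count. Modulo the relations, every element can be written as $\alpha+\sum_{k\ge1}t^k\gamma_k$ with $\gamma_k\in A^*(Z)$ (lifted) and $k<r$, using $Q$ to reduce the degree in $t$. Suppose such an element maps to zero. Restricting to $\tY\setminus E$ gives $\alpha\in i_*A^*(Z)$. Applying $j^*$ and using the projective bundle formula on $E$, together with the fact that $[Z]\mapsto$ its expression via $Q$, shows that all the $\gamma_k$ and the residual vanish. This mirrors Keel's argument, with $P_N$ in place of the ordinary Chern polynomial.
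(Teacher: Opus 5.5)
The paper gives no proof of this statement; it is quoted from Arena--Obinna, so I am judging your argument on its own merits. Your surjectivity argument and the relation $t\cdot\ker(i^*)=0$ are fine.

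\textbf{The key relation $Q_{N_{Z/Y}}=0$.} Everything nontrivial is concentrated here, i.e.\ in the weighted key formula $f^*[Z]=-(P_N(t)-P_N(0))$, and you give no working argument for it. (Incidentally, with $t=-[E]$ the class whose $j_*$ gives this is $+(P_N(t)-P_N(0))/t$, not its negative.)

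\emph{Fulton's route is unavailable.} His proof of the key formula rests on excess intersection for the square with $f^{-1}(Z)=E$ Cartier and excess bundle $g^*N/\cO_E(-1)$. For unequal weights neither exists. With weights $(1,2)$ on $\A^2$, in the $\mu_2$-chart $x_2=v^2$, $x_1=vx_1'$, the pulled-back ideal is $(vx_1',v^2)$, which is not principal. There is also no tautological line subbundle of $g^*N$ unless all weights coincide.

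\emph{Transport from $\Bl^w_Z N$ also fails.} On the model the formula is a tautology, since $A^*(\Bl^w_0N)\cong A^*(\cP(N))$ and $f^*[0_N]=g^*c_r(N)=-(P_N(t)-P_N(0))$. But specialization goes from $\tY$ to the special fibre and is not injective, so nothing comes back. Note also that $D=f^*[Z]+P_N(t)-P_N(0)$ automatically satisfies $D|_{\tY\setminus E}=0$ and $j^*D=0$, so any proof must use more than these two restrictions.

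\textbf{Your injectivity step.} The same issue breaks it. On $\Bl_p\PP^2$ one has $A^2(\tY)\cong\Q$ while $A^2(\tY\setminus E)=A^2(E)=0$, so restriction to $\tY\setminus E$ together with $j^*$ cannot detect zero. What $j^*$ does give you is $\gamma_k=0$ and $i^*\alpha=0$. To finish you need $\alpha=f_*f^*\alpha=0$, using that $f$ is proper birational.

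\textbf{A way to get $Q=0$.} Use the deformation to the weighted normal cone as an ambient stack rather than as a family.
\begin{enumerate}
\item Let $\mathcal B=\bigoplus_{n\in\Z}I_ns^{-n}$, with $I_n=\cO_Y$ for $n\le0$, and set $\mathcal Y=[\mathrm{Spec}_Y\mathcal B/\Gm]$. This is smooth with a map to $Y$. Inside it:
$\tY$ is the complement of $\mathcal Z=[(Z\times\A^1_s)/\Gm]$, of codimension $r$;
the open locus $\{s\ne0\}$ is $Y$;
the divisor $\{s=0\}$ is $[N/\Gm]$, and $A^*([N/\Gm])=A^*(Z)[t]$ is a polynomial ring.
\item Localization along $\{s=0\}$, plus surjectivity of $i^*$, gives $A^*(\mathcal Y)\cong A^*(Y)[t]/(t\ker i^*)$. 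Here restriction to $\{s\ne0\}$ and to $\{s=0\}$ \emph{is} jointly injective: the constant term is detected on $Y$ itself, and the others in a polynomial ring.
\item Restricting $[\mathcal Z]$ to $Y$ gives $[Z]$. Restricting it to $[N/\Gm]$, which $\mathcal Z$ meets transversally in the zero section, gives $c^{\Gm}_r(N)=P_N(t)$. Hence $[\mathcal Z]=[Z]+P_N(t)-P_N(0)$.
\item Since $A^*(\mathcal Y)\to A^*(\mathcal Z)=A^*(Z)[t]$ is onto, excising $\mathcal Z$ kills exactly $([\mathcal Z])$. This is the theorem.
\end{enumerate}
The weighted key formula then falls out as a corollary rather than being an input, and no separate injectivity argument is needed.
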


Let $g:E\to Z$ be the restriction of $f$ and $j:E\hookrightarrow \tY$ be the inclusion.

\begin{theorem}[{\cite[Theorem~8.2.1 and Remark~8.1.12]{Arena}}]\label{thm:blowup.formula2}
	Let $Y,\tY,Z,E$ be as in Theorem~\ref{thm:blowup.formula1}.
	Let $i_S:S\hookrightarrow Y$ be a closed immersion of codimension $c$, such that both $S$ and $S\cap Z$ are smooth.
	Let $\tS$ be the proper transform of $S$ in $\tY$. 
	Then
	\[[\tS]=f^*[S]-\left\{j_*\left(P_{N_{Z/Y}}(1)(1+t+t^2+\cdots) \cdot g^*i_{S*}s^{wt}(N_{S\cap Z/S})\right)\right\}^c\]
	where $t=-[E]$ and $\{\cdots\}^c$ denotes the codimension-$c$ component.
\end{theorem}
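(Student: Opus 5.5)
The plan is to reduce, by deformation to the normal cone, to a computation on the weighted projectivization $E=\cP_w(N_{Z/Y})$, and there apply the residual (excess) intersection formula in $\Gm$-equivariant form.

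The first step is to identify $\tS$. Because $S$ and $S\cap Z$ are smooth, and the $\Gm$-action on $N_{Z/Y}$ restricts to one on $N_{S\cap Z/S}\subset N_{Z/Y}|_{S\cap Z}$, the weight filtration on the ideal of $Z$ restricts to a weight filtration on the ideal of $S\cap Z$ in $S$. Hence $\tS$ is the stack-theoretic weighted blowup of $S$ along $S\cap Z$. Its exceptional divisor is $E_S=\cP_w(N_{S\cap Z/S})$, sitting inside $E\times_Z(S\cap Z)$. The two classes $f^*[S]$ and $[\tS]$ agree on $\tY\setminus E$. By the localization sequence \eqref{eq:loc}, their difference therefore lies in $j_*A_*(E)$, and it remains to identify this class.

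To identify it, I will use Fulton's refined pullback for the Cartesian square of $f$ and $i_S$. The fibre product $f^{-1}(S)$ is $\tS\cup g^{-1}(S\cap Z)$, and residual intersection expresses $f^*[S]-[\tS]$ as $j_*$ of the excess contribution supported on $g^{-1}(S\cap Z)$. This contribution should be
\[
\Big\{c\big(g^*N_{Z/Y}\otimes\text{twist}\big)\cdot s\big(E_S,\,g^{-1}(S\cap Z)\big)\Big\},
\]
where the twist comes from $\cO_E(E)$, whose first Chern class is $-t$. To make this precise on a stack, I will deform to the normal cone. This reduces the computation to the case where $Y$ is the total space of $N_{Z/Y}$ with its $\Gm$-action and $S$ is the total space of $N_{S\cap Z/S}$. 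In that linear model, $E=[(N\setminus 0)/\Gm]$, the restriction of $t$ is the equivariant parameter, and every class becomes an explicit polynomial in $t$ with coefficients in $A^*(Z)$.

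In the linear model the computation runs as follows. The class of $E_S$ in $E$ is $g^*i_{S*}$ applied to the ratio $P_{N_{Z/Y}}(t)/P_{N_{S\cap Z/S}}(t)$: this is the top Chern class of the $\Gm$-equivariant quotient bundle, whose weighted Chern roots are shifted by $t$. The Segre class of the normal cone contributes the inverse $1/P_{N_{S\cap Z/S}}$. The self-intersection factor $c(\cO_E(-E))^{-1}$ expands as the geometric series $1+t+t^2+\cdots$. Finally, the homogenization of $P$ with respect to $t$ is packaged into the constant-term specialization $P(1)$. This is exactly how $s^{wt}=1/P(1)$ is defined, and it yields the product $P_{N_{Z/Y}}(1)(1+t+\cdots)\,g^*i_{S*}s^{wt}(N_{S\cap Z/S})$. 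Taking the codimension-$c$ part then gives the formula, since $f^*[S]$ and $[\tS]$ both have codimension $c$.

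I expect the main obstacle to be this bookkeeping of weights. The step is to check that the mixed-degree expression, evaluated at $t=1$ and then truncated to degree $c$, really reproduces the correct homogeneous class once each weighted Chern root $w_k t+\alpha_k$ is accounted for. I would verify it first in the toy case $Z=\pt$, $Y=\A^n$ with weights $w_k$ and $S$ a coordinate subspace. There both sides can be computed directly on $\cP(w_1,\dots,w_n)$, and the same identity then propagates to the general case by the splitting principle.
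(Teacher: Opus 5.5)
The paper gives no argument for this statement (it is quoted from Arena's thesis), so your sketch has to carry the proof on its own. Its central step does not. Deforming to the weighted normal cone does not reduce the problem to the linear model.

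In $\tY_0=\mathrm{Tot}(\cO_E(-1))$ one has $j_{0*}\beta=\pi^*(-t\beta)$. So the linear model only sees $t\beta$, which is the same information as $j^*(f^*[S]-[\tS])=-t\beta$. In $\tY$, however, $\ker j_*$ can be strictly smaller than $\ker(t\cdot)$.

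Concretely, take all weights $1$, $Y=\PP^3$, $Z$ a line, and $S$ a line meeting $Z$ transversally. Write $N=N_{Z/Y}$ and set $\theta=(P_N(t)-P_N(0))/t=t+c_1(N)$ on $E$. Then $t\theta=-c_2(N)=0$ because $Z$ is a curve, so $j_{0*}\theta=0$. Yet $j_*\theta=f^*[Z]\neq 0$, by the relation $Q_N$ of Theorem~\ref{thm:blowup.formula1}. Hence the linear model cannot distinguish the true correction $j_*[F]$ (with $F$ a fibre of $E\to Z$) from $j_*[F]+a\,f^*[Z]$.

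To make the deformation argument work you would need a canonical refined correction class on $g^{-1}(S\cap Z)$ that is compatible with specialization, and you do not construct one. Fulton's residual intersection theorem does not supply one either. It requires the distinguished part of $f^{-1}(S)$ to be a Cartier divisor on $\tY$, and $g^{-1}(S\cap Z)$ is not one unless $Z\subset S$.

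The linear-model bookkeeping is also off. First, the correction is not built from $[E_S]$ or a Segre class of $E_S$. It is supported on all of $g^{-1}(S\cap Z)$: in the example it is the whole fibre $F$, while $E_S$ is a single point of $F$. Second, the series $1+t+t^2+\cdots$ is $c(\cO_E(E))^{-1}$; your $c(\cO_E(-E))^{-1}$ equals $(1+t)^{-1}$. Third, a check at $Z=\pt$ sets every non-equivariant Chern root to zero, so the splitting principle cannot propagate it.

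Here is a route that does work in this smooth setting. Write $W=S\cap Z$, $N'=N_{W/S}$, $r=\operatorname{rk}N$, $q=\operatorname{rk}(N|_W/N')$. Let $k\colon g^{-1}(W)\hookrightarrow E$ and $i_W\colon W\hookrightarrow Z$; the latter is what the statement writes as $i_S$.

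You need an extra hypothesis, implicit in the statement rather than a consequence of smoothness: $N'$ is a $\Gm$-stable subbundle, and $\tS$ is the weighted blowup of $S$ along $W$ with the induced weights. Stability can fail, for instance with weights $(1,2)$ on $\A^2$ and $S=\{y=x\}$. It does hold in both of the paper's applications.

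Put $\Theta=\bigl(P_{N/N'}(t)-P_{N/N'}(0)\bigr)/t$ and $\Delta=f^*[S]-[\tS]-j_*k_*\Theta$. Then:
(i) $j^*\Delta=0$. Excess intersection gives $i^*[S]=i_{W*}P_{N/N'}(0)$, one has $j^*[\tS]=[\tS\cap E]=k_*P_{N/N'}(t)$, and $j^*j_*=-t$.
(ii) $f_*\Delta=0$. Indeed $f_*f^*[S]=f_*[\tS]=[S]$, and $g_*t^a=0$ for $a\le q-1<r-1$, assuming $S\not\subset Z$.
(iii) $(f_*,j^*)$ is jointly injective on $A^*(\tY)$. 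By Theorem~\ref{thm:blowup.formula1}, every class has the form $f^*\alpha+\sum_{a=1}^{r-1}t^a f^*\beta_a$. Then $f_*$ returns $\alpha$, and $j^*$ returns the $i^*\beta_a$ by the weighted projective bundle formula.

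The displayed formula is then only a repackaging of $\Delta=0$. By Whitney and the projection formula, $P_N(1)\cdot i_{W*}s^{wt}(N')=i_{W*}P_{N/N'}(1)$. The degree-$(q-1)$ part of $P_{N/N'}(1)(1+t+t^2+\cdots)$ is exactly $\Theta$.
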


\subsection{The ring $A^*_G(U^{1/2})$} 
To apply Theorem~\ref{thm:blowup.formula1} to the weighted blowup 
\[f:U^{1/2}\lra X^{ss}\] 
along $Z_\DC$,
we verify the surjectivity of the restriction map
\beq\label{eq:rest.DC}
A^*_G(U^{1/2})\lra A^*_G(Z_\DC)\cong A^*(B\SL_2)\cong \Q[c_2'],
\eeq
where $c_2'$ denotes the second Chern class of the standard $\SL_2$-representation.
\begin{lemma}\label{lem:rest.DC}
The restriction map~\eqref{eq:rest.DC} is surjective with kernel $(H,c_3)$.
\end{lemma}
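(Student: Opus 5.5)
The plan is to compute the restriction map at a single base point. Since $Z_\DC\cong \bar G/\PGL_2$ is one $G$-orbit, the first step is to write
\[
A^*_G(Z_\DC)\cong A^*_{\Stab_G(q_\DC)}(\pt),
\]
where $\Stab_G(q_\DC)$ is a $\mu_3$-cover of $\PGL_2$. With rational coefficients this ring is $A^*(B\SL_2)=\Q[c_2']$; I would realize this concretely through the embedding $\SL_2\to \SL_3$ given by $\Sym^2$ of the standard representation, which stabilizes the conic $xy-z^2$.

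Next, I would reduce surjectivity to restriction from $BG$. The map \eqref{eq:rest.DC} factors as
\[
A^*_G(U^{1/2})\to A^*_G(E_\DC)\to A^*_G(Z_\DC).
\]
Since $f^*$ followed by restriction to the exceptional divisor and pushdown to $Z_\DC$ agrees with restricting classes from $X^{ss}$, it suffices to show that $A^*_G(X^{ss})\to A^*_G(Z_\DC)$ is surjective, and to compute its kernel there. The ring $A^*_G(X^{ss})$ is generated by $H,c_2,c_3$ by Theorem~\ref{thm:Chowring.GIT}. So the task is to compute the images of $H$, $c_2$ and $c_3$ under pullback along $B\SL_2\to BG$ (restricted at $q_\DC$).

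These images are computed as follows.
\begin{itemize}
\item For $c_2,c_3$: restricting $V$ to $\SL_2$ gives $\Sym^2W$, where $W$ is the standard representation with Chern roots $\pm a$. Its Chern roots are $2a,0,-2a$, so $c_2\mapsto -4a^2=4c_2'$ (with $c_2'=-a^2$) and $c_3\mapsto 0$.
\item For $H$: the fiber $\cO_X(-1)|_{[q_\DC]}=\C\cdot q_\DC$ is the trivial $\SL_2$-character, since $q_\DC=(xy-z^2)^2$ is $\SL_2$-invariant. Hence $H\mapsto 0$. (Lemma~\ref{lem:linear.relation} uses the same type of argument.)
\end{itemize}
Thus the restriction map sends $c_2\mapsto 4c_2'$, which gives surjectivity onto $\Q[c_2']$. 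Moreover, $H$ and $c_3$ lie in the kernel.

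The main step I expect to be delicate is showing that the kernel is exactly $(H,c_3)$, as an ideal of $A^*_G(U^{1/2})$. I read the statement as: the kernel of the restriction from $A^*_G(X^{ss})$ (or $\Q[H,c_2,c_3]$), whose pullback is what Theorem~\ref{thm:blowup.formula1} uses. Since $\Q[H,c_2,c_3]/(H,c_3)\cong\Q[c_2]\to\Q[c_2']$ is an isomorphism, the kernel in the polynomial ring is exactly $(H,c_3)$. The kernel in $A^*_G(X^{ss})$ is then the image of $(H,c_3)$, because the defining relations already lie in that ideal. I would verify this last point by setting $H=c_3=0$ in $r^\tr_4,r^\tr_5,r^\tr_6,r^\flex_5$: each vanishes, since every monomial contains $H$ or $c_3$. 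A double conic is semistable, so $Z_\DC$ does not meet the unstable loci, which is consistent with this.
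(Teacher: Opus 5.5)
Your argument is correct and is essentially the paper's. You read \eqref{eq:rest.DC} as the restriction $A^*_G(X^{ss})\to A^*_G(Z_\DC)$, which is what Theorem~\ref{thm:blowup.formula1} needs and what the paper's proof actually uses. You restrict to the stabilizer of $[q_\DC]$, get $H\mapsto 0$, $c_2\mapsto 4c_2'$ and $c_3\mapsto 0$, and conclude from the fact that $A^*_G(X^{ss})$ is generated by $H,c_2,c_3$.

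You should drop the factorization through $E_\DC$. Restricting $f^*\alpha$ to $E_\DC$ and pushing down to $Z_\DC$ gives $i^*\alpha\cdot g_*(1)=0$, not $i^*\alpha$. Nothing else in your argument depends on that step.
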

\begin{proof}
Composing the map $A^*(BG)\to A^*_G(U^{1/2})$ with \eqref{eq:rest.DC}, we obtain $A^*(BG)\to \Q[c_2']$.
This map sends $c_2$ to $4c_2'$ and $c_3$ to $0$, and is therefore surjective.
It follows that \eqref{eq:rest.DC} is also surjective.
Since $\SL_2$ has no nontrivial characters, $H$ restricts to $0$.
Since $A^*_G(U^{1/2})$ is generated by $H,c_2,c_3$, it follows that the kernel of \eqref{eq:rest.DC} is generated by $H$ and $c_3$.
\end{proof}

Identifying $c_2=4c_2'$, we can express the total Chern class of $\Sym^8\C^2$ as
\[c^{\SL_2}(\Sym^8\C^2)=1+30c_2+273c_2^2+820c_2^3+576c_2^4,\]
and hence the $\Gm$-equivariant weighted Chern polynomial of $N_{Z_{\DC}/X^{ss}}$
is
\beq\label{eq:PDC}
	P_{\DC}(t)=c^{\SL_2}_{2t}(\Sym^8\C^2)=32\big(16t^9 + 120c_2t^7 + 273c_2^2t^5 + 205c_2^3t^3 + 36c_2^4t\big).
\eeq

We now apply Theorem~\ref{thm:blowup.formula1} to $f$ to obtain the following.
\begin{proposition}\label{prop:Chowring.Uhalf}
The $G$-equivariant Chow ring of $U^{1/2}$ is given by
\[A^*_G(U^{1/2})\cong\frac{A^*_G(X^{ss})[e]}{(eH,\; ec_3,\; Q_\DC)}\cong\frac{\Q[H,c_2,c_3,e]}{(r^\tr_4,r^\tr_5,r^\tr_6,r^\flex_5,\; eH,\; ec_3,\; Q_\DC)},\]
where $e=-[\cEK]$, and $Q_\DC=P_\DC(e)+f^*[Z_\DC]$.
\end{proposition}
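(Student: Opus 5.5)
The plan is to apply Theorem~\ref{thm:blowup.formula1} directly to the weighted blowup $f:U^{1/2}\to X^{ss}$ along $Z_\DC$, working $G$-equivariantly, i.e.\ to the stack-theoretic weighted blowup $\cPK\to\cPGIT$ along $\pDC$ with weight two (Theorem~\ref{thm:Kwallcrossing}). Equivariant Chow groups of $[U^{1/2}/G]$ agree with those of $[U^{1/2}/\bar G]$ since $G\to\bar G$ has finite kernel, so we may freely pass between $G$ and $\bar G$.

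We first check the two hypotheses of Theorem~\ref{thm:blowup.formula1}. Both $X^{ss}$ and $Z_\DC\cong \bar G/\PGL_2$ are smooth. The restriction $i^*:A^*_G(X^{ss})\to A^*_G(Z_\DC)\cong\Q[c_2']$ is surjective with kernel $(H,c_3)$; this is Lemma~\ref{lem:rest.DC}. That proof, phrased for $U^{1/2}$, really only uses the factorization through $A^*(BG)\to A^*(B\SL_2)$, which holds equally on $X^{ss}$: $c_2\mapsto 4c_2'$, $c_3\mapsto 0$, and $H\mapsto 0$ because $\SL_2$ has no characters. Since $A^*_G(X^{ss})$ is generated by $H,c_2,c_3$ (Theorem~\ref{thm:Chowring.GIT}), the kernel is exactly the ideal $(H,c_3)$. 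For the second hypothesis, the normal bundle $N_{Z_\DC/X^{ss}}$ is, by Lemma~\ref{lem:etalenbd.DC}, the homogeneous bundle associated to the $\PGL_2$-representation $H^0(\PP^1,\cO(8))\cong\Sym^8\C^2$. We give it the fiberwise $\Gm$-action by scalar multiplication with weight $2$, matching ``weight two'' in Theorem~\ref{thm:Kwallcrossing}. The weighted blowup in the sense of \cite{ArenaObinna} is then precisely $U^{1/2}$, the root stack of the ordinary blowup, and its exceptional divisor descends to $\cEK$.

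Theorem~\ref{thm:blowup.formula1} then gives
\[A^*_G(U^{1/2})\cong A^*_G(X^{ss})[t]\big/\big(t\ker i^*,\;P_{N}(t)-P_N(0)+f^*[Z_\DC]\big),\]
with $t=-[\cEK]=e$. Here $t\ker i^*=(eH,ec_3)$. The weighted top Chern polynomial is $P_N(t)=c_9^{\Gm\times\SL_2}(\Sym^8\C^2\otimes\chi^{2})$. Its Chern roots are $2t+k\alpha$ for $k=-8,-6,\dots,8$, with $\alpha^2=-c_2'$, so
\[P_N(t)=\prod_k(2t+k\alpha)=2t\prod_{k=1}^4(4t^2-4k^2\alpha^2)=c^{\SL_2}_{2t}(\Sym^8\C^2),\]
which is \eqref{eq:PDC} after substituting $c_2=4c_2'$. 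This product is odd in $t$, so $P_N(0)=0$ and $Q_N=P_\DC(e)+f^*[Z_\DC]$. Substituting the presentation of $A^*_G(X^{ss})$ from Theorem~\ref{thm:Chowring.GIT} then yields the second isomorphism.

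The main obstacle is the identification in the second paragraph: the weighted blowup of Theorem~\ref{thm:blowup.formula1} with weight-$2$ scalar action must coincide with the square-root stack $U^{1/2}$ of the ordinary blowup $U$, so that $t$ equals $-[\cEK]$ rather than a multiple of it. To settle this, I would compare local models. The weighted blowup with all weights $2$ is $[(\mathrm{Bl}\text{ data})/\Gm]$ with the $\Gm$ acting through $\mu_2$ on the exceptional coordinate, and this is exactly $\sqrt{E_\DC/U}$, with $\cEK$ the reduced exceptional divisor. One would then check that the normalization of $t$ in \cite{ArenaObinna} matches the class of the reduced divisor. The rest is bookkeeping, including confirming $P_N(0)=0$ and that the $f^*[Z_\DC]$ term is left unexpanded as in the statement.
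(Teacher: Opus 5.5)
Your proposal is correct and follows the paper's proof: you apply Theorem~\ref{thm:blowup.formula1} to $f$ using Lemma~\ref{lem:rest.DC}, which, as you note, is really about the restriction from $X^{ss}$ and has kernel $(H,c_3)$. You then substitute the presentation of $A^*(\cPGIT)$ from Theorem~\ref{thm:Chowring.GIT}. Your explicit check that $P_N(t)=c^{\SL_2}_{2t}(\Sym^8\C^2)$ is odd in $t$ (so $P_N(0)=0$), and your identification of the weight-two blowup with the square-root stack $U^{1/2}$, only make explicit what the paper takes from \eqref{eq:PDC} and Theorem~\ref{thm:Kwallcrossing}.
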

\begin{proof}
The first isomorphism follows from Theorem~\ref{thm:blowup.formula1} and Lemma~\ref{lem:rest.DC}.
The second isomorphism then follows from Theorem~\ref{thm:Chowring.GIT}.
\end{proof}

\begin{remark}[Computation of $Q_\DC$]
The class $[Z_\DC]\in A^9_G(X^{ss})$ can be computed via pushforward along the second Veronese map
\[d:\PP\Sym^2V^*\lra \PP\Sym^4V^*,\quad [q]\mapsto [q^2],\]
followed by pushforward to $BG$ as in the previous computation. 
This in turn yields an explicit formula for $Q_\DC$.
We record the resulting formula for completeness.
Write
\[d_*(1)=\sum_{i=0}^9 \alpha_i H^{9-i},\quad \alpha_i\in A^i(BG).\]
To determine $\alpha_i$, multiply both sides by $H^{5+k}$ and push forward to $BG$.
Since $H$ restricts to twice the equivariant hyperplane class on $\PP\Sym^2V^*$, the projective bundle formula implies
\[2^{5+k}s_k(\Sym^2V^*)=\sum_{i=0}^9 \alpha_i\, s_{k-i}(\Sym^4V^*), \quad k\ge 0.\]
For $k=0,\dots,9$, this gives the recursion
\[\alpha_k=2^{5+k}s_k(\Sym^2V^*)-\sum_{i=0}^{k-1}\alpha_i\, s_{k-i}(\Sym^4V^*),\]
which uniquely determines the coefficients $\alpha_k$.
From this, one gets
\beq\label{eq:fund.DC}
\begin{aligned}
	\left[Z_\DC\right]&=32\Big(H^9+15c_2H^7-21c_3H^6+63c_2^2H^5-174c_2c_3H^4 +(85c_2^3+363c_3^2)H^3\\
	&\qquad \qquad -189c_2^2c_3H^2+(36c_2^4+483c_2c_3^2)H -(36c_2^3c_3+343c_3^3)\Big).
\end{aligned}
\eeq

Substituting $t=e$ in~\eqref{eq:PDC} and using~\eqref{eq:fund.DC}, one obtains a formula for $Q_\DC$.
\end{remark}

\subsection{Unstable locus of $U^{1/2}$}

By the localization sequence, 
the pullback
\[A^*_G(U^{1/2}) \lra A^*_G(U^{1/2,ss}) \cong A^*(\cPK)\]
is surjective, with kernel given by
the image of the pushforward 
\beq\label{eq:push.Uunstable}A_*^G\big(U^{1/2,us}\big)\lra A_*^G(U^{1/2}).\eeq
Since $A^*_G(U^{1/2})$ has already been computed, it remains to determine the image of \eqref{eq:push.Uunstable}.

In this subsection, we describe the unstable locus $U^{1/2,us}$. 
For later use, we also describe the non-stable locus $U^{1/2,ns}$ 
(see Lemmas~\ref{lem:Uus} and~\ref{lem:Uns}).

The image $f(U^{1/2,us})$ consists of quartics whose $G$-orbit closures contain a double conic; equivalently, they are identified with double conics under the quotient map $X^{ss}\to X^{ss}/\!\!/G=\PGIT$. Moreover, $U^{1/2,us}$ is the proper transform of $f(U^{1/2,us})$; see \cite[Remark~7.17]{Kir85}. 

We give a characterization of $f(U^{1/2,us})$.
Recall that the $G$-action on $X$ is induced by the dual action on $V^*$, so that $\lambda_R(t)=\diag(t,t^{-1},1)$ acts on $(x,y,z)$ with weights $(-1,1,0)$.
\begin{lemma}\label{lem:Uus}
Let $F=\sum_{0\leq i,j,k\leq 4,i+j+k=4}a_{ijk}x^iy^jz^k$ be a semistable plane quartic.
The following are equivalent:
\begin{enumerate}
	\item $[F]\in f(U^{1/2,us})$;
	\item Up to the $G$-action, $F$ contains none of the monomials
		$x^4,x^3y,x^3z,x^2yz,x^2z^2,xz^3$,
	and satisfies the discriminant condition $a_{112}^2=4a_{004}a_{220}$.
\end{enumerate}
Moreover, $U^{1/2,us}$ is the proper transform of $f(U^{1/2,us})$ under $f$.
\end{lemma}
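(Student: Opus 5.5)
We use the description of instability after a Kirwan blowup, \cite[Remark~7.17 and Lemma~7.6]{Kir85}. A point of $U^{ss}$ is unstable in $U$ exactly when it is the proper transform of a quartic $F$ with the following property: the closed orbit in $\overline{G\cdot F}\cap X^{ss}$ is a double conic, and the limit point in $E_\DC$ is unstable for the $\PGL_2$-action on the fiber $Y_8=\PP H^0(Q,\cO_Q(8))$ described in Lemma~\ref{lem:etalenbd.DC}. The last statement of the lemma, that $U^{1/2,us}$ is the proper transform of $f(U^{1/2,us})$, is quoted directly from \cite[Remark~7.17]{Kir85}. The root stack $U^{1/2}\to U$ is a bijection on points and is compatible with semistability, so it changes nothing here.

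For the equivalence (1)$\Leftrightarrow$(2) we work with the one-parameter subgroup $R$. By the Hilbert--Mumford criterion and Luna's slice theorem, $[F]\in f(U^{1/2,us})$ if and only if, after moving $F$ by $G$, some one-parameter subgroup degenerates $F$ to a double conic. Any double conic is $G$-conjugate to $(xy+az^2)^2$, which is $R$-fixed, and a maximal torus of its $\PGL_2$-stabilizer is conjugate to $R$. So we may take the one-parameter subgroup to be $\lambda_R(t)^{\pm1}$, and using the Weyl element we may fix the sign. Under $\lambda_R$ the monomial $x^iy^jz^k$ has weight $j-i$. The limit $\lim_{t\to0}\lambda_R(t)\cdot F$ exists and is $R$-fixed exactly when all monomials of negative weight vanish. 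Those monomials are $x^4,x^3y,x^3z,x^2yz,x^2z^2,xz^3$, which is the first condition in (2). The limit is then the weight-zero part $a_{220}x^2y^2+a_{112}xyz^2+a_{004}z^4$. This is a double conic precisely when $a_{112}^2=4a_{004}a_{220}$, because $\Delta_R$ is cut out by $s^2=4p$; this gives the discriminant condition. So (2) says that $F$ degenerates along $R$ to a double conic $Q_2$.

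It remains to check that this degeneration lands in the unstable part of the fiber of $E_\DC$, and conversely that every point of $f(U^{1/2,us})$ arises this way. The normal direction of the degeneration at $Q_2$ is the image in $H^0(Q,\cO_Q(8))$ of the lowest positive-weight part of $F$. Its $R$-weights on $Q\cong\PP^1$ are all positive, so the resulting binary octic is destabilized by $R$ in the sense of Hilbert--Mumford on $Y_8$. Hence the limit in $U$ lies in an unstable stratum, and the point is unstable in $U$. Conversely, suppose $[F]$ is unstable in $U$. Then Kirwan's description gives an optimal destabilizing one-parameter subgroup which fixes the limiting double conic, and after conjugation it is $R$. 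Reading off weights recovers (2).

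The main obstacle is this last step: we need the weight bookkeeping to match the order of vanishing along $Z_\DC$ correctly. Concretely, we must show that the destabilizing direction of $f^{-1}([F])$ in the normal space $H^0(\PP^2,\cO(4))/H^0(\cO(4)(-Q))$ really is the unstable octic. We intend to handle it as in the proof of Lemma~\ref{lem:Rfixedlocus}, using the $R$-equivariance of the map \eqref{eq:coker.DC}.
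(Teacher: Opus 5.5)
Your proposal is correct and follows essentially the paper's own argument: $\lambda_R$ destabilizes the proper transform for (2)$\Rightarrow$(1), the destabilizing one-parameter subgroup coming from \cite[Lemma~7.6]{Kir85} fixes the limiting double conic and is conjugate to $\lambda_R$ for (1)$\Rightarrow$(2), and the last assertion is \cite[Remark~7.17]{Kir85}. The obstacle you flag is settled by the $R$-equivariance you propose: the limit under $\lambda_R$ of the proper transform is an $R$-fixed octic of positive weight in $Y_8$, and the Hilbert--Mumford weight does not change in this limit, since $\cO_X(1)$ contributes weight zero at the double conic; note, however, that this limiting direction need not be the image of the lowest positive-weight part of $F$, because that image can vanish in $H^0(Q,\cO_Q(8))$ (e.g.\ $F=q^2+\epsilon q y^2$ with $q=xy-z^2$ has limiting direction $y^4|_Q$).
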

\begin{proof}
The equivalence follows from the Hilbert--Mumford criterion. 

For $(2)\Rightarrow(1)$, suppose first that $F$ is not a double conic. Then $[F]$ defines a point of $U^{1/2}$, and the one-parameter subgroup $\lambda_R:t\mapsto \diag(t,t^{-1},1)$ destabilizes it. Hence $[F]\in U^{1/2,us}$, and therefore $[F]\in f(U^{1/2,us})$.
Moreover, the $\lambda_R$-limit of such points in $X^{ss}$ is the double conic $(xy-z^2)^2$. Since the corresponding limit in $U^{1/2}$ is unstable, and since $Z_\DC$ is a single $G$-orbit, it follows that $Z_\DC\subset f(U^{1/2,us})$. Thus (1) also holds when $F$ is a double conic.

Conversely, suppose that $[F]=f(u)$ for $u\in U^{1/2,us}$.
Then there exists a destabilizing one-parameter subgroup $\lambda$ such that the limit $\lim_{t\to 0} \lambda(t)\cdot u$ lies over $Z_\DC$.
Hence, $F$ degenerates to a double conic.
Up to the $G$-action, this limit is $(xy-z^2)^2$.
Since $\lambda$ fixes the limit, it lies in the stabilizer of $(xy-z^2)^2$, and hence it is conjugate to $\lambda_R$. 
Thus, after a $G$-action, we may assume $\lambda=\lambda_R$. 
Then, the fact $\lim_{t\to 0}\lambda(t)\cdot F=(xy-z^2)^2$ forces $F$ to be of the form
$(xy-z^2)^2 + \text{(terms of strictly positive $\lambda$-weight)}$,
which is precisely the condition (2).

The last assertion follows from \cite[Remark~7.17]{Kir85}.
\end{proof}

Similarly, $f(U^{1/2,ns})$ consists of quartics whose $G$-orbit closures contain a strictly polystable quartic as in \eqref{eq:plane.tacnodal.eq}, and coincides with the strictly semistable locus $X^{sss}:=X^{ss}\setminus X^s$ of $X$.

\begin{lemma}\label{lem:Uns}
Let $F$ be a semistable quartic.
The following are equivalent:
\begin{enumerate}
\item $[F]\in X^{sss}$;
\item Up to the $G$-action, $F$ contains none of the monomials
	$x^4,x^3y,x^3z,x^2yz,x^2z^2,xz^3$.
\end{enumerate}
Moreover, $f(U^{1/2,ns})=X^{sss}$, and $U^{1/2,ns}$ is the proper transform of $X^{sss}$ 
under $f$.
\end{lemma}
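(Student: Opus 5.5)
The plan is to argue in parallel with Lemma~\ref{lem:Uus}, using the Hilbert--Mumford criterion together with Mumford's classification of strictly polystable quartics. The closed orbits in $X^{sss}$ are the double conics and the tacnodal curves $(xy+az^2)(xy+bz^2)$ with $a\neq b$. Every such curve is fixed by $\lambda_R$, after conjugation.

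For $(2)\Rightarrow(1)$, assume $F$ contains none of $x^4,x^3y,x^3z,x^2yz,x^2z^2,xz^3$. These are exactly the monomials of negative $\lambda_R$-weight, since $x$ has weight $-1$, $y$ has weight $1$, and $z$ has weight $0$. Hence every monomial of $F$ has weight $\ge 0$, and $\lim_{t\to 0}\lambda_R(t)\cdot F$ exists in the affine cone. This limit is the weight-zero part $a_{220}x^2y^2+a_{112}xyz^2+a_{004}z^4$. If it is nonzero, it is a point of $(X^{ss})^R$, which is strictly semistable, and so $F$ is strictly semistable. If the weight-zero part vanishes, the Hilbert--Mumford weight is strictly positive, so $F$ would be unstable; this is excluded because $F$ is semistable. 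In either case $F\in X^{sss}$.

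For $(1)\Rightarrow(2)$, take $[F]\in X^{sss}$. The unique closed orbit in $\overline{G\cdot F}\cap X^{ss}$ is then strictly polystable, so it has a positive-dimensional stabilizer. By the above classification, after translating $F$ by $G$ this closed orbit contains a point $F_0$ of $(X^{ss})^R$. By Kempf's theory of optimal destabilizing subgroups (or Luna's slice theorem at $F_0$), there is a one-parameter subgroup $\lambda$ with $\lim\lambda(t)\cdot F=F_0$ that lies in the identity component of $\Stab(F_0)$. For tacnodal $F_0$ this identity component is $R$; for $F_0$ a double conic it is $\PGL_2$, whose one-parameter subgroups are all conjugate to $\lambda_R$ inside it. In both cases we may take $\lambda=\lambda_R^{\pm1}$, and after applying the Weyl element swapping $x$ and $y$, $\lambda=\lambda_R$. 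Existence of the limit then forces all monomials of negative $\lambda_R$-weight to vanish, which is (2).

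For the last assertion, note that $U^{1/2,s}$ is the preimage of $X^s$ away from $E_\DC$. This follows from Kirwan's description \cite[Remark~7.17 and \S7]{Kir85}: stability in the blowup is equivalent to stability downstairs off the exceptional divisor. Hence $f(U^{1/2,ns})\supseteq X^{sss}\setminus Z_\DC$. Over $Z_\DC$, I expect the fiber of $E_\DC$ over a double conic to contain strictly semistable points of $Y_8$, such as $u^4v^4$, so the fiber meets $U^{1/2,ns}$ and $Z_\DC\subset f(U^{1/2,ns})$. Then $f(U^{1/2,ns})=X^{sss}$, and as in Lemma~\ref{lem:Uus}, \cite[Remark~7.17]{Kir85} identifies $U^{1/2,ns}$ with the proper transform of $X^{sss}$. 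The main obstacle is the reduction of the destabilizing subgroup to $\lambda_R$ in the double-conic case, where I would use conjugacy of maximal tori in $\PGL_2$.
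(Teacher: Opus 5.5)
Your argument for $(1)\Leftrightarrow(2)$ is correct and follows the paper's route. For $(2)\Rightarrow(1)$ you apply Hilbert--Mumford to $\lambda_R$. For the converse, a one-parameter subgroup reaching the polystable limit $F_0$ lies in $\Stab(F_0)$ and can be conjugated to $\lambda_R$; you carry this out more carefully than the paper does, and the double-conic case is not the real difficulty.

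Your derivation of $f(U^{1/2,ns})=X^{sss}$ is also fine. Off $E_\DC$, the lift of a point of $X^{sss}$ either lies in the proper transform of the locus degenerating to double conics, hence is unstable. Otherwise its orbit closure contains the lift of a tacnodal curve with $\Gm$-stabilizer, so it is not stable. Over $Z_\DC$, your expectation about $u^4v^4$ holds by Lemma~\ref{lem:Rfixedlocus}: that point is semistable and $R$-fixed.

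The gap is the final claim that $U^{1/2,ns}$ is the proper transform of $X^{sss}$. As used in Lemma~\ref{lem:Uus}, \cite[Remark~7.17]{Kir85} describes the \emph{unstable} locus of the blowup: the proper transform of the set of points whose orbit closures meet the centre. It says nothing about points of $E_\DC$ that are semistable but not stable.

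Since $U^{1/2,ns}$ is closed and contains $f^{-1}(X^{sss}\setminus Z_\DC)$, one inclusion is automatic. What must be shown is that every non-stable point of $E_\DC$ lies in the closure of $f^{-1}(X^{sss}\setminus Z_\DC)$. This includes the strictly semistable points given by octics with a root of multiplicity exactly four. The statement is not formal: that part of $E_\DC$ is $10$-dimensional, hence of codimension one in the $11$-dimensional $U^{1/2,ns}$, so a priori it could be a separate component.

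The paper supplies this step with an explicit deformation. Let $Q=\{xy=z^2\}$. Up to $G$, such a point is the image in $N_{Z_\DC/X^{ss}}|_{[q_\DC]}\cong H^0(Q,\cO_Q(8))$ of a quartic $g$ with only nonnegative $\lambda_R$-weights. The family $F_t=q_\DC+tg$ then lies in $X^{sss}\setminus Z_\DC$ for small $t\neq 0$, by openness of semistability and your $(2)\Rightarrow(1)$. Its lift converges to the given point of $E_\DC$. You need this argument, or an equivalent dimension-and-irreducibility comparison, in place of the citation.
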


\begin{proof}
The proof of the equivalence follows the same strategy as that of Lemma~\ref{lem:Uus}. 
In this case, up to the $G$-action, the $\lambda$-limit is of the form~\eqref{eq:plane.tacnodal.eq}, 
so no discriminant condition appears.

To be more precise, $(2)\Rightarrow (1)$ follows from the Hilbert--Mumford criterion, since every quartic satisfying (2) is not stable with respect to $\lambda_R$. 

For $(1)\Rightarrow(2)$, let $[F]\in X^{sss}$. By the Hilbert--Mumford criterion, there exists a one-parameter subgroup $\lambda$ such that $[F]$ is strictly semistable with respect to $\lambda$. 
The corresponding strictly polystable limits are precisely the quartics of the form~\eqref{eq:plane.tacnodal.eq}, up to the $G$-action. 
Such a limit is stabilized by $\lambda_R$, and hence $\lambda$ is conjugate to $\lambda_R$. This forces $F$ to satisfy (2).

Since $f^{-1}(X^s)\subset U^{1/2,s}$, we have $f(U^{1/2,ns})\subset X^{sss}$.
Conversely, if 
$F$ satisfies (2), then the same argument with $\lambda_R$ shows that $[F]\in f(U^{1/2,ns})$. 
Hence $f(U^{1/2,ns})=X^{sss}$.

For the last assertion, since $f$ is an isomorphism over $X^{ss}\setminus Z_\DC$, it suffices to show that every point of $U^{1/2,ns}$ lying over $Z_\DC$ lies in the closure of $f^{-1}(X^{sss}\setminus Z_\DC)$.
This follows by deforming double conics, up to the $G$-action, by quartics of the form $F_t=(xy-z^2)^2+t(az^4+G)$, where $G$ consists of terms of strictly positive $\lambda_R$-weight. 
This completes the proof.
\end{proof}

\begin{remark}\label{rem:sss}
	By the GIT analysis in Subsection~\ref{ss:GIT.planecurve}, semistability of $F$ forces $a_{220}\neq 0$ and $(a_{112},a_{004})\neq (0,0)$ in Lemmas~\ref{lem:Uus} and~\ref{lem:Uns}.
	In particular, we have $a_{004}=\frac{a_{112}^2}{4a_{220}}$ in Lemma~\ref{lem:Uus}.
\end{remark}

\subsection{Resolutions of $U^{1/2,us}$ and $U^{1/2,ns}$}
To compute the pushforward images, we introduce suitable resolutions of $U^{1/2,us}$ and $U^{1/2,ns}$.
More precisely, we construct closed substacks of $\Fl\times U^{1/2}$ whose images under the second projection coincide with these loci.

Let $\Fl$ be the incidence variety in $\PP V\times \PP V^*$, with natural projections
\[\Fl=\PP(T_{\PP V}) \xrightarrow{~\rho~} \PP V \xrightarrow{~\pi~} \pt.\]
Let
$S=\{1,y,z,yz,z^2,z^3\}$,
which is admissible in the sense of~\cite[Definition~3.7]{CL22}. 
Thus there exists a rank $6$ vector bundle $\sP^S(\cO_{\PP V}(4))$ over $\Fl$, together with a natural surjection
\beq\label{eq:KS}\rho^*\pi^*\Sym^4V^* \lra \sP^S(\cO_{\PP V}(4)),\eeq
obtained by truncating quartic forms at $p$ to the monomials in $S$, where $(p,\ell)\in \Fl$, and $y,z$ are local affine coordinates at $p$ such that $\ell=\{y=0\}$.
Let $\sK_S$ denote its kernel. 

Then $\PP(\sK_S)\subset \Fl\times X$ is a $G$-equivariant $\PP^8$-bundle over $\Fl$.
Let
\[\sT:=\PP(\sK_S)^{ss}=\PP(\sK_S)\cap(\Fl\times X^{ss}).\]
Then $\sT$ parametrizes triples $(p,\ell,F)$ 
such that, up to the $G$-action, $p=[1:0:0]$, $\ell=\{y=0\}$ and $F$ satisfies the condition in Lemma~\ref{lem:Uns}(2).

Let $\sQ\subset \sT$ be the divisor defined by 
$a_{112}^2=4a_{004}a_{220}$.
Then $\sQ$ parametrizes triples $(p,\ell,F)$ such that, up to the $G$-action, $p=[1:0:0]$, $\ell=\{y=0\}$ and $F$ satisfies the condition in Lemma~\ref{lem:Uus}(2).
Fiberwise over $\Fl$, $\sQ$ is a quadric divisor in the fibers $\PP^8\cap X^{ss}$, and the map
\[\sQ\hooklongrightarrow \sT\hooklongrightarrow \Fl\times X^{ss} \lra X^{ss}\]
has image equal to $f(U^{1/2,us})$. 
The image of $\sT$ is $f(U^{1/2,ns})=X^{sss}$. 

To lift this to a map to $U^{1/2}$, we take the proper transforms under 
\[f_\Fl:\;\Fl\times U^{1/2}\lra \Fl\times X^{ss}.\]
Let $\widetilde \sQ$ and $\widetilde \sT$ denote the proper transforms of $\sQ$ and $\sT$.

\begin{lemma}\label{lem:surj.push}
The images of the pushforward maps
\beq\label{eq:push.tsQ} A_*^G(\widetilde \sQ)\lra A_*^G(U^{1/2}) \and A_*^G(\widetilde \sT)\lra A_*^G(U^{1/2})\eeq
coincide with the images of \eqref{eq:push.Uunstable} and $A_*^G(U^{1/2,ns})\to A_*^G(U^{1/2})$, respectively.
\end{lemma}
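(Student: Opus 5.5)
The plan is to show that the proper maps $q_\sQ:\widetilde\sQ\to U^{1/2}$ and $q_\sT:\widetilde\sT\to U^{1/2}$, obtained by composing the inclusion into $\Fl\times U^{1/2}$ with the second projection, have images exactly $U^{1/2,us}$ and $U^{1/2,ns}$. After that, we use the standard fact that rationally, for a proper surjection $q:W\to Y'$ onto a closed substack $Y'\subset Y$, the image of $A^G_*(W)\to A^G_*(Y)$ equals the image of $A^G_*(Y')\to A^G_*(Y)$.

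\textbf{Step 1: properness.} The projection $\Fl\times U^{1/2}\to U^{1/2}$ is proper because $\Fl$ is projective, and $\widetilde\sQ$, $\widetilde\sT$ are closed in $\Fl\times U^{1/2}$, being proper transforms, i.e.\ closures of preimages. So $q_\sQ$ and $q_\sT$ are proper.

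\textbf{Step 2: the images.} Away from $Z_\DC$ the map $f_\Fl$ is an isomorphism, so over $X^{ss}\setminus Z_\DC$ the image of $\widetilde\sQ$ is $f(U^{1/2,us})\setminus Z_\DC$ and that of $\widetilde\sT$ is $X^{sss}\setminus Z_\DC$. This uses the identification of the images of $\sQ$ and $\sT$ recorded just before the statement. Taking closures, the images of $q_\sQ$ and $q_\sT$ equal the closures of $f^{-1}(f(U^{1/2,us})\setminus Z_\DC)$ and of $f^{-1}(X^{sss}\setminus Z_\DC)$ respectively, since proper maps have closed images. We must check that the open parts $\sQ\setminus f_\Fl^{-1}(Z_\DC)$ and $\sT\setminus f_\Fl^{-1}(Z_\DC)$ are dense in $\sQ$ and $\sT$. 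Both are irreducible (a bundle over $\Fl$ with irreducible fibers: an open part of $\PP^8$, and a quadric of rank $\geq 3$ in the variables $a_{112},a_{004},a_{220}$). Neither is contained in the double-conic locus, because the fibers have dimension $8$ or $7$, while over $Z_\DC$ only finitely many flags per double conic occur. Given this density, Lemma~\ref{lem:Uus} and Lemma~\ref{lem:Uns} identify these closures with $U^{1/2,us}$ and $U^{1/2,ns}$: each is the proper transform of its image under $f$, hence the closure of its part off $E_\DC$.

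\textbf{Step 3: Chow groups.} For a proper surjective $G$-equivariant map $q:W\to Y'$ of finite-type stacks, $q_*:A^G_*(W)_\Q\to A^G_*(Y')_\Q$ is surjective. This is standard: every subvariety of $Y'$ is dominated by a subvariety of $W$ of the same dimension, and the pushforward is a nonzero multiple of the fundamental class; the same holds equivariantly on approximations $Y'\times^G \mathcal{U}$. Composing with the closed pushforward into $U^{1/2}$ gives the asserted equality of images.

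\textbf{Main obstacle.} The delicate point is Step 2 near $E_\DC$. We need every point of $U^{1/2,us}$ over $Z_\DC$, namely unstable binary octics, to lie in the image of $\widetilde\sQ$. By properness this reduces to density, and density is supplied by the proper-transform descriptions in Lemmas~\ref{lem:Uus} and~\ref{lem:Uns} (citing \cite[Remark~7.17]{Kir85}). If one doubts that $\widetilde\sQ$ surjects onto the full exceptional part, one can instead check it directly: a point over $[q_\DC]$ is an octic with a root of multiplicity $\geq 5$. The one-parameter families $(xy-z^2)^2+tG$, with $G$ of positive $\lambda_R$-weight satisfying the discriminant condition, lie in $\sQ$ with fixed flag $([1:0:0],\{y=0\})$ and limit to such octics in the normal direction. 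All such octics arise this way, up to the $\SL_2$-action.
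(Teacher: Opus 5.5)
Your proposal is correct and follows the paper's argument. The proper transforms $\widetilde\sQ$ and $\widetilde\sT$ map properly onto $U^{1/2,us}$ and $U^{1/2,ns}$ by Lemmas~\ref{lem:Uus} and~\ref{lem:Uns}, and rational proper pushforward along a surjection is surjective; you simply spell out the properness and Chow-theoretic steps that the paper leaves implicit. One small slip: a double conic $Q_2$ occurs in $\sT$ with the whole one-parameter family of flags $(p,T_pQ)$, $p\in Q$ (this locus is $\sT_2$, of dimension $6$), not with finitely many flags, but your dimension comparison with $\dim\sQ=10$ and $\dim\sT=11$ still goes through.
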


\begin{proof}
By Lemmas~\ref{lem:Uus} and~\ref{lem:Uns}, the proper transforms $\widetilde \sQ$ and $\widetilde \sT$ map onto 
$U^{1/2,us}$ and $U^{1/2,ns}$, respectively.
So, the pushforward images coincide.
\end{proof}

To compute the images, we identify $\widetilde \sT\to \sT$ and $\widetilde \sQ\to \sQ$ as stack-theoretic weighted blowups, whose centers we now describe.
Let $S'=\{1,z\}$. Over $\Fl$, we have natural surjections
\[\rho^*\pi^*\Sym^2V^* \lra \rho^*\sP^2(\cO_{\PP V}(2)) \lra \sP^{S'}(\cO_{\PP V}(2)).\]
Let $\sK_{S'}$ be the kernel of the composite map. Then $\PP (\sK_{S'})\subset \Fl\times X_2$, where $X_2=\PP\Sym^2V^*$.
Let $X_2^{ss}\subset X_2$ be the semistable locus (consisting of smooth conics), and set
\[\sT_2:= \PP (\sK_{S'})^{ss}=\PP (\sK_{S'})\cap(\Fl\times X_2^{ss}).\]

\begin{lemma}
	The proper transforms $\widetilde\sT\to \sT$ and $\widetilde \sQ\to \sQ$ are the stack-theoretic weighted blowups along $\sT_2$ with weight two.
\end{lemma}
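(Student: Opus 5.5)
The plan is to deduce both statements from a single general fact. If $W\subset Y$ is a smooth closed subvariety meeting a smooth center $C\subset Y$ \emph{cleanly}, meaning $W\cap C$ is smooth with $T(W\cap C)=TW\cap TC$, then the proper transform of $W$ in $\Bl_CY$ is $\Bl_{W\cap C}W$. Moreover, the exceptional divisor of $\Bl_{W\cap C}W$ is the scheme-theoretic restriction of the exceptional divisor of $\Bl_CY$. We apply this with $Y=\Fl\times X^{ss}$, $C=\Fl\times Z_\DC$, and $W=\sT$ or $\sQ$. The map $f_\Fl:\Fl\times U^{1/2}\to\Fl\times X^{ss}$ is the square-root stack of the ordinary blowup along $\Fl\times Z_\DC$. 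Since taking a root stack along a Cartier divisor commutes with restriction to a closed substack not contained in that divisor, the proper transforms $\widetilde\sT$ and $\widetilde\sQ$ are then the square-root stacks of $\Bl_{\sT\cap C}\sT$ and $\Bl_{\sQ\cap C}\sQ$ along their exceptional divisors. These are exactly the stack-theoretic weighted blowups with weight two. So the task reduces to three checks: smoothness of $\sT$ and $\sQ$, the identification $\sT\cap C=\sQ\cap C=\sT_2$, and cleanness.

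First I check smoothness. $\sT$ is an open subset of the $\PP^8$-bundle $\PP(\sK_S)$ over $\Fl$, so it is smooth. For $\sQ$, Remark~\ref{rem:sss} gives $a_{220}\neq0$ on the semistable locus. Hence fiberwise $\sQ$ is the graph $a_{004}=a_{112}^2/(4a_{220})$, which is smooth.

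Next I identify $\sT\cap C$. The map $q\mapsto q^2$ embeds $\PP(\sK_{S'})$ into $\Fl\times X_2$ and then into $\Fl\times X$. In the local coordinates $y,z$ at $p$ with $\ell=\{y=0\}$, a conic $q$ with no $1,z$ terms has the form $q=y+\alpha z^2+\beta yz+\gamma y^2$ (up to scaling, given semistability). Then $q^2$ has no $1,y,z,yz,z^2,z^3$ terms, so $\sT_2\subset\sT\cap C$ set-theoretically. Conversely, if $q^2$ kills $S$, then $q$ must vanish at $p$ with tangent line $\ell$. Every double conic automatically satisfies the discriminant condition, so $\sT_2\subset\sQ$ and the same identification holds for $\sQ$.

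The main obstacle is the scheme-theoretic and tangent-space statement, namely cleanness. By $G$-equivariance and homogeneity of $\Fl$ and $Z_\DC$, it suffices to work at one point: $p=[1:0:0]$, $\ell=\{y=0\}$, and $F=(xy-z^2)^2$ after the usual coordinate change. I would decompose $T_F X$ into $\lambda_R$-weight spaces as in Lemmas~\ref{lem:normal.Rfixed.X} and~\ref{lem:normaltonormal}. In these terms $T\sT$ in the fiber direction is the span of monomials of nonnegative weight, and $TZ_\DC=(xy-z^2)\cdot\C[x,y,z]_2$ modulo $F$. I then compute the intersection $\big(T_{F}\sT\big)\cap T_FZ_\DC$ together with the $\Fl$-directions. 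The expected answer is $q\cdot\langle$conics with no weight $<0$ terms$\rangle$ plus the $\Fl$-directions, which has dimension $\dim\sT_2$.

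Equivalently, the normal directions of $Z_\DC$ that survive in $\sT$ should be the nonnegative-weight part of $H^0(Q,\cO(8))$ in Lemma~\ref{lem:etalenbd.DC}, and these must inject into $N_{C/Y}$. The same computation handles $\sQ$: its tangent space is cut out by the linearized discriminant $2a_{112}\,da_{112}=4a_{220}\,da_{004}+4a_{004}\,da_{220}$. I will need to verify that this hyperplane contains $T\sT_2$ and meets $TZ_\DC$ no further. If the tangent-space count is awkward, my fallback is to write explicit equations: in the affine chart $a_{220}=1$ of the $\PP^8$-fiber, parametrize the double conics by $q=xy+uz^2+\dots$ and check that $\sT\cap C$ is cut out by a regular sequence of the right length.
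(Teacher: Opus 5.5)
Your proposal is correct and is essentially the paper's argument. The paper observes that the squares formed by the second Veronese map are Cartesian, i.e.\ $\sT\cap(\Fl\times Z_\DC)=\sT_2$ scheme-theoretically with $\sT_2\subset\sQ$, and then invokes compatibility of weighted blowups with restriction to closed substacks; your clean-intersection and root-stack reduction is that same step, and your tangent-space count makes the Cartesianity explicit. The separate tangent-space check for $\sQ$ is unnecessary: $\sQ\cap(\Fl\times Z_\DC)$ is a closed subscheme of $\sT\cap(\Fl\times Z_\DC)=\sT_2$ that contains the reduced scheme $\sT_2$, so it equals $\sT_2$.
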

\begin{proof}
	Via the second Veronese map, we have a commutative diagram
\[
\begin{tikzcd}
	& \sT_2 \arrow[r,hook]\arrow[d,hook']\arrow[ld,hook']&
	\Fl\times X_2^{ss} \arrow[r] \arrow[d,hook'] &
	Z_\DC \arrow[d,hook']\\
	\sQ\arrow[r,hook]&
	\sT\arrow[r,hook]&
	\Fl\times X^{ss}  \arrow[r]&
	X^{ss}.
\end{tikzcd}
\]
where the vertical maps are induced by the second Veronese,
the leftmost vertical map factors through $\sQ$, 
and all squares are Cartesian.
Hence the assertion follows from the compatibility of stack-theoretic weighted blowups with base change. 
\end{proof}

Summarizing the construction, we obtain the following diagram:
\[
\begin{tikzcd}
	\widetilde \sQ \arrow[r,hook]\arrow[d] &
	\widetilde \sT \arrow[r,hook]\arrow[d] &
	\Fl\times U^{1/2} \arrow[d,"f_\Fl"]\arrow[r] &
	U^{1/2}\arrow[d,"f"]\\
	\sQ\arrow[r,hook]&
	\sT\arrow[r,hook] &
	\Fl\times X^{ss} \arrow[r]&
	X^{ss}
\end{tikzcd}
\]
where the left two vertical maps are the stack-theoretic weighted blowups along $\sT_2$, and under the composite maps in the upper row, $\widetilde \sQ$ and $\widetilde \sT$ map onto $U^{1/2,us}$ and $U^{1/2,ns}$, respectively.

By Lemma~\ref{lem:surj.push}, it suffices to compute the images of the pushforward maps from $\widetilde \sQ$ and $\widetilde \sT$ to $U^{1/2}$. 
In Subsection~\ref{ss:push}, we show that they are determined by the $G$-equivariant fundamental classes of $\widetilde \sQ$ and $\widetilde \sT$, which will be computed using Theorem~\ref{thm:blowup.formula2} in Subsections~\ref{ss:rel.sT} and~\ref{ss:ChowK}.

\subsection{Images of pushforward maps}\label{ss:push}
We compute the images of the maps in \eqref{eq:push.tsQ}.
Write
\[\zeta:=c_1^G(\cO_{\PP V}(1)) \and \xi:=c_1^G(\cO_\rho(1)).\]
Then 
$A^*_G(\Fl)$ is generated by $\zeta$ and $\xi$ as an $A^*(BG)$-algebra, with relations
\beq\label{eq:rel.Fl}\zeta^3+c_2\zeta+c_3=0\and \xi^2+3\zeta \xi+3\zeta^2+c_2=0.\eeq

Since $\Fl \cong G/B$, 
where $B\subset G$ is the stabilizer of  
the pointed line $[1:0:0]\in \{y=0\}$, and
since the projection maps $\sT\to \Fl$ and $\sQ\to \Fl$ are $G$-equivariant, they are $G$-equivariant fiber bundles of the form $G\times^B F$ with fiber $F$. Thus, for the maximal torus $T\subset B$,
\[A^*_G(G\times^BF)\cong A^*_B(F)\cong A^*_T(F).\]

\begin{lemma}\label{lem:pullback.tsT.tsQ}
	The following pullback maps are all surjective:
	\[A^*_G(\Fl)\lra A^*_G(\sT), \quad  A^*_G(\Fl)\lra A^*_G(\sQ), \and A^*_G(\Fl)\to A^*_G(\sT_2).\]
	Consequently, the restriction maps $A^*_G(\sT)\to A^*_G(\sT_2)$ and $A^*_G(\sQ)\to A^*_G(\sT_2)$ are surjective.
\end{lemma}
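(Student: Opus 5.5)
Since $\sT$, $\sQ$ and $\sT_2$ are $G$-equivariant fiber bundles over $\Fl\cong G/B$ of the form $G\times^B F$, I would reduce each statement to a statement about $T$-equivariant Chow rings of the fibers, using $A^*_G(G\times^B F)\cong A^*_T(F)$. Under this identification $A^*_G(\Fl)\cong A^*_T(\pt)=A^*(BT)$, and the pullback $A^*_G(\Fl)\to A^*_G(G\times^BF)$ becomes the structure map $A^*(BT)\to A^*_T(F)$. Surjectivity of the three maps is then equivalent to saying that $A^*_T(F)$ is generated by $A^*(BT)$, i.e. that $A^*(BT)\to A^*_T(F)$ is onto, for each of the three fibers $F$.

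Next I would identify the fibers explicitly. For $\sT_2$ the fiber $F_2$ is the set of smooth conics in $\PP(\sK_{S'})_{[1:0:0],\{y=0\}}$, i.e. conics with no $x^2$ and no $xz$ term, so $q=a xy+b y^2+c yz+d z^2$ with the smoothness condition, which forces $a\neq0$ and $d\neq 0$ (the discriminant is $-a^2d/4$ up to scalar). Thus $F_2$ is an open subset $\{ad\neq0\}$ of $\PP^3$. For $\sT$ the fiber is $\PP^8\cap X^{ss}$, and by Remark~\ref{rem:sss} semistability forces $a_{220}\neq0$ and $(a_{112},a_{004})\neq(0,0)$, while the remaining coordinates (of positive $\lambda_R$-weight) are free. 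For $\sQ$ one additionally imposes $a_{112}^2=4a_{004}a_{220}$, and with $a_{220}\ne0$ this lets us solve $a_{004}=a_{112}^2/(4a_{220})$, so the fiber is the graph over $\{a_{220}\neq0, a_{112}\neq0\}$ essentially. In each case, I expect the fiber to be a $T$-equivariant vector bundle (affine space with linear $T$-action on the free coordinates) over a base of the form $(\text{torus-orbit-like open set})$, e.g. $\{a_{220}\ne0\}\times(\A^2_{a_{112},a_{004}}\setminus0)$ projectivized. Homotopy invariance removes the free affine directions, so $A^*_T(F)\cong A^*_T(F_0)$ where $F_0$ is a small $T$-variety such as $\Gm\times(\A^2\setminus 0)$ modulo scaling, or a single $T$-orbit.

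The key step, which I expect to be the main obstacle, is to check that $A^*_T(F_0)$ is a quotient of $A^*(BT)$. For open subsets of $\A^n$ with linear $T$-action, the localization sequence gives $A^*_T(\A^n)=A^*(BT)\twoheadrightarrow A^*_T(U)$ for $U$ open, and for a projective space $\PP(W)$ one gets $A^*_T(\PP W)$ generated by $A^*(BT)$ and the hyperplane class $h$; the point is that on the open locus where some coordinate $a$ (e.g. $a_{220}$) is nonzero, $h$ equals minus the weight of $a$, hence lies in the image of $A^*(BT)$. Doing this carefully for $\sQ$ requires noting the quadric cone $\{a_{112}^2=4a_{004}a_{220}, a_{220}\neq0\}$ is isomorphic to an affine space in the coordinates other than $a_{004}$, hence again open in a linear representation after dehomogenizing by $a_{220}$. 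The final assertion follows formally: since $\sT_2\to\Fl$ factors as $\sT_2\hookrightarrow\sQ\hookrightarrow\sT\to\Fl$, surjectivity of $A^*_G(\Fl)\to A^*_G(\sT_2)$ implies surjectivity of both restriction maps.
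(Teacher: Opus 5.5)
Your proof is correct and follows essentially the paper's route. Via $A^*_G(G\times^B F)\cong A^*_T(F)$ you reduce to the fibers. By Remark~\ref{rem:sss}, the fibers of $\sT$ and $\sQ$ become $T$-stable open subsets of linear representations $\C^8$ and $\C^7$ after dehomogenizing by $a_{220}$ (and, for $\sQ$, eliminating $a_{004}=a_{112}^2/(4a_{220})$), so localization from $A^*_T(\C^n)=A^*(BT)$ gives surjectivity. The last assertion then follows from the factorization $\sT_2\hookrightarrow\sQ\hookrightarrow\sT\to\Fl$.

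The only variation is for $\sT_2$. The paper uses the projective bundle formula and kills $H$ because $\sT_2\to X^{ss}$ factors through $Z_\DC$ (Lemma~\ref{lem:rest.DC}). You instead observe directly that smoothness of $axy+by^2+cyz+dz^2$ forces $ad\neq 0$, so the fiber is again open in an affine chart with linear $T$-action. Both arguments work.
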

\begin{proof}
	Each fiber $F$ of $\sT\to \Fl$ (resp.\ $\sQ\to \Fl$) is isomorphic to an open subscheme of a $T$-representation $\C^8$ (resp.\ $\C^7$) by Lemma~\ref{lem:Uns} (resp.\ \ref{lem:Uus}) and Remark~\ref{rem:sss}. 
	By the $\A^1$-homotopy property and the localization sequence, the first two maps are surjective.

	Similarly, $\sT_2$ is an open subscheme of the projective bundle $\PP(\sK_{S'})$ over $\Fl$. By the projective bundle formula and the localization sequence, $A^*_G(\sT_2)$ is generated as an $A^*_G(\Fl)$-algebra by the class $H$. Since $\sT_2\to X^{ss}$ factors through $Z_\DC$, the pullback of $H$ vanishes in $A^*_G(\sT_2)$ by Lemma~\ref{lem:rest.DC}. Hence the pullback $A^*_G(\Fl)\to A^*_G(\sT_2)$ is surjective. 
	
	It follows that the restriction map $A^*_G(\sT)\to A^*_G(\sT_2)$ is surjective.
	Since this factors through $A^*_G(\sQ)$, the map $A^*_G(\sQ)\to A^*_G(\sT_2)$ is also surjective.
\end{proof}

\begin{lemma}\label{lem:images.tsT.tsQ}
The rings $A^*_G(\widetilde \sT)$ and $A^*_G(\widetilde \sQ)$ are generated by $e$ as $A^*_G(\Fl)$-algebras.
In particular, the images of the pushforward maps
\[A^G_*(\widetilde\sT)\lra A^G_*(\Fl\times U^{1/2}) \and A^G_*(\widetilde\sQ)\lra A^G_*(\Fl\times U^{1/2})\]
are $A^*_G(\Fl)[e]\cdot [\widetilde \sT]$ and $A^*_G(\Fl)[e]\cdot [\widetilde \sQ]$, respectively.
\end{lemma}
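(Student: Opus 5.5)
The statement has two parts: generation of $A^*_G(\widetilde\sT)$ and $A^*_G(\widetilde\sQ)$ by $e$ over $A^*_G(\Fl)$, and the resulting description of the pushforward images. I will derive the first from the blowup formula (Theorem~\ref{thm:blowup.formula1}) and the surjectivity results of Lemma~\ref{lem:pullback.tsT.tsQ}. The second then follows formally from the projection formula.

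For $\widetilde\sT$, the preceding lemma identifies $\widetilde\sT\to\sT$ with the stack-theoretic weighted blowup along $\sT_2$ with weight two. Both $\sT$ and $\sT_2$ are smooth: $\sT$ is an open subset of a $\PP^8$-bundle over $\Fl$, and $\sT_2$ is an open subset of a projective bundle over $\Fl$. Lemma~\ref{lem:pullback.tsT.tsQ} shows that $A^*_G(\sT)\to A^*_G(\sT_2)$ is surjective. Theorem~\ref{thm:blowup.formula1} therefore applies, once we equip the normal bundle with the fiberwise $\Gm$-action coming from the weight-two blowup. It gives
\[A^*_G(\widetilde\sT)\cong A^*_G(\sT)[t]/(\cdots),\]
where $t=-[\text{exceptional divisor}]$. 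The exceptional divisor of $\widetilde\sT\to\sT$ is the restriction of the exceptional divisor of $\Fl\times U^{1/2}\to\Fl\times X^{ss}$, because the squares in the Veronese diagram are Cartesian and weighted blowups are compatible with base change. So $t$ is the pullback of $e$. Since $A^*_G(\Fl)\to A^*_G(\sT)$ is surjective, $A^*_G(\widetilde\sT)$ is generated by $e$ over $A^*_G(\Fl)$.

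The divisor $\sQ$ is singular along $\sT_2$: it is fiberwise a quadric cone whose vertex locus contains the double conics, so the proper transform $\widetilde\sQ$ need not be the blowup of a smooth stack. I expect this to be the main obstacle. I would first check in the explicit fiber coordinates of Lemma~\ref{lem:Uus} and Remark~\ref{rem:sss} whether the equation $a_{112}^2=4a_{004}a_{220}$ can be solved as $a_{004}=a_{112}^2/(4a_{220})$ with $a_{220}\neq 0$. If it can, then $\sQ$ is a graph over an open subset of a $T$-representation $\C^7$, hence smooth. It contains $\sT_2$ as a smooth closed substack, the preceding lemma again identifies $\widetilde\sQ\to\sQ$ as a weighted blowup, and Lemma~\ref{lem:pullback.tsT.tsQ} supplies the surjectivity hypothesis. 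The same argument as for $\widetilde\sT$ then applies verbatim.

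If Theorem~\ref{thm:blowup.formula1} does not apply cleanly, I will fall back on the localization sequence
\[A^G_*(\widetilde E)\to A^G_*(\widetilde\sQ)\to A^G_*(\sQ\setminus\sT_2)\to 0,\]
where $\widetilde E$ is the exceptional divisor. The exceptional divisor is a weighted projective bundle over $\sT_2$, so its classes are generated by powers of $e$ over $A^*_G(\sT_2)$, which is a quotient of $A^*_G(\Fl)$. Pushing these classes forward gives multiples of $e$, and the open part is handled by the surjectivity in Lemma~\ref{lem:pullback.tsT.tsQ}.

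Finally, every class on $\widetilde\sT$ or $\widetilde\sQ$ is a restriction $\iota^*\alpha$ with $\alpha\in A^*_G(\Fl)[e]\subset A^*_G(\Fl\times U^{1/2})$. The projection formula gives $\iota_*\iota^*\alpha=\alpha\cdot[\widetilde\sT]$, and likewise for $\widetilde\sQ$. This is exactly the claimed description of the pushforward images.
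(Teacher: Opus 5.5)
Your proposal is correct and follows the paper's proof. The paper applies Theorem~\ref{thm:blowup.formula1} to both weighted blowups along $\sT_2$ to write $A^*_G(\widetilde\sT)$ and $A^*_G(\widetilde\sQ)$ as quotients of $A^*_G(\sT)[e]$ and $A^*_G(\sQ)[e]$, invokes the surjectivity in Lemma~\ref{lem:pullback.tsT.tsQ}, and finishes with the projection formula. Your worry about $\sQ$ is unfounded, and your own check settles it: by Remark~\ref{rem:sss}, $a_{220}\neq 0$ on the semistable locus, so the vertex $\{a_{112}=a_{004}=a_{220}=0\}$ of the quadric cone (which does not contain the double conics) misses $\sT$, and the fibers of $\sQ$ are open subsets of $\C^7$, as already used in the proof of Lemma~\ref{lem:pullback.tsT.tsQ}. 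The localization fallback is therefore unnecessary.
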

\begin{proof}
	Applying Theorem~\ref{thm:blowup.formula1} to the blowups $\widetilde\sT\to \sT$ and $\widetilde\sQ\to \sQ$ (both along $\sT_2$), 
	one can write $A^*_G(\widetilde\sT)$ and $A^*_G(\widetilde\sQ)$ as quotients of $A^*_G(\sT)[e]$ and $A^*_G(\sQ)[e]$.
	Hence the first assertion follows from Lemma~\ref{lem:pullback.tsT.tsQ}.	
	The final assertion follows from the projection formula.
\end{proof}

As a consequence, the images of the pushforward maps in \eqref{eq:push.tsQ} are characterized as follows.
Let $p_{U^{1/2}}:\Fl\times U^{1/2}\to U^{1/2}$ denote the second projection.
\begin{proposition}\label{prop:images.tsT.tsQ} 
The following statements hold.
\begin{enumerate}
	\item The image of the map $A_*^G(\widetilde\sT)\to A_*^G(U^{1/2})$ is the ideal generated by
		\[r^{\sT}_{ij}:= p_{U^{1/2}*}\big([\widetilde \sT]\cdot \zeta^i\xi^j\big),\quad 0\leq i\leq 2,\; 0\leq j\leq 1.\]
	\item The image of the map $A_*^G(\widetilde\sQ)\to A_*^G(U^{1/2})$ is the ideal generated by
		\[r^{\sQ}_{ij}:= p_{U^{1/2}*}\big([\widetilde \sQ]\cdot \zeta^i\xi^j\big), \quad 0\le i\le 2,\; 0\le j\le 1.\]
\end{enumerate}
\end{proposition}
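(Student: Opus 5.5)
The plan is to combine Lemma~\ref{lem:images.tsT.tsQ} with the projection formula for $p_{U^{1/2}}$ and the structure of $A^*_G(\Fl)$ as a module over $A^*(BG)$. I give the argument for (1); the case of $\widetilde\sQ$ in (2) is identical after replacing $\widetilde\sT$ by $\widetilde\sQ$.

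Since $\widetilde\sT\to U^{1/2}$ factors as the closed immersion $\widetilde\sT\hookrightarrow\Fl\times U^{1/2}$ followed by $p_{U^{1/2}}$, the image of $A^G_*(\widetilde\sT)\to A^G_*(U^{1/2})$ equals $p_{U^{1/2}*}$ applied to the image in $A^*_G(\Fl\times U^{1/2})$. By Lemma~\ref{lem:images.tsT.tsQ}, the latter image is $A^*_G(\Fl)[e]\cdot[\widetilde\sT]$. Here $e$ is pulled back from $U^{1/2}$.

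The next step is to show that $A^*_G(\Fl)$ is a free $A^*(BG)$-module with basis $\zeta^i\xi^j$ for $0\le i\le 2$ and $0\le j\le 1$. This follows from the relations \eqref{eq:rel.Fl}, since $\Fl$ is a $\PP^1$-bundle over the $\PP^2$-bundle $\PP V\to\pt$; two applications of the projective bundle formula give the basis. Hence every element of $A^*_G(\Fl)[e]\cdot[\widetilde\sT]$ is a sum of terms $\alpha\cdot\zeta^i\xi^j[\widetilde\sT]$, where $\alpha$ is a polynomial in $e$ with coefficients in $A^*(BG)$. All such $\alpha$ are pullbacks along $p_{U^{1/2}}$ of classes in $A^*_G(U^{1/2})$.

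The projection formula then gives $p_{U^{1/2}*}(p_{U^{1/2}}^*\alpha\cdot\zeta^i\xi^j[\widetilde\sT])=\alpha\cdot r^{\sT}_{ij}$. So the image lies in the ideal of $A^*_G(U^{1/2})$ generated by the $r^{\sT}_{ij}$. It lies a priori inside the $A^*(BG)[e]$-submodule they generate, and this submodule is already an ideal, because $A^*_G(U^{1/2})$ is generated by $H,c_2,c_3,e$ (Proposition~\ref{prop:Chowring.Uhalf}).

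It remains to account for multiplication by $H$. The key point is that $p_{U^{1/2}}^*H=f_\Fl^*H$ restricted to $\widetilde\sT$ is expressible through $A^*_G(\Fl)[e]$ by Lemma~\ref{lem:images.tsT.tsQ}, so multiplying by $H$ does not leave the image. Conversely, each $r^{\sT}_{ij}$ visibly lies in the image, and the image of a pushforward is an $A^*_G(U^{1/2})$-submodule by the projection formula, hence an ideal. The two inclusions give equality.

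The only step needing care is the identification of the image in $\Fl\times U^{1/2}$, and this is already supplied by Lemma~\ref{lem:images.tsT.tsQ}. The main point to check is that the lemma's generation statement (generated by $e$ over $A^*_G(\Fl)$) genuinely covers all of $A^*_G(\widetilde\sT)$, so that no extra classes such as $H$ restricted to $\widetilde\sT$ escape. This holds because, via Lemma~\ref{lem:pullback.tsT.tsQ}, $H$ restricted to $\widetilde\sT$ is itself a polynomial in $\zeta,\xi,e$ over $A^*(BG)$.
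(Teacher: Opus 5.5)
Your argument is correct and is essentially the paper's proof: by Lemma~\ref{lem:images.tsT.tsQ} the image in $\Fl\times U^{1/2}$ is $A^*_G(\Fl)[e]\cdot[\widetilde\sT]$, and the projection formula together with the $A^*(BG)$-basis $\zeta^i\xi^j$ ($0\le i\le 2$, $0\le j\le 1$) coming from the relations \eqref{eq:rel.Fl} gives the stated generators. The separate paragraph on multiplication by $H$ is unnecessary. The sentence asserting that the $A^*(BG)[e]$-span of the $r^{\sT}_{ij}$ is an ideal merely because $A^*_G(U^{1/2})$ is generated by $H,c_2,c_3,e$ is a non sequitur. Your two inclusions already finish the proof: the image lies in that span and hence in the ideal, and conversely the ideal lies in the image because a pushforward image is an ideal containing each $r^{\sT}_{ij}$.
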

\begin{proof}
	This follows from the last assertion in Lemma~\ref{lem:images.tsT.tsQ} and the projection formula. 
	By the relations~\eqref{eq:rel.Fl}, the range $0\leq i\leq 2$, $0\leq j\leq 1$ suffices.
\end{proof}
\begin{corollary}\label{cor:Chow.rij}
Let $\cPKs\subset \cPK$ denote the open substack of K-stable pairs. Then
\[A^*(\cPK)\cong A^*_G(U^{1/2})/(r^{\sQ}_{ij}) \and A^*(\cPKs)\cong A^*_G(U^{1/2})/(r^{\sT}_{ij}),\]
where $0\leq i\leq 2$ and $0\leq j\leq 1$, and $A^*_G(U^{1/2})$ is given in Proposition~\ref{prop:Chowring.Uhalf}.
\end{corollary}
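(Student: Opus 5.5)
The plan is to deduce Corollary~\ref{cor:Chow.rij} from the localization sequence together with Lemma~\ref{lem:surj.push} and Proposition~\ref{prop:images.tsT.tsQ}. Start from the identification $A^*(\cPK)\cong A^*_G(U^{1/2,ss})$ recorded at the beginning of this section; it uses that $G\to\bar G$ has finite kernel, so equivariant Chow groups with $\Q$-coefficients agree. Then apply the right exact localization sequence \eqref{eq:loc} to the closed substack $U^{1/2,us}\subset U^{1/2}$. This gives a surjection $A^*_G(U^{1/2})\to A^*_G(U^{1/2,ss})$ whose kernel is the image of \eqref{eq:push.Uunstable}.

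By Lemma~\ref{lem:surj.push}, this image equals the image of $A_*^G(\widetilde\sQ)\to A_*^G(U^{1/2})$. The reason is that $\widetilde\sQ\to U^{1/2,us}$ is proper and surjective, so with rational coefficients every cycle on $U^{1/2,us}$ is, up to a nonzero rational multiple, a pushforward from $\widetilde\sQ$. Proposition~\ref{prop:images.tsT.tsQ}(2) identifies this image with the ideal generated by the $r^{\sQ}_{ij}$ for $0\le i\le 2$ and $0\le j\le 1$. That proposition rests on Lemma~\ref{lem:images.tsT.tsQ}, which says $A^*_G(\widetilde\sQ)$ is generated by $e$ over $A^*_G(\Fl)$, and on the projection formula for $p_{U^{1/2}}$. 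The projection formula matters here: $e$ is pulled back from $U^{1/2}$, so multiplying by powers of $e$ keeps us inside the ideal generated by the $r^{\sQ}_{ij}$. Combining these gives the first isomorphism.

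For the stable locus, I would first identify $\cPKs$ with $[U^{1/2,s}/\bar G]$. This uses that the K-stable locus corresponds to the GIT stable locus under the isomorphism $\cPK\cong[U^{1/2,ss}/\bar G]$ of \cite{ADL}, so that $\cPK\setminus\cPKs$ is the strictly polystable/strictly semistable locus $U^{1/2,ss}\setminus U^{1/2,s}$. Then apply localization to $U^{1/2,ns}\subset U^{1/2}$, whose open complement is $U^{1/2,s}$. The kernel of the restriction map is the image of $A_*^G(U^{1/2,ns})$. By Lemma~\ref{lem:surj.push} and Proposition~\ref{prop:images.tsT.tsQ}(1), this image is the ideal $(r^{\sT}_{ij})$.

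I expect the main care to be needed at this modular identification of $\cPKs$ with the GIT stable part of $U^{1/2}$. One has to check that $U^{1/2,ns}$ contains all of $U^{1/2,us}$, which is immediate since unstable points are not stable. Once that is in place, the remaining steps are formal.
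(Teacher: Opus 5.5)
Your proposal is correct and follows the paper's proof. The paper's proof consists of combining the localization sequences for $U^{1/2,us}\subset U^{1/2}$ and $U^{1/2,ns}\subset U^{1/2}$ with Lemma~\ref{lem:surj.push} and Proposition~\ref{prop:images.tsT.tsQ}, and it uses implicitly the identification of the K-stable locus with the GIT-stable part $[U^{1/2,s}/\bar G]$ that you single out and justify in the standard way.
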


\begin{proof}
This follows from Lemma~\ref{lem:surj.push}, 
Proposition~\ref{prop:images.tsT.tsQ}, and the localization sequences.
\end{proof}
We first compute the classes $r^{\sT}_{ij}$ in Subsection~\ref{ss:rel.sT}, 
and then compute $r^{\sQ}_{ij}$ in Subsection~\ref{ss:ChowK}.

\subsection{The relations $r^{\sT}_{ij}$}\label{ss:rel.sT}
We compute the class $[\widetilde{\sT}]\in A^6_G(\Fl\times U^{1/2})$ 
using Theorem~\ref{thm:blowup.formula2}, and then apply the projective bundle formula to obtain the relations $r^{\sT}_{ij}=p_{U^{1/2}*}\big([\widetilde \sT]\cdot \zeta^i\xi^j\big)$.

Consider the Cartesian diagram 
\[
\begin{tikzcd}
	\Fl\times E_\DC^{1/2} \arrow[r,hook,"j_\Fl"] \arrow[d,"g_\Fl"'] &
	\Fl\times U^{1/2}\arrow[d,"f_\Fl"'] \arrow[r,"p_{U^{1/2}}"] &
	U^{1/2}\arrow[d,"f"]\\
	\Fl\times X_2^{ss} \arrow[r,hook,"i_\Fl"] &
	\Fl\times X^{ss} \arrow[r,"p_{X^{ss}}"]&
	X^{ss}
\end{tikzcd}
\]
where 
$E_\DC^{1/2}$ denotes the exceptional divisor in $U^{1/2}$.
Let $i_{\sT_2}:\sT_2\hookrightarrow \Fl\times X_2^{ss}$ be the natural inclusion.
By Theorem~\ref{thm:blowup.formula2}, the class of $\widetilde \sT$ is
\[
[\widetilde \sT] = f_\Fl^*[\sT]-\left\{j_{\Fl*}\left(P_{\DC}(1)\cdot (1+e+e^2+\cdots)\cdot g_\Fl^*i_{\sT_2*}s^{wt}\left(N_{\sT_2/\sT}\right)\right)\right\}^6,
\]
where, by abuse of notation, we write $e$ for $-[\Fl\times E_\DC^{1/2}]$.
We compute $[\sT]$ and $s^{wt}\left(N_{\sT_2/\sT}\right)$. 

First, since $\PP(\sK_S)$ is defined by~\eqref{eq:KS}, its
class in $A^*_G(\Fl\times X^{ss})$ is 
\beq\label{eq:sTclass}[\sT]=c_6^G\big(\sP^S(\cO_{\PP V}(4))\boxtimes \cO_X(1)\big)=\sum_{k=0}^6 c_k^G\big(\sP^S(\cO_{\PP V}(4))\big)\, H^{6-k}.\eeq
To compute $c^G(\sP^S(\cO_{\PP V}(4)))$, set
\[\Omega_y:=\Omega_\rho\otimes \cO_\rho(1)\and \Omega_z:=\cO_\rho(1).\]
The relative Euler sequence for $\rho$ gives
$0\to \Omega_y\to \rho^*\Omega_{\PP V}\to \Omega_z\to 0$.
Writing $c_1^G(\Omega_z)=\xi$, we have $c_1^G(\Omega_y)=-3\zeta-\xi$.
For integers $i,j,k\geq0$, 
set 
\[L_{ijk}:=\cO_{\PP V}(i+j+k)\otimes \Omega_y^{\otimes j}\otimes \Omega_z^{\otimes k}.\]
Then the coefficient $a_{ijk}$ of $x^iy^jz^k$ defines a section of $L_{ijk}\boxtimes \cO_X(1)$, and 
\beq\label{eq:PS4.topChern}
\begin{split}
	&c_t^G\big(\sP^S(\cO_{\PP V}(4))\big)=\prod_{i+j+k=4,~y^jz^k\in S}c_t^G(L_{ijk}). 
\end{split}
\eeq
by \cite[Section~3.2]{CL22}, where $c_t(L_{ijk})=t+(i+j+k)\zeta-j(3\zeta+\xi)+k\xi$. 

Next, since $f$ is a weight-two blowup, the weighted Segre class with weight $2$ is
$s^{wt}\left(N_{\sT_2/\sT}\right) = 1/c_t^G\left(N_{\sT_2/\sT}\right)|_{t=2}$.
By comparing the defining exact sequences for $\sK_S$ and $\sK_{S'}$, we have
\[ c_t^G\left(N_{\sT_2/\sT}\right)\big|_{t=2}=\frac{c_t^G(\Sym^4V^*)}{c_t^G(\Sym^2V^*)}\bigg|_{t=2}\cdot
\frac{c_t^G(\sP^{S'}(\cO(2)))}{c_t^G(\sP^S(\cO(4)))}\bigg|_{t=2}.\]
Over $\sT_2$, the first factor corresponds to the contribution from the normal directions to the locus $Z_\DC$, and equals $P_{\DC}(1)$.
For the second factor, the filtration of $\sP^S(-)$ in~\cite{CL22} gives
\beq\label{eq:Jclass}
\begin{split}
	&\frac{c_t^G(\sP^{S'}(\cO(2)))}{c_t^G(\sP^S(\cO(4)))}\bigg|_{t=2} =\frac{\prod_{i+j+k=2,~y^jz^k\in S'}c_t^G(L_{ijk})} {\prod_{i+j+k=4,~y^jz^k\in S}c_t^G(L_{ijk})}\bigg|_{t=2}=:J. 
\end{split}
\eeq
Since $s^{wt}\left(N_{\sT_2/\sT}\right)=P_\DC(1)^{-1}J^{-1}$, the factor $P_\DC(1)$ cancels inside the bracket $\{\cdots\}$. Thus
\beq\label{eq:tildesT.class}[\widetilde \sT] = f_\Fl^*[\sT]+\left\{(e+e^2+e^3+\cdots)\cdot J^{-1}\cdot [\sT_2]\right\}^6,\eeq
where we have $[\sT_2]=c_2^G\big(\sP^{S'}(\cO_{\PP V}(2))\big)=c_1^G(L_{200})c_1^G(L_{101})$, since $H|_{\sT_2}=0$.
\medskip

Combining \eqref{eq:sTclass}--\eqref{eq:tildesT.class} and applying the projective bundle formula, we obtain the following relations:
\beq\label{rel.sT}
\begin{split}
	r^{\sT}_{00} &= -56c_3 -220c_2H + 200H^3 -200c_2e + 160e^3,\\
	r^{\sT}_{10} &= 48c_2^2 -250c_3H -320c_2H^2 +52H^4 -280c_2e^2 + 32e^4,\\
	r^{\sT}_{20} &= 104c_2c_3 +220c_2^2H -347c_3H^2 -195c_2H^3 +5H^5 +200c_2^2e -160c_2e^3,\\
	r^{\sT}_{01} &= -48c_2^2 +636c_3H +390c_2H^2 -42H^4 +280c_2e^2 -32e^4,\\
	r^{\sT}_{11} &= -248c_2c_3 -268c_2^2H +645c_3H^2 +215c_2H^3 +3H^5 -200c_2^2e +160c_2e^3,\\
	r^{\sT}_{21} &= 48c_2^3 -224c_3^2 -874c_2c_3H -396c_2^2H^2 +277c_3H^3 +37c_2H^4 +H^6\\
				&\qquad -280c_2^2e^2 +32c_2e^4.
\end{split}
\eeq

\subsection{Chow rings of the stable loci}\label{ss:stable}
We record the Chow rings of the stable loci of $\cPK$ and $\cPGIT$, 
which are determined by \eqref{rel.sT} and Corollary~\ref{cor:Chow.rij}.

Let $\cPGITs\subset \cPGIT$ denote the stable locus. By Lemma~\ref{lem:Uns}, we have $\cPGITs\cong \cPKs\setminus\cEK$.

\begin{theorem}[Chow rings of $\cPKs$ and $\cPGITs$]\label{thm:Chowring.stable}
We have
\[A^*(\cPKs)\cong\frac{\QQ[H,c_2,c_3,e]}{(r^\tr_4,r^\tr_5, eH, ec_3,  r^{\sT}_{00}, r^{\sT}_{10})}, \and A^*(\cPGITs)\cong\frac{\QQ[H,c_2,c_3]}{(r^\tr_4,r^\tr_5, \bar r^{\sT}_{00}, \bar r^{\sT}_{10})},\]
where $\bar r^{\sT}_{00}$ and $\bar r^{\sT}_{10}$ are obtained from $r^{\sT}_{00}$ and $r^{\sT}_{10}$ by setting $e=0$.
\end{theorem}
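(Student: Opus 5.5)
Corollary~\ref{cor:Chow.rij} gives $A^*(\cPKs)\cong A^*_G(U^{1/2})/(r^{\sT}_{ij})$. Proposition~\ref{prop:Chowring.Uhalf} presents $A^*_G(U^{1/2})$ as $\Q[H,c_2,c_3,e]$ modulo $(r^\tr_4,r^\tr_5,r^\tr_6,r^\flex_5,eH,ec_3,Q_\DC)$. So $A^*(\cPKs)$ is $\Q[H,c_2,c_3,e]$ divided by the ideal $I$ generated by these seven classes together with the six relations \eqref{rel.sT}. The plan is to show that $I$ equals the smaller ideal $I':=(r^\tr_4,r^\tr_5,eH,ec_3,r^{\sT}_{00},r^{\sT}_{10})$. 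Since $I'\subset I$ is clear, this amounts to an ideal-membership check, which I would do by a Gröbner basis computation in weighted degree ($\deg H=\deg e=1$, $\deg c_2=2$, $\deg c_3=3$). Concretely, I would verify that $r^\tr_6$, $r^\flex_5$, $Q_\DC$, $r^{\sT}_{20}$, $r^{\sT}_{01}$, $r^{\sT}_{11}$ and $r^{\sT}_{21}$ all reduce to zero modulo $I'$.

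Several of these reductions are visible by hand. The relation $r^{\sT}_{01}+r^{\sT}_{10}$ equals $386c_3H+70c_2H^2+10H^4$, so modulo $eH$ it is a combination of $r^\tr_4$-type terms; the $e$-parts of $r^{\sT}_{10}$ and $r^{\sT}_{01}$ cancel exactly. Similarly, the $e$-parts of $r^{\sT}_{20}$ and $r^{\sT}_{11}$ are $\pm c_2\cdot(\text{$e$-part of }r^{\sT}_{00})$. Hence $r^{\sT}_{20}-c_2r^{\sT}_{00}$ and $r^{\sT}_{11}+c_2r^{\sT}_{00}$ are $e$-free modulo $(eH,ec_3)$, and these $e$-free pieces should lie in $(r^\tr_4,r^\tr_5)$ together with $H\cdot(\ldots)$-multiples. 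The degree-$9$ class $Q_\DC$ should follow from the degree-$3$ and degree-$4$ relations. Pure powers of $e$ are controlled by $r^{\sT}_{00}$ (giving $e^3$ in terms of $c_2e$) and $r^{\sT}_{10}$ (giving $e^4$ in terms of $c_2e^2$). As a sanity check, I would compare the Hilbert series of the resulting ring with the one predicted by the stable-locus stratification.

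For $\cPGITs\cong\cPKs\setminus\cEK$, the localization sequence shows that $A^*(\cPGITs)$ is the quotient of $A^*(\cPKs)$ by the image of $A^*(\cEK)$. I would argue that this image is the ideal $(e)$: the map $A^*(\cPKs)\to A^*(\cEK\cap\cPKs)$ is surjective, because $\cEK$ is a gerbe over a quotient of an open subset of $Y_8$, whose classes are generated by $c_2$ and $e|_{\cEK}$. Hence $A^*(\cPGITs)$ is the quotient by $I'+(e)$. Setting $e=0$ kills $eH$ and $ec_3$ and turns $r^{\sT}_{00}$ and $r^{\sT}_{10}$ into $\bar r^{\sT}_{00}$ and $\bar r^{\sT}_{10}$, which gives the stated presentation.

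The main obstacle is the membership check, especially for $Q_\DC$ and $r^{\sT}_{21}$: they have high degree and involve $[Z_\DC]$ from \eqref{eq:fund.DC}. In practice I would carry this out by computer algebra rather than by hand.
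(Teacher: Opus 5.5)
Your proposal is correct and follows the paper's proof. The paper likewise reduces the first presentation to checking, by direct computation, that $r^\tr_6$, $r^\flex_5$, $Q_\DC$ and the remaining $r^{\sT}_{ij}$ lie in $(r^\tr_4,r^\tr_5,eH,ec_3,r^{\sT}_{00},r^{\sT}_{10})$, and obtains $A^*(\cPGITs)\cong A^*(\cPKs)/(e)$ from the localization sequence for $\cEK\cap\cPKs\subset\cPKs$; your surjectivity argument for restriction to $\cEK$ supplies a step the paper leaves implicit.

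One small slip: the $e$-free combinations are $r^{\sT}_{20}+c_2r^{\sT}_{00}$ and $r^{\sT}_{11}-c_2r^{\sT}_{00}$, with signs opposite to those you wrote.
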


\begin{proof}
One readily checks that the ideal $(r^{\sT}_{ij})$ is generated by $r^{\sT}_{00}$ and $r^{\sT}_{10}$ modulo the ideal $(r^\tr_4,r^\tr_5,eH,ec_3)$, and that the classes $r^\tr_6$, $r^\flex_5$ and $Q_{\DC}$ lie in the ideal $(r^\tr_4,r^\tr_5,eH,ec_3,r^{\sT}_{00},r^{\sT}_{10})$.
By Proposition~\ref{prop:Chowring.Uhalf} and Corollary~\ref{cor:Chow.rij}, this gives the first presentation.

For $\cPGITs\cong \cPKs\setminus \cEK$, the localization sequence for $\cEK\cap\cPKs\subset \cPKs$ yields $A^*(\cPGITs)\cong A^*(\cPKs)/(e)$, which gives the second presentation.
\end{proof}
\begin{remark}\label{rem:c3.redundant}
	By the relations $r^{\sT}_{00}$ and $\bar r^{\sT}_{00}$, the generator $c_3$ is redundant in both presentations.
	Hence $A^*(\cPKs)$ is generated by $H,e,c_2$, and $A^*(\cPGITs)$ is generated by $H,c_2$.
\end{remark}

From this presentation, one computes the Hilbert series as follows.
\begin{corollary}\label{cor:poincare.stable}
We have 
\[\begin{aligned}
	&\sum_{i}\dim A^i(\cPKs)\,t^i=1+2t+3t^2+4t^3+2t^4+t^5,\\
	&\sum_{i}\dim A^i(\cPGITs)\,t^i=1+t+2t^2+2t^3+t^4.
\end{aligned}\]
\end{corollary}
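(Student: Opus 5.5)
The Hilbert series of $A^*(\cPKs)$ and $A^*(\cPGITs)$ can be computed directly from the presentations in Theorem~\ref{thm:Chowring.stable}, working degree by degree. An independent check is available from the Poincar\'e series already computed in Section~\ref{s:Betti}.

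\textbf{Direct computation for $\cPGITs$.} By Remark~\ref{rem:c3.redundant}, the relation $\bar r^{\sT}_{00}=-56c_3-220c_2H+200H^3$ expresses $c_3$ as a polynomial in $H$ and $c_2$. After this substitution, $A^*(\cPGITs)$ is a quotient of $\Q[H,c_2]$, whose graded pieces have dimensions $1,1,2,2,3,3,\dots$. The remaining relations are $r^\tr_4$, $r^\tr_5$ and $\bar r^{\sT}_{10}$, with $c_3$ eliminated. In degree $4$ this leaves two relations, $r^\tr_4$ and $\bar r^{\sT}_{10}$, inside the $3$-dimensional space spanned by $H^4$, $H^2c_2$ and $c_2^2$.
\begin{itemize}
\item We first check that these two relations are linearly independent; this gives $\dim A^4=1$.
\item In degree $5$, we check that $H\cdot(\text{degree-}4\text{ relations})$ together with $r^\tr_5$ span the whole $3$-dimensional space spanned by $H^5$, $H^3c_2$ and $Hc_2^2$. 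This forces $A^5=0$, and hence $A^{\geq5}=0$.
\end{itemize}
The degrees $\leq3$ involve no relations, so the series is $1+t+2t^2+2t^3+t^4$.

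\textbf{Direct computation for $\cPKs$.} We use the generators $H,c_2,e$ after eliminating $c_3$ via $r^{\sT}_{00}$. Since $c_3$ is a combination of $c_2H$, $H^3$, $c_2e$ and $e^3$, the relation $ec_3=0$ becomes a relation in degree $4$. The relation $eH$ removes all mixed monomials in $e$ and $H$. In low degrees the graded pieces are therefore:
\begin{itemize}
\item degree $1$: spanned by $H$ and $e$;
\item degree $2$: spanned by $H^2$, $c_2$ and $e^2$;
\item degree $3$: spanned by $H^3$, $Hc_2$, $e^3$ and $ec_2$, which gives $4$.
\end{itemize}
In degree $4$ the monomials are $H^4$, $H^2c_2$, $c_2^2$, $e^4$ and $e^2c_2$, a $5$-dimensional space. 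The relations there are $r^\tr_4$, $r^{\sT}_{10}$ and $e\cdot c_3$ (rewritten). We check that these are independent, leaving dimension $2$. In degrees $5$ and $6$, a finite rank computation should give dimensions $1$ and $0$. This step is the main obstacle. It requires a Gr\"obner basis or explicit linear algebra over $\Q$ with the specific coefficients, and I would carry it out by computer algebra.

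\textbf{Independent check.} By the cycle class isomorphism, which holds for stable loci by the argument of Section~\ref{s:Betti} applied to $\cPKs=\cPK\setminus\cZ_\pTac$, the total dimension equals $P_t(\cPK)$ minus the Gysin contribution of the strictly polystable locus. This gives $13$ for $\cPKs$. Poincar\'e duality fails here since $\cPKs$ is not proper, so this serves only as a consistency check on the total, not on the individual degrees.
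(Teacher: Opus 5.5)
Your main argument follows the paper's own route: the corollary comes from computing the Hilbert series of the presentations in Theorem~\ref{thm:Chowring.stable} degree by degree after eliminating $c_3$. Your bookkeeping is consistent with this, including turning $ec_3=0$ into $e^4=\tfrac54 c_2e^2$ and the counts $1,2,3,4,2$ in degrees $\le 4$ for $\cPKs$.

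\textbf{A missing line for $\cPGITs$.} The step ``$A^5=0$, hence $A^{\ge 5}=0$'' needs one more line. Since the ring is generated in degrees $1$ and $2$, you have $A^6=HA^5+c_2A^4=c_2A^4$. The relation $\bar r^{\sT}_{10}$ contains the term $48c_2^2$, so $A^4$ is spanned by $H$-divisible monomials, and therefore $c_2A^4\subset HA^5=0$. For $\cPKs$ the analogous step is automatic. Every odd-degree monomial is divisible by $H$ or $e$, so $A^6=0$ gives $c_2A^5\subset HA^6+eA^6=0$, and hence all higher degrees vanish.

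\textbf{The independent check is wrong and should be dropped or redone.}
\begin{itemize}
\item $\cPKs$ is not $\cPK\setminus\cZ_\pTac$. The non-stable locus of $\cPK$ is the entire preimage of $\PP^1\subset\PK$, which has codimension~$3$. It is the descent of $U^{1/2,ns}$, the proper transform of the tacnodal locus $X^{sss}$. By contrast, $\cZ_\pTac$ is only its closed, zero-dimensional, polystable part.
\item Removing $\cZ_\pTac$ alone gives $P_t(\cPK)-t^{12}/(1-t^2)=1+2t^2+3t^4+5t^6+4t^8+3t^{10}+t^{12}$. The total is $19$, not $13$.
\item The correct comparison uses $\cPKs\cong\cPH\setminus\cZ_2\cong\cPhat\setminus\cEhat$. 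For example, $P_t(\cPH)-t^4P_t(\cZ_2)$ does give $1+2t^2+3t^4+4t^6+2t^8+t^{10}$.
\item However, left-exactness of that localization/Gysin sequence is not available a priori. This removal is not a Kirwan unstable stratum, so the equivariant-perfection argument of Section~\ref{s:Betti} does not apply.
\item In the paper this left-exactness (Corollary~\ref{cor:localization.left.exact}) is deduced \emph{from} the present corollary. So it cannot serve as an independent confirmation without a separate argument.
\end{itemize}
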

This leads to the exactness of the following localization sequences.

\begin{corollary}\label{cor:localization.left.exact}
The localization sequences
\[\begin{aligned}A_*(\cZ_2)\lra &A_*(\cPH)\lra A_*(\cPKs)\lra 0\\ 
A_*(\overline \cZ_1)\lra &A_*(\cPH)\lra A_*(\cPGITs)\lra 0 \end{aligned}\]
are also exact on the left. 
\end{corollary}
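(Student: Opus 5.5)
The plan is a dimension count. The localization sequences are right exact, so left exactness amounts to injectivity of the pushforwards $A_*(\cZ_2)\to A_*(\cPH)$ and $A_*(\overline\cZ_1)\to A_*(\cPH)$. I would show that, degree by degree, $\dim A^*(\cPH)$ equals the sum of the dimensions of the two outer terms.

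First, the open complements need to be identified. As recalled at the start of Section~\ref{s:chow.pk}, $\cPH\setminus\cZ_2\cong\cPKs$ and $\cPH\setminus\overline\cZ_1\cong\cPGITs$. Their Chow groups have the Hilbert series given in Corollary~\ref{cor:poincare.stable}.

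Second, I need the Betti numbers of $\cPH$, $\cZ_2$ and $\overline\cZ_1$, with Chow equal to cohomology. For $\cPH$, Theorem~\ref{thm:Betti.PH} gives $\sum_i\dim A^i(\cPH)t^i=1+2t+4t^2+5t^3+4t^4+2t^5+t^6$ after substituting $t^2\mapsto t$.

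For $\cZ_2\cong[(\cP(2,3,4)\times\cP(2,3,4))/\mu_2]\times B\Gamma$ (Proposition~\ref{prop:Z2}), rational Chow groups are the $\mu_2$-invariants of $A^*(\cP(2,3,4))^{\otimes 2}$. Since $A^*(\cP(2,3,4))=\Q[h]/(h^3)$, the invariants are spanned by $1$, $h_1+h_2$, $h_1^2+h_2^2$, $h_1h_2$, $h_1^2h_2+h_1h_2^2$ and $h_1^2h_2^2$. This gives series $1+t+2t^2+t^3+t^4$. Because $\cZ_2$ has codimension $2$, its contribution is $t^2(1+t+2t^2+t^3+t^4)$.

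Checking the first sequence: $(1+2t+3t^2+4t^3+2t^4+t^5)+(t^2+t^3+2t^4+t^5+t^6)=1+2t+4t^2+5t^3+4t^4+2t^5+t^6$. This matches $\cPH$ degree by degree. Since the sequence is exact on the right and in the middle, a surjection onto the right term together with this equality of dimensions forces the kernel of the left map to vanish.

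For $\overline\cZ_1$, I would use Remark~\ref{rem:Z1.Kirwan}. $\overline\cZ_1$ is the coarse-level image of the Kirwan blowup of $\cEK\cong[Y_8^{ss}/\PGL_2]$ at $\pHT$, and the map $\Psi^{-1}(\cEK)\to\overline\cZ_1$ is a root stack, so it does not change rational Chow groups. Its Poincar\'e series is therefore $P_t([Y_8^{ss}/\PGL_2])+\frac{t^2-t^{6}}{1-t^2}P_t(B(R\rtimes\mu_2))$ minus the unstable strata contributions $\sum_{i=0}^{2}t^{2(3+i)}/(1-t^2)$. Here $P_t([Y_8^{ss}/\PGL_2])$ comes from Kirwan \cite[16.2]{Kir84}. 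Chow equals cohomology for $\overline\cZ_1$ by the same argument as in Theorem~\ref{thm:Betti.PH}, and one could also cross-check against \cite[page~83]{Kir85}. The expected answer is $1+2t+3t^2+2t^3+t^4+\dots$, truncated to dimension $5$. Shifting by $t$ and adding $1+t+2t^2+2t^3+t^4$ should again recover the series of $\cPH$.

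The main obstacle is computing the series of $\overline\cZ_1$ correctly, including the Kirwan corrections. A shortcut is available: from the first sequence and $\cPGITs=\cPKs\setminus\cEK$, the required equality is equivalent to $\dim A^*(\overline\cZ_1)=\dim A^*(\cPH)-\dim A^*(\cPGITs)$. So it suffices to establish the upper bound $\dim A^{*}(\overline\cZ_1)\le$ that difference, and right exactness gives the reverse inequality.
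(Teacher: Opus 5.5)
Your strategy is the paper's: a degree-by-degree dimension count, using that Chow equals cohomology for every stack involved. Your treatment of the first sequence is correct, including the series $1+t+2t^2+t^3+t^4$ for $\cZ_2$.

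The second sequence does not go through as you set it up. Your blowup correction $\frac{t^2-t^{6}}{1-t^2}P_t(B(R\rtimes\mu_2))$ treats $\pHT\subset\cEK$ as having codimension $3$. In fact the codimension is $6$:
\begin{itemize}
\item $\dim\cEK=5$ and $\dim B(R\rtimes\mu_2)=-1$;
\item by transversality (Proposition~\ref{prop:transverse}, Remark~\ref{rem:Z1.Kirwan}), $N_{\pHT/\cEK}$ is the restriction of the rank-$6$ bundle $N_{\cZ_\pTac/\cPK}$, with weights $2,2,3,3,4,4$.
\end{itemize}
With your exponent the series comes out as $1+2t+3t^2+2t^3+t^4-t^5-t^6-\cdots$, hence your need to truncate. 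Your expected answer also fails the check you propose, since $t(1+2t+3t^2+2t^3+t^4)+(1+t+2t^2+2t^3+t^4)=1+2t+4t^2+5t^3+3t^4+t^5$. This is not the series of $\cPH$.

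The correct correction term is $\frac{t^2-t^{12}}{1-t^2}P_t(B(R\rtimes\mu_2))$. Combine it with:
\begin{itemize}
\item $P_t(\cEK)=1+t^2+2t^4+2t^6+2t^8+t^{10}+t^{12}+\frac{t^{16}}{1-t^4}$;
\item the unstable contribution $\frac{t^6+t^8+t^{10}}{1-t^2}$.
\end{itemize}
This gives $\sum_i\dim A^i(\overline\cZ_1)t^i=1+2t+3t^2+3t^3+2t^4+t^5$. That is Kirwan's value for $\widehat Y_8/\!\!/\SL_2$ \cite[page~83]{Kir85}, which the paper simply quotes. Then $t(1+2t+3t^2+3t^3+2t^4+t^5)+(1+t+2t^2+2t^3+t^4)=1+2t+4t^2+5t^3+4t^4+2t^5+t^6$, as required.

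Your proposed shortcut does not remove the need for this computation. It only restates injectivity as the upper bound $\dim A^*(\overline\cZ_1)\le\dim A^*(\cPH)-\dim A^*(\cPGITs)$. Neither the first sequence nor $\cPGITs=\cPKs\setminus\cEK$ supplies that bound. You still need the correct series of $\overline\cZ_1$, either computed as above or cited.
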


\begin{proof}
By Corollary~\ref{cor:cl.hypersurf} and Theorem~\ref{thm:Betti.PH}, the Hilbert series of $A^*(\cPH)$ is
$\sum_{i}\dim A^i(\cPH)t^i=1+2t+4t^2+5t^3+4t^4+2t^5+t^6$.

By Remark~\ref{rem:Z1.Kirwan}, the cycle class map for $\overline \cZ_1$ is an isomorphism, and its Poincar\'e polynomial equals that of $\widehat Y_8/\!\!/\SL_2$, as computed in~\cite[page~83]{Kir85}. In particular, 
$\sum_{i}\dim A^i(\overline\cZ_1)t^i=1+2t+3t^2+3t^3+2t^4+t^5$.
By Lemma~\ref{prop:Z2}, the cycle class map for $\cZ_2$ is an isomorphism, and
$\sum_{i}\dim A^i(\cZ_2)t^i=1+t+2t^2+t^3+t^4$.
Comparing dimensions, we obtain
\[\dim A_k(\cZ_2)+\dim A_k(\cPKs)=\dim A_k(\cPH)=\dim A_k(\overline\cZ_1)+\dim A_k(\cPGITs)\]
for all $k$. Hence the assertion follows.
\end{proof}

\subsection{The relations $r^{\sQ}_{ij}$ and the Chow ring of $\cPK$}\label{ss:ChowK}
We now consider the relations arising from excising the unstable locus $U^{1/2,us}$.
By Lemma~\ref{lem:surj.push}, these relations are generated by 
\[r^{\sQ}_{ij}=p_{U^{1/2}*}\big([\widetilde \sQ]\cdot \zeta^i\xi^j\big), \quad 0\leq i\leq 2,\;0\leq j\leq 1.\]
Since $\sQ\subset\sT$ is a Cartier divisor, the computation follows the same strategy as for $\widetilde\sT$, with an additional factor given by the divisor class of $\sQ$.

\begin{lemma}\label{lem:divisorclass.sQ}
Under the natural identification $\Pic^G(\sT) \cong \Pic^G(\Fl)\oplus \ZZ\cdot H$, the divisor $\sQ\subset \sT$ is a Cartier divisor associated with the line bundle
\[L_\sQ = \Big(\cO_{\PP V}(8)\otimes \Omega_y^{\otimes 2}\otimes \Omega_z^{\otimes 4}\Big) \boxtimes \cO_X(2).\]
\end{lemma}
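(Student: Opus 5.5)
The plan is to identify the equation $a_{112}^2-4a_{004}a_{220}$ cutting out $\sQ$ as a $G$-equivariant section of a line bundle on $\sT$, and read off its weight from the coefficient bundles already in play. Recall from the computation of $[\sT]$ that on $\Fl\times X$ the coefficient $a_{ijk}$ of $x^iy^jz^k$, taken in the local frame at $(p,\ell)$ where $p=[1:0:0]$, $\ell=\{y=0\}$ and $y,z$ are the affine coordinates, is a section of $L_{ijk}\boxtimes\cO_X(1)$, with
\[L_{ijk}=\cO_{\PP V}(i+j+k)\otimes\Omega_y^{\otimes j}\otimes\Omega_z^{\otimes k}.\]
On $\PP(\sK_S)$ the monomials in $S$ are killed. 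The coefficients $a_{004}$, $a_{112}$, $a_{220}$ are not in $S$, so they survive as sections over $\sT$. Strictly, they live on graded pieces of the filtration of $\rho^*\pi^*\Sym^4V^*/(\text{terms of } S)$. The first step is to check that, on the locus where the monomials of $S$ and all monomials of lower weight vanish, these three coefficients are honest, well-defined sections of the stated line bundles. The defining conditions of Lemma~\ref{lem:Uns} kill exactly the monomials of negative $\lambda_R$-weight, and $x^2y^2$, $xyz^2$, $z^4$ are the weight-zero ones. So I would argue that the weight-zero part is a canonical quotient of $\sK_S$ (the "leading part" in the filtration of \cite{CL22}), whose graded pieces are $L_{004}$, $L_{112}$, $L_{220}$.

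Next I would observe that both $a_{112}^2$ and $a_{004}a_{220}$ are sections of the same line bundle:
\[L_{112}^{\otimes 2}\cong L_{004}\otimes L_{220}\cong\cO_{\PP V}(8)\otimes\Omega_y^{\otimes 2}\otimes\Omega_z^{\otimes 4}.\]
Check: $L_{112}^2$ has $\cO(8)$, $\Omega_y^2$ and $\Omega_z^2\otimes\Omega_z^2$, which gives $\Omega_z^4$. For $L_{004}\otimes L_{220}$, we have $L_{004}=\cO(4)\otimes\Omega_z^4$ and $L_{220}=\cO(4)\otimes\Omega_y^2$. Each is also tensored with $\cO_X(1)^{\otimes 2}=\cO_X(2)$. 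Hence $a_{112}^2-4a_{004}a_{220}$ is a $G$-invariant section of $L_\sQ$ as stated, and its zero scheme is $\sQ$ by definition.

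It remains to show that this section cuts out $\sQ$ reduced and is nonzero on each fiber. By Remark~\ref{rem:sss}, $a_{220}\neq0$ on $\sT$, so the equation is smooth: its derivative in $a_{004}$ is $-4a_{220}\neq 0$. This makes $\sQ$ a reduced Cartier divisor with $\cO(\sQ)\cong L_\sQ$.

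The identification $\Pic^G(\sT)\cong\Pic^G(\Fl)\oplus\ZZ H$ follows from Lemma~\ref{lem:pullback.tsT.tsQ}, since $\sT$ is an open subset of a projective bundle over $\Fl$ whose complement has codimension $\ge2$. I expect the main subtlety to be the first step. The issue is that the coefficients of non-leading monomials are not canonical sections, because changing the frame in $B$ mixes them. The point to verify is that the weight-zero coefficients become canonical exactly because all lower-weight terms vanish on $\sT$. I would confirm this by checking $B$-equivariance of the $T$-weight computation in the fiber $\C^8$.
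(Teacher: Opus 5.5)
Your main argument is the paper's own, and it is correct. The weight-zero coefficients give a quotient $(L_{004}\oplus L_{112}\oplus L_{220})\boxtimes\cO_X(1)$ of $\sK_S\boxtimes\cO_X(1)$ in the filtration of \cite{CL22}, so $a_{112}^2-4a_{004}a_{220}$ is a section of $L_{112}^{\otimes 2}\boxtimes\cO_X(2)=(L_{004}\otimes L_{220})\boxtimes\cO_X(2)=L_\sQ$. The subtlety you flag resolves as you expect. The unipotent radical of $B$ strictly raises $\lambda_R$-weights on $V^*$, so it acts trivially on the weight-zero quotient of $\sK_S$. That quotient is therefore $B$-equivariantly a direct sum of the three lines, which is what makes $a_{112}^2$ and $a_{004}a_{220}$ separately well defined. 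Your smoothness check via $a_{220}\neq 0$ is also fine, and the paper leaves it implicit.

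Your last paragraph, however, is false and should be dropped. The complement of $\sT$ in $\PP(\sK_S)$ is not of codimension $\geq 2$. It contains the relative hyperplane $\{a_{220}=0\}$, whose points are quartics in $(y,z)^3$, i.e.\ with a triple point at $p$, hence unstable. You use exactly this one paragraph earlier when you say $a_{220}\neq 0$ on all of $\sT$. Likewise, the proof of Lemma~\ref{lem:pullback.tsT.tsQ} says the fibers of $\sT\to\Fl$ are open in $\C^8$, not in $\PP^8$.

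Consequently $a_{220}$ is a nowhere-vanishing equivariant section of $L_{220}\boxtimes\cO_X(1)$ over $\sT$. So $H=-c_1^G(L_{220})$ on $\sT$, and $\Pic^G(\Fl)\oplus\ZZ H\to\Pic^G(\sT)$ is surjective but not injective. The surjectivity of $A^*_G(\Fl)\to A^*_G(\sT)$ in the lemma you cite points the opposite way from your conclusion.

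The identification in the statement is only an isomorphism on $\PP(\sK_S)$ (or on $\Fl\times X$). It should be read as describing $L_\sQ$ there and then restricting to $\sT$. Neither your main argument nor the later use $[\sQ]=[\sT]\cdot c_1^G(L_\sQ)$ in $A^*_G(\Fl\times X^{ss})$ needs injectivity.
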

\begin{proof}
Both $\sQ$ and $\sT$ are $G$-equivariant fiber bundles over $\Fl=G/B$, and $\sQ\subset \sT$ is defined fiberwise by $a_{112}^2-4a_{004}a_{220}=0$. Hence $\sQ\subset \sT$ is Cartier.

To identify the associated line bundle, recall that the bundle $\sK_S$, the kernel of~\eqref{eq:KS}, admits a natural $B$-equivariant filtration whose graded pieces $L_{ijk}$ correspond to the monomials $y^jz^k\notin S$ (cf.~\cite[Section~3.2]{CL22}). 
Moreover, $(L_{004}\oplus L_{112}\oplus L_{220})\boxtimes \cO_X(1)$ appears as a quotient bundle of $\sK_S\boxtimes \cO_X(1)$ in the filtration. 
Therefore $a_{112}^2$ and $a_{004}a_{220}$, and hence $a_{112}^2-4a_{004}a_{220}$, are globally sections of 
$L_{112}^{\otimes 2}\boxtimes \cO_X(2)=(L_{004}\otimes L_{220})\boxtimes \cO_X(2)=L_\sQ$.
\end{proof}

Hence we have 
$[\sQ]=[\sT]\cdot c_1^G(L_\sQ)$.
Applying Theorem~\ref{thm:blowup.formula2} to $\widetilde\sQ\to \sQ$, 
we obtain the following.

\begin{proposition}\label{prop:class.tildeQ}
Let $\widetilde \sQ$ be the proper transform of $\sQ$ in $\Fl\times U^{1/2}$. Then
\[[\widetilde \sQ]=f_\Fl^*[\sQ]-\left\{j_{\Fl*}\left(P_{\DC}(1)\cdot (1+e+e^2+\cdots)\cdot g_\Fl^*i_{\sT_2*}\, s^{wt}\left(N_{\sT_2/\sQ}\right)\right) \right\}^7,\]
where 
$s^{wt}\left(N_{\sT_2/\sQ}\right)= s^{wt}\left(N_{\sT_2/\sT}\right)/s^{wt}(L_\sQ)$.
\end{proposition}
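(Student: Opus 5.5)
The plan is to apply Theorem~\ref{thm:blowup.formula2} directly to the closed immersion $\sQ\hookrightarrow \Fl\times X^{ss}$, of codimension $7$, with respect to the stack-theoretic weighted blowup $f_\Fl:\Fl\times U^{1/2}\to \Fl\times X^{ss}$ along $\Fl\times Z_\DC$. This is the same argument that produced the formula for $[\widetilde\sT]$ in Subsection~\ref{ss:rel.sT}, with $\sT$ replaced by $\sQ$ and the codimension raised from $6$ to $7$.

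First I verify the hypotheses. By base change, $f_\Fl$ is the weighted blowup along $\Fl\times Z_\DC$, and $\Fl\times Z_\DC$ is identified with $\Fl\times X_2^{ss}$ via the Veronese. The restriction $A^*_G(\Fl\times X^{ss})\to A^*_G(\Fl\times Z_\DC)$ is surjective. This follows from Lemma~\ref{lem:rest.DC} together with the fact that $A^*_G(\Fl)$ is generated over $A^*(BG)$ by $\zeta,\xi$, and these classes come from the first factor. The normal bundle is $N_{Z_\DC/X^{ss}}$ pulled back, with its weight-two $\Gm$-action, so $P_{N}(t)$ is again $P_\DC(t)$.

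Next I check smoothness. $\sQ$ is smooth because it is a $G$-equivariant bundle over $\Fl$ whose fiber is the open subset of the quadric $a_{112}^2=4a_{004}a_{220}$ cut out by the conditions $a_{220}\neq0$ and $(a_{112},a_{004})\neq(0,0)$ from Remark~\ref{rem:sss}. That quadric is a cone whose vertex lies outside this open subset, so the fiber is smooth. The intersection $\sQ\cap(\Fl\times Z_\DC)$ is $\sT_2$: this is the Cartesian square in the diagram preceding Lemma~\ref{lem:pullback.tsT.tsQ}, where $\sT_2$ factors through $\sQ$ because $a_{112}=2$, $a_{004}=1$, $a_{220}=1$ satisfy the discriminant condition. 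Since $\sT_2$ is an open subset of a projective bundle over $\Fl$, it is smooth. Theorem~\ref{thm:blowup.formula2} with $c=7$ then yields the displayed formula verbatim, with $g_\Fl^*i_{\sT_2*}s^{wt}(N_{\sT_2/\sQ})$ inside the bracket.

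It remains to identify $s^{wt}(N_{\sT_2/\sQ})$. Since $\sT_2\subset\sQ\subset\sT$ and $\sQ$ is Cartier in $\sT$ with line bundle $L_\sQ$ (Lemma~\ref{lem:divisorclass.sQ}), there is an exact sequence
\[
0\to N_{\sT_2/\sQ}\to N_{\sT_2/\sT}\to L_\sQ|_{\sT_2}\to 0.
\]
I expect this sequence to be compatible with the $\Gm$-actions defining the weighted blowup. On $L_\sQ$ the weight is $2$, because the discriminant is quadratic in the weight-one normal coordinates of $Z_\DC$, so its Chern root is scaled with the same $t=2$ convention used for $s^{wt}$.

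This compatibility is the main subtle point. One must check that the weight filtration on the ideal of $\sT_2$ in $\sQ$ is induced from that in $\sT$, that is, that $\widetilde\sQ\to\sQ$ is the weighted blowup whose weights are compatible with the splitting. This was already asserted in the lemma identifying $\widetilde\sQ\to\sQ$ as a weight-two blowup along $\sT_2$ via base change.

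Once compatibility holds, multiplicativity of $c_t^{\Gm}$ in exact sequences gives $P_{N_{\sT_2/\sT}}(1)=P_{N_{\sT_2/\sQ}}(1)\cdot P_{L_\sQ}(1)$. Inverting both sides yields $s^{wt}(N_{\sT_2/\sQ})=s^{wt}(N_{\sT_2/\sT})/s^{wt}(L_\sQ)$, as claimed.
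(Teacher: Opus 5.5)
Your proposal is correct and is the paper's own argument: the paper's proof just cites Theorem~\ref{thm:blowup.formula2} for $\sQ\subset\Fl\times X^{ss}$ (codimension $7$, with $\sQ\cap(\Fl\times Z_\DC)=\sT_2$) together with Lemma~\ref{lem:divisorclass.sQ}, and your hypothesis checks and the sequence $0\to N_{\sT_2/\sQ}\to N_{\sT_2/\sT}\to L_\sQ|_{\sT_2}\to 0$ spell that out. Drop the heuristic that the discriminant is ``quadratic in weight-one normal coordinates'': it is in fact a linear normal coordinate to $\sT_2$ in $\sT$, as it must be since $\sQ$ is smooth along $\sT_2$, and the weight $2$ on $L_\sQ|_{\sT_2}$ holds simply because $\Gm$ acts on all of $N_{\sT_2/\sT}$ with uniform weight $2$, which you also correctly note via the weight-two blowup lemma.
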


\begin{proof}
This follows directly from Theorem~\ref{thm:blowup.formula2} and Lemma~\ref{lem:divisorclass.sQ}.
\end{proof}

Using the computations of $[\sT]$ and $s^{wt}\left(N_{\sT_2/\sT}\right)$ obtained in Subsection~\ref{ss:rel.sT},
together with 
Lemma~\ref{lem:divisorclass.sQ} and Proposition~\ref{prop:class.tildeQ}, the six classes $r^{\sQ}_{ij}$ are, modulo $(eH,ec_3)$, given by
\[ 
\begin{split}
	r^{\sQ}_{00} &= 660c_3H -300c_2H^2 +420H^4 -400c_2e^2 +320e^4,\\
	r^{\sQ}_{10} &= -288c_2c_3 +96c_3H^2 -600c_2H^3 +120H^5 +96c_2^2e -560c_2e^3 +64e^5,\\
	r^{\sQ}_{20} &= -336c_3^2 -600c_2c_3H +288c_2^2H^2 -540c_3H^3 -420c_2H^4 +12H^6\\
				&\qquad +400c_2^2e^2 -320c_2e^4,\\
	r^{\sQ}_{01} &= 480c_2c_3 +96c_2^2H +774c_3H^2 +690c_2H^3 -126H^5\\
				&\qquad -96c_2^2e +560c_2e^3 -64e^5,\\
	r^{\sQ}_{11} &= 560c_3^2 +680c_2c_3H -232c_2^2H^2 +1382c_3H^3 +490c_2H^4 +2H^6\\
				&\qquad -400c_2^2e^2 +320c_2e^4,\\
	r^{\sQ}_{21} &= -480c_2^2c_3 -96c_2^3H +596c_3^2H -916c_2c_3H^2 -712c_2^2H^3 +698c_3H^4 +106c_2H^5 +2H^7\\
				&\qquad +96c_2^3e -560c_2^2e^3 +64c_2e^5.
\end{split}
\] 

\begin{theorem}[Chow ring of $\cPK$]\label{thm:Chowring.PK}
We have
\[A^*(\cPK)\cong \Q[H,c_2,c_3,e]/(r^\tr_4, r^\tr_5, r^\tr_6, eH, ec_3, r^{\sQ}_{00}, r^{\sQ}_{10}).\]
\end{theorem}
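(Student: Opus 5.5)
The plan is to combine Corollary~\ref{cor:Chow.rij} with Proposition~\ref{prop:Chowring.Uhalf}. Together they give
\[A^*(\cPK)\cong \Q[H,c_2,c_3,e]/I,\qquad I=(r^\tr_4,r^\tr_5,r^\tr_6,r^\flex_5,\;eH,\;ec_3,\;Q_\DC,\;r^{\sQ}_{ij}\ (0\le i\le 2,\ 0\le j\le 1)).\]
So the theorem amounts to the ideal identity $I=I'$, where
\[I':=(r^\tr_4,r^\tr_5,r^\tr_6,eH,ec_3,r^{\sQ}_{00},r^{\sQ}_{10}).\]
The inclusion $I'\subset I$ is clear, since the formulas for $r^{\sQ}_{ij}$ listed above are correct modulo $(eH,ec_3)\subset I$.

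For the reverse inclusion $I\subset I'$, I would show that each of $r^\flex_5$, $Q_\DC$, $r^{\sQ}_{20}$, $r^{\sQ}_{01}$, $r^{\sQ}_{11}$ and $r^{\sQ}_{21}$ lies in $I'$. The steps are as follows.

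First, reduce modulo $(eH,ec_3)$. Every monomial then involving $e$ is of the form $c_2^a e^b$, so the ring splits as
\[\Q[H,c_2,c_3]\;+\;e\,\Q[c_2,e].\]
This makes the membership checks degree-wise finite linear algebra.

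Second, observe from the explicit formulas that $r^{\sQ}_{01}+r^{\sQ}_{10}$ has no $e$-part. It equals
\[192c_2c_3+96c_2^2H+870c_3H^2+90c_2H^3-6H^5,\]
so it lies in $\Q[H,c_2,c_3]$, and I would test it against $(r^\tr_4,r^\tr_5,r^\tr_6)$ in degree $5$. Similarly, $r^{\sQ}_{20}+c_2r^{\sQ}_{00}$ and $r^{\sQ}_{11}-c_2r^{\sQ}_{00}$ kill the $e$-parts in degree $6$. For $r^{\sQ}_{21}$, subtracting $c_2\,r^{\sQ}_{10}$ removes the $e$-terms. In each case the remainder is a polynomial in $H,c_2,c_3$ of degree $\ge 5$, and I expect it to lie in $(r^\tr_4,r^\tr_5,r^\tr_6)$ plus, where needed, multiples of $r^{\sQ}_{00}$ and $r^{\sQ}_{10}$ by $H$ and $c_3$ (which are $e$-free modulo $eH,ec_3$).

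Third, handle $r^\flex_5$ the same way in degree $5$, using $r^{\sQ}_{10}$ and $H r^{\sQ}_{00}$ to absorb the $e$-free part if it is not already in the triple-point ideal.

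Fourth, handle $Q_\DC$, which has degree $9$. It equals $P_\DC(e)+[Z_\DC]$, with $P_\DC$ from \eqref{eq:PDC} and $[Z_\DC]$ from \eqref{eq:fund.DC}. Its $e$-part $32(16e^9+120c_2e^7+\cdots)$ should be expressible via $e$-multiples of $r^{\sQ}_{00}$ and $r^{\sQ}_{10}$, whose $e$-parts generate the relevant ideal in $e\Q[c_2,e]$. The $e$-free part $[Z_\DC]$ must then be checked to be in $I'\cap\Q[H,c_2,c_3]$.

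I expect the main obstacle to be this last verification in degree $9$, together with the bookkeeping of combining $e$-free and $e$-parts consistently. This is a Gröbner-basis computation. I would carry it out by computer algebra, using a monomial order eliminating $e$ first. As a sanity check, the Hilbert series of $\Q[H,c_2,c_3,e]/I'$ should agree with $P_t(\cPK)$ from Theorem~\ref{thm:PK.poincare} under $t\mapsto\sqrt t$. Since the cycle class map is an isomorphism, this also independently confirms that no further relations are missing: the surjection $\Q[H,c_2,c_3,e]/I'\to A^*(\cPK)$ is an isomorphism if and only if the dimensions match in every degree.
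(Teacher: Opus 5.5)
Your proposal is correct and is essentially the paper's proof: combine Proposition~\ref{prop:Chowring.Uhalf} with Corollary~\ref{cor:Chow.rij}, then check by explicit computer algebra that $r^\flex_5$, $Q_\DC$ and the remaining $r^{\sQ}_{ij}$ lie in $(r^\tr_4,r^\tr_5,r^\tr_6,eH,ec_3,r^{\sQ}_{00},r^{\sQ}_{10})$. One small correction: in degree $5$ only $Hr^{\sQ}_{00}$ is usable, not $r^{\sQ}_{10}$, since the $e$-part of $r^{\sQ}_{10}$ cannot be cancelled there; indeed $r^\flex_5\equiv -\tfrac43Hr^\tr_4+\tfrac23r^\tr_5+\tfrac14Hr^{\sQ}_{00}$ modulo $(eH)$. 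Your Hilbert-series comparison with Theorem~\ref{thm:PK.poincare} is a valid alternative way to finish, but it makes the Chow computation depend on the Betti-number computation, which the paper deliberately keeps independent.
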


\begin{proof}
	One readily checks that the ideal $(r^{\sQ}_{ij})$ is generated by $r^{\sQ}_{00}$ and $r^{\sQ}_{10}$ modulo the ideal $(r^\tr_4,r^\tr_5,r^\tr_6, eH,ec_3)$, and that the classes $r^\flex_5$ and $Q_{\DC}$ lie in $(r^\tr_4,r^\tr_5,r^\tr_6,eH,ec_3,r^{\sT}_{00},r^{\sT}_{10})$.
	By Proposition~\ref{prop:Chowring.Uhalf} and Corollary~\ref{cor:Chow.rij}, this gives claimed presentation.
\end{proof}

\subsection{Chow ring of $\cPK$ in terms of tautological classes}
\begin{proposition}\label{prop:lambda.linear}
We have $\lambda=3H+2e$. Moreover, 
\[c_i\left((\EE^\rmK)^*\otimes (\det \EE^{\rmK})^{1/3}\right)=c_i \quad \text{ for }~i=2,3.\] 
\end{proposition}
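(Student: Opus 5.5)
The plan is to read both identities off the comparison results of Section~\ref{s:Hodge}, pulled back along $f:\cPK\to\cPGIT$, together with Remark~\ref{rem:Hodge.GIT}. Throughout, $H$ and $c_2,c_3$ on $\cPK$ denote the pullbacks under $f$, and $e=-[\cEK]$ as in Proposition~\ref{prop:Chowring.Uhalf}.

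\emph{The class $\lambda$.} By Proposition~\ref{prop:Lambda.K} we have $\Lambda\cong \Lambda^{\mathrm{GIT}}|_{\cPK}\otimes\cO_{\cPK}(-2\cEK)$. Proposition~\ref{prop:Lambda.GIT} gives $\Lambda^{\mathrm{GIT}}\cong\det\EE^{\mathrm{GIT}}$, and by Lemma~\ref{lem:Hodge.GIT} this is the descent of $\cO_{X^{ss}}(3)$. Hence $c_1(\Lambda^{\mathrm{GIT}})=3H$. Taking first Chern classes then gives
\[\lambda=3H-2[\cEK]=3H+2e.\]
There is one point to check here. The stack $\cPK$ is presented as $[U^{1/2,ss}/\bar G]$, while $H$ is defined $G$-equivariantly. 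Since the $G$-linearization of $\cO_X(3)$ is trivial on $\mu_3$, the identification $A^*_{\bar G}\cong A^*_G$ sends $c_1(\cO(3))$ to $3H$, so no extra factor appears.

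\emph{The classes $c_2,c_3$.} By Theorem~\ref{thm:Hodge}(1), $\EE^\rmK\cong f^*\EE^{\mathrm{GIT}}\otimes M$ with $M=\cO_{\cPK}(-\cEK)$. The normalized bundle $W\mapsto W^*\otimes(\det W)^{1/3}$ is unchanged, as a $\Q$-twisted bundle, when $W$ is replaced by $W\otimes M$ for a line bundle $M$ and rank $3$. Indeed, $(W\otimes M)^*\otimes\det(W\otimes M)^{1/3}=W^*\otimes M^{-1}\otimes(\det W)^{1/3}\otimes M$. So its Chern classes, which are defined formally via Chern roots as in the statement, agree. Therefore
\[c_i\big((\EE^\rmK)^*\otimes(\det\EE^\rmK)^{1/3}\big)=f^*c_i\big((\EE^{\mathrm{GIT}})^*\otimes(\det\EE^{\mathrm{GIT}})^{1/3}\big)=f^*c_i(V)=c_i\]
by Remark~\ref{rem:Hodge.GIT}.

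\emph{Main obstacle.} Granting the earlier results, the only delicate point is making sense of Chern classes of the $\Q$-twisted bundle in a way compatible with Remark~\ref{rem:Hodge.GIT}. I would define them by the Chern-root formula $c(W^*\otimes L^{1/3})=\prod(1-\alpha_k+\tfrac13 c_1(L))$, where the $\alpha_k$ are the Chern roots of $W$. With this definition, the invariance under $W\mapsto W\otimes M$ is a formal identity. It also remains to confirm, on the GIT side, that $c_1(\EE^{\mathrm{GIT}})=3H$ is consistent with $\EE^{\mathrm{GIT}}$ being the descent of $V^*\otimes\cO(1)$, where $c_1(V^*)=0$; this is immediate.
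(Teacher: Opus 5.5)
Your proof is correct and follows essentially the same route as the paper, which simply declares both identities immediate from Lemma~\ref{lem:Hodge.GIT}, Proposition~\ref{prop:Lambda.K} and Remark~\ref{rem:Hodge.GIT}. Your version also makes explicit two steps that the paper's one-line proof leaves implicit: the use of Theorem~\ref{thm:Hodge}(1) to pass from $\EE^{\mathrm{GIT}}$ to $\EE^\rmK$, and the invariance of the normalized rank-$3$ bundle under twisting by a line bundle.
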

\begin{proof}
These are immediate from Lemma~\ref{lem:Hodge.GIT}, Proposition~\ref{prop:Lambda.K} and Remark~\ref{rem:Hodge.GIT}.
\end{proof}

By Proposition~\ref{prop:lambda.linear}, 
$r^\tr_4, r^\tr_5,r^\tr_6, r^{\sQ}_{00}$ and $r^{\sQ}_{10}$ are written in $\lambda$ and $e$ as
\[
\begin{aligned}
s^\tr_4&:= 621c_3\lambda + 90c_2\lambda^2 - 5\lambda^4 - 360c_2e^2 + 80e^4,
\\
s^\tr_5 &:=3888c_2c_3 + 648c_2^2\lambda - 1269c_3\lambda^2 - 180c_2\lambda^3 + 2\lambda^5 \\
	&\qquad  - 1296c_2^2e + 1440c_2e^3 - 64e^5,
\\
s^\tr_6 &:= 81648c_3^2 + 73872c_2c_3\lambda + 10044c_2^2\lambda^2 - 4509c_3\lambda^3 - 495c_2\lambda^4 + \lambda^6 \\
	& \qquad - 40176c_2^2e^2 + 7920c_2e^4 - 64e^6,
\\
s^{\sQ}_4 &:= 297c_3\lambda - 45c_2\lambda^2 + 7\lambda^4 - 360c_2e^2 + 320e^4,
\\
s^{\sQ}_5&:= 2916c_2c_3 - 108c_3\lambda^2 + 225c_2\lambda^3 - 5\lambda^5
	- 972c_2^2e + 3870c_2e^3 - 488e^5,
\end{aligned}
\]
up to nonzero scalar multiples, modulo $(eH,ec_3)=(\lambda e-2e^2,ec_3)$.

Using these expressions, Theorem~\ref{thm:Chowring.PK} can be rewritten as follows.
\begin{theorem}[Chow ring of $\cPK$ in terms of tautological classes]\label{thm:ChowK.lambda}
We have
\[A^*(\cPK)\cong \QQ[\lambda,e,c_2,c_3]/\IK,\quad \text{where }\; \IK:=\big(\lambda e-2e^2,ec_3, s^\tr_4,s^\tr_5,s^\tr_6,s^\sQ_4,s^\sQ_5\big).\]
\end{theorem}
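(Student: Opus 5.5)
The proof is a change of variables applied to Theorem~\ref{thm:Chowring.PK}. By Proposition~\ref{prop:lambda.linear} we have $\lambda=3H+2e$, so $H=\tfrac13(\lambda-2e)$. Since we work with $\Q$-coefficients, the substitution $H\mapsto \tfrac13(\lambda-2e)$ gives a graded $\Q$-algebra isomorphism
\[\Q[H,c_2,c_3,e]\xrightarrow{\ \cong\ }\Q[\lambda,e,c_2,c_3],\]
fixing $e,c_2,c_3$. It therefore suffices to show that this isomorphism carries the ideal
\[I:=(r^\tr_4,r^\tr_5,r^\tr_6,eH,ec_3,r^{\sQ}_{00},r^{\sQ}_{10})\]
onto $\IK$. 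Note that the $c_i$ appearing here are the tautological classes $c_i((\EE^\rmK)^*\otimes(\det\EE^\rmK)^{1/3})$, again by Proposition~\ref{prop:lambda.linear}.

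First, $eH\mapsto \tfrac13(\lambda e-2e^2)$, which is a unit multiple of $\lambda e-2e^2$, and $ec_3$ is unchanged. Hence the ideal $J:=(eH,ec_3)$ maps exactly to $(\lambda e-2e^2,ec_3)$. Modulo $J$, every monomial $e^aH^b$ with $a,b\ge1$ vanishes, and so does every monomial $ec_3$. Thus each generator, taken modulo $J$, is a sum of terms free of $e$ plus terms of the form $e^a c_2^k$.

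Next, for each of $r^\tr_4,r^\tr_5,r^\tr_6,r^{\sQ}_{00},r^{\sQ}_{10}$, substitute $H=\tfrac13(\lambda-2e)$ and reduce modulo $(\lambda e-2e^2,ec_3)$. In the reduction, a power $(\lambda-2e)^b$ becomes $\lambda^b+(\text{const})\,e^b$, since modulo $\lambda e-2e^2$ we have $\lambda^i e^j=2^i e^{i+j}$ for $j\ge1$. The results should be nonzero scalar multiples of $s^\tr_4,s^\tr_5,s^\tr_6,s^{\sQ}_4,s^{\sQ}_5$, each plus an element of $(\lambda e-2e^2,ec_3)$; this is exactly the claim recorded just before the theorem. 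It follows that the generators of $I$ map to generators of $\IK$ up to units and elements of the common sub-ideal $(\lambda e-2e^2,ec_3)$, which lies in both ideals. The two ideals are therefore equal.

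The only real work is the explicit verification that the substituted relations agree with the listed $s$-polynomials up to scalars modulo $(\lambda e-2e^2,ec_3)$. This is a finite polynomial calculation that is best checked by computer algebra, and it is the step most exposed to arithmetic errors, for example in matching the scalar normalization of $r^\tr_4$ against $s^\tr_4$. Conceptually, there is no obstacle.
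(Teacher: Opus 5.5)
Your proposal is correct and is essentially the paper's argument: the paper also derives the theorem from Theorem~\ref{thm:Chowring.PK} by substituting $H=\frac13(\lambda-2e)$ (Proposition~\ref{prop:lambda.linear}), noting $(eH,ec_3)=(\lambda e-2e^2,ec_3)$, and rewriting $r^\tr_4,r^\tr_5,r^\tr_6,r^{\sQ}_{00},r^{\sQ}_{10}$ as nonzero scalar multiples of the $s$-polynomials modulo that ideal. Your observation that $(\lambda-2e)^b\equiv\lambda^b-2^be^b$ modulo $\lambda e-2e^2$ makes this check mechanical, and it goes through with the listed coefficients; for example, $-\frac{27}{4}$ times the substituted $r^\tr_4$ is exactly $s^\tr_4$.
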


\section{Chow ring of $\protect\cPhat$}\label{s:chow.phat}

We compute the Chow ring of $\cPhat=[\Uhat^{ss}/\bar G]$. 
The overall strategy parallels that of Sections~\ref{s:chow.pgit} and~\ref{s:chow.pk}: we compute $A^*(\cPhat')=A_G^*(\Uhat)$
by applying Theorem~\ref{thm:blowup.formula1} to the stack-theoretic weighted blowup
$\cPhat'\to \cPK$
along $\cZ_\pTac$, and then determine the kernel of the surjection
\beq\label{eq:rest.Uhatss}
A^*_G(\Uhat)\lra A^*_G(\Uhat^{ss})
\eeq
via the localization sequence and a resolution of the unstable locus 
$\Uhat^{us}:=\Uhat\setminus \Uhat^{ss}$.

To describe the resolution of the unstable locus and compute the resulting relations, we use the decomposition of $N_{\cZ_\pTac/\cPK}$ established in Section~\ref{s:normal}.

\subsection{Chow ring of $\cPhat'$}

We first verify the following. 
\begin{lemma}\label{lem:rest.ZpTac}
The pullback 
$A^*(\cPK)\to A^*(\cZ_\pTac)$ 
is surjective with kernel  
$(\lambda, e^2, 3c_3-c_2e)$.
\end{lemma}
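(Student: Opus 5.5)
The plan is to compute $A^*(\cZ_\pTac)$ explicitly, compute the images of the generators $\lambda,e,c_2,c_3$ of $A^*(\cPK)$ (Theorem~\ref{thm:ChowK.lambda}) under pullback, and finish by comparing dimensions degree by degree.

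\emph{Step 1: the target ring.} By Proposition~\ref{prop:gerbe}, $\cZ_\pTac\cong B(\bar R\rtimes\mu_2)\times[\PP^1/\Gamma]$. With rational coefficients,
\[A^*(B(\bar R\rtimes\mu_2))\cong A^*(B\bar R)^{\mu_2}=\Q[r^2],\]
where $r$ is the generator of $A^1(B\bar R)$; the $\mu_2$-action sends $r\mapsto -r$. Likewise $A^*([\PP^1/\Gamma])\cong A^*(\PP^1)=\Q[h]/(h^2)$. Hence
\[A^*(\cZ_\pTac)\cong \Q[r^2,h]/(h^2),\]
with Hilbert series $1/(1-t)$, matching $P_t(\cZ_\pTac)=1/(1-t^2)$.

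\emph{Step 2: the restrictions.} I would compute these on the $T$-equivariant presentation $[(\A^2_{a,b}\setminus 0)/(T\rtimes\mu_2)]$ from Lemma~\ref{lem:Rfixedlocus2}. Via the split sequence \eqref{eq:exact.TR}, the torus $T$ is identified with $R\times\Gm$, and the $\Gm$-factor acts on $(a,b)$ through $\chartheta$ with weight $3$ on each coordinate, so $c_1(\chartheta)$ is a multiple of $h$. Under this identification:
\begin{itemize}
\item By Lemma~\ref{lem:linear.relation}, $H$ restricts to $2\chartheta$.
\item $\cO(\Delta_R)$ restricts to $6\chartheta$. Since $\Delta_R=\{a=b\}$ pulls back to twice the root divisor, $e=-[\cEK]$ restricts to $-3\chartheta$.
\item Consequently $\lambda=3H+2e$ restricts to $6\chartheta-6\chartheta=0$.
\item Writing the $T$-weights of $V$ as $t_1=r+\tfrac12\chartheta$, $t_2=-r+\tfrac12\chartheta$, $t_3=-\chartheta$ (rationally, using $t_1t_2t_3=1$ and $t_3=\chartheta^{-1}$), one gets $c_2=e_2(t_1,t_2,t_3)$ and $c_3=e_3(t_1,t_2,t_3)$ as polynomials in $r^2$ and $\chartheta$. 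Explicitly, $c_2=-r^2-\tfrac34\chartheta^2$ and $c_3=-\chartheta(\tfrac14\chartheta^2-r^2)$.
\end{itemize}
Since $\chartheta^2\propto h^2=0$, these simplify to
\[c_2\mapsto -r^2,\qquad c_3\mapsto r^2\chartheta,\qquad e\mapsto -3\chartheta,\qquad e^2\mapsto 0,\]
so that $3c_3-c_2e\mapsto 3r^2\chartheta-3r^2\chartheta=0$. Thus $\lambda$, $e^2$ and $3c_3-c_2e$ all lie in the kernel. Surjectivity also follows: $c_2\mapsto -r^2$ and $e\mapsto -3\chartheta$ generate the target ring.

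The main obstacle is getting the normalizations and signs right. In particular one must pin down the exact factor relating $[\cEK]|_{\cZ_\pTac}$ to $\chartheta$ through the square-root stack, because the vanishing of $\lambda$ and of $3c_3-c_2e$ depends on these precise constants. As a consistency check I would use that $\Lambda$ is trivial on $\cZ_\pTac$, which is the fact used in Proposition~\ref{prop:Lambda.K}.

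\emph{Step 3: the dimension count.} The ring
\[\Q[\lambda,e,c_2,c_3]/(\lambda,e^2,3c_3-c_2e)\cong\Q[e,c_2]/(e^2)\]
has Hilbert series $1/(1-t)$. It surjects onto $A^*(\cPK)/\ker$, which in turn is isomorphic to $A^*(\cZ_\pTac)$ and has the same Hilbert series. Comparing dimensions degree by degree forces the containment $(\lambda,e^2,3c_3-c_2e)\subseteq\ker$ to be an equality.
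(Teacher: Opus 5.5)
Your proposal is correct and follows essentially the same route as the paper: identify $A^*(\cZ_\pTac)$ via Proposition~\ref{prop:gerbe}, restrict $c_2,c_3$ through the $T$-weights of $V$ modulo $\chartheta^2$, and use $e=-3\chartheta$ (from Lemma~\ref{lem:linear.relation} and the root-stack factor) to get $3c_3=c_2e$. The differences are cosmetic: you verify $\lambda\mapsto 0$ explicitly from $\lambda=3H+2e$ and $H\mapsto 2\chartheta$, and you close with a Hilbert-series comparison, whereas the paper presents the target directly as $\Q[e,c_2]/(e^2)$ with $e,c_2$ the restricted classes.
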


\begin{proof}
	By Proposition~\ref{prop:gerbe}, we have $\cZ_\pTac\cong (\bar R\rtimes \mu_2)\times [\PP^1/\Gamma]$. 
	Hence
	\[ 
	A^*(\cZ_\pTac)\cong A^*(B(\bar R\rtimes \mu_2)))\otimes A^*([\PP^1/\Gamma]) \cong \Q[e,c_2]/(e^2),
	\] 
	where $e$ and $c_2$ denote their restrictions to $\cZ_\pTac$. 
	Indeed, $-e$ corresponds to $\cO_{[\PP^1/\Gamma]}(1)$ by Lemma~\ref{lem:linear.relation}, and viewing $A^*(B(\bar R\rtimes\mu_2))$ as the $\mu_2$-invariant subring
	\[A^*(B(\bar R\rtimes\mu_2))=A^*(B(R\rtimes\mu_2)))=A^*(BR)^{\mu_2} \subset A^*(BR),\]
	it follows that this ring is a polynomial ring generated by (the image of) $c_2$. 

	We now determine the kernel.
	Since $\lambda$ restricts to zero on $\cZ_\pTac$, it remains to compute the image of $c_3$.
	Viewing $V$ as a $T$-representation with weights $-(\mu+\chartheta)$, $-(\nu+\chartheta)$, and $-\chartheta$, 
	\[c_2^T(V)=(\mu+\chartheta)(\nu+\chartheta)+(\mu+\chartheta)\chartheta+(\nu+\chartheta)\chartheta, \quad c_3^T(V)=-(\mu+\chartheta)(\nu+\chartheta)\chartheta\]
	in $A^*(BT)$. Thus $c_3^T(V)=-c_2^T(V)\chartheta$ modulo $(\chartheta^2)$.
	Since $e=-3\chartheta$ on $\cZ_\pTac$ by Lemma~\ref{lem:linear.relation}, we have $3c_3=c_2e$ modulo $(e^2)$, which completes the proof.
\end{proof}

Applying Theorem~\ref{thm:blowup.formula1} to 
$\cPhat' \to \cPK$, we obtain the following. 

\begin{proposition}\label{prop:Chowring.Uhat}
The Chow ring of $\cPhat'=[\Uhat/\bar G]$ is given by 
\[A^*(\cPhat')\cong \Q[\lambda,e,\ehat,c_2,c_3]/\left(\IK + (\shat_2,\shat_3,\shat_4)\right),\]
where $-\ehat$ is the class of $\cEhat'$, $\IK$ is the ideal defined in Theorem~\ref{thm:ChowK.lambda}, and 
\[\shat_2=\lambda\ehat,\quad \shat_3=e^2\ehat,\and \shat_4=(3c_3-c_2e)\ehat.\]
\end{proposition}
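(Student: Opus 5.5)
The plan is to apply Theorem~\ref{thm:blowup.formula1} to the stack-theoretic weighted blowup $\cPhat'\to\cPK$ along $\cZ_\pTac$, with $Y=\cPK$, $Z=\cZ_\pTac$, and $t=\ehat=-[\cEhat']$. The two hypotheses of that theorem are already in place. Surjectivity of $i^*:A^*(\cPK)\to A^*(\cZ_\pTac)$ is Lemma~\ref{lem:rest.ZpTac}. The fiberwise $\Gm$-action on the normal bundle is the weight action on the decomposition $N_{\cZ_\pTac/\cPK}\cong N_2\oplus N_3\oplus N_4$ of Proposition~\ref{prop:normalbundle.decomp}, with $\Gm$ acting by weight $j$ on $N_j$. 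Theorem~\ref{thm:blowup.formula1} then presents $A^*(\cPhat')$ as
\[
A^*(\cPK)[\ehat]\big/\big(\ehat\cdot\ker(i^*),\,Q_N\big).
\]
Using Theorem~\ref{thm:ChowK.lambda} for $A^*(\cPK)=\Q[\lambda,e,c_2,c_3]/\IK$ and the kernel $(\lambda,e^2,3c_3-c_2e)$ from Lemma~\ref{lem:rest.ZpTac}, the term $\ehat\cdot\ker(i^*)$ contributes exactly $\shat_2,\shat_3,\shat_4$.

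The remaining work is to show that $Q_N=P_N(\ehat)-P_N(0)+\Psi^*[\cZ_\pTac]$ adds nothing new, i.e.\ that it lies in $\IK+(\shat_2,\shat_3,\shat_4)$. I would compute $P_N(t)$ on the double cover $\widetilde\cZ_\pTac$. There $N$ splits into the line bundles $\cO(j\mu)$ and $\cO(j\nu)$, so
\[
P_N(t)=\prod_{j=2}^4 (jt+j\mu)(jt+j\nu).
\]
This expression is symmetric in $\mu$ and $\nu$, so it descends to $\cZ_\pTac$ and can be written in terms of $\mu+\nu=-3\chartheta=e$ and $\mu\nu$. The latter is expressed through $c_2$ using the formula for $c_2^T(V)$ in the proof of Lemma~\ref{lem:rest.ZpTac}, working modulo $e^2$. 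Every term of $P_N(\ehat)-P_N(0)$ is divisible by $\ehat$. Because $A^*(\cZ_\pTac)\cong\Q[e,c_2]/(e^2)$ is reached by $i^*$, each coefficient can be lifted to a polynomial in $e,c_2$ on $\cPK$, and any ambiguity in the lift is absorbed by $\ehat\ker(i^*)$.

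The main obstacle is showing that $Q_N$ lies in the ideal, which requires the class $\Psi^*[\cZ_\pTac]$. I would first try a purely formal argument. The pushforward $i_*$ satisfies $[\cZ_\pTac]=i_*1$, and by the excess/self-intersection formula in the blowup we have $\Psi^*i_*\alpha=j_*(g^*\alpha\cdot\text{(excess)})$. For the weighted blowup this produces precisely the terms that cancel $P_N(0)$ modulo multiples of $\ehat$. In practice, Arena--Obinna's statement already tells us that $Q_N$ is a genuine relation. So the only question is whether it is redundant, and this can be settled degree by degree.

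For that check I would use Hilbert series. Computing the Hilbert series of $\Q[\lambda,e,\ehat,c_2,c_3]/(\IK+(\shat_2,\shat_3,\shat_4))$ should give $P_{t^{1/2}}(\cPK)+\frac{t-t^6}{1-t}P_{t^{1/2}}(\cZ_\pTac)$. By the blowup formula used in the proof of Theorem~\ref{thm:Betti.PH} together with the cycle class isomorphism, this is the Hilbert series of $A^*(\cPhat')$. Agreement of the series shows that $Q_N$ imposes no further relation, since the ring without it surjects onto $A^*(\cPhat')$ with the same dimensions. The routine part is the explicit Gr\"obner-basis computation, the analogue of the ``one readily checks'' steps in Theorems~\ref{thm:Chowring.GIT} and~\ref{thm:Chowring.PK}.
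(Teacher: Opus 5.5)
Your first paragraph is the paper's argument: Theorem~\ref{thm:blowup.formula1} applied to $\cPhat'\to\cPK$, with Lemma~\ref{lem:rest.ZpTac} giving $\ehat\cdot\ker(i^*)=(\shat_2,\shat_3,\shat_4)$ and Theorem~\ref{thm:ChowK.lambda} giving $A^*(\cPK)$. The gap is your next step. $Q_N$ does \emph{not} lie in $\IK+(\shat_2,\shat_3,\shat_4)$, so neither the excess-intersection argument nor the Hilbert-series test can establish that it does.

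Since $i^*$ is surjective,
\[\Q[\lambda,e,\ehat,c_2,c_3]/\big(\IK+(\shat_2,\shat_3,\shat_4)\big)\cong A^*(\cPK)[\ehat]/(\ehat\ker i^*)\cong A^*(\cPK)\oplus \ehat\,A^*(\cZ_\pTac)[\ehat].\]
Under this identification, the $\ehat^6$-component of $Q_N=P_N(\ehat)-P_N(0)+\Psi^*[\cZ_\pTac]$ is the leading coefficient $\prod_{j=2}^4 j^2=576$ of $P_N$. This is a nonzero element of $A^0(\cZ_\pTac)=\Q$, so $Q_N\neq 0$ in this ring.

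Equivalently, the Hilbert series of this ring is $P_{t^{1/2}}(\cPK)+\frac{t}{1-t}P_{t^{1/2}}(\cZ_\pTac)$, not $P_{t^{1/2}}(\cPK)+\frac{t-t^6}{1-t}P_{t^{1/2}}(\cZ_\pTac)$. The difference is $t^6/(1-t)^2$, so your check would fail already in degree $6$: the ring without $Q_N$ has one dimension too many there, spanned by $\ehat^6$.

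The excess-formula paragraph does not help either. The identity $\Psi^*[\cZ_\pTac]=-(P_N(\ehat)-P_N(0))$ that it produces \emph{is} the relation $Q_N$. It links the $A^*(\cPK)$-summand to powers of $\ehat$, which is new information and not a consequence of $\ehat\ker(i^*)$.

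So the presentation as printed cannot be proved, because it omits a generator of the ideal. What Theorem~\ref{thm:blowup.formula1} actually gives, and hence what the paper's one-line derivation gives when taken at face value, is $A^*(\cPhat')\cong\Q[\lambda,e,\ehat,c_2,c_3]/\big(\IK+(\shat_2,\shat_3,\shat_4,Q_N)\big)$. This parallels Proposition~\ref{prop:Chowring.Uhalf}, where $Q_\DC$ is kept.

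You should therefore retain $Q_N$ and compute it as you outlined:
\begin{enumerate}
\item descend $576(\ehat+\mu)^3(\ehat+\nu)^3$ using $\mu+\nu=e$ and $\mu\nu\equiv c_2$ modulo $e^2$;
\item add $\Psi^*[\cZ_\pTac]$.
\end{enumerate}
Dropping $Q_N$ becomes legitimate only later, in $A^*(\cPhat)$, after the unstable relations $r^{\sThat}_{ij}$ are imposed. There a Hilbert-series comparison against the finite Poincar\'e polynomial of Theorem~\ref{thm:Betti.PH} is the right tool.
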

\begin{proof}
	This is immediate from Theorems~\ref{thm:blowup.formula1} and~\ref{thm:ChowK.lambda} and Lemma~\ref{lem:rest.ZpTac}.
\end{proof}

\subsection{Resolution of the unstable locus of $\Uhat$}
Set
\[\widetilde \sT^{ss}:=\widetilde \sT\cap (\Fl\times U^{1/2,ss}).\]
Let $\fhat:\Uhat\to U^{1/2,ss}$ denote the blowup morphism, and let $\sThat$ be the proper transform of $\widetilde\sT^{ss}$.
\[
\begin{tikzcd}
	\sThat \arrow[r,hook]\arrow[d] &
	\Fl\times \Uhat \arrow[d,"\fhat_\Fl"]\arrow[r] &
	\Uhat\arrow[d,"\fhat"]\\
	\widetilde \sT^{ss}\arrow[r,hook] &
	\Fl\times U^{1/2,ss} \arrow[r]&
	U^{1/2,ss}
\end{tikzcd}
\]
By Lemma~\ref{lem:Uns} and \cite[Remark~7.17]{Kir85}, the image of $\sThat\to \Uhat$ is precisely the unstable locus $\Uhat^{us}=\Uhat\setminus \Uhat^{ss}$.
Therefore the kernel of \eqref{eq:rest.Uhatss} is the image of the pushforward map
\[ 
A^G_*(\sThat)\lra A^G_*(\Uhat).\] 

\subsection{$\sThat$ as a weighted blowup}
Recall that the blowup center of $\fhat$ is 
\[Z^{1/2}_\pTac= G\cdot (U^{1/2,ss})^R\cong G\times^{N_G(R)}(U^{1/2,ss})^R.\]
Its inverse image in $\widetilde \sT^{ss}$ is an \'etale double cover
\[\widetilde Z^{1/2}_\pTac \cong G\times^T (U^{1/2,ss})^R.\]
Indeed, every tacnodal curve parametrized by $Z^{1/2}_\pTac$ has precisely two distinct tacnodes, and each tacnode determines a pointed line $(p,\ell)\in \Fl$.
In particular, $\sThat$ is the stack-theoretic weighted blowup of $\widetilde \sT^{ss}$ along $\widetilde Z^{1/2}_\pTac$. 
To determine the weights, consider the Cartesian diagram
\beq\label{diagram:ZpTac}
\begin{tikzcd}
	\widetilde Z^{1/2}_\pTac 
	\arrow[d,hook',"\ihat_\pTac"'] 
	\arrow[r] &
	Z^{1/2}_\pTac\arrow[d,hook',"i_\pTac"]\\
	\widetilde \sT^{ss}
	\arrow[r]&
	U^{1/2,ss}.
\end{tikzcd}
\eeq
The immersion $\ihat_\pTac$ has codimension $3$, hence $N_{\ihat_\pTac}$ is a rank $3$ subbundle of
\[N_{i_\pTac}|_{\widetilde Z^{1/2}_\pTac}\cong \cO(2\mu)\oplus \cO(3\mu) \oplus \cO(4\mu)\oplus \cO(2\nu)\oplus \cO(3\nu)\oplus \cO(4\nu)\]
by Proposition~\ref{prop:normalbundle.decomp}.
Since $\widetilde \sT^{ss}$ is defined by $x^iy^jz^{4-i-j}$ with $i\leq j$, $N_{\ihat_\pTac}$ is identified with the summand corresponding to the positive $R$-weights. Thus
\[N_{\ihat_\pTac}\cong \cO(2\nu)\oplus \cO(3\nu)\oplus \cO(4\nu).\]
Thus, $\sThat\to \widetilde \sT^{ss}$ is the stack-theoretic weighted blowup with weights $(2,3,4)$.

Moreover, the same argument as in the proof of Lemma~\ref{lem:rest.ZpTac} shows that 
$A^*_G(\widetilde Z^{1/2}_\pTac)\cong A^*(BT)/(\chartheta^2)$ and that
the restriction map
$\ihat_\pTac^*:A^*_G(\widetilde\sT^{ss})\to A^*_G(\widetilde Z^{1/2}_\pTac)$
is surjective. Then by Theorem~\ref{thm:blowup.formula1}, $A^*_G(\sThat)$ is generated by the pullback of the class $\ehat$ as an $A^*_G(\widetilde \sT^{ss})$-algebra. 
This, together with the first assertion of Lemma~\ref{lem:images.tsT.tsQ}, implies the following.
\begin{lemma}\label{lem:images.sThat}
	The ring $A^*_G(\sThat)$ is generated by $e$ and $\ehat$ as an $A^*_G(\Fl)$-algebra.
	The image of the pushforward map
	$A^G_*(\sThat)\to A^G_*(\Fl\times \Uhat)$
	is $A^*_G(\Fl)[e,\ehat]\cdot [\sThat]$.
\end{lemma}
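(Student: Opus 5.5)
The plan is to mimic the proof of Lemma~\ref{lem:images.tsT.tsQ}, with one more blowup layer. The argument has two parts: generation of $A^*_G(\sThat)$, and then the image of the pushforward.

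For generation, I will first use what was established just before the statement. The map $\sThat\to\widetilde\sT^{ss}$ is the stack-theoretic weighted blowup along $\widetilde Z^{1/2}_\pTac$ with weights $(2,3,4)$, and the restriction
\[\ihat_\pTac^*:A^*_G(\widetilde\sT^{ss})\to A^*_G(\widetilde Z^{1/2}_\pTac)\]
is surjective. Theorem~\ref{thm:blowup.formula1} then presents $A^*_G(\sThat)$ as a quotient of $A^*_G(\widetilde\sT^{ss})[\ehat]$, where $-\ehat$ is the exceptional class. Here $\ehat$ is the pullback of the class $\ehat$ on $\Uhat$ along $\sThat\hookrightarrow\Fl\times\Uhat\to\Uhat$. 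This holds because $\sThat$ is the proper transform and the center of $\sThat$ is the scheme-theoretic preimage of the center $Z^{1/2}_\pTac$ (diagram~\eqref{diagram:ZpTac}), so the exceptional divisor of $\sThat$ is the restriction of $\Fl\times\Ehat$.

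Next, $\widetilde\sT^{ss}\subset\widetilde\sT$ is open, so restriction $A^*_G(\widetilde\sT)\to A^*_G(\widetilde\sT^{ss})$ is surjective by the localization sequence. By Lemma~\ref{lem:images.tsT.tsQ}, $A^*_G(\widetilde\sT)$ is generated by $e$ over $A^*_G(\Fl)$. Combining these, $A^*_G(\sThat)$ is generated by $e$ and $\ehat$ as an $A^*_G(\Fl)$-algebra, where both classes and the classes from $\Fl$ are pulled back from $\Fl\times\Uhat$.

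For the second assertion, write $k:\sThat\hookrightarrow\Fl\times\Uhat$. Every class in $A^G_*(\sThat)$ has the form $k^*(\alpha)\cap[\sThat]$ with $\alpha\in A^*_G(\Fl)[e,\ehat]$. I would be careful here because $\sThat$ is a stack, so I will use the operational Chow ring of $\sThat$ acting on its Chow groups. $\sThat$ is smooth: it is a weighted blowup of the open set $\widetilde\sT^{ss}$ in $\widetilde\sT$, which is itself a weighted blowup of the smooth $\sT$ along the smooth $\sT_2$. So $A_*=A^*\cap[\sThat]$. The projection formula then gives
\[k_*\bigl(k^*\alpha\cap[\sThat]\bigr)=\alpha\cdot[\sThat],\]
so the image is exactly $A^*_G(\Fl)[e,\ehat]\cdot[\sThat]$.

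The main point needing care is identifying the exceptional class of $\sThat\to\widetilde\sT^{ss}$ with the restriction of $-\ehat$ from $\Uhat$. This requires that the intersection of $\Fl\times\Ehat$ with $\sThat$ is the exceptional divisor with multiplicity one. I would verify this from the Cartesian square~\eqref{diagram:ZpTac} and the compatibility of weighted blowups with the normal-bundle inclusion $N_{\ihat_\pTac}\subset N_{i_\pTac}|$, which is a weight-compatible summand ($\cO(2\nu)\oplus\cO(3\nu)\oplus\cO(4\nu)$ with weights $2,3,4$). Because the summand is weight-compatible, the weighted blowup of the subvariety is the proper transform with the expected exceptional divisor.
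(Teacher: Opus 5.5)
Your proposal is correct and follows the paper's own argument. The blowup formula, using surjectivity of $\ihat_\pTac^*$, makes $A^*_G(\sThat)$ generated by $\ehat$ over $A^*_G(\widetilde\sT^{ss})$, which is generated by $e$ over $A^*_G(\Fl)$ by Lemma~\ref{lem:images.tsT.tsQ} and open restriction, and the projection formula then gives the image. Your multiplicity-one concern is harmless but unnecessary: with $\Q$-coefficients it suffices that the pullback of $\Fl\times\Ehat$ to $\sThat$ is a nonzero multiple of the exceptional divisor.
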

\begin{proof}
	The proof is identical to that of Lemma~\ref{lem:images.tsT.tsQ}.
\end{proof}

Let $p_{\Uhat}:\Fl\times \Uhat\to \Uhat$ denote the second projection.
\begin{proposition}\label{prop:images.sThat}
	The image of  $A^G_*(\sThat)\to A^G_*(\Uhat)$ is the ideal generated by
	\[r^{\sThat}_{ij}:= p_{\Uhat*}\big([\sThat]\cdot \zeta^i\xi^j\big),\quad 0\leq i\leq 2,\; 0\leq j\leq 1.\]
\end{proposition}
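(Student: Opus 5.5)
The argument mirrors Proposition~\ref{prop:images.tsT.tsQ}. Since $\sThat\to\Uhat$ factors as the closed immersion $\sThat\hookrightarrow \Fl\times\Uhat$ followed by the projection $p_{\Uhat}$, the image of $A^G_*(\sThat)\to A^G_*(\Uhat)$ equals $p_{\Uhat*}$ applied to the image of $A^G_*(\sThat)\to A^G_*(\Fl\times\Uhat)$. By Lemma~\ref{lem:images.sThat}, that image is $A^*_G(\Fl)[e,\ehat]\cdot[\sThat]$. So the work consists of describing $p_{\Uhat*}$ of this submodule.

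First, $e$ and $\ehat$ are pulled back from $\Uhat$ along $p_{\Uhat}$: both are divisor classes on $\Uhat$, namely the pullback of $-[\cEK]$ and $-[\cEhat']$. The projection formula then gives
\[
p_{\Uhat*}\big(\alpha\, e^a\ehat^b\,[\sThat]\big)=e^a\ehat^b\, p_{\Uhat*}\big(\alpha\,[\sThat]\big),\qquad \alpha\in A^*_G(\Fl).
\]
Consequently the image is the $A^*_G(\Uhat)$-submodule, that is, the ideal, generated by the classes $p_{\Uhat*}(\alpha[\sThat])$ with $\alpha$ running over $A^*_G(\Fl)$. We may take $\alpha$ to be $A^*(BG)$-multiples of monomials $\zeta^i\xi^j$, and the $A^*(BG)$-coefficients, being pulled back from the base, also come out by the projection formula.

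Second, we reduce the monomials using the relations \eqref{eq:rel.Fl}. These say $\zeta^3=-c_2\zeta-c_3$ and $\xi^2=-3\zeta\xi-3\zeta^2-c_2$. Hence $A^*_G(\Fl)$ is generated as an $A^*(BG)$-module by $\zeta^i\xi^j$ with $0\le i\le 2$ and $0\le j\le 1$. Combining this with the first step, the ideal is generated by the six classes $r^{\sThat}_{ij}$.

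The only point needing care is that $\zeta$ and $\xi$ are legitimate classes on $\Fl\times\Uhat$ pulled back from $\Fl$, and that $p_{\Uhat}$ is proper. Both hold because $\Fl$ is a projective homogeneous $G$-variety, so $p_{\Uhat}$ is a proper $G$-equivariant morphism and equivariant proper pushforward is defined. No real obstacle is expected: the substantive input, Lemma~\ref{lem:images.sThat}, is already available, and the remainder is formal.
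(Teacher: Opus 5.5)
Your proposal is correct and matches the paper's proof, which simply repeats the argument of Proposition~\ref{prop:images.tsT.tsQ}. That argument takes the image $A^*_G(\Fl)[e,\ehat]\cdot[\sThat]$ from Lemma~\ref{lem:images.sThat}, pulls $e$, $\ehat$ and $A^*(BG)$ out of $p_{\Uhat*}$ by the projection formula, and uses \eqref{eq:rel.Fl} to restrict to $0\le i\le 2$, $0\le j\le 1$. You leave one point implicit: that the image is an ideal, so that the $A^*(BG)[e,\ehat]$-span of the $r^{\sThat}_{ij}$ equals the ideal they generate. This also follows from the projection formula, and the paper glosses over it in the same way.
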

\begin{proof}
	The proof is identical to that of Proposition~\ref{prop:images.tsT.tsQ}.
\end{proof}

\subsection{The class $[\sThat]$ and the relations $r^{\sThat}_{ij}$}
Consider the blowup diagram
\[
\begin{tikzcd}
	\Fl\times \Ehat \arrow[r,hook,"\jhat_\Fl"] \arrow[d,"\ghat_\Fl"'] &
	\Fl\times \Uhat\arrow[d,"\fhat_\Fl"'] \arrow[r,"p_{\Uhat}"] &
	\Uhat\arrow[d,"\fhat"]\\
	\Fl\times Z^{1/2}_\pTac \arrow[r,hook,"\ihat_\Fl"] &
	\Fl\times U^{1/2,ss} \arrow[r]&
	U^{1/2,ss}
\end{tikzcd}
\]
obtained by the base change. 
Consider the natural inclusion
\[\tilde i:~\widetilde Z^{1/2}_\pTac \hooklongrightarrow\Fl\times Z^{1/2}_\pTac.\]
Applying Theorem~\ref{thm:blowup.formula2} to $\sThat\to \widehat\sT^{ss}$, we obtain the following formula for $[\sThat]$. 
\begin{proposition}\label{prop:sThat.class}
	In $A^*_G(\Fl\times \Uhat)$, we have
	\[[\sThat]=\fhat_\Fl^*[\widetilde\sT^{ss}]-\left\{\jhat_{\Fl *}\left(24(1-\xi)^3 (1+\ehat+\ehat^2+\cdots)\cdot \ghat_\Fl^*(\tilde i_*(1))\right)\right\}^6,\]
	where, by abuse of notation, we write $\ehat=-[\Fl\times \Ehat]$. 
\end{proposition}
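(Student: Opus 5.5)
We apply Theorem~\ref{thm:blowup.formula2} to the stack-theoretic weighted blowup $\fhat:\Uhat\to U^{1/2,ss}$ along $Z^{1/2}_\pTac$, with $S=\widetilde\sT^{ss}\subset \Fl\times U^{1/2,ss}$ of codimension $6$ and $S\cap(\Fl\times Z^{1/2}_\pTac)=\widetilde Z^{1/2}_\pTac$. The result has the form
\[[\sThat]=\fhat_\Fl^*[\widetilde\sT^{ss}]-\Big\{\jhat_{\Fl*}\Big(P_{N_{i_\pTac}}(1)\,(1+\ehat+\ehat^2+\cdots)\cdot \ghat_\Fl^*\tilde i_*\,s^{wt}\big(N_{\ihat_\pTac}\big)\Big)\Big\}^6 ,\]
so the task is to identify $P_{N_{i_\pTac}}(1)\cdot \ghat_\Fl^*\tilde i_* s^{wt}(N_{\ihat_\pTac})$ with $24(1-\xi)^3\,\ghat_\Fl^*\tilde i_*(1)$.

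\textbf{Step 1 (hypotheses).} Base change of $\fhat$ along $\Fl\times U^{1/2,ss}\to U^{1/2,ss}$ gives the weighted blowup $\fhat_\Fl$ along $\Fl\times Z^{1/2}_\pTac$ with the same weighted normal bundle $N_{i_\pTac}$. The surjectivity hypothesis needed here is the one shown just before Lemma~\ref{lem:images.sThat}. Smoothness of $\widetilde\sT^{ss}$ and of $\widetilde Z^{1/2}_\pTac\cong G\times^T(U^{1/2,ss})^R$ is clear, and the diagram \eqref{diagram:ZpTac} shows the intersection is exactly $\widetilde Z^{1/2}_\pTac$, with normal bundle $N_{\ihat_\pTac}\cong\cO(2\nu)\oplus\cO(3\nu)\oplus\cO(4\nu)$.

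\textbf{Step 2 (weighted Chern polynomials).} By Proposition~\ref{prop:normalbundle.decomp}, with weights $2,3,4$ on the summands, we get
\[P_{N_{i_\pTac}}(t)=\prod_{j=2}^4(jt+j\mu)(jt+j\nu), \qquad P_{N_{\ihat_\pTac}}(t)=\prod_{j=2}^4 (jt+j\nu).\]
On $\widetilde Z^{1/2}_\pTac$ the ratio $P_{N_{i_\pTac}}(1)\,s^{wt}(N_{\ihat_\pTac})=P_{N_{i_\pTac}}(1)/P_{N_{\ihat_\pTac}}(1)$ therefore equals $\prod_j j(1+\mu)=24(1+\mu)^3$.

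\textbf{Step 3 (main obstacle: expressing $\mu$ through $\xi$).} We must show that $\tilde i^*\xi=-\mu$, pulled back from $\Fl$ through the tacnode labelling. At the point $(p,\ell)=([1:0:0],\{y=0\})$ the fiber of $\cO_\rho(1)=\Omega_z$ is spanned by the coordinate $z$ in the affine chart at $p$. Its $T$-weight is that of $z/x$, which equals $\mu$ up to sign and the dualization convention. I would check this sign carefully against the defining conventions of $\mu,\nu$ together with Lemma~\ref{lem:linear.relation}; if needed, the identification can be pinned down by confirming that the positive-$R$-weight summand in Step 1 is $\nu$, consistent with $\xi\mapsto-\mu$.

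Once this is settled, the projection formula along $\tilde i$ gives
\[\tilde i_*\big(24(1+\mu)^3\big)=24(1-\xi)^3\,\tilde i_*(1),\]
and pulling back by $\ghat_\Fl^*$ yields the stated formula. Since $\ghat_\Fl^*$ commutes with multiplication by classes pulled back from $\Fl$, the factor $(1-\xi)^3$ can be moved outside as written.
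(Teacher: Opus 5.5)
Your route is the paper's own. You apply Theorem~\ref{thm:blowup.formula2} to $\fhat_\Fl$ with $S=\widetilde\sT^{ss}$ and use the projection formula to move classes through $\tilde i_*$. You then recognize $P_{N_{i_\pTac}}(1)\,s^{wt}(N_{\ihat_\pTac})=\prod_{j=2}^4(j+j\mu)=24(1+\mu)^3$ as the contribution of the excess summand $\cO(2\mu)\oplus\cO(3\mu)\oplus\cO(4\mu)$.

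\textbf{The gap: the sign in Step~3 is not established.} The stated formula depends on it: $\tilde i^*\xi=+\mu$ would give $24(1+\xi)^3$ instead. Your fallback check does not decide it. That $N_{\ihat_\pTac}$ is built from $\nu$ only tells you which normal directions lie in $\widetilde\sT^{ss}$. It is independent of the $T$-weight of $\cO_\rho(1)$ at the base flag.

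What is needed is the direct weight computation of Lemma~\ref{lem:linear.zeta.xi}. Take the base flag $\langle e_1\rangle\subset\langle e_1,e_3\rangle$, i.e.\ $p=[1:0:0]$ and $\ell=\{y=0\}$. The fiber of $\cO_\rho(-1)$ there is the tangent line $T_p\ell\cong\Hom(\langle e_1\rangle,\langle e_3\rangle)$, of $T$-weight $t_1^{-1}t_3=\mu$. Hence $\xi=c_1^G(\cO_\rho(1))\mapsto-\mu$.

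In your own formulation: $x$ and $z$ have weights $\mu+\chartheta$ and $\chartheta$. So the generator $d(z/x)$ of the fiber of $\Omega_z=\cO_\rho(1)$ has weight $-\mu$, and no dualization ambiguity remains. The paper's conventions force the same sign. Indeed $c_1^G(L_{301})=4\zeta+\xi$ must equal the weight $3\mu+4\chartheta$ of $x^3z$, and $\zeta\mapsto\mu+\chartheta$. With this, $\tilde i^*(1-\xi)^3=(1+\mu)^3$, and your final projection-formula step yields the statement.

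\textbf{A smaller point, in Step~1.} The surjectivity you cite, of $\ihat_\pTac^*$ on $\widetilde\sT^{ss}$, is the hypothesis for the other blowup $\sThat\to\widetilde\sT^{ss}$. For Theorem~\ref{thm:blowup.formula2} applied to $\fhat_\Fl$, the relevant map is $A^*_G(\Fl\times U^{1/2,ss})\to A^*_G(\Fl\times Z^{1/2}_\pTac)$. Its surjectivity follows from Lemma~\ref{lem:rest.ZpTac} together with the projective bundle formula for $\Fl$.
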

\begin{proof}
	By Theorem~\ref{thm:blowup.formula2}, we have
	\[[\sThat]=\fhat_\Fl^*[\widetilde\sT^{ss}]-\left\{\jhat_{\Fl *}\left(P_{N_{i_\pTac}}(1)(1+\ehat+\ehat^2+\cdots)\cdot \ghat_\Fl^*\tilde i_* s^{wt}\big(N_{\ihat_\pTac}\big)\right)\right\}^6.\]
	Since $N_{\ihat_\pTac}$ is pulled back from a vector bundle on $\Fl\times Z^{1/2}_\pTac$, we have 
	\[\ghat_\Fl^*\tilde i_* s^{wt}(N_{\ihat_\pTac})=\ghat_\Fl^*(\tilde i_*(1))\cdot s^{wt}(N_{\ihat_\pTac})\]
	by the projection formula.
	Moreover, the factor $P_{N_{i_\pTac}}(1)\, s^{wt}(N_{\ihat_\pTac})$ is precisely the contribution of the excess bundle
	$\cO(2\mu)\oplus \cO(3\mu)\oplus \cO(4\mu)$
	in the Cartesian diagram~\eqref{diagram:ZpTac}. Therefore
	\[P_{N_{i_\pTac}}(1) s^{wt}(N_{\ihat_\pTac})=(2+2\mu)(3+3\mu)(4+4\mu)=24(1+\mu)^3=24(1-\xi)^3,\]
	by Lemma~\ref{lem:linear.zeta.xi} below. 
	This gives the claimed formula.
\end{proof}

Below, we first prove Lemma~\ref{lem:linear.zeta.xi}, and then compute $\tilde i_*(1)$ as a class pulled back from $A^*_G(\Fl\times U^{1/2,ss})$.
In particular, $\jhat_{\Fl *}$ inside the bracket $\{\cdots\}^6$ can be replaced by $-\ehat$, so the term $\{\cdots\}^6$ can be computed explicitly.

\smallskip

The affine space bundle $G/T\to G/B=\Fl$ induces a natural isomorphism 
\beq\label{eq:FltoTcharacter}A^1_G(\Fl)\xrightarrow{~\cong~} A^1(BT)\cong \Hom(T,\Gm)_\Q\eeq
via pullback. 
We write $\zeta$ and $\xi$ in terms of $\mu$ and $\nu$.

\begin{lemma}\label{lem:linear.zeta.xi}
	Under \eqref{eq:FltoTcharacter}, $\zeta$ maps to $\frac{1}{3}(2\mu-\nu)$, and $\xi$ maps to $-\mu$.
\end{lemma}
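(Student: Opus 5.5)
The plan is to reduce everything to characters of $T$ restricted to the base point $(p,\ell)=([1:0:0],\{y=0\})$ of $\Fl=G/B$. Pulling back along $G/T\to G/B$ identifies $A^1_G(\Fl)$ with $A^1(BT)=\Hom(T,\Gm)_\Q$. Under this identification, a $G$-equivariant line bundle on $\Fl$ goes to the character by which $T$ acts on its fiber at the $T$-fixed base point. So it suffices to compute the $T$-weights of the fibers of $\cO_{\PP V}(1)$ and $\cO_\rho(1)$ at that point, and then rewrite them in terms of $\mu(t)=t_1^{-1}t_3$ and $\nu(t)=t_2^{-1}t_3$.

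For $\zeta$: the fiber of $\cO_{\PP V}(-1)$ at $p=[1:0:0]$ is the line $\C e_1\subset V$, on which $t$ acts by $t_1$. Hence $\cO_{\PP V}(1)$ has weight $t_1^{-1}$. Since $t_1t_2t_3=1$, a short computation gives
\[
\tfrac13(2\mu-\nu) = \tfrac13\bigl(-2t_1+t_2+t_3\bigr) = \tfrac13\bigl(-2t_1-t_1\bigr) = -t_1
\]
in additive notation, where I use $t_1+t_2+t_3=0$. This gives $\zeta\mapsto\frac13(2\mu-\nu)$.

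One point needs checking first. The paper uses the convention that $G$ acts on $X$ via the dual action on $V^*$. I must make sure this convention makes $x$ have weight $-1$ under $\lambda_R$, as stated before Lemma~\ref{lem:Uus}, and that $\mu$, $\nu$ match it. The cross-check is Lemma~\ref{lem:linear.relation}, whose weight computations ($t_3^{-3}$ on $s$, and so on) fix the sign conventions. I will also check the result against $c_1^T(V)=0$ and the weights of $V$ used in the proof of Lemma~\ref{lem:rest.ZpTac}.

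For $\xi$: $\cO_\rho(1)$ on $\Fl=\PP(T_{\PP V})$ was identified with $\Omega_z$, the conormal direction $z$ at $p$ transverse to $\ell=\{y=0\}$. Its fiber is spanned by the local coordinate $z/x$, so it has weight (weight of $z$) minus (weight of $x$) as a function on $V$. Coordinates on $V$ have weights inverse to those of the basis vectors, so $z/x$ carries $t_3^{-1}t_1$. Its character is therefore $t_1t_3^{-1}=-\mu$. Equivalently, I can read this off from the formula $c_t(L_{ijk})=t+(i+j+k)\zeta-j(3\zeta+\xi)+k\xi$: the section $a_{ijk}$ of $L_{ijk}$ must have the weight of the monomial $x^iy^jz^k$ at the fixed point. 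As a consistency check, I will verify that $c_1(\Omega_y)=-3\zeta-\xi$ then matches the weight of $y/x$.

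The main obstacle is sign bookkeeping: duals, whether the fiber or its coordinate function is meant, and the identification via $G/T$ versus the fiber at the fixed point. I will settle these by checking the final answers against the relation $\xi^2+3\zeta\xi+3\zeta^2+c_2=0$, restricted to $A^*(BT)$ with the weights of $V$, since that relation must hold identically.
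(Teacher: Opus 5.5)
Your argument is correct and essentially matches the paper's: both read off the $T$-weights of the fibers at the base flag $\langle e_1\rangle\subset\langle e_1,e_3\rangle$. The paper phrases the $\xi$ step via $\cO_\rho(-1)\cong\Hom(\langle e_1\rangle,\langle e_3\rangle)$, which has weight $\mu$; your $d(z/x)$ computation is the dual of this. One wording slip: $\Omega_z=\cO_\rho(1)$ is the cotangent line $T_p^*\ell$ (the quotient of $\rho^*\Omega_{\PP V}$, with $z$ the coordinate \emph{along} $\ell=\{y=0\}$), not a conormal direction transverse to $\ell$ --- that description fits $\Omega_y$ --- but the generator $z/x$ you actually use, and hence the character $t_1t_3^{-1}$, i.e.\ $-\mu$, is the right one.
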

\begin{proof}
	The map \eqref{eq:FltoTcharacter} sends a $G$-equivariant line bundle to its fiber at the point $B\in G/B$ as a $T$-representation, where $B$ corresponds to the flag $\langle e_1\rangle \subset \langle e_1,e_3\rangle\subset \C^3$. Since $\cO(-1)|_{B}=\langle e_1\rangle$ has $T$-weight $(1,0,0)$, the dual class $\zeta=c_1(\cO_{\PP V}(1))$ maps to $\mu+\chartheta=\frac{1}{3}(2\mu-\nu)$. 
	
	Similarly, $\cO_\rho(-1)$ is naturally isomorphic to $\Hom(\langle e_1\rangle, \langle e_3\rangle)$, which has $T$-weight $(-1,0,1)$. Hence $\xi=c_1^G(\cO_\rho(1))$ corresponds to $-\mu$. 
\end{proof}

Next, we compute the class $\tilde i_*(1)$, and hence the relations $r^{\sThat}_{ij}$.

\begin{proposition}\label{prop:diagonal}
	For the closed immersion $\tilde i:\widetilde{Z}^{1/2}_{\pTac}\hookrightarrow \Fl\times Z^{1/2}_{\pTac}$, we have	
	\[\begin{split}
		\tilde i_*(1)&=1\cdot (e^3/9+5ec_2/3) +(3\zeta+\xi)\cdot (e^2/9-2c_2)+\zeta\cdot (2c_2) + (3\zeta^2+\zeta \xi)\cdot (e/3)\\
		&\quad   -(2\zeta^2+3\zeta \xi+\xi^2)\cdot (-e) -(6\zeta^3+11\zeta^2\xi+6\zeta \xi^2+\xi^3)\cdot 2.
	\end{split}
	\]	
\end{proposition}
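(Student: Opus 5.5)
The plan is to reduce the computation to a $T$-equivariant computation on the flag variety $\Fl$ over the $R$-fixed locus, and to identify $\tilde i$ with the inclusion of two torus-fixed flags.

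\textbf{Reduction to $N_G(R)$-equivariant classes.} Using $Z^{1/2}_\pTac\cong G\times^{N_G(R)}(U^{1/2,ss})^R$, we have
\[A^*_G(\Fl\times Z^{1/2}_\pTac)\cong A^*_{N_G(R)}(\Fl\times (U^{1/2,ss})^R).\]
On this model, $\widetilde Z^{1/2}_\pTac\cong G\times^T (U^{1/2,ss})^R$ is the $N_G(R)$-orbit of
\[\{(p_0,\ell_0)\}\times (U^{1/2,ss})^R, \qquad p_0=[1:0:0],\quad \ell_0=\{y=0\}.\]
This is the tacnode of $(xy+az^2)(xy+bz^2)$ together with its tangent line. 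The $\mu_2$-translate is the second tacnode $[0:1:0]$ with tangent line $\{x=0\}$. Hence $\tilde i_*(1)$ is the $\mu_2$-symmetrization of $\sigma\boxtimes 1$, where $\sigma\in A^3_T(\Fl)$ is the class of the $T$-fixed point $(p_0,\ell_0)$.

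\textbf{Computing $\sigma$.} I would realize the fixed point as the zero locus of a section of a rank-$3$ bundle on $\Fl$, built from pieces twisted by characters of $T$:
\begin{itemize}
\item the condition $p=p_0$ is a section of $\cO_{\PP V}(1)\otimes V/\langle e_1\rangle$, of rank $2$;
\item given $p=p_0$, the condition $\ell=\ell_0$ is the vanishing of a section of $\cO_\rho(1)$ twisted by a character, of rank $1$.
\end{itemize}
This gives $\sigma$ as a product of three linear forms in $\zeta,\xi$ with character shifts. I expect this to produce the factorized pieces visible in the statement: $(\zeta+\xi)(2\zeta+\xi)$ and $(\zeta+\xi)(2\zeta+\xi)(3\zeta+\xi)$ appear as top-degree parts. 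Expanding $\sigma+\mu_2\cdot\sigma$ in the basis $1,\zeta,\xi,\zeta^2,\zeta\xi,\dots$ of $A^*_T(\Fl)$ over $A^*(BT)$ then gives coefficients that are $\mu_2$-symmetric functions of $\mu,\nu$.

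\textbf{Rewriting the coefficients.} I would express these symmetric coefficients through $\chartheta=-\frac13(\mu+\nu)$ and $c_2^T(V)$, using:
\begin{itemize}
\item $e=-3\chartheta$ (Lemma~\ref{lem:linear.relation});
\item the Chern class formulas for $V$ from the proof of Lemma~\ref{lem:rest.ZpTac};
\item Lemma~\ref{lem:linear.zeta.xi}, which expresses $\zeta,\xi$ in terms of $\mu,\nu$.
\end{itemize}
Since the target is written with classes pulled back from $A^*_G(U^{1/2,ss})$ (for example $e^3$ occurs), I would choose lifts of the coefficients to polynomials in $e,c_2$. Any two lifts differ by an element of the kernel computed in Lemma~\ref{lem:rest.ZpTac}. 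Such elements act trivially after the subsequent multiplication by $\ehat$ and $\jhat_{\Fl*}$, because of Proposition~\ref{prop:Chowring.Uhat}.

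\textbf{Verification and main obstacle.} The main obstacle is bookkeeping the characters correctly, in particular the normalization $\cO_X(1)\leftrightarrow 2\chartheta$ and the $\mu_3$ ambiguity between $T$ and $\bar T$. I would settle this by equivariant localization. The six $T$-fixed flags of $\Fl$ determine the restrictions of $\zeta$ and $\xi$ as explicit characters. The class $\tilde i_*(1)$ must restrict to
\begin{itemize}
\item the tangent Euler class $\prod(\text{tangent weights})$ at the two flags $(p_0,\ell_0)$ and its $\mu_2$-image,
\item and to $0$ at the other four fixed flags.
\end{itemize}
Checking that the proposed expression satisfies these six restrictions, modulo $e^2$, confirms the formula, since restriction to fixed points is injective on $A^*_T(\Fl)$ rationally.
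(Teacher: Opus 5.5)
Your proposal is correct, and its skeleton matches the paper's. The paper factors $\tilde i$ as $\widetilde Z^{1/2}_{\pTac}\hookrightarrow\Fl\times\widetilde Z^{1/2}_{\pTac}\to\Fl\times Z^{1/2}_{\pTac}$. The class of the first immersion is the pullback of the diagonal $[\Delta]\subset\Fl\times\Fl$ along $\widetilde Z^{1/2}_{\pTac}\cong G\times^T(U^{1/2,ss})^R\to G/B$, which is exactly your $\sigma\boxtimes 1$. Pushing forward along the \'etale double cover (Lemma~\ref{lem:BTtoBN}) is your $\mu_2$-symmetrization.

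The two routes differ only in how the fixed-flag class is computed. Your zero-locus description gives it in closed form,
\[\sigma=(\zeta+w_2)(\zeta+w_3)(\xi+w_2-w_1),\qquad (w_1,w_2,w_3)=\bigl(-(\mu+\chartheta),\,-(\nu+\chartheta),\,-\chartheta\bigr),\]
where the $w_i$ are the $T$-weights of $V$. The paper instead writes $[\Delta]$ in K\"unneth form with unknown coefficients and solves for them using $\pr_{1*}$ and the pairings $G_{ij}$.

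The paper's method directly outputs the dual basis in the statement:
\[1,\quad 3\zeta+\xi,\quad \zeta,\quad 3\zeta^2+\zeta\xi,\quad -(\zeta+\xi)(2\zeta+\xi),\quad -(\zeta+\xi)(2\zeta+\xi)(3\zeta+\xi).\]
In your approach these factored pieces are not top-degree parts of $\sigma$, whose leading term is $\zeta^2\xi$. They appear only after reducing by $\zeta^3+c_2\zeta+c_3$ and $\xi^2+3\zeta\xi+3\zeta^2+c_2$; for example, $-(\zeta+\xi)(2\zeta+\xi)(3\zeta+\xi)\equiv\zeta^2\xi+c_2\xi-3c_3$. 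After this reduction the two classes agree. Your derivation is shorter and transparently the class of the fixed flag, while the paper's lands in the stated form without a comparison step.

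For your verification step, note that the identity holds only modulo $e^2$. The symmetrization of $(\mu+\chartheta)^2$ (the image of $\zeta^2$) is $(\mu+\chartheta)^2+(\nu+\chartheta)^2=-\chartheta^2-2c_2=-e^2/9-2c_2$. So the stated coefficient $e^2/9-2c_2$ is correct only because $e^2=0$ in $A^*_G(Z^{1/2}_{\pTac})$.

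Checking fixed-point restrictions modulo $e^2$ therefore needs slightly more than injectivity of localization over $A^*(BT)$, since injectivity need not survive base change to $A^*(BT)/(\chartheta^2)$. It does survive here, because $\chartheta$ is not proportional to any root $\pm\mu,\pm\nu,\pm(\mu-\nu)$, so the GKM divisibility conditions pass to the quotient by $\chartheta^2$. Alternatively, skip localization and compare coefficients directly in the free basis $\{1,\zeta,\zeta^2\}\cdot\{1,\xi\}$.
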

We prove this in two steps using the factorization
\beq\label{factorization.tildei}\widetilde Z^{1/2}_{\pTac}\hooklongrightarrow \Fl\times \widetilde Z^{1/2}_{\pTac}\lra \Fl\times Z^{1/2}_{\pTac},\eeq 
as the composition of a closed immersion followed by an \'etale double cover.
The closed immersion fits into the Cartesian diagram
\[\begin{tikzcd}
	\widetilde Z^{1/2}_{\pTac} \arrow[r,hook]\arrow[d] & \Fl\times \widetilde Z^{1/2}_{\pTac}\arrow[d]\\
	\Fl\arrow[r,hook] & \Fl\times\Fl
\end{tikzcd}\]
where the vertical maps are induced by the projection
\[\widetilde Z^{1/2}_{\pTac}\cong G\times^T (U^{1/2,ss})^R\lra G/T\lra G/B= \Fl,\] 
which records the pointed line at the chosen tacnode, and the lower horizontal map is the diagonal map of $\Fl$.
It follows that the class $[\widetilde Z^{1/2}_{\pTac}]\in A^3_G(\Fl\times \widetilde Z^{1/2}_{\pTac})$ is the pullback of that of the diagonal \cite[Proposition~1.7]{Fulton-intersection-theory}. We compute the latter in Lemmas~\ref{lem:diagonal} below. 

Moreover, the second map $\Fl\times \widetilde Z^{1/2}_{\pTac}\to \Fl\times Z^{1/2}_{\pTac}$ in the factorization~\eqref{factorization.tildei} is the base change of the \'etale double cover $BT \to B(N_G(R))$, which we compute in Lemma~\ref{lem:BTtoBN} below.

The next two lemmas complete the proof of Proposition~\ref{prop:diagonal}.
\begin{lemma}\label{lem:diagonal}
	Let $\Delta\subset \Fl\times\Fl$ be the diagonal. Then,
	\[\begin{split}
		[\Delta]&=1\otimes \zeta^2\xi+(3\zeta+\xi)\otimes \zeta^2+\zeta\otimes \zeta \xi +(3\zeta^2+\zeta \xi)\otimes \zeta\\
		&\quad -(2\zeta^2+3\zeta \xi+\xi^2)\otimes \xi-(6\zeta^3+11\zeta^2\xi+6\zeta \xi^2+\xi^3)\otimes 1
	\end{split}
	\]
	in $A^*_G(\Fl\times \Fl)=A^*_G(\Fl)\otimes_{A^*(BG)} A^*_G(\Fl)$. 
\end{lemma}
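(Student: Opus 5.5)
The plan is to use the standard description of the class of the diagonal in a product of a flag bundle with itself. I will build $\Fl\to\pt$ as a tower of projective bundles: first $\pi:\PP V\to\pt$, then $\rho:\Fl=\PP(T_{\PP V})\to\PP V$. The diagonal of $\Fl$ then factors through the fibre product $\Fl\times_{\PP V}\Fl\subset\Fl\times\Fl$. This gives
\[[\Delta_{\Fl}]=[\Fl\times_{\PP V}\Fl]\cdot[\Delta_{\Fl/\PP V}],\]
with both factors computed by the diagonal formula for a projective bundle. For a $\PP^{r-1}$-bundle $\PP(E)\to Y$ with hyperplane class $h$, the relative diagonal in $\PP(E)\times_Y\PP(E)$ has class
\[\sum_{a+b=r-1} h_1^a\,h_2^b\,\text{(Chern class corrections)};\]
more precisely, it is the top Chern class of $\pr_1^*\cO(-1)^\vee\otimes\pr_2^*(E/\cO(-1))$, or its dual. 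I will not guess the sign conventions. Instead I will fix them by requiring that the result pushes forward correctly.

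The cleanest route is probably to avoid the geometry and verify the formula algebraically. The key fact is that $A^*_G(\Fl)$ is a free $A^*(BG)$-module with basis
\[1,\ \zeta,\ \zeta^2,\ \xi,\ \zeta\xi,\ \zeta^2\xi,\]
subject to the relations \eqref{eq:rel.Fl}. For such a module, the diagonal class is characterised as follows. Write it as $\sum_k a_k\otimes b_k$, where $\{a_k\}$ runs over this basis and $\{b_k\}$ is the dual basis with respect to the Poincaré pairing
\[\langle u,v\rangle=\pi_*\rho_*(uv)\in A^*(BG).\]
This characterisation holds because $\pr_{2*}([\Delta]\cdot\pr_1^*u)=u$ for all $u$.

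The steps are then as follows.

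First, compute the pushforward $\pi_*\rho_*$ on monomials. From \eqref{eq:rel.Fl} and the definitions, $\rho_*\xi=1$, $\rho_*1=0$ and $\pi_*\zeta^2=1$. Hence $\pi_*\rho_*(\zeta^2\xi)=1$, and the other degree-3 basis monomials reduce using the two relations.

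Second, check that the listed second tensor factors
\[\zeta^2\xi,\quad \zeta^2,\quad \zeta\xi,\quad \zeta,\quad \xi,\quad 1\]
pair with the first factors
\[1,\quad 3\zeta+\xi,\quad \zeta,\quad 3\zeta^2+\zeta\xi,\quad -(2\zeta^2+3\zeta\xi+\xi^2),\quad -(6\zeta^3+\cdots)\]
to give the identity matrix. In degree 3 the needed values are
\[\langle 1,\zeta^2\xi\rangle=1,\qquad \langle 3\zeta+\xi,\zeta^2\rangle=\pi_*\rho_*(\xi\zeta^2)=1,\qquad \langle 3\zeta+\xi,\zeta\xi\rangle=\pi_*\rho_*(3\zeta^2\xi+\zeta\xi^2).\]
The last one is evaluated using $\xi^2=-3\zeta\xi-3\zeta^2-c_2$, which gives $3-3=0$ modulo pushforwards of lower degree. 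The remaining higher-degree pairings are checked similarly. The factors that look like $(\xi+\zeta)(\xi+2\zeta)$ and $(\xi+\zeta)(\xi+2\zeta)(\xi+3\zeta)$ reflect the weights of $T_{\Fl}$, which is why they appear.

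The main obstacle is the bookkeeping. Pairings that land in positive degree of $A^*(BG)$ must vanish, or be accounted for, which requires care with the $c_2,c_3$ terms. I will handle this by working over $A^*(BT)$ via the torus-fixed-point (Atiyah–Bott) formula. For $\Fl=G/B$ there are six fixed points, and $[\Delta]$ restricts at $(w,w')$ to $\delta_{ww'}$ times the Euler class of the tangent space at $w$. Checking the formula at all 36 pairs of fixed points, where $\zeta,\xi$ restrict to explicit weights (compare Lemma~\ref{lem:linear.zeta.xi}), then proves it, since restriction to the fixed points is injective.
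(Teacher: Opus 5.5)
Your proposal is correct and is essentially the paper's argument. The paper writes $[\Delta]$ with unknown first factors against the monomials $\zeta^2\xi,\zeta^2,\zeta\xi,\zeta,\xi,1$, applies $\pr_{1*}\big([\Delta]\cdot(1\otimes\zeta^i\xi^j)\big)=\zeta^i\xi^j$ using the values $\pi_*\rho_*(\zeta^i\xi^j)$ obtained from the projective bundle formula and \eqref{eq:rel.Fl}, and solves the resulting triangular system, which is exactly your dual-basis check for $\langle u,v\rangle=\pi_*\rho_*(uv)$.

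The fixed-point localization fallback is valid but unnecessary: the pairings that land in positive degree of $A^*(BG)$ (those involving $c_2,c_3$) can be checked just as directly from those same monomial pushforwards.
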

\begin{proof}
	Let $F_i,F_i'\in \Q[\zeta,\xi]$ be the classes of degree $i$ such that
	\[\begin{split}
		[\Delta]&=F_0\otimes \zeta^2\xi+F_1\otimes \zeta^2+F_1'\otimes \zeta \xi +F_2\otimes \zeta+F_2'\otimes \xi + F_3\otimes 1.
	\end{split}
	\]
	Since the first projection $\pr_1:\Fl\times \Fl\to \Fl$ restricts to the identity on $\Delta$,
	\[
	\begin{split}
		\zeta^i\xi^j=\pr_{1*}\left([\Delta]\cdot (1\otimes \zeta^i\xi^j)\right)
		&=F_0\cdot G_{i+2,j+1}+F_1\cdot G_{i+2,j}+F_1'\cdot G_{i+1,j+1}\\
		&\quad +F_2\cdot G_{i+1,j}+F_2'\cdot G_{i,j+1}+F_3\cdot G_{i,j}
	\end{split}
	\]
	for every $i,j\geq0$, where $G_{i,j}\in A^*(BG)=\Q[c_2,c_3]$. 
	The equality holds in 
	\[A^*_G(\Fl)\cong \frac{\QQ[c_2,c_3][\zeta,\xi]}{(\zeta^3+c_2\zeta+c_3,\xi^2+3\zeta \xi+3\zeta^2+c_2)}\cong \QQ[\zeta,\xi].\]
	The classes $G_{i,j}$ are computed using the projective bundle formula:
	\[\begin{split}
		&(G_{21},G_{20},G_{11},G_{10},G_{01},G_{00})=(1,0,0,0,0,0);\\
		&(G_{31},G_{30},G_{21},G_{20},G_{11},G_{10})=(0,0,1,0,0,0);\\
		&(G_{22},G_{21},G_{12},G_{11},G_{02},G_{01})=(0,1,-3,0,0,0);\\
		&(G_{41},G_{40},G_{31},G_{30},G_{21},G_{20})=(-c_2,0,0,0,1,0);\\
		&(G_{32},G_{31},G_{22},G_{21},G_{12},G_{11})=(3c_2,0,0,-1,-3,0);\\
		&(G_{42},G_{41},G_{32},G_{31},G_{22},G_{21})=(3c_3,-c_2,3c_2,0,0,1).
	\end{split}\] 
	Substituting $(i,j)=(0,0),(1,0),(0,1),(2,0),(1,1),(2,1)$  gives the claimed identity.
\end{proof}

\begin{lemma}\label{lem:BTtoBN}
	The pushforward $A^*_G(\widetilde Z^{1/2}_{\pTac})\to A^*_G(Z^{1/2}_{\pTac})$ sends
	\[\begin{split}
		1 &~\mapsto~ 2,\\
		\xi &~\mapsto~ 3(\zeta+\xi)=-e,\\ 
		\zeta &~\mapsto~ -(\zeta+\xi)=e/3,\\ 
		\zeta \xi &~\mapsto~ 2c_2,\\
		\zeta^2 &~\mapsto~ -(\zeta+\xi)^2-2c_2=e^2/9-2c_2,\\ 
		\zeta^2\xi &~\mapsto~ -3(\zeta+\xi)^3-5(\zeta+\xi)c_2=e^3/9+5ec_2/3. 
	\end{split}\]
\end{lemma}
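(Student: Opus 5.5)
The plan is to reduce everything to equivariant Chow rings of classifying stacks of the torus and its normalizer. By the descriptions $\widetilde Z^{1/2}_\pTac\cong G\times^T (U^{1/2,ss})^R$ and $Z^{1/2}_\pTac\cong G\times^{N_G(R)}(U^{1/2,ss})^R$, the map in question is the base change of the étale double cover $BT\to B(N_G(R))$. This base change is taken along $[(U^{1/2,ss})^R/N_G(R)]\to B N_G(R)$. So I would first identify $A^*_G(\widetilde Z^{1/2}_\pTac)\cong A^*(BT)/(\chartheta^2)$ and $A^*_G(Z^{1/2}_\pTac)\cong A^*(BN_G(R))/(\chartheta^2)=\big(A^*(BT)/(\chartheta^2)\big)^{\mu_2}$. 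These identifications were already used in Lemma~\ref{lem:rest.ZpTac} and in the discussion before Lemma~\ref{lem:images.sThat}.

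The pushforward along the double cover $BT\to BN_G(R)$, with rational coefficients, is the transfer. Composed with the injective pullback into $A^*(BT)^{\mu_2}$, it becomes the symmetrization $\alpha\mapsto \alpha+\sigma(\alpha)$, where $\sigma$ is the involution of $A^*(BT)$ exchanging $\mu$ and $\nu$. This compatibility holds because the cover is a $\mu_2$-torsor, and $\pi^*\pi_*$ is the sum over the Galois group. The formula is compatible with the base change to $(U^{1/2,ss})^R$, since the relation $\chartheta^2=0$, with $\chartheta=-\tfrac13(\mu+\nu)$, is $\sigma$-invariant.

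Next I express the inputs in characters. By Lemma~\ref{lem:linear.zeta.xi}, $\xi=-\mu$ and $\zeta=\tfrac13(2\mu-\nu)=\mu+\chartheta$. On the target I use $e=-3\chartheta=\mu+\nu$ from Lemma~\ref{lem:linear.relation}. For $c_2$ I use $c_2^T(V)=(\mu+\chartheta)(\nu+\chartheta)+(\mu+\nu+2\chartheta)\chartheta$, which reduces modulo $\chartheta^2$ to $\mu\nu+\chartheta(\mu+\nu)+\chartheta(\mu+\nu+2\chartheta)$. Since $\mu+\nu=-3\chartheta$, this gives $c_2\equiv\mu\nu$ modulo $(\chartheta^2)$, and hence $\mu\nu\equiv c_2$.

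Then each entry is a direct symmetrization. The monomial $1$ goes to $2$. For $\xi$, symmetrizing gives $-(\mu+\nu)=-e$, while $3(\zeta+\xi)=3\chartheta=-e$. For $\zeta$, it gives $\tfrac13(\mu+\nu)=e/3$. For $\zeta\xi=-\mu^2-\mu\chartheta$, it gives $-(\mu^2+\nu^2)-\chartheta(\mu+\nu)=-(\mu+\nu)^2+2\mu\nu+3\chartheta^2\equiv 2c_2$, using $(\mu+\nu)^2=9\chartheta^2\equiv0$. The same procedure handles $\zeta^2=(\mu+\chartheta)^2$ and $\zeta^2\xi$. For these I use $\mu^2+\nu^2=(\mu+\nu)^2-2\mu\nu$ and $\mu^3+\nu^3=(\mu+\nu)^3-3\mu\nu(\mu+\nu)$, then reduce modulo $\chartheta^2$.

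The main obstacle is bookkeeping rather than conceptual. Two points need care. First, I must check that $\zeta+\xi=\chartheta$, so that the paper's intermediate expressions in $\zeta+\xi$ match. Second, I must decide which monomials are to be read modulo $\chartheta^2$, since the stated forms $e^2/9$ and $e^3/9$ retain higher powers of $e$. These are legitimate when the classes are pulled back to $\Fl\times Z^{1/2}_\pTac$, where they are then multiplied.
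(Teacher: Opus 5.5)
Your proposal is correct and is essentially the paper's proof: the pushforward along the double cover is symmetrization over $N_G(R)/T\cong\mu_2$. The paper carries this out directly in $(\zeta,\xi)$, using the involution $(\zeta,\xi)\mapsto(-2\zeta-\xi,3\zeta+2\xi)$ and the relations \eqref{eq:rel.Fl}, whereas you pass to $(\mu,\nu)$ and use $e=\mu+\nu$ and $c_2\equiv\mu\nu$.

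On your last point, the $e^2$ and $e^3$ terms simply vanish in the target $\Q[e,c_2]/(e^2)$, and they still vanish after pulling back to $\Fl\times Z^{1/2}_\pTac$, so no further justification is needed. The expressions in $\zeta+\xi=\chartheta$ in fact hold exactly in $A^*(BN_G(R))$, since $c_2=\mu\nu-3\chartheta^2$ in $A^*(BT)$. Exactly one has $-(\zeta+\xi)^2=-e^2/9$, so the $e$-form of the $\zeta^2$ entry is only valid modulo $e^2$, which is all the lemma needs.
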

\begin{proof}
	The map sends a class to the sum of its two translates under the action of $N_G(R)/T\cong \mu_2$, which sends $(\zeta,\xi)$ to $(-2\zeta-\xi,3\zeta+2\xi)$ by Lemma~\ref{lem:linear.zeta.xi}. The result then follows by \eqref{eq:rel.Fl}.
\end{proof}
\begin{proof}[Proof of Proposition~\ref{prop:diagonal}]
	This is now an immediate consequence of Lemmas~\ref{lem:diagonal} and~\ref{lem:BTtoBN}, together with
	 the factorization~\eqref{factorization.tildei}.
\end{proof}

\subsection{Chow ring of $\cPhat$}

Combining Propositions~\ref{prop:sThat.class} and~\ref{prop:diagonal} with \eqref{rel.sT}, we obtain the following identities modulo the ideal $\IK+(\shat_2,\shat_3,\shat_4)$:
\[ 
\begin{split}
r^{\sThat}_{00} 
&= 
-56c_3
-\frac{220}{3}c_2\lambda
+\frac{200}{27}\lambda^3
-\frac{160}{3}c_2e
+\frac{2720}{27}e^3
-144c_2\ehat
+72e\ehat^2
+48\ehat^3,\\
r^{\sThat}_{10} 
&= 
48c_2^2
-\frac{38}{15}\lambda^4
-\frac{2272}{15}e^4
-\frac{96}{25}c_2\ehat^2
-\frac{2368}{25}e\ehat^3
-\frac{1168}{25}\ehat^4.
\end{split}\]
The other classes $r^{\sThat}_{ij}$ are generated by these two, modulo $\IK+(\shat_2,\shat_3,\shat_4)$.

\begin{theorem}[Chow ring of $\cPhat$]\label{thm:Chow.cPhat}
	We have
	\[A^*(\cPhat)\cong\Q[\lambda,e,\ehat,c_2]/\Ihat,\]
	where $\Ihat=\IK+\big(\shat_2,\shat_3,\shat_4', r^{\sThat}_{10}\big)$, and $\shat_4'=-3 c_2e\ehat - 6c_2\ehat^2 + 3 e\ehat^3 + 2\ehat^4$.
\end{theorem}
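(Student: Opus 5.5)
Since $\Uhat^{ss}\subset\Uhat$ is open with complement $\Uhat^{us}$, the localization sequence gives $A^*(\cPhat)\cong A^*(\cPhat')/K$, with $K$ the image of $A^G_*(\Uhat^{us})\to A^G_*(\Uhat)$. By Proposition~\ref{prop:images.sThat}, $K$ is generated by the six classes $r^{\sThat}_{ij}$. Proposition~\ref{prop:Chowring.Uhat} gives $A^*(\cPhat')$, so we start from
\[A^*(\cPhat)\cong \Q[\lambda,e,\ehat,c_2,c_3]\big/\big(\IK+(\shat_2,\shat_3,\shat_4)+(r^{\sThat}_{ij})\big).\]
What remains is pure ideal manipulation: shrink this generating set to the one in the statement and eliminate $c_3$.

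First, compute all six $r^{\sThat}_{ij}=p_{\Uhat*}([\sThat]\zeta^i\xi^j)$ explicitly. By Proposition~\ref{prop:sThat.class}, $[\sThat]$ is $\fhat_\Fl^*[\widetilde\sT^{ss}]$ minus a correction term. For the correction term, replace $\jhat_{\Fl*}$ by multiplication by $-\ehat$ and substitute $\tilde i_*(1)$ from Proposition~\ref{prop:diagonal}. Then push forward along the $\Fl$-bundle using the relations \eqref{eq:rel.Fl} and the pushforward values $G_{i,j}$ from Lemma~\ref{lem:diagonal}. The $\fhat_\Fl^*$ part contributes exactly the relations \eqref{rel.sT}, rewritten via $H=(\lambda-2e)/3$ (Proposition~\ref{prop:lambda.linear}). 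This produces the displayed formulas for $r^{\sThat}_{00}$ and $r^{\sThat}_{10}$.

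Next, eliminate $c_3$. The class $r^{\sThat}_{00}$ has $c_3$-coefficient $-56$, so it expresses $c_3$ as a polynomial in $\lambda,e,\ehat,c_2$. Substitute this expression for $c_3$ into every other generator. Under this substitution:
\begin{itemize}
\item $\shat_4=(3c_3-c_2e)\ehat$ becomes a degree $4$ class. Reducing it modulo $\shat_2=\lambda\ehat$, $\shat_3=e^2\ehat$, and $\lambda e-2e^2$ (so that terms like $e^3\ehat$ and $\lambda^3\ehat$ vanish) should leave a scalar multiple of $\shat_4'=-3c_2e\ehat-6c_2\ehat^2+3e\ehat^3+2\ehat^4$.
\item The generators $ec_3$, $s^\tr_j$ and $s^\sQ_j$ of $\IK$ are likewise rewritten in $\lambda,e,\ehat,c_2$. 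Since they lie in $\IK$ only up to the new $\ehat$-terms, we must check that any extra $\ehat$-contributions are absorbed by $\shat_2,\shat_3,\shat_4'$. I would rely on the elimination being consistent, so that the stated $\Ihat$, read with $c_3$ substituted, is the correct ideal.
\end{itemize}

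Then verify with a Gröbner basis computation that $r^{\sThat}_{01},r^{\sThat}_{11},r^{\sThat}_{20},r^{\sThat}_{21}$ lie in $\IK+(\shat_2,\shat_3,\shat_4',r^{\sThat}_{10})$. The paper has already recorded that they are generated by $r^{\sThat}_{00}$ and $r^{\sThat}_{10}$ modulo $\IK+(\shat_2,\shat_3,\shat_4)$. As a consistency check, the Hilbert series of the resulting quotient should equal $1+3t+5t^2+7t^3+5t^4+3t^5+t^6$, matching Theorem~\ref{thm:Betti.PH} together with the cycle class isomorphism. Agreement would confirm that no relation has been lost.

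The main obstacle is correctness of the pushforward term involving $\tilde i_*(1)$, which requires the $\mu_2$-symmetrization in Lemma~\ref{lem:BTtoBN} and the excess factor $24(1-\xi)^3$. Sign or normalization errors there would break the Hilbert series check. If that check fails, I would first recheck the weighted Segre normalization: weights $(2,3,4)$ versus the excess bundle factor.
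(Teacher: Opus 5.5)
Your proposal is correct and follows the paper's proof. Localization together with Proposition~\ref{prop:images.sThat} reduces to the ideal $\IK+(\shat_2,\shat_3,\shat_4,r^{\sThat}_{00},r^{\sThat}_{10})$, then $c_3$ is eliminated via $r^{\sThat}_{00}$, and your reduction of $\shat_4$ gives exactly the paper's identity $\shat_4=\frac{9}{7}\shat_4'-\frac{3}{56}\ehat\cdot r^{\sThat}_{00}$ modulo $\IK+(\shat_2,\shat_3)$. The ``absorption'' check you propose for the generators of $\IK$ is unnecessary: in the statement $\IK$ is read with $c_3$ replaced by its expression from $r^{\sThat}_{00}$, and eliminating a variable this way is automatically a ring isomorphism.
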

\begin{proof}
	By Propositions~\ref{prop:Chowring.Uhat} and~\ref{prop:images.sThat} and the localization sequence, we have
	\[A^*(\cPhat)\cong \Q[\lambda,e,\ehat,c_2,c_3]/\left(\IK+\big(\shat_2,\shat_3,\shat_4, r^{\sThat}_{00}, r^{\sThat}_{10}\big)\right).\]
	The generator $c_3$ is redundant by the relation $r^{\sThat}_{00}$, and we have $\shat_4=\frac{9}{7}\shat_4'-\frac{3}{56}\ehat \cdot r^{\sThat}_{00}$ modulo $\IK+(\shat_2,\shat_3)$.
	Hence we can eliminate $c_3$ using $r^{\sThat}_{00}$, and replace $\shat_4$ with $\shat_4'$ in the presentation. 
\end{proof}
\begin{remark}
	The Hilbert series is $1+3t+5t^2+7t^3+5t^4+3t^5+t^6$.
\end{remark}

\section{Chow ring of $\cPH$}\label{s:chow.ph}

In this section, we compute the Chow ring of $\cPH$. We first show that it is generated by tautological classes, namely, the Hodge class $\lambda$, the boundary divisor class $\delta=[\overline \cZ_1]$, and the Chern classes $c_i^{\rm H}$ of the normalized Hodge bundle. We then determine the relations among these generators by pushing forward the relations in $A^*(\cPhat)$ and using the results in Section~\ref{s:Hodge}.

\subsection{Generators} 
Recall that the Hodge bundle $\EE^\rmH\to \cPH$ is defined as 
\[\EE^\rmH:=\pi_{\cC*}\w_{\cC/\cPH}\] 
of the relative canonical line bundle of the universal curve $\pi_{\cC}:\cC\to\cPH$.
Let 
\[c_i^\rmH:=c_i\big((\EE^\rmH)^*\otimes(\det \EE^\rmH)^{\frac{1}{3}}\big) \quad \text{ for }i=2,3.\]

\begin{proposition}\label{prop:generators.PH}
	The ring $A^*(\cPH)$ is generated by $\lambda$, $\delta$, $c_2^\rmH$ and $c_3^\rmH$.
\end{proposition}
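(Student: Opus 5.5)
The plan is to use the localization sequence for $\cZ_2\subset \cPH$ together with the isomorphism $\cPH\setminus\cZ_2\cong\cPKs$. By Corollary~\ref{cor:localization.left.exact} we have an exact sequence
\[0\lra A_*(\cZ_2)\lra A_*(\cPH)\lra A_*(\cPKs)\lra 0.\]
So generation of $A^*(\cPH)$ by $\lambda,\delta,c_2^\rmH,c_3^\rmH$ reduces to two statements. First, the restrictions of these classes generate $A^*(\cPKs)$. Second, the image of $A_*(\cZ_2)$ in $A^*(\cPH)$ lies in the subring $R$ generated by these four classes.

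For the first statement, restrict to $\cPKs\subset\cPK$. There $\lambda$ is identified with $3H+2e$ (Proposition~\ref{prop:lambda.linear}) and $\delta$ with $[\cEK]=-e$. By Corollary~\ref{cor:Hodge}, $\det\EE^\rmH$ is $\Lambda(-\partial\cPH)$, and $\EE^\rmH$ agrees with $\EE^\rmK$ away from $\cZ_2$ (Theorem~\ref{thm:Hodge}(2) off $\cEhat$). Hence $c_i^\rmH$ restricts to $c_i\big((\EE^\rmK)^*\otimes(\det\EE^\rmK)^{1/3}\big)=c_i$ for $i=2,3$. Theorem~\ref{thm:Chowring.stable} shows that $A^*(\cPKs)$ is generated by $H,e,c_2,c_3$. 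Since $H=(\lambda+2\delta)/3$ and $e=-\delta$ on this locus, the restrictions of $\lambda,\delta,c_2^\rmH,c_3^\rmH$ generate $A^*(\cPKs)$.

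For the second statement, $A^*(\cZ_2)$ is computed from Proposition~\ref{prop:Z2} as $\big(A^*(\cP(2,3,4))^{\otimes 2}\big)^{\mu_2}$. This ring is generated by the symmetric functions $h_1+h_2$ and $h_1h_2$ of the two hyperplane classes, and its Hilbert series is $1+t+2t^2+t^3+t^4$. It therefore suffices to show two things:
\begin{itemize}
\item the restriction $A^*(\cPH)\to A^*(\cZ_2)$ is surjective, since then by the projection formula $\iota_*A^*(\cZ_2)=\iota_*(1)\cdot A^*(\cPH)$;
\item $[\cZ_2]\in R$.
\end{itemize}

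For the class of $\cZ_2$, the natural candidate is $[\cZ_2]=\tfrac12\delta^2$ up to a combination of $\lambda\delta$. Note that $\cZ_2$ is the self-intersection locus of the normal crossings boundary $\overline\cZ_1$, and the Remark after Theorem~\ref{thm:Chowring.intro} relates $\lambda\delta(\lambda+2\delta)$ to $\lambda[\cZ_2]$. To pin this down, I would compute $\delta^2$ on $\cPhat$ using $\Phi^*\delta=\Psi^*[\cEK]+2\cEhat$ (Remark~\ref{rem:Hodge.HK}) and $\Phi^*[\cZ_2]=$ the pushforward-compatible class $2\cEhat\cdot(\cdots)$, which expresses $[\cZ_2]$ in terms of $\lambda,\delta$.

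For surjectivity of the restriction, I would use that $\Lambda|_{\cZ_2}$, the normal bundle $N_{\cZ_2/\cPH}$ (whose Chern roots are the two branch-normal classes of $\overline\cZ_1$), and $\EE^\rmH|_{\cZ_2}$ restrict to explicit classes on $\cP(2,3,4)^2$. Concretely, the two normal line bundles have first Chern classes proportional to $h_1$ and $h_2$ via the $\Gm$-weights in Proposition~\ref{prop:cEhat}. Hence $\delta|_{\cZ_2}$ restricts to a nonzero multiple of $h_1+h_2$, and the Hodge bundle, which splits as $\EE_1\oplus\EE_2\oplus\cO$ by the computation in the proof of Theorem~\ref{thm:Hodge}, gives $h_1h_2$ through $c_2^\rmH$. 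This provides generators of the symmetric invariants.

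The main obstacle is this last identification, namely making the restrictions of $\delta$ and $c_2^\rmH$ to $\cZ_2$ explicit enough to hit both $h_1+h_2$ and $h_1h_2$, and confirming $[\cZ_2]\in R$. If that becomes messy, I would fall back on a dimension count: compare the subring generated in $A^*(\cPhat)$ under $\Phi_*$ with the Hilbert series $1+2t+4t^2+5t^3+4t^4+2t^5+t^6$, using Theorem~\ref{thm:Chow.cPhat} and $\Phi_*\Phi^*=\id$.
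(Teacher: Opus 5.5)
Your reduction is correct and differs from the paper's. You localize along $\cZ_2$, with open part $\cPKs$, instead of along $\overline\cZ_1$, with open part $\cPGITs$. The first half, that the restrictions of $\lambda,\delta,c_2^\rmH,c_3^\rmH$ generate $A^*(\cPKs)$, is fine. The gap is in the second half, which you leave as the ``main obstacle'' and argue from a wrong geometric picture.

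First, $\overline\cZ_1$ is not a normal-crossings divisor with self-intersection $\cZ_2$. By Remark~\ref{rem:Z1.Kirwan}, its proper transform $\Psi^{-1}(\cEK)$ maps to it as a square-root stack along $\cZ_2$, hence bijectively; for a normal-crossings divisor the proper transform would be two-to-one over the double locus. In fact $\overline\cZ_1$ is smooth, with $\cZ_2$ a divisor in it.

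Second, $\PP(N_{\cZ_2/\cPH})=\cEhat^\dagger$ is a product over $\cZ_2$ (Proposition~\ref{prop:cEhat}, Theorem~\ref{thm:roof.diagram}). So the two Chern roots of $N_{\cZ_2/\cPH}$ coincide rationally, each a multiple of $h_1+h_2$; they are not multiples of $h_1$ and $h_2$ separately.

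Consequently, your conclusion that $\delta|_{\cZ_2}$ is a nonzero multiple of $h_1+h_2$ is true, but not for the reason you give. The $\delta^2$-coefficient in your guess for $[\cZ_2]$ is also wrong: the actual class is $\frac12\lambda\delta+\delta^2$. Above all, neither surjectivity of $A^*(\cPH)\to A^*(\cZ_2)$ nor $[\cZ_2]\in R$ is actually established. The line ``$\Phi^*[\cZ_2]=2\cEhat\cdot(\cdots)$'' is not an argument, and without $[\cZ_2]\in R$ your induction on degree does not close.

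Both inputs follow formally from results available before the proposition.

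\emph{Surjectivity.} No explicit restriction computation is needed. Up to a root stack, $\cPhat\to\cPH$ is the blowup along $\cZ_2$, with exceptional divisor $\cEhat$ lying over $\cZ_2$. The blowup and projective bundle formulas therefore identify $A^*(\cPhat)\to A^*(\cEhat)$ with $i_{\cZ_2}^*\oplus\mathrm{id}$. Surjectivity of $A^*(\cPhat)\to A^*(\cEhat)$ reduces, via $\cPhat'\to\cPK$ and the same formulas, to Lemma~\ref{lem:rest.ZpTac}. This is the argument in the proof of Lemma~\ref{lem:surj.pullback.Z1}.

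\emph{The class of $\cZ_2$.} Substitute $\delta=-e-2\ehat$ (Remark~\ref{rem:Hodge.HK}) into $\lambda e=2e^2$, using $\lambda\ehat=0$; this gives \eqref{eq:quadrel.ehat}. Theorem~\ref{thm:blowup.formula1} gives $t^2+c_1(N)t+\Phi^*[\cZ_2]=0$ with $t=2\ehat$. Applying $\Phi_*$ to both relations, with $\Phi_*\ehat=0$, yields $[\cZ_2]=\frac12\lambda\delta+\delta^2$. Your Hodge-bundle idea can also be made to work, but $\delta|_{\cZ_2}$ must then be read off from $\det\EE^\rmH=\Lambda(-\partial\cPH)$ together with $\lambda|_{\cZ_2}=0$.

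\emph{Comparison with the paper.} By localizing along $\overline\cZ_1$, the paper avoids the second step entirely. On $\cPGITs$ the ring is visibly generated by $\lambda,c_2^\rmH,c_3^\rmH$, and the kernel of restriction is the ideal generated by the generator $\delta$. No class of a deeper stratum has to be identified. Your route needs surjectivity only onto the smaller stratum $\cZ_2$, but pays for it with the computation of $[\cZ_2]$.
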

\begin{proof}
	Since the Hodge bundles $\EE^\rmH$ on $\cPH$ and $\EE^\rmK$ on $\cPK$ coincide over $\cPH\setminus \overline\cZ_1\cong \cPGITs$, the class  $c_i^\rmH$ restrict to $c_i$ in $A^i(\cPGITs)$ by Proposition~\ref{prop:lambda.linear}. In particular, the ring $A^*(\cPH\setminus \overline\cZ_1)$ is generated by the restrictions of $\lambda, c_2^\rmH$ and $c_3^\rmH$.
	Hence by the localization sequence 
	\beq\label{eq:loc.cPH}A_{*}(\overline \cZ_1)\xrightarrow{i_{\overline \cZ_1*}}A_*(\cPH)\lra A_*(\cPH\setminus \overline \cZ_1)\lra 0,\eeq
	it suffices to show that the image of the pushforward map $i_{\overline\cZ_1*}$ is equal to the ideal generated by $\delta=[\overline \cZ_1]$. 
	This is indeed true, by surjectivity of the pullback map $i_{\overline\cZ_1}^*$, proved in Lemma~\ref{lem:surj.pullback.Z1} below, and the projection formula $i_{\overline\cZ_1*}i_{\overline\cZ_1}^*(-)=(-)\cdot \delta$.
\end{proof}

It remains to prove Lemma~\ref{lem:surj.pullback.Z1} below.

\begin{lemma}\label{lem:surj.pullback.Z1}
	The pullback map $i_{\overline\cZ_1}^*:A^*(\cPH)\to A^*(\overline\cZ_1)$ is surjective.
\end{lemma}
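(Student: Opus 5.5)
The plan is to use the diagram of Remark~\ref{rem:Z1.Kirwan}. There the preimage $\Psi^{-1}(\cEK)\subset\cPhat$ maps to $\overline\cZ_1$ by a square-root stack along $\cZ_2$, and maps to $\cEK\cong[Y_8^{ss}/\PGL_2]$ by a weighted blowup at $\pHT$. Since a root stack induces an isomorphism on rational Chow rings, it suffices to show that every class on $\Psi^{-1}(\cEK)$ pulled back from $\overline\cZ_1$ comes from $A^*(\cPH)$. I would do this by showing that $A^*(\Psi^{-1}(\cEK))$ is generated by restrictions of classes from $A^*(\cPhat)$ that descend to $\cPH$.

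\emph{Step 1: generators of $A^*(\cEK)$.} The restriction $A^*(\cPK)\to A^*(\cEK)$ is surjective. Indeed, $\cEK$ is a $\Gamma$-gerbe over $[Y_8^{ss}/\PGL_2]$, and its rational Chow ring is a quotient of $A^*_{\SL_2}(Y_8)=\Q[h,c_2']/(\cdots)$ via the localization sequence. On $\cEK$, the class $e$ restricts to $c_1$ of the normal bundle, which is the hyperplane class $h$ up to a linear combination with $c_2'$. The class $c_2$ restricts to $4c_2'$, as in Lemma~\ref{lem:rest.DC}. So $e$ and $c_2$ generate.

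\emph{Step 2: the blowup at $\pHT$.} Apply Theorem~\ref{thm:blowup.formula1} to the weighted blowup $\Psi^{-1}(\cEK)\to\cEK$ at the point $\pHT$. The restriction $A^*(\cEK)\to A^*(\pHT)\cong A^*(B(R\rtimes\mu_2))=\Q[c_2]$ is surjective. Hence $A^*(\Psi^{-1}(\cEK))$ is generated by the restrictions of $e$, $c_2$ and $\ehat$, all of which are classes on $\cPhat$. The remaining task is to express these in terms of classes pulled back from $\cPH$. By Remark~\ref{rem:Hodge.HK}, $\Phi^*\delta=\Psi^*(-e)+2(-\ehat)$, up to the sign conventions $e=-[\cEK]$ and $\ehat=-[\cEhat]$. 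By Theorem~\ref{thm:Hodge} and Corollary~\ref{cor:Hodge}, $\Phi^*\lambda=\Psi^*\lambda$, and $\Phi^*c_2^\rmH$ equals $c_2$ plus an explicit correction supported on $\cEhat$ coming from the exact sequence $0\to\Psi^*\EE^\rmK(-\cEhat)\to\Phi^*\EE^\rmH\to\cO_{\cEhat}\to0$. On $\Psi^{-1}(\cEK)$ the class $\lambda=3H+2e$ restricts to $2e$, because $H$ vanishes there. So $\Phi^*\lambda$ restricts to $2e$, giving $e$. Then $\Phi^*\delta$ gives $\ehat$, and $\Phi^*c_2^\rmH$, corrected by a polynomial in $\ehat$ and $e$, gives $c_2$.

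\emph{Step 3: descent.} Let $\alpha\in A^*(\overline\cZ_1)$. Its pullback to $\Psi^{-1}(\cEK)$ is a polynomial in restrictions of $\Phi^*\lambda$, $\Phi^*\delta$ and $\Phi^*c_2^\rmH$, that is, the pullback of some class $\beta|_{\overline\cZ_1}$ with $\beta\in A^*(\cPH)$. The map from $\Psi^{-1}(\cEK)$ to $\overline\cZ_1$ is a root stack, so pullback is injective and in fact an isomorphism rationally. Therefore $\alpha=\beta|_{\overline\cZ_1}$.

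\textbf{Main obstacle.} I expect Step 2's identification of $c_2$ to be the delicate point, namely controlling the correction term in $c_2(\Phi^*\EE^\rmH)$ along $\cEhat$. A cruder route is available if needed: it suffices to generate $A^*(\Psi^{-1}(\cEK))$ as a ring by $e$, $\ehat$ and any class restricting to $c_2$ modulo $(e,\ehat)$. An inductive degree argument then absorbs the correction, since the correction lies in the ideal $(\ehat)$, which is already generated.
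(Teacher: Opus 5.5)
\paragraph{Comparison with the paper's proof.}
Your argument is correct, but it is organized differently from the paper's. The paper never uses the proper transform $\Psi^{-1}(\cEK)$. It applies the blowup formula to $\Phi$ and the projective bundle formula to the exceptional divisor $\cEhat$, which reduces the claim to surjectivity of $A^*(\cPhat)\to A^*(\cEhat)$. It then passes to $\cEhat'\subset\cPhat'$ and uses the blowup formula for $\cPhat'\to\cPK$ to reduce everything to Lemma~\ref{lem:rest.ZpTac}. That argument is purely formal and uses no tautological identities.

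However, $\cEhat$ lies over the center $\cZ_2$, not over $\overline\cZ_1$. Indeed, the cokernel of $\Phi^*:A^*(\cPH)\to A^*(\cPhat)$ has Hilbert series $t(1+t+2t^2+t^3+t^4)$, which is $t$ times that of $A^*(\cZ_2)$. So the paper's chain of identifications really gives surjectivity of $A^*(\cPH)\to A^*(\cZ_2)$. By localization, $A^*(\overline\cZ_1)$ is spanned by restrictions from $\cPH$ together with pushforwards from $\cZ_2$. Reaching $\overline\cZ_1$ therefore also needs the class of $\cZ_2$ in $A^1(\overline\cZ_1)$ to be a restriction from $\cPH$.

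Your route supplies exactly this input. Write $\rho:\Psi^{-1}(\cEK)\to\overline\cZ_1$ for the root stack. From $\Phi^*\lambda|_{\Psi^{-1}(\cEK)}=2e|_{\Psi^{-1}(\cEK)}$ and $\delta=-e-2\ehat$ you get $\rho^*[\cZ_2]=-2\ehat|_{\Psi^{-1}(\cEK)}=(\delta+\tfrac12\lambda)|_{\Psi^{-1}(\cEK)}$. This is consistent with $[\cZ_2]=\tfrac12\lambda\delta+\delta^2$ on $\cPH$. The price is that you use identities the paper proves only in Section~\ref{s:chow.ph}. None of them depends on this lemma, so there is no circularity.

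\paragraph{Small repairs.}
\begin{itemize}
\item For $\delta=-e-2\ehat$, cite Lemma~\ref{lem:Z1.relation} rather than Remark~\ref{rem:Hodge.HK}. The remark tacitly assumes $\Phi^*\Lambda\cong\Psi^*\Lambda$. That assumption is true: $4K+3C$ agrees on $\cShat$, because the corrections $+12\cFhat$ and $-12\cFhat$ cancel. It also follows from Corollary~\ref{cor:Hodge} together with that lemma.
\item In Step~1, $e|_{\cEK}=-c_1(N_{\cEK/\cPK})$ is simply a nonzero multiple of $h$. There is no room for $c_2'$ in degree one.
\item In Step~2, apply Theorem~\ref{thm:blowup.formula1} to the full weighted blowup of $\cEK$ at $\pHT$. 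By Proposition~\ref{prop:transverse}, this is the total transform of $\cEK$ in $\cPhat'$. Then pass to its open semistable part $\Psi^{-1}(\cEK)$, whose Chow ring is a quotient.
\item The obstacle you anticipate does not arise. The exact sequence of Theorem~\ref{thm:Hodge} gives $c(\Phi^*\EE^\rmH)=c\big(\Psi^*\EE^\rmK(-\cEhat)\big)(1+\ehat)^{-1}$. Hence $c_2^\rmH=c_2-\tfrac13\ehat(\lambda+e+\ehat)$ in $A^2(\cPhat)$, so the correction lies in $(\ehat)$. This agrees with Proposition~\ref{prop:c2Hc3H}.
\end{itemize}
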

\begin{proof}
	By the blowup formula and the projective bundle formula applied to $\cPhat\to \cPH$ and $\cEhat\to \overline\cZ_1$ respectively, the pullback $A^*(\cPhat)\to A^*(\cEhat)$ can be identified with
	\[i_{\overline\cZ_1}^*\oplus \id: A^*(\cPH)\oplus A^{*-1}(\overline\cZ_1)\lra A^*(\overline\cZ_1)\oplus A^{*-1}(\overline\cZ_1).\]
	Hence the assertion is equivalent to surjectivity of 
	the pullback $A^*(\cPhat)\to A^*(\cEhat)$. The latter follows from surjectivity of the pullback
	\beq\label{eq:rest.Ehat'}
	A^*(\cPhat')\lra A^*(\cEhat')\eeq
	since both pullbacks $A^*(\cPhat')\to A^*(\cPhat)$ and $A^*(\cEhat')\to A^*(\cEhat)$ are surjective.
	
	By the blowup formula and the projective bundle formula applied to $\cPhat'\to \cPK$ and $\cEhat'\to \cZ_\pTac$ respectively, surjectivity of \eqref{eq:rest.Ehat'} is equivalent to surjectivity of $A^*(\cPK)\to A^*(\cZ_\pTac)$, proved in Lemma~\ref{lem:rest.ZpTac}. 
\end{proof}

\subsection{Relations}
We begin by giving a direct proof of the linear relation in Remark~\ref{rem:Hodge.HK}.
By abuse of notation, we often omit $\Phi^*$ and $\Psi^*$ by identifying both $\Phi^* A^*(\cPH)$ and $\Psi^*A^*(\cPK)$ as subspaces of $A^*(\cPhat)$.
\begin{lemma}\label{lem:Z1.relation}
	In $A^1(\cPhat)$, we have $\delta=-e-2\ehat$.
\end{lemma}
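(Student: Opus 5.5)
We need to show $\Phi^*\delta = -e-2\ehat$ in $A^1(\cPhat)$, where $\delta=[\overline\cZ_1]$, $-e=[\Psi^*\cEK]$ (pulled back), and $-\ehat=[\cEhat]$. Equivalently, $\Phi^*[\overline\cZ_1]=\Psi^*[\cEK]+2[\cEhat]$ as divisor classes. The plan is to compare the two divisors directly on $\cPhat$, using that $\Phi$ and $\Psi$ are isomorphisms away from $\cEhat$.

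\textbf{Step 1: agreement away from $\cEhat$.} The divisors $\Phi^{-1}(\overline\cZ_1)$ and $\Psi^{-1}(\cEK)$ agree on $\cPhat\setminus\cEhat$. Indeed, over this locus both maps are isomorphisms onto $\cPH\setminus\cZ_2\cong\cPK\setminus\cZ_\pTac$. Under this identification, $\overline\cZ_1\setminus\cZ_2$ corresponds to $\cEK\setminus\cZ_\pTac$: both parametrize the pairs on $\PP(1,1,4)$, by Theorem~\ref{thm:cEK} and Hacking's stratification.

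\textbf{Step 2: reduction to multiplicities.} By Step 1, $\Phi^*\overline\cZ_1-\Psi^*\cEK$ is a Cartier divisor supported on $\cEhat$. Since $\cEhat$ is irreducible (Proposition~\ref{prop:cEhat}), this difference is $m\cEhat$ for some integer $m$. We compute $m$ as the difference of the multiplicities of $\Phi^*\overline\cZ_1$ and $\Psi^*\cEK$ along $\cEhat$.

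\textbf{Step 3: computing the two multiplicities.}
\begin{itemize}
\item For $\Psi^*\cEK$: by Proposition~\ref{prop:transverse}, $\cZ_\pTac$ meets $\cEK$ transversely, so $\cZ_\pTac\not\subset\cEK$. Hence $\Psi^*\cEK$ is the proper transform of $\cEK$ and does not contain $\cEhat$, so its multiplicity along $\cEhat$ is $0$.
\item For $\Phi^*\overline\cZ_1$: since $\overline\cZ_1\supset\cZ_2$, its multiplicity along $\cEhat$ is at least $2$ by Proposition~\ref{prop:mult}. To compute it exactly, use the one-parameter slices $\A^1_t\to\cPhat$ from the proof of Proposition~\ref{prop:fiber.Chat}, which are transverse to $\cEhat$ at a general point.
\end{itemize}

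Along such a slice, the composite $\A^1_t\to\cPH$ factors through $\A^1_t/\iota$, with coordinate $s=t^2$. The main step is to check that the image curve $\A^1_s\to\cPH$ meets $\overline\cZ_1$ transversally at a general point of $\cZ_2$. Since $\cPH$ is smooth with normal crossings boundary $\overline\cZ_1$ near $\cZ_2$ (the two local branches of $\overline\cZ_1$ correspond to smoothing the double curve $\ell$ and the two $A_1$-type points of the surface), it suffices to show that the family over $\A^1_s$ smooths the surface at first order in exactly one branch direction. This is where I expect the main difficulty.

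I would check it in the explicit model of Lemma~\ref{lem:etalenbd.bpCY}, or directly in the plane quartic family, where the local smoothing parameter of $S_1\cup_\ell S_2$ is $s$ to first order. A cleaner alternative is to deduce the intersection number from Remark~\ref{rem:Hodge.HK}. That remark was obtained from \eqref{eq:Hodge.HK2} and Corollary~\ref{cor:Hodge} independently of this lemma, and it already gives $\Phi^*\partial\cPH\sim\Psi^*\cEK+2\cEhat$. Since $\Pic(\cPhat)\otimes\Q$ is torsion-free, this settles $m=2$ even if the direct local computation is incomplete. I would present the local multiplicity argument and cross-check it against Remark~\ref{rem:Hodge.HK}.

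\textbf{Conclusion.} With $m=2$, we get $\Phi^*[\overline\cZ_1]=\Psi^*[\cEK]+2[\cEhat]$. Rewriting in terms of $e$ and $\ehat$ gives $\delta=-e-2\ehat$.
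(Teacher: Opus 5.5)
Your Steps 1, 2 and the first bullet of Step 3 are fine, and they match the paper's opening reduction to $\delta=-e-a\ehat$. The gap in the direct route is the upper bound for the multiplicity of $\Phi^*\overline\cZ_1$ along $\cEhat$. Proposition~\ref{prop:mult} only gives $\ge 2$, and the transversality you need is asserted rather than checked.

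It is also based on a wrong local picture: $\overline\cZ_1$ does not have two normal-crossing branches through $\cZ_2$. Both cone points of $\PP(1,1,2)\cup_\ell\PP(1,1,2)$ lie on $\ell$ and are identified, so there is no separate branch for smoothing them. In the slice of Lemma~\ref{lem:etalenbd.bpCY}, the surface deformations normal to $\cZ_2$ are $x_1x_2=ax_0^2+bx_3$. Here $b\neq 0$ gives $\PP^2$, and $b=0\neq a$ gives $\PP(1,1,4)$. So near $\cZ_2=\{a=b=0\}$ the boundary is the single smooth branch $\{b=0\}$, consistent with Remark~\ref{rem:Z1.Kirwan}. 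What your slice argument would have to exclude is that the image of $\A^1_s$ leaves $\cZ_2$ to first order in the $a$-direction, i.e.\ is tangent to $\overline\cZ_1$.

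With the correct picture, no slice is needed. $\overline\cZ_1$ has multiplicity one along $\cZ_2$. By Lemma~\ref{lem:Phi} and Theorem~\ref{thm:roof.diagram}, $\Phi$ is the ordinary blowup along $\cZ_2$ followed by the square root along $\cEhat^\dagger$. Hence the multiplicity is $1\cdot 2=2$, and $\Phi^*\overline\cZ_1=\Psi^*\cEK+2\cEhat$.

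Your fallback is sound. Remark~\ref{rem:Hodge.HK} follows from Theorem~\ref{thm:Hodge}(2) and Corollary~\ref{cor:Hodge}, neither of which uses the lemma. It also needs $\Phi^*\lambda=\Psi^*\lambda$ in $A^1(\cPhat)$; this holds because $\Lambda$ descends to a $\QQ$-line bundle on $\PCY$ and the two composites $\cPhat\to\PCY$ agree.

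However, that remark already is the full statement in $A^1(\cPhat)$, so Steps 1--3 become superfluous. Also, to read off $m=2$ from the linear equivalence you need $[\cEhat]\neq 0$, not torsion-freeness.

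The paper takes a different route. It determines $a$ by the projection formula, $0=\Phi_*(\Phi^*\delta\cdot\ehat)=\Phi_*(e\ehat)+a\Phi_*(\ehat^2)$. Here $\Phi_*(e\ehat)=\tfrac12[\cZ_2]$ because $\Psi^{-1}(\pHT)\to\cZ_2$ is a $\mu_2$-gerbe, and $\Phi_*(\ehat^2)=-\tfrac14[\cZ_2]$. This gives $\tfrac12-\tfrac a4=0$, so $a=2$.

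Comparing the three routes:
\begin{itemize}
\item The paper's argument uses neither the local structure of $\overline\cZ_1$ near $\cZ_2$ nor the Hodge bundle sequence, only the gerbe structures on $\cEhat$.
\item The Hodge route trades this for Theorem~\ref{thm:Hodge}(2).
\item The repaired local route is the most direct, once the slice of Lemma~\ref{lem:etalenbd.bpCY} shows that $\overline\cZ_1$ is smooth along $\cZ_2$.
\end{itemize}
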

\begin{proof}
	Since $\Phi^{-1}(\overline\cZ_1)=\Psi^{-1}(\cEK)\cup \cEhat$ as a set and $\Psi^{-1}(\cEK)\to \overline \cZ_1$ is birational (cf.~Remark~\ref{rem:Z1.Kirwan}), we have
	$\delta=-e-a\ehat$
	for some $a$. 
	Since $\Phi_*\ehat=0$, the projection formula gives
	\[0=\Phi_*(\delta\cdot \ehat)=\Phi_*(e\cdot \ehat)+a\Phi_*(\ehat^2).\]
	
	Since the intersection $\Psi^{-1}(\cEK)\cap \cEhat=\Psi^{-1}(\pHT)$ is a $\mu_2$-gerbe over $\cZ_2$, it follows that $\Phi_*(\Psi^*e\cdot \ehat)=\frac{1}{2}[\cZ_2]$.
	On the other hand, $-(2\ehat)^2$ is the sum of two divisors, namely
	\[\Big(\text{a divisor in }\big[\big(\cP(2,3,4)\times\cP(2,3,4)\big)/\mu_2\big]\Big)\times [\PP^1/\Gamma]\]
	and $\big[\big(\cP(2,3,4)\times\cP(2,3,4)\big)/\mu_2\big]\times B\Gamma\cong \cZ_2$.
	Hence $\Phi_*(\ehat^2)=-\frac{1}{4}[\cZ_2]$, and therefore $a=2$.
\end{proof}
By Lemma~\ref{lem:Z1.relation}, the relation $\lambda e-2e^2=0$ becomes
\beq\label{eq:quadrel.ehat}8\ehat^2+8\delta\ehat+\lambda \delta+2\delta^2=0.\eeq
In particular, comparing this with the relation in Theorem~\ref{thm:blowup.formula1}, we obtain 
\[[\cZ_2]=\frac{1}{2}\lambda\delta+\delta^2.\]

\begin{proposition}\label{prop:c2Hc3H}
	In $A^2(\cPhat)$, we have $c_2^\rmH=c_2-\frac{1}{24}(\lambda\delta+2\delta^2)$.
	In particular,
	\[c_3^\rmH=-\frac{25}{189}\lambda^3-\frac{37}{42}\lambda\delta^2+\frac{1}{27}\delta^3+\frac{55}{42}\lambda c_2^\rmH-\frac{20}{21}\delta c_2^\rmH\]
	in $A^3(\cPH)$, and the ring $A^*(\cPH)$ is generated by $\lambda$, $\delta$ and $c_2^\rmH$.
\end{proposition}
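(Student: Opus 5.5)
The plan is to compute the Chern classes of $\Phi^*\EE^\rmH$ from the exact sequence of Theorem~\ref{thm:Hodge}(2),
\[0\lra\Psi^*\EE^\rmK(-\cEhat)\lra \Phi^*\EE^\rmH\lra \cO_{\cEhat}\lra 0,\]
and then normalize. Since $c(\cO_{\cEhat})=c(\cO)/c(\cO(-\cEhat))=1/(1+\ehat)$ (recall $-\ehat=[\cEhat]$), Whitney's formula gives
\[c(\Phi^*\EE^\rmH)=c\big(\Psi^*\EE^\rmK\otimes\cO(\ehat)\big)\cdot(1+\ehat)^{-1}.\]
I will write $\EE^\rmK=W\otimes(\det\EE^\rmK)^{1/3}$ with $W^*$ normalized, so that $c_1(W)=0$, $c_2(W^*)=c_2$, $c_3(W^*)=c_3$ (Proposition~\ref{prop:lambda.linear}), and $c_1(\EE^\rmK)=\lambda+e$ by Corollary~\ref{cor:Hodge}. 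Then $c_1(\Phi^*\EE^\rmH)=\lambda+e+2\ehat=\lambda-\delta$ by Lemma~\ref{lem:Z1.relation}, consistent with Corollary~\ref{cor:Hodge}.

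The normalized class $c_2^\rmH$ is insensitive to twisting, so it depends only on $c_1,c_2$ of $\Phi^*\EE^\rmH$ through the discriminant-type combination $c_2-\tfrac13c_1^2$ (up to sign convention for the dual). Expanding the degree-two part of the product above, I get $c_2(\Phi^*\EE^\rmH)$ as the $c_2$ of $\EE^\rmK(\ehat)$ minus $\ehat\,c_1(\EE^\rmK(\ehat))$ plus $\ehat^2$. Subtracting $\tfrac13c_1^2$ should leave $c_2$ plus a quadratic expression in $e,\ehat$. I will then use the relations $\lambda\ehat=0$, $e^2=\tfrac12\lambda e$, and \eqref{eq:quadrel.ehat} to rewrite this correction as $-\tfrac1{24}(\lambda\delta+2\delta^2)$, substituting $e=-\delta-2\ehat$. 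One sanity check is that the correction should be proportional to $[\cZ_2]=\tfrac12\lambda\delta+\delta^2$ only up to the gerbe factor. The main risk is bookkeeping of signs between $\EE$ and $\EE^*$, and of the $\tfrac13$ power.

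For $c_3^\rmH$, I will work in $A^3(\cPH)$ by pulling back to $\cPhat$, which is injective since $\Phi^*$ is split by $\Phi_*$. There I will compute the degree-three part of the same Chern class identity. This expresses $c_3^\rmH$ in terms of $c_3,c_2,\lambda,e,\ehat$. Next I will eliminate $c_3$ using $r^{\sThat}_{00}$ from Theorem~\ref{thm:Chow.cPhat}, i.e.\ $56c_3=\dots$. I will then replace $c_2$ by $c_2^\rmH+\tfrac1{24}(\lambda\delta+2\delta^2)$ and $e$ by $-\delta-2\ehat$. Finally, I will reduce modulo $\Ihat$ to a normal form and check that all monomials involving $\ehat$ cancel, so the result lies in $\Phi^*A^3(\cPH)$ and matches the stated cubic in $\lambda,\delta,c_2^\rmH$.

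I expect this reduction modulo $\Ihat$ to be the main computational obstacle. It should be carried out with a Gröbner basis in $\Q[\lambda,e,\ehat,c_2]$, using the fact that the cubic part of $A^*(\cPhat)$ is $7$-dimensional. Given the two formulas, Proposition~\ref{prop:generators.PH} immediately yields that $\lambda,\delta,c_2^\rmH$ generate.
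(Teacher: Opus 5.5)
Your plan is the paper's own proof. It applies Whitney's formula to the sequence of Theorem~\ref{thm:Hodge}(2), uses $\lambda\ehat=0$ and \eqref{eq:quadrel.ehat} for $c_2^\rmH$, and then for $c_3^\rmH$ takes the degree-three part, eliminates $c_3$ via $r^{\sThat}_{00}$, and uses injectivity of $\Phi^*$. Your degree-two bookkeeping is right: $c_2^\rmH=c_2-\tfrac13(e\ehat+\ehat^2)=c_2+\tfrac13(\delta\ehat+\ehat^2)$, which \eqref{eq:quadrel.ehat} turns into $c_2-\tfrac1{24}(\lambda\delta+2\delta^2)$. For the last step, a Gr\"obner normal form is unnecessary. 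Write the result as $\alpha+\beta\ehat$ with $\alpha,\beta\in\Q[\lambda,\delta,c_2^\rmH]$ and apply $\Phi_*$, using $\Phi_*\ehat=0$ and $\Phi_*\Phi^*=\id$.

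Be warned that your final matching step will fail by a sign. This is exactly the $\EE$ versus $\EE^*$ issue you flagged.

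The degree-three part of your Whitney identity collapses to $c_3(\Phi^*\EE^\rmH)=\Psi^*c_3(\EE^\rmK)$. With $c_3=c_3\big((\EE^\rmK)^*\otimes(\det\EE^\rmK)^{1/3}\big)$, as in Proposition~\ref{prop:lambda.linear}, you get
\[
c_3^\rmH=\tfrac1{27}\big((\lambda-\delta)^3-(\lambda+e)^3\big)+\tfrac13\big((\lambda-\delta)c_2^\rmH-(\lambda+e)c_2\big)+c_3 .
\]
After eliminating $c_3$ by $r^{\sThat}_{00}$, this is the \emph{negative} of the displayed cubic in $A^3(\cPhat)$.

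Here is an independent check. On $\cPH\setminus\overline\cZ_1\cong\cPGITs$ we have $\delta=0$, $\lambda=3H$ and $c_i^\rmH=c_i$. There the displayed formula says $56c_3=-200H^3+220c_2H$, while $\bar r^{\sT}_{00}$ says $56c_3=200H^3-220c_2H$. Together these would force $c_3=0$, an extra degree-three relation contradicting $\dim A^3(\cPGITs)=2$.

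So the displayed cubic is really $c_3\big(\EE^\rmH\otimes(\det\EE^\rmH)^{-1/3}\big)=-c_3^\rmH$. With that sign corrected, your argument proves the statement. The conclusion that $\lambda,\delta,c_2^\rmH$ generate $A^*(\cPH)$ is unaffected.
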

\begin{proof}
	The first identity follows directly from the exact sequence in Theorem~\ref{thm:Hodge} and the relation~\eqref{eq:quadrel.ehat}.
	The identity for $c_3^\rmH$ holds in $A^3(\cPhat)$ by the same argument, together with the first identity, and it also holds in $A^3(\cPH)$ since the pullback  $\Phi^*:A^*(\cPH)\to A^*(\cPhat)$ is injective. This shows that the generator $c_3^\rmH$ is redundant, hence the last assertion follows.
\end{proof}
Write $\IH\subset \Q[\lambda,\delta,c_2^\rmH]$ for the ideal of relations of $A^*(\cPH)$. 
By \eqref{eq:quadrel.ehat}, every element of $A^*(\cPhat)$ can be written uniquely as $\alpha+\beta\cdot \ehat$, with $\alpha,\beta\in A^*(\cPH)$. 
Moreover, we have
\[ 
	\Phi_*(\alpha+\beta\cdot \ehat)=\alpha \and \Phi_*((\alpha+\beta\cdot \ehat)\cdot \ehat)=-\frac{1}{8}(\lambda\delta +2\delta^2)\beta.\] 
For $j>1$, we have $\Phi_*((\alpha+\beta\cdot \ehat)\cdot \ehat^j)\in (\alpha,(\lambda\delta+2\delta^2)\beta)$.
Therefore, the ideal $\IH$ is generated by $\alpha$ and $(\lambda\delta+2\delta^2)\beta$ for $\alpha, \beta$ appearing in generators of $\Ihat$ written in the form $\alpha+\beta\ehat$.

For the generator $\shat_2=\lambda\ehat\in \Ihat$, we have $\beta=\lambda$, and hence obtain
\[r_3:=\lambda^2\delta+2\lambda\delta^2,\]
up to a nonzero scalar multiple.
The coefficients $\alpha$ for the generators $ec_3, s^{\tr}_4, r^{\sThat}_{10}\in \Ihat$ give
\[\begin{aligned}
	r_4&:=10\lambda\delta^3-8\lambda\delta c_2^\rmH+9\delta^2c_2^\rmH,\\
	r_4'&:=8\lambda^4-120\lambda\delta^3-75\lambda^2c_2^\rmH+80\lambda\delta c_2^\rmH,\\
	r_4''&:=218\lambda^4+5588\lambda\delta^3+56\delta^4-1545\lambda^2c_2^\rmH-6144\lambda\delta c_2^\rmH+8472\delta^2c_2^\rmH-1008(c_2^\rmH)^2,
\end{aligned}\]
up to nonzero scalar multiples, modulo $r_3$.
Similarly, the coefficient $\alpha$ of $s^{\sQ}_5$ gives
\[r_5:=5\lambda^5-824\lambda\delta^4-195\lambda^3c_2^\rmH+990\lambda (c_2^\rmH)^2-972\delta (c_2^\rmH)^2,\]
up to a nonzero scalar multiple, modulo the ideal $(r_3,r_4)$.

\begin{theorem}[Chow ring of $\cPH$]\label{thm:Chowring.Hacking}
	We have
	\[A^*(\cPH)\cong\Q[\lambda,\delta,c_2^\rmH]/\IH, \quad \text{where }~\IH=(r_3,r_4,r_4',r_4'',r_5).\]
\end{theorem}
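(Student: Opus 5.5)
The plan is to deduce the presentation from Theorem~\ref{thm:Chow.cPhat} by pushing forward along $\Phi$, using the structure already set up after Proposition~\ref{prop:c2Hc3H}.

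\emph{Generators and a module description of $A^*(\cPhat)$.} By Proposition~\ref{prop:c2Hc3H}, $A^*(\cPH)$ is generated by $\lambda,\delta,c_2^\rmH$, so the natural map $\Q[\lambda,\delta,c_2^\rmH]\to A^*(\cPH)$ is surjective; let $\IH$ be its kernel. Since $\Phi$ is a weighted blowup along $\cZ_2$ with weight two, and the restriction to $\cZ_2$ is surjective (via Lemma~\ref{lem:surj.pullback.Z1} and Proposition~\ref{prop:Z2}), Theorem~\ref{thm:blowup.formula1} gives
\[A^*(\cPhat)\cong A^*(\cPH)[\ehat]/\big(\ehat\cdot\ker(i_{\cZ_2}^*),\,Q\big),\]
where $Q$ is identified with \eqref{eq:quadrel.ehat}. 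Hence every class is uniquely $\alpha+\beta\ehat$ with $\alpha,\beta\in A^*(\cPH)$. In these coordinates $\Phi_*(\alpha+\beta\ehat)=\alpha$ and $\Phi_*(\ehat(\alpha+\beta\ehat))=-\tfrac18(\lambda\delta+2\delta^2)\beta$. Because $\Phi_*\Phi^*=\id$, a polynomial in $\lambda,\delta,c_2^\rmH$ vanishes in $A^*(\cPH)$ exactly when its pullback lies in $\Ihat$ after rewriting $e=-\delta-2\ehat$ (Lemma~\ref{lem:Z1.relation}) and $c_2=c_2^\rmH+\tfrac1{24}(\lambda\delta+2\delta^2)$.

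\emph{Describing $\IH$ through $\Ihat$.} Conversely, $\IH$ is generated by the $\alpha$'s and by $(\lambda\delta+2\delta^2)\beta$, as $\alpha+\beta\ehat$ runs over generators of $\Ihat$ rewritten in these variables. The point is that $\Ihat$, as an ideal of $\Q[\lambda,\delta,c_2^\rmH,\ehat]$, together with the quadratic relation, is closed under multiplication by $\ehat$, and $\Phi_*$ of $\ehat^j$ times a generator lies in the ideal already obtained. The concrete step is to substitute into each generator of $\Ihat$: namely $\lambda e-2e^2$, $ec_3$, $s^\tr_4,s^\tr_5,s^\tr_6,s^\sQ_4,s^\sQ_5$, $\shat_2,\shat_3,\shat_4',r^{\sThat}_{10}$, with $c_3$ eliminated via $r^{\sThat}_{00}$ or Proposition~\ref{prop:c2Hc3H}. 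Each is reduced modulo \eqref{eq:quadrel.ehat} to the form $\alpha+\beta\ehat$, and one records $\alpha$ and $(\lambda\delta+2\delta^2)\beta$. As computed above, $\shat_2$ gives $r_3$, the coefficients from $ec_3,s^\tr_4,r^{\sThat}_{10}$ give $r_4,r_4',r_4''$, and $s^\sQ_5$ gives $r_5$. A Gr\"obner basis computation should then show that every other resulting class lies in $(r_3,r_4,r_4',r_4'',r_5)$.

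\emph{Check and main obstacle.} As a final check, the Hilbert series of $\Q[\lambda,\delta,c_2^\rmH]/(r_3,r_4,r_4',r_4'',r_5)$ should equal $1+2t+4t^2+5t^3+4t^4+2t^5+t^6$, matching Theorem~\ref{thm:Betti.PH} together with the cycle class isomorphism. This confirms that no relation was missed and none is spurious, since both sides are quotients with equal dimensions. The main obstacle is the bookkeeping in the reduction step: correctly rewriting the degree-5 and degree-6 generators in the $\ehat$-basis and showing that the many resulting classes collapse to the five listed. I would do this by machine, using the Hilbert series comparison as the decisive consistency check rather than verifying each membership by hand.
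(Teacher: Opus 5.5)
Your proposal is correct and is essentially the paper's argument. You reduce the generators of $\Ihat$ (after substituting $e=-\delta-2\ehat$ and the formula for $c_2$) to the form $\alpha+\beta\ehat$ via \eqref{eq:quadrel.ehat}, use $\Phi_*\ehat=0$ and $\Phi_*\ehat^2=-\tfrac18(\lambda\delta+2\delta^2)$ to extract $\alpha$ and $(\lambda\delta+2\delta^2)\beta$, and check by machine that these generate $(r_3,r_4,r_4',r_4'',r_5)$. The one addition is your Hilbert-series comparison with Theorem~\ref{thm:Betti.PH}, which is a valid shortcut for the final step: once the five classes are known to be relations, equal dimensions force them to generate $\IH$. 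The paper, however, deliberately keeps the Chow computation independent of the Betti computation so that each serves as a check on the other.
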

\begin{proof}
	By Proposition~\ref{prop:c2Hc3H}, it remains to determine the ideal $\IH$.
	The ideal $\IH$ is generated by $\alpha$ and $(\lambda\delta+2\delta^2)\beta\in \Q[\lambda,\delta,c_2^\rmH]$ for $\alpha, \beta$ that appear when expanding the generators of $\Ihat$ as $\alpha +\beta \ehat$, except for the relation $\lambda e-2e^2$, which has already been used to obtain \eqref{eq:quadrel.ehat}.
	It can be checked that this ideal coincides with the ideal generated by the above five classes.
\end{proof}

\bibliographystyle{alpha}
\bibliography{main}
\end{document}